\newcommand{\blue}{black}
\renewcommand{\emptyset}{\varnothing}
\newtheorem{prop}{Proposition}[section]
\newtheorem{theo}[prop]{Theorem}
\newtheorem{lemm}[prop]{Lemma}
\newtheorem{assu}[prop]{Condition}
\newcommand{\Po}{{\cal P}}
\newcommand{\cR}{{\cal R}}
\newcommand{\cK}{{\cal C}}
\newcommand{\Q}{{\mathbb{Q}}}
\newcommand{\cM}{{\cal M}}
\def\N{\mathbb{N}}
\def\Z{\mathbb{Z}}
\def\R{\mathbb{R}}
\def\E{\mathbb{E}}
\def\bM{{\bf M}}
\def\cI{\mathcal{I}}
\def\cN{\mathcal{N}}
\def\cM{\mathcal{M}}
\def\cF{\mathcal{F}}
\def\Pr{\mathbb{P}}
\def\0{{\bf 0}}
\def\cB{{\cal B}}
\renewcommand{\E}{\mathbb E \,}
\newcommand{\tod}{\stackrel{{\cal D}}{\longrightarrow}}
\newcommand{\eqd}{\stackrel{{\cal D}}{=}}
\newcommand{\dist}{{\rm dist}}
\newcommand{\eqco}{\setcounter{equation}{0}}
\newcommand{\allco}{\eqco   }
\renewcommand{\H}{{\cal H}}
\newcommand{\Cov}{{\rm Cov}}
\newcommand{\Var}{{\rm Var}}
\newcommand{\K}{{\cal K}}
\newcommand{\F}{{\cal F}}
\newcommand{\tN}{{\tilde{N}}}
\newcommand{\oF}{{\overline{F}}}
\newcommand{\txi}{{\tilde{\xi}}}
\newcommand{\tx}{{\tilde{x}}}
\newcommand{\tily}{{\tilde{y}}}
\newcommand{\tbeta}{{\tilde{\beta}}}
\newcommand{\eps}{\varepsilon}
\def\bdm{\begin{displaymath}}
\newcommand{\edm}{\end{displaymath}}
\def\benu{\begin{enumerate}}
\def\eenu{\end{enumerate}}
\def\beqn{\begin{equation}}
\def\eeqn{\end{equation}}
\def\be{\begin{equation}}
\def\ee{\end{equation}}
\def\bea{\begin{eqnarray}}
\def\eea{\end{eqnarray}}
\newcommand{\bean}{\begin{eqnarray*}}
\newcommand{\eean}{\end{eqnarray*}}
\newcommand{\bear}{\begin{eqnarray}}
\newcommand{\eear}{\end{eqnarray}}
\renewcommand{\epsilon}{\varepsilon}
\def\R{\mathbb{R}}
\renewcommand{\P}{{\mathbb P}}
\def\tz{\tilde{z} }
\def\ty{\tilde{y} }
\def\tu{\tilde{u} }
\def\qed{\hfill\hbox{${\vcenter{\vbox{
    \hrule height 0.4pt\hbox{\vrule width 0.4pt height 6pt
    \kern5pt\vrule width 0.4pt}\hrule height 0.4pt}}}$}}
\def\la{{\lambda}}
\begin{document}
\title{\bf Leaves on the line and in the plane}

\author{
Mathew D. Penrose$^{1}$ \\
{\normalsize{\em University of Bath}} }

 \footnotetext{ $~^1$ Department of
Mathematical Sciences, University of Bath, Bath BA2 7AY, United
Kingdom: {\texttt m.d.penrose@bath.ac.uk} }




\maketitle

\begin{abstract} The {\em dead leaves model} (DLM) provides a random tessellation of $d$-space, representing the visible portions of fallen leaves on the ground when $d=2$.  For $d=1$, we establish formulae for the intensity, two-point correlations, and asymptotic covariances for the point process of cell boundaries, along with a functional CLT.  For $d=2$ we establish  analogous results for the random surface measure of cell boundaries, and also \textcolor{\blue}{determine} the  intensity of cells in a more general setting than in earlier work of Cowan and Tsang. We introduce a general notion of {\em dead leaves random measures} and  give formulae for means, asymptotic variances and functional CLTs for these measures; this has applications to various other quantities associated with the DLM. \\

Key words and phrases: random tessellation, dead leaves model, random measure, central limit theorem, Ornstein-Uhlenbeck process. 

AMS classifications: 60D05, 60G57, 60G55, 60F05, 82C22.

\end{abstract}


\section{Introduction}
\label{secintro}
\subsection{Overview}
\label{subsecoverview}
The {\em dead leaves model} (or DLM for short) in $d$-dimensional
space 
($d \in \N$), due originally to Matheron \cite{Matheron},
 is defined as follows \cite{Bordenave,Serra}.
Leaves fall at random onto the ground and the visible parts of
leaves on the ground (i.e., those parts that are not covered by later
arriving leaves) tessellate $\R^d$ (typically with $d=2$; see Figure 1).
Motivation for studying the DLM   
from the modelling of
natural images, and from materials science, is discussed
in \cite{Bordenave}, \cite{Jeulin}, \textcolor{\blue}{
 and \cite{GG},} for example.  
The DLM provides a natural way of generating a stationary random
tessellation of the plane with non-convex cells having
possibly curved boundaries. 

 To define the model more formally,  
let $\Q$ be a probability measure on the space $\cK$ of compact 
sets in $\R^d$ (equipped with the $\sigma$-algebra generated
by the Fell topology, as defined in  Section \ref{subsecnotation} below),
 assigning  strictly positive measure to the collection of
sets in $\cK$ having non-empty interior.
A collection of `leaves'
arrives as 
an independently marked homogeneous Poisson point process 
$\Po = \sum_{i =1}^\infty \delta_{(x_i,t_i)}$
in $\R^d \times \R $ of unit intensity with marks $(S_i)_{i \geq 1}$
taking values in $\cK$ with common mark distribution $\Q$.
Each point $(x_i,t_i,S_i)$ of this
marked point process
is said to have {\em arrival time} $t_i$ and the  associated leaf covers
the region $S_i+x_i \subset \R^d$ from that time onwards (where
$S+x:= \{y+x:y \in S\}$).
At a specified time, say time 0, and at spatial location $x \in \R^d$,
the most recent leaf to arrive before time 0 that covers location
$x$ is said to be {\em visible} (or {\em exposed}) at $x$.
\textcolor{\blue}{For each $i \in \N$,
 the {\em visible portion} of leaf $i$ at time 0
is the set of sites $x \in \R^d$ such that leaf $i$ is visible 
at $x$ at time 0.}
 The
connected components of visible portions
of leaves (at time 0) form a tessellation of $\R^d$, which
we call the {\em DLM tessellation}. 

\begin{figure}[!h]
\label{fig1}
\center
\includegraphics[width=17cm]{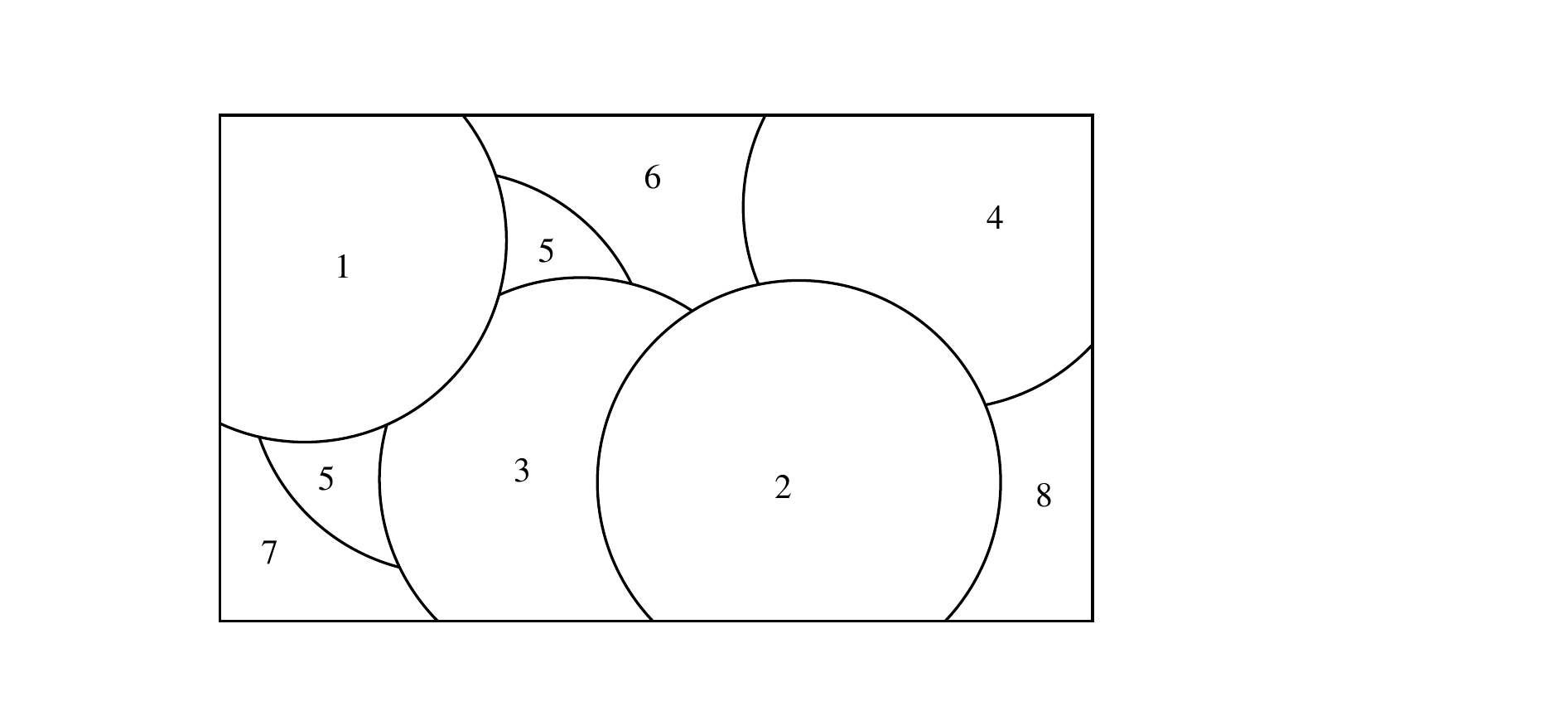}
\caption{A realization of the DLM tessellation, restricted to a window,
 where all the leaves are
unit disks. The numbers indicate the reverse order of arrival
of the leaves visible within the window. In this paper we view the two visible
 components of leaf 5 as being
 separate components of the DLM tessellation}
\end{figure}

Properties of the DLM itself
are discussed in \cite{Bordenave,Cowan,Jeulin,Serra},
 while percolation on the DLM tessellation
has been considered in
\cite{Schramm,Muller}. In some of these
works the authors call the DLM the `confetti' model.
Note that in the present paper, all cells of our DLM tessellation
are connected; the tessellation where cells are taken to be
the visible portions of leaves (rather than their connected 
components) is also of interest, and is considered in
some of the works just mentioned.

In this paper we consider the DLM for $d=1$ and for $d=2$. 
For $d=1$, we develop the second order theory for the point process
of cell boundaries. That is, we determine its second factorial moment
measure, two-point correlation functions, asymptotic variance
and a spatial central limit theorem (CLT).
Moreover, we can and do consider the point process 
 to  be {\em evolving}  as leaves continue
to rain down; we establish a {\em functional} CLT
showing that the (evolving) number of cells in a large window approximates
to  an Ornstein-Uhlenbeck process.
For $d=2$ we carry out a similar programme (asymptotic variance
and functional CLT)
 for the surface measure of cell
boundaries within a large window.
We state our results for $d=1$ in Section \ref{seconedim},
and for $d=2$ in
Section \ref{ctshidim}.

For general $d$, we also develop (in Section \ref{secDLRM}) 
 an extension of the DLM which we call
the {\em dead leaves random measure} (DLRM). 
Suppose now that each point $(x_i,t_i)$ of $\Po$ is  marked with not only
a random closed set $S_i$ as before, but also a {\em random measure}
  $M_i$, for example the surface measure of $\partial S_i$.   The DLRM
 at time $t$ is  the sum, over those $i$ with $t_i \leq t$,
  of the measures $M_i +x_i := M_i (\cdot +(-x_i) )$, restricted to the
 complement of leaves  arriving between times $t_i$ and $t$. 
We give results on its intensity, limiting covariances and functional CLTs.
  This provides a general framework  from which we may deduce the
results already mentioned as special cases, and is also
 applicable to other DLM functionals and to variants of the DLM
including the {\em colour DLM} and {\em dead leaves random function},
as discussed in Section \ref{secDLRM}. 

As well as the new results already mentioned, we provide some extensions
to known first-order results, giving the intensity
of cell boundaries in
$d=1$, and the intensity of cells in $d=2$.
These were already in the literature
in the special cases  when
all of the leaves are connected  
(for $d=1$; see \cite{Matheron}), and when they
all have the same shape (for $d=2$; see \cite{Cowan}).
Finally, for $d=1$ we discuss the distribution of cell sizes; essentially
this was given in \cite{Matheron} but we give a bit more detail here.



Exact formulae for second moment measures
and for two-point correlation functions are rarely available
for non-trivial point processes in Euclidean space, and
one contribution  of this work is to provide
such formulae in one class of such models.
Our CLTs could be useful for providing  confidence
intervals for parameter estimation in the DLM and related models.
Our general functional CLT shows that the DLRMs are a class of off-lattice
interacting particle systems for which the limiting process
of fluctuations can be identified explicitly as an Ornstein-Uhlenbeck
 process. In earlier works \cite{Surveys,Qi}, functional CLTs
were obtained for certain general classes of particle 
systems but without any characterisation of the limit process.

The proof of some of our results in $d=2$ uses
the  following results. For rectifiable 
  curves $\gamma$ and $\gamma'$ in $\R^2$
of respective lengths $|\gamma|$ and $|\gamma'|$, let $N(\gamma,\gamma')$
(respectively $\tN(\gamma,\gamma')$)
denote the number of times they cross each other (respectively, touch each
other). If one integrates $N(\gamma,T(\gamma'))$  
(resp. $\tN(\gamma,T(\gamma'))$ over all rigid motions $T$ of
the plane, one obtains a value of $4|\gamma| \times |\gamma'|$
(resp. zero). 
We discuss these results, which are related to the classic 
 {\em Buffon's needle} problem, 
 in Section \ref{secBuffon}.

Since we prefer to work with positive rather than negative
times, in this paper we often consider a {\em time-reversed}
version of  the DLM
where,
 for each site $x \in \R^d$, the {\em first}
leaf to arrive at $x$ after time 0 is taken to be visible at $x$.
 Imagine leaves falling
onto a glass plate which can be observed from below,
starting from time 0. Clearly this gives a tessellation
with the same distribution as the original DLM. 
This observation dates back at least to \cite{JeulinMik}. In \cite{KT},
it is the basis for a perfect simulation algorithm for the DLM.
The time-reversed DLM is illustrated for $d=1$ in Figure 2.

We shall state our results  in Sections \ref{seconedim}-\ref{secBuffon},
 and prove them in Sections \ref{secproofsDLRM}-\ref{secpftwodim}.

\begin{figure}[!h]
\label{fig2}
\center
\includegraphics[width=15cm]{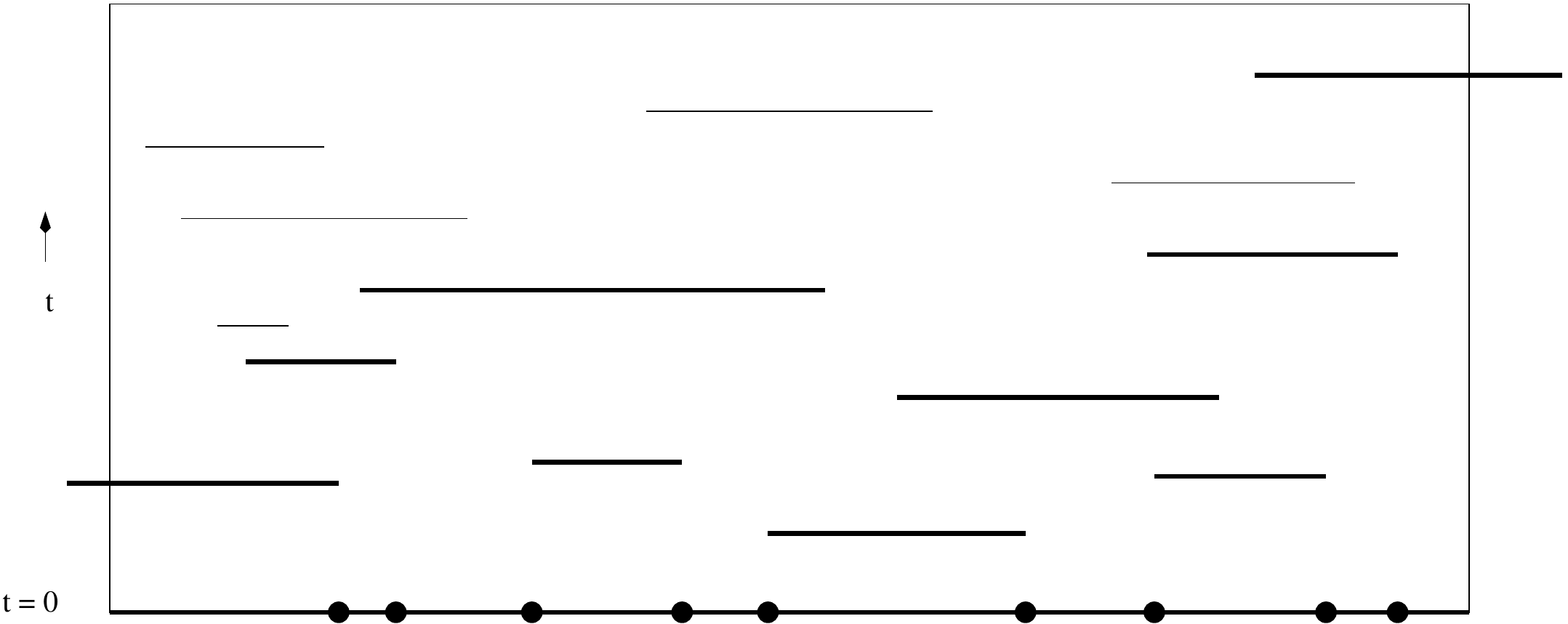}
\caption{A realization of the time-reversed DLM in $d=1$, with
time as the vertical coordinate. The leaves are intervals
of variable length. Those leaves
arriving in a given rectangular region of space-time are shown,
and the boundary points of the induced 
1-dimensional DLM tessellation are shown at the bottom. The 
\textcolor{\blue}{thicker lines}
represent leaves which are at least partially visible}
\end{figure}

\subsection{Motivation}

We now discuss further the motivation for considering the DLM.
Since we consider the case $d=1$ in some detail, we discuss the  
motivation for this at some length.

The phrase `leaves on the line' entered the British folklore in
the early 1990s as a corporate justification for delays on the railways.
 To quote Wikipedia,
this phrase is `a standing joke ... seen by members of the public
 who do not understand the problem as an excuse for poor service.' 
This paper is a mathematical contribution to said public
understanding.

A one-dimensional
DLM is obtained whenever one takes the restriction of a
 higher-dimensional DLM (in $\R^d$, say, with $d > 1$)
to a specified one-dimensional  subspace of $\R^d$. 
Such restrictions are considered in \cite{Serra} and \cite{Bordenave}.

 Moreover, the one dimensional DLM is quite   natural 
in its own right. For example, to quote
\cite{Serra}, `Standing at the beginning of a forest, one
sees only the first few trees, the others being hidden behind.' 
A less pleasant interpretation,
is that if  an explosion takes place in a crowded spot,
one might be interested in the number of people directly 
exposed to the blast (rather than shielded by others). 
In these two interpretations, the `time' dimension in fact represents
a second spatial dimension.

In another interpretation of the one-dimensional time-reversed DLM, consider
a rolling news television or radio station. Suppose news stories
arise  as a homogeneous Poisson process in the product space $\R  \times \R_+$,
where the first coordinate represents the time at which the story starts,
 and the second
coordinate represents its `newsworthiness' (a lower score representing
a more newsworthy story), and each story is active for
 a random duration. Suppose
at any given time the news station presents the most
newsworthy currently active story. Then the stories presented
form a sequence of intervals, each story presented continuing until it finishes
or is   superseded by a more newsworthy story. The continuum time series
 of stories
presented forms a DLM in time rather than in space, with `newsworthiness'
taking the role of `time' in the original DLM. One can imagine
a similar situation with, for example, the time series of 
top-ranked tennis or golf players.

In these interpretations, we are taking the trajectory of a news story's
newsworthiness, or a tennis player's standard of play,
 to be flat but of possibly
random duration. It would be interesting in future work to allow for
other shapes of trajectory. If the trajectory is taken to be a fixed
wedge-shape, then the sequence of top-ranked stories/players is the
sequence of {\em maximal points} (actually minimal points in our formulation),
 which has been considered, for example,
in \cite{LSY,Yukich}.

The two-dimensional DLM has received considerable attention in
applications; see \cite{Bordenave} and references therein.
For any two-dimensional image of a three-dimensional particulate
material with opaque particles, the closest particles
obscure those lying behind, and the  DLM models this phenomenon.
See  for example \cite{Jeulin,JeulinMic} for applications to analysis
of images of powders. Jeulin \cite{JeulinSig,Jeulin}  
extends to the DLM to a {\em dead leaves random function} model
for further flexibility in modelling greyscale images, and 
some of our results are applicable to this model. See Section \ref{secDLRM}.

 Another reason to study the DLM, in arbitrary dimensions, is as an analogue
to the {\em car parking} model of random sequential adsorption.
In the one-dimensional and infinite-space version of the latter model,
 unit intervals (`cars') arrive 
 at locations in space-time given
by the points of a homogeneous Poisson process in $\R \times \R_+$.
Each car is  accepted if the position (a unit interval) where it arrives
does not intersect any previously accepted cars.
Ultimately, at time infinity one ends up with a random maximal 
(i.e., saturated) packing  of
$\R$ by unit intervals.
The higher-dimensional version of the  car parking model has
also been studied, for example in \cite{GP} and references
therein.

The problem of covering can be viewed as in some sense dual to that of
packing (see for example \cite{Rogers,Zong}),
 and in this sense the (time-reversed) DLM is dual to the car parking model;
in each case, objects of finite but positive size (cars/leaves)
arrive sequentially at random in $d$-space, and are accepted in a greedy manner
subject to  a hard-core constraint  (for the packing) or 
a visibility  constraint (for the DLM). 

\subsection{Notation and terminology}
\label{subsecnotation}
Let $\cB^d$ denote the Borel $\sigma$-algebra on $\R^d$.
For $k \in \{0,1,\ldots,d\}$, let
 $\H_k$
denote the $k$-dimensional Hausdorff measure of sets in $\R^d$. 
This is a measure on $(\R^d,\cB^d)$.
In particular,
  $\H_0$ is the counting measure and
$\H_d$ is Lebesgue measure.  See \cite{LP}.

Let $\|\cdot\|$ denote the Euclidean norm in $\R^d$.
Given $x \in \R^d$, let $\delta_x$ denote the Dirac measure
at $x$, i.e. $\delta_x(A) = 1 $ if $x \in A$, otherwise $\delta_x(A)=0$.
For $r >0$, let $B(r): = \{y \in \R^d:\|y\| \leq r\}$,
the closed Euclidean ball of radius $r$  centred on the origin.
Set $\pi_d:= \H_d(B(1))$, the Lebesgue measure of the unit ball in
$d$ dimensions.

%
We say a set  $\gamma \subset \R^2$ is a
 {\em rectifiable curve}
if there exists a continuous injective function 
$\Gamma:[0,1] \to \R^2$
such that
 $\gamma = \Gamma([0,1])$ and $\H_1(\gamma) < \infty$.
If moreover there exist $k \in \N \cup \{0\} $ and
numbers  $0= x_0 < x_1 < \cdots < x_{k+1} =1$, 
 such that for $1 \leq i \leq k+1$ the restriction
of $\Gamma$ to $[x_{i-1},x_i]$ is continuously
differentiable with derivative that is nowhere zero,
 we say that $\gamma$ is a {\em piecewise $C^1$ curve}.
We then refer to the points $\Gamma(x_1),\ldots,\Gamma(x_k)$
(where  $k$ is assumed to be taken  as small as possible)  
as the {\em corners} of $\gamma$. 
If we can take $k=0$
 (so there are no corners),
then we say $\gamma$ is a $C^1$ curve.
We say that $\Gamma(0)$ and $\Gamma(1)$ are the
{\em endpoints} of $\gamma$.
We define a {\em rectifiable Jordan curve} 
(respectively, a {\em piecewise $C^1$ Jordan curve})
 similarly to a rectifiable curve
(respectively,  piecewise $C^1$  curve)
 except that now  $\Gamma$
must satisfy $\Gamma(1)= \Gamma(0)$ but  be otherwise injective.



%

For $\sigma \geq 0$,
let  ${\cal N}(0,\sigma^2)$ denote a  normally distributed
random variable
having mean zero and variance $\sigma^2$ if $\sigma >0$, and
denote a random variable taking the value 0 almost surely
if $\sigma =0$.

We now review some concepts from the theory of point processes and
random measures that we shall
be using. See for example \cite{LP} or \cite{SW} for more details.

Let ${\bf M}$ be the space of \textcolor{\blue}{locally}
 finite measures on $(\R^d,\cB^d)$,
equipped with the smallest $\sigma$-algebra  that makes measurable all
of the functions from $\bM$ to $\R$ of the form $\mu \mapsto \mu(A)$, 
with $A \in \cB^d$. For $\mu \in \bM$ we shall often write
$|\mu|$ for $\mu(\R^d)$.
A {\em random measure} on $\R^d$ is a measurable function
from an underlying probability space 
to  $\bM$, or equivalently, a measurable kernel from the
probability space to $(\R^d,\cB^d)$.
A random measure on $\R^d$ is said to be {\em stationary}
if its distribution is shift invariant, in which case 
 the expected value of the measure it assigns
to a Borel set $B \subset \R^d$ is proportional to the Lebesgue measure
of $B$; the constant of proportionality is called 
 the {\em intensity}
of the random measure.

A random measure on $\R^d$ taking integer values is
called a {\em point process} on $\R^d$, and the notions
of intensity and stationarity for random measures carry through to
point processes. A point process is
said to be {\em simple} if it has no multiple points.  

The {\em second factorial moment measure} of a point process $\eta$
in $\R^d$
is a Borel measure $\alpha_2$ 
 on $\R^d \times \R^d$,
 defined at \cite[eqn (4.22)]{LP}. 
  For  disjoint Borel sets  $A,B \subset \R^d$,
we have
$
\alpha_2 (A \times B)= \E[ \eta(A) \eta(B)]. 
$
If $\eta$ is simple then
for $x,y \in \R^d$ with $x \neq y$, loosely speaking $\alpha_2(d(x,y))$ is
the probability of seeing a point of $\eta$ in $dx$
and another one in $dy$.
If $\alpha_2(d(x,y)) = \rho(y-x)dxdy$ for some Borel function
$\rho: \R^d \to \R_+$, then
the {\em pair correlation function} $\rho_2(\cdot)$ of the   point
process $\eta$ is defined by $\rho_2(z) := \rho(z) / \gamma^2$,
where $\gamma$ is the intensity of $\eta$. 

The {\em Fell topology} on $\cK$ is the topology 
generated by all sets of the form $\{F \in \cK: F  \cap G  \neq 
\emptyset\}$ with $G \subset \R^d$ open,  or of the form 
 $\{F \in \cK: F  \cap K  = \emptyset\}$ with $K \in \cK$.

A {\em random closed set} in $\R^d$ is a measurable map from a probability space
 to the space of closed sets in $\R^d$ equipped with
 the sigma-algebra generated by the Fell topology. 
See \cite[Definition 2.1.2]{SW} 
\textcolor{\blue}{or \cite[Definition 1.1.$1''$]{Molchanov}.}
\textcolor{\blue}{
A simple point process on
 $\R^d$ can also be interpreted as a random, locally finite
set of points in $\R^d$.
}

We now elaborate on the definition of  the DLM given already.
Let $\Q$ be a probability measure on $\cK$, which we call the
{\em grain distribution} of the model. 
 Assume $\Q$ assigns  strictly positive measure to the collection of
sets in $\cK$ having non-empty interior.
Let $\Po $ be a homogeneous Poisson process 
in $\R^d \times \R $ of unit intensity.
Write $ \Po   = \sum_{i=1}^\infty \delta_{(x_i,t_i)}$ with 
$(x_i,t_i)_{i \in \N}$ a sequence of random elements of 
$\R^d \times \R$. This can be done: see \cite[Corollary 6.5]{LP}.
Independently of $\Po$, let
$(S_i)_{i \geq 1}$ be a sequence of independent random elements of 
$\cK$ with common distribution $\Q$.
By the Marking theorem 
(see 
 \textcolor{\blue}{(see \cite[Theorem 5.6]{LP})} 
the point process
$\sum_{i=1}^\infty \delta_{(x_i,t_i,S_i)}$ is
a Poisson process in $\R^d \times \R \times \cK$ with intensity
measure $\H_d \otimes \H_{1} \otimes \Q$.  

\textcolor{\blue}{For $A \subset \R^d$,  let 
$\overline{A}$ denote its closure and
$A^o$ its interior;  let
$\partial A := \overline{A} \setminus A^o$, the topological
boundary of $A$.}
The boundary of the DLM tessellation at time $t$, 
which we denote by $\Phi_t$, is
given by
\bea
\Phi_t :=
 \cup_{i: t_i \leq t} [ (\partial S_i + x_i) \setminus
\cup_{j: t_i < t_j  \leq t} (S_j^o + x_j)].
\label{Phindef}
\eea
The boundary of the the {\em time-reversed} DLM tessellation is denoted by
$\Phi$, and given by
\bea
\Phi := 
 \cup_{i: t_i \geq 0 } [ (\partial S_i + x_i) \setminus
\cup_{j: 0 \leq t_j  < t_i} (S_j^o + x_j)].
\label{Phidef}
\eea
The cells of our (time-reversed) DLM tessellation are then defined to be
the closures of the connected components of $\R^d \setminus \Phi$.
Clearly $\Phi_t$ has the same distribution as $\Phi$ for all $t$.

 Throughout, we let $S$ denote a random element
of $\cK$ with distribution $\Q$; that is, a measurable
function from an underlying probability space (denoted
$(\Omega,\F,\P)$) to $\cK$. For $x \in \R^d$ we set
\bea
\lambda := \E[ \H_d( S)]; ~~~~~~~
\lambda_x := \E[ \H_d( S \cup (S+x))].
\label{lambdadef}
\eea
\textcolor{\blue}{Clearly $0 < \lambda \leq \lambda_x \leq 2 \lambda$. We set}
\bea
R := \sup \{\|x\|: x \in S\},
\label{Rdef}
\eea
taking $R=0$ if $S$ is empty.
Observe that $2 \lambda - \lambda_x$ equals the expected value
of $\H_d(S \cap (S+x))$, sometimes called the {\em covariogram}
of $S$.  \textcolor{\blue}{ The function $\lambda_x$ will feature in certain
formulae for limiting covariances below. It also features in
certain formulae for limiting variances arising from the 
{\em Boolean model} (see for
example \cite{HM}). The Boolean model, that is
the random set $\cup_{i: 0 \leq t_i \leq \lambda} (S_i+ x_i)$ for some fixed $\lambda$, 
 is another fundamental model in stochastic geometry.
 }

Some of our \textcolor{\blue}{results require} the following
measurability condition.
\begin{assu}
\label{measassu}
 $\Q$ is such that $\H_{d-1}(\cdot \cap \partial S)$
is a random measure on $\R^d$. 
\end{assu}
For $d=1$, we shall show in  Lemma \ref{lemmeas} that
Condition \ref{measassu} can actually be
deduced from our earlier assumption that $S$ is a random element of $\cK$,
that is, a measurable map from a probability space to $\cK$.
However, we do not know whether this is also the case 
for $d \geq 2$.

Given $d$,
for $n  > 0$ let $W_n := [0,n^{1/d}]^d $, a cube of volume $n$ in $d$-space.
Let $\cR_0$ denote the  class of bounded measurable
 real-valued functions on $\R^d$ that are Lebesgue-almost everywhere
continuous and have compact support (the $\cR$
stands for `Riemann integrable'). For $f \in \cR_0$, and $n >0$,
define the rescaled function $ T_n f$ by
$T_n f(x) := f(n^{-1/d}x)$, for $x \in \R^d$.
  Given a measure $\mu$ on $\R^d$ we shall
often write $\mu(f)$ for $\int_{\R^d} f d \mu$.
Also, we write $\|f\|_2$ for $(\int_{\R^d} |f(x)|^2 dx)^{1/2}$,
and for $f,g \in \cR_0$ we write $\langle f ,g \rangle $ for
$\int_{\R^d} f(x) g(x)dx$.

For $A, B \subset \R^d$ we set 
$A \oplus B := \{a+b:a \in A, b \in B\}$. 

\section{Leaves on the line}
\label{seconedim}
\allco



In this section we take $d=1$ and state our results for  the 
$1$-dimensional DLM.
  We shall prove them in Section 
\ref{secpfoned}.

We define a {\em visible interval} to be a cell of the DLM tessellation.
In the special case where
all of the leaves are single intervals of fixed  length,
 a visible interval is simply
the visible part of a leaf, because this visible part cannot be disconnected.

The endpoints of the visible intervals
form a stationary point process in $\R$. In terms  of earlier notation
this point process is simply the random set $\Phi$ (if viewed as a random
subset of $\R$) or the measure $\H_0(\Phi \cap \cdot)$ (if viewed as a random
point measure).  
We denote this point process (viewed as a random measure)
 by $\eta$, as illustrated in Figure 2.
 The point process $\eta$ is simple, even
if some of the leaves include constituent intervals of length zero
(recall that we assume $\Q$ is such that some of the
leaves have non-empty interior).


Our main results for $d=1$ concern
the second factorial moment measure and the
pair correlation function of $\eta$ (Theorem \ref{th:fmm}), 
 asymptotic covariances
and a CLT for the total number of points of $\eta$ in
a large interval (Theorems \ref{th2mom1d}, \ref{limcovthm}, and \ref{CLTb}), 
and
a functional
CLT for this quantity as the DLM evolves in time
(Theorem \ref{FCLT1}).

 We shall also give formulae for the intensity of $\eta$,
 the distribution of the length of 
the visible interval containing the origin
and the length of a typical visible interval
 (Propositions \ref{th:main},
\ref{thmXdist} and
 \ref{typintcoro}).
These propositions are to some extent already known, as we shall discuss.
%

Recall that $\lambda$ and $R$  are defined by (\ref{lambdadef}) and
(\ref{Rdef}), respectively.

\begin{prop}[Intensity of $\eta$]
\label{th:main}
Assume that $\E[R]< \infty$
 and $\E[\H_0(\partial S)]< \infty$.
Then $\eta$ is a stationary point process with intensity 
$ \la^{-1} \E[\H_0(\partial S)]$.
\end{prop}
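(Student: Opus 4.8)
The plan is to first dispose of stationarity and then read off the intensity from a first--moment (Mecke--type) computation. Stationarity of $\eta$ is immediate: the homogeneous Poisson process $\Po$ together with its i.i.d.\ marking is invariant in distribution under spatial translations, and the definition (\ref{Phidef}) of $\Phi$ commutes with translation, so $\eta$ is stationary and $\E[\eta(\cdot)]$ is a constant multiple of $\H_1$. It therefore suffices to compute $\E[\eta(A)]$ for a bounded Borel set $A$ and divide by $\H_1(A)$. (The hypothesis $\E[R]<\infty$ ensures that the tessellation, and hence $\Phi$, is a.s.\ well defined; local finiteness of $\eta$ will in any case follow a posteriori from the finiteness of the first moment below.) I would represent $\eta(A)$ as a sum over leaves and their boundary points: by (\ref{Phidef}), a boundary point $p \in \partial S_i + x_i$ contributes to $\Phi$ exactly when $t_i \ge 0$ and $p$ lies in no earlier interior, i.e.\ $p \notin S_j^o + x_j$ for all $j$ with $0 \le t_j < t_i$. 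Writing $V_i(p)$ for the indicator of this visibility event, we have, up to a $\P$--null set of boundary coincidences that does not affect the first moment,
\[
\eta(A) = \sum_{i: t_i \ge 0}\ \sum_{p \in \partial S_i + x_i} \1\{p \in A\}\, V_i(p) .
\]
I would then apply the multivariate Mecke equation for the marked Poisson process $\sum_i \delta_{(x_i,t_i,S_i)}$ with intensity $\H_1 \otimes \H_1 \otimes \Q$ (see \cite{LP}), turning $\E[\eta(A)]$ into $\int_0^\infty\!\int_{\R} \E[\,\cdots\,]\,dx\,dt$ integrated against $\Q$, where the inner expectation is over an independent copy of the process augmented by the added leaf at $(x,t,S)$.

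The crux is the visibility probability. For a fixed point $p$, the leaves with $0 \le t_j < t$ whose interiors cover $p$ form a Poisson process whose mean count equals $\int_0^t \E[\H_1(S^o)]\,dt' = \lambda t$; here I use that $\E[\H_0(\partial S)]<\infty$ forces $\partial S$ to be a.s.\ finite, so $\H_1(\partial S)=0$ a.s.\ and $\E[\H_1(S^o)]=\E[\H_1(S)]=\lambda$. Crucially, the added leaf at $(x,t,S)$ cannot cover its own boundary point $p$, so the visibility event is a functional of the \emph{independent} remainder of the process; its probability is the Poisson void probability $e^{-\lambda t}$, which depends neither on $p$, nor on the grain $S$, nor on $x$. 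This decoupling lets the visibility factor $e^{-\lambda t}$ pull out of the spatial and grain integrals.

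It then remains to evaluate the spatial integral by a Campbell/Fubini argument: for fixed $S$, $\int_{\R} \H_0\big((\partial S + x)\cap A\big)\,dx = \sum_{q \in \partial S}\H_1(A-q) = \H_0(\partial S)\,\H_1(A)$, since each boundary point sweeps out a translate of $A$ of measure $\H_1(A)$; taking expectations gives $\H_1(A)\,\E[\H_0(\partial S)]$. Combining the pieces,
\[
\E[\eta(A)] = \H_1(A)\,\E[\H_0(\partial S)] \int_0^\infty e^{-\lambda t}\,dt = \lambda^{-1}\,\E[\H_0(\partial S)]\,\H_1(A),
\]
which yields the claimed intensity $\lambda^{-1}\E[\H_0(\partial S)]$. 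I expect the main obstacle to be the rigorous justification of the Mecke application and the factorisation rather than the algebra: specifically the measurability of $\H_0(\partial S)$ and of the visibility indicators $V_i(p)$ (for $d=1$ this is supplied by Lemma \ref{lemmeas}), the Tonelli interchange legitimised by $\E[\H_0(\partial S)]<\infty$, and the careful handling of the $S^o$--versus--$S$ distinction together with boundary coincidences, which must be shown to be null events so that $\eta$ is simple and the double count above is exact in expectation.
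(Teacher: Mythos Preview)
Your approach is correct and is essentially the same as the paper's: the paper routes the proof through its general DLRM framework by taking $M=\H_0(\partial S\cap\cdot)$, invoking Proposition~\ref{prop:bdy} to identify $\eta$ with $\xi$, and then applying Theorem~\ref{th:momgen}, whose proof is precisely the Mecke computation you carry out directly (visibility probability $e^{-\lambda t}$ via Lemma~\ref{Elem}, then Fubini). The $S^o$-versus-$S$ boundary-coincidence issue you flag is exactly what Proposition~\ref{prop:bdy} handles via the bivariate Mecke formula, and the measurability is dealt with in Lemmas~\ref{lemmeas} and~\ref{1dcondlem}.
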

{\bf Remarks.}
 In the special case where all the leaves are intervals
of strictly positive length, the  intensity of $\eta$
 simplifies to just $2/\lambda$.
This special case 
 of Proposition \ref{th:main} is already documented;
 see \cite[page 4]{Matheron},
or \cite[(XIII.41)]{Serra}.

Our more general statement (and proof) of Proposition \ref{th:main}
allows for disconnected leaves.
If $\H_0(\partial S)$ is finite, then $S$ consists of finitely many
disjoint intervals,
and $\H_0(\partial S)$ equals twice the number of constituent intervals
of $S$, minus the number of these intervals having length zero.
It is quite natural to allow a leaf to have several components;
 for example if the one-dimensional
DLM is obtained as the restriction of a higher-dimensional DLM
(in $\R^d$, say, for some $d > 1$)
to a specified one-dimensional subspace of $\R^d$. 
If the leaves in the parent DLM in $\R^d$ are not restricted
to be convex, but have `nice' boundaries (for example
the polygonal boundaries considered in \cite{Cowan}),
then the one-dimensional DLM  induced in this manner
will typically include leaves with more than one component.
The proof given here,  based
on  a general result for DLRMs, and ultimately on the Mecke formula
 from the theory of Poisson processes \cite{LP},
 is quite simple and may be new.

In the next two results we use the following notation.
Let $\nu$ denote the distribution of the Lebesgue measure 
 of a leaf under the measure $\Q$.
Let $H$ denote a random variable with distribution $\nu$.
For $x >0$, write $F(x) = \Pr[H \leq x]$ and $\oF(x):=1-F(x)$, and
 set $\lambda_x := \E[H+ \min(x,H)]$ 
(this is consistent with (\ref{lambdadef})).

\begin{theo}[Second moment measure of $\eta$]
\label{th:fmm}
 Suppose that $\Q$ is concentrated on \textcolor{\blue}{connected sets (i.e., on
 intervals)},  and that $F(0)=0$
and $\lambda < \infty$.
Then the  second factorial moment measure $\alpha_2$ of the point process $\eta$
is given for $x < y $ by
\bea
\alpha_2(d(x,y)) =  4 (1 + F(y- x)) (\lambda \lambda_{y-x})^{-1} dy dx 
+  \la_{y-x}^{- 1}  \Pr[H + x \in dy] dx. 
\label{0615a}
\eea
If $\nu$ has a probability density function $f$, then
 the pair correlation function $\rho_2$ of $\eta$
is given by
\bea
\rho_2 (z) = \frac{\lambda (1+F(z))}{\lambda_z} + 
\frac{\lambda^2 f(z)}{ 4 \lambda_z}, ~~~ z >0.   
\label{pcf}
\eea
In the particular case where $\nu = \delta_\la$ for some $\lambda >0$,
we have for $x < y$ that
\bea
\alpha_2(d(x,y)) =  \frac{4  
{\bf 1}_{(0,\la)}(y-x)
}{ \lambda (\la + y-x)} 
  dy dx
+ 8 \la^{-2} {\bf 1}_{(\la,\infty)}(y-x) dy dx
+  (2 \la)^{- 1}   \delta_{\la +x} (dy) dx.
\nonumber \\
\label{delta2mom}
\eea
\end{theo}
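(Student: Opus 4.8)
The plan is to work with the time-reversed model and realize $\eta$ through the marked Poisson process of leaves. Writing each leaf as an interval $[a_i,a_i+h_i]$ of length $h_i$, spatial translation invariance lets me treat $\{(a_i,t_i,h_i)\}$ as a Poisson process on $\R\times[0,\infty)\times(0,\infty)$ with intensity $da\,dt\,\nu(dh)$. By (\ref{Phidef}) a point $p$ is an atom of $\eta$ exactly when $p$ is an endpoint of some leaf that is \emph{exposed}, i.e.\ $p$ lies in the open interior of no strictly earlier leaf; the hypothesis $F(0)=0$ makes the two endpoints of a leaf a.s.\ distinct, and a.s.\ no point is an endpoint of two leaves. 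I will compute $\alpha_2$ by splitting pairs of atoms according to whether they are the two endpoints of one leaf (the singular part, supported on $y=x+H$) or endpoints of two distinct leaves (the absolutely continuous part), handling these by the one-point and two-point Mecke formulae respectively.

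For the singular part I apply the Mecke formula to the sum over leaves of the indicator that both endpoints are exposed, localized at $a\in dx$ and $a+h\in dy$. The only input is the probability that both endpoints of a length-$h$ leaf arriving at time $t$ are exposed. Earlier leaves form in space a Poisson process of intensity $t\,da'\,\nu(dh')$, and the expected number of these whose interior meets $\{x,x+h\}$ is $t\int(h'+\min(h',h))\,\nu(dh')=t\lambda_h$, since for fixed $h'$ the set of left endpoints $a'$ with $(a',a'+h')$ catching $x$ or $x+h$ has Lebesgue measure $h'+\min(h',h)$. Hence this probability is $e^{-t\lambda_h}$; integrating $\int_0^\infty e^{-t\lambda_h}\,dt=\lambda_h^{-1}$ and localizing produces the term $\lambda_{y-x}^{-1}\,\Pr[H+x\in dy]\,dx$.

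For the absolutely continuous part I apply the two-point Mecke formula to ordered pairs of distinct leaves, one carrying an exposed endpoint at $x$ and the other an exposed endpoint at $y$; with $z:=y-x>0$ and arrival times $s,u$ I split into the four cases given by which (left/right) endpoint of each leaf sits at $x$ and $y$. The joint exposure probability factorizes into a background part and a mutual-blocking part. The background part equals $\exp(-\min(s,u)\lambda_z-|s-u|\lambda)$: before time $\min(s,u)$ a blocking background leaf counts if it catches $x$ or $y$ (rate $\lambda_z$, by the measure computation $h'+\min(h',z)$ above), while between the two times only the site with the larger arrival time is still open to blocking (rate $\lambda$). The mutual-blocking part is deterministic given the configuration: the earlier of the two inserted leaves obstructs the other's endpoint only when geometry permits — a leaf left-anchored at $x$ covers $y$ iff its length exceeds $z$, and a leaf right-anchored at $y$ covers $x$ iff its length exceeds $z$ — giving indicators weighted by the time order. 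Integrating the background factor gives $\int_0^\infty\!\!\int_0^\infty e^{-\min(s,u)\lambda_z-|s-u|\lambda}\,ds\,du=2/(\lambda\lambda_z)$, and each blocking correction contributes $\oF(z)/(\lambda\lambda_z)$. Summing the four cases, the base term is $4\cdot 2/(\lambda\lambda_z)$ and the corrections total $4\,\oF(z)/(\lambda\lambda_z)$, so the continuous density is $(8-4\oF(z))/(\lambda\lambda_z)=4(1+F(z))/(\lambda\lambda_z)$, which is the first term of (\ref{0615a}).

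When $\nu$ has a density $f$, the singular term becomes $\lambda_z^{-1}f(z)\,dx\,dy$; adding it to the continuous part and dividing by the squared intensity $\gamma^2=(2/\lambda)^2$ furnished by Proposition \ref{th:main} yields (\ref{pcf}). The case $\nu=\delta_\lambda$ follows by substituting $F(z)=\1[z\ge\lambda]$ and $\lambda_z=\lambda+\min(z,\lambda)$ into (\ref{0615a}). I expect the two-point calculation to be the main obstacle: correctly disentangling the random background contribution (which generates the exponential, and hence the appearance of $\lambda_z$) from the deterministic mutual blocking between the two inserted leaves, and carrying out the case analysis over the four endpoint configurations and the two time orderings without over- or under-counting.
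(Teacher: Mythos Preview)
Your proposal is correct and follows essentially the same route as the paper: realize $\eta$ via the time-reversed marked Poisson process, split pairs of atoms into the same-leaf contribution (handled by the one-point Mecke formula and the two-point exposure probability $e^{-t\lambda_h}$, giving the singular term) and the distinct-leaf contribution (handled by the bivariate Mecke formula, the background exposure probability $\exp(-\min(s,u)\lambda_z-|s-u|\lambda)$, and the deterministic mutual-blocking indicators), then derive (\ref{pcf}) and (\ref{delta2mom}) by specialization. The only difference is cosmetic bookkeeping: the paper groups the eight (endpoint-type $\times$ time-order) cases into four with no possible blocking (each contributing $1/(\lambda\lambda_z)$) and four with blocking (each contributing $F(z)/(\lambda\lambda_z)$), whereas you compute a base of $8/(\lambda\lambda_z)$ and subtract four corrections of $\oF(z)/(\lambda\lambda_z)$; the arithmetic is identical.
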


We now give some limit theorems for
$\eta([0,n])$, as $n \to \infty$. In these results,
$n$ does not need to be integer-valued.

\begin{theo}[Asymptotic variance for \textcolor{\blue}{$\eta([0,n])$}]
\label{th2mom1d}
Suppose that $\E[(\H_0(\partial S))^2] < \infty$ and
$\E[R^2] < \infty$.
 Then the limit $\sigma_1^2 :=
  \lim_{n \to \infty} n^{-1} \Var[\eta([0,n])]  $ 
exists.  If also $\Q$ is concentrated \textcolor{\blue}{on intervals},  
 and $F(0)=0$, then
\bea
\sigma_1^2 =
 \frac{2}{\la} + 2 \int_{(0,\infty)} \lambda_u^{-1} \Pr[H \in du] 
+ 8 \int_0^\infty \left( \frac{1+F(u)}{\la \la_u} - \frac{1}{\la^2} \right)du.
\label{0628a}
\eea
In (\ref{0628a}) the last integrand on the right hand side is equal
to
\bea
 \frac{\int_u^\infty \oF(t)dt - \la \oF(u)}{ 
\la^2 (2 \la - \int_u^\infty \oF(t)dt)}. 
\label{0628b}
\eea
In the special case with $\nu = \delta_1$,
 the right hand side of (\ref{0628a}) comes to
$ 8 \log 2 -5 \approx 0.545 $
\end{theo}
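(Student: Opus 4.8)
The plan is to reduce the variance to the second factorial moment measure and then evaluate the latter using the formula (\ref{0615a}). Since $\eta$ is simple, for $A=[0,n]$ one has $\E[\eta(A)(\eta(A)-1)]=\alpha_2(A\times A)$, so
$$\Var[\eta(A)] = \alpha_2(A\times A) + \E[\eta(A)] - (\E[\eta(A)])^2.$$
Under the interval hypothesis with $F(0)=0$ each leaf is a nondegenerate interval, so $\H_0(\partial S)=2$ a.s.\ and Proposition \ref{th:main} gives $\E[\eta(A)]=2n/\la$. The mere existence of $\sigma_1^2$ in the general (possibly disconnected) case, under the second moment hypotheses, I would deduce from the general variance-asymptotics result for dead leaves random measures in Section \ref{secDLRM} applied to $\eta=\H_0(\Phi\cap\cdot)$; the explicit formula then comes from the direct computation below.

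First I would compute $\alpha_2(A\times A)$. By symmetry of $\alpha_2$, and since the singular part of (\ref{0615a}) is carried by $\{y=x+H\}$ with $H>0$ a.s., we have $\alpha_2(A\times A)=2\alpha_2(\{0\le x<y\le n\})$. Substituting $z=y-x$ and integrating out $x$ (which contributes the factor $(n-z)$) turns the absolutely continuous part into $\frac{8}{\la}\int_0^n(1+F(z))\la_z^{-1}(n-z)\,dz$, while the singular part becomes $2\int_{(0,\infty)}\la_u^{-1}(n-u)_+\,\Pr[H\in du]$.

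Next I would extract the order-$n$ behaviour. With intensity $\gamma=2/\la$ one checks $n^{-1}\Var[\eta(A)]=\gamma + n^{-1}(\alpha_2(A\times A)-(\gamma n)^2)$, the quadratic terms cancelling because $1+F(z)\to 2$ and $\la_z\to 2\la$ force the leading term of the a.c.\ contribution to equal $(\gamma n)^2=4n^2/\la^2$. Subtracting this and dividing by $n$ leaves $8\int_0^n(\frac{1+F(z)}{\la\la_z}-\frac{1}{\la^2})\frac{n-z}{n}\,dz$ from the a.c.\ part and $2\int_{(0,\infty)}\la_u^{-1}\frac{(n-u)_+}{n}\,\Pr[H\in du]$ from the singular part, and letting $n\to\infty$ by dominated convergence yields precisely the three terms of (\ref{0628a}). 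This dominated-convergence step is the main point to justify: using $\la_u=\la+\int_0^u\oF$ and $2\la-\la_u=\int_u^\infty\oF$, the first integrand equals $\frac{\int_u^\infty\oF(t)\,dt-\la\oF(u)}{\la^2\la_u}$, so its absolute value is bounded by $\la^{-3}(\int_u^\infty\oF+\la\oF(u))$, whose integral over $u$ is $\la^{-3}(\tfrac12\E[H^2]+\la^2)<\infty$ thanks to $\E[R^2]=\E[H^2]<\infty$; the singular integrand is dominated by $\la^{-1}$ against a probability measure.

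Finally, the identity (\ref{0628b}) is the algebraic simplification just used (with $\la^2\la_u=\la^2(2\la-\int_u^\infty\oF)$ in the denominator), and the case $\nu=\delta_1$ is a direct evaluation: then $\la=1$, $\la_u=1+\min(u,1)$ and $F=\1_{[1,\infty)}$, so the second term is $2\la_1^{-1}=1$, the integrand of the third term vanishes on $[1,\infty)$ and equals $\frac{1}{1+u}-1$ on $[0,1)$, giving $8(\log 2-1)$, and adding the first term $2$ yields $8\log 2-5$.
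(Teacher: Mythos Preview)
Your proof is correct and follows essentially the same route as the paper: existence of $\sigma_1^2$ from the general DLRM covariance result, then the explicit formula by writing $\Var[\eta([0,n])]$ in terms of $\alpha_2$ via simplicity of $\eta$, substituting $z=y-x$ (the paper uses the equivalent $v=x/n$, $u=y-x$), and passing to the limit by dominated convergence using the rewriting (\ref{0628b}). Your domination argument is slightly more explicit than the paper's. One small imprecision: $\E[R^2]=\E[H^2]$ holds only after normalising so that $S=[0,H]$; in general $H\le 2R$ gives $\E[H^2]\le 4\,\E[R^2]<\infty$, which is all you need.
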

It is interesting to consider the  evolving point process 
of visible leaf boundaries.
 For $t \in \R$   let $\eta_t:= \H_0(\Phi_t \cap \cdot)$,
 the point process
of endpoints of visible intervals at time $t$, for the
DLM run {\em forward} in time.
 See Figure 3.
 We use the abbreviation $\eta_t(n)$ for $\eta_t([0,n])$.
\begin{theo}[Asymptotic covariance for $\eta_t(n)$]
\label{limcovthm}
Suppose that $\E[(\H_0(\partial S))^2] < \infty$ and
$\E[R^2] < \infty$.
Let $t,u \in \R$ and $r,s \in \R_+:= [0,\infty)$.  Then with $\sigma_1^2$ as given
in the preceding theorem, 
\bea
\lim_{n \to \infty} n^{-1} \Cov (\eta_{t}(nr),\eta_{u}(ns) ) = \sigma_1^2
(\min(r , s))
\exp(- \lambda (|u-t|) )
\nonumber
\\
 =: \kappa_1((r,t),(s,u)).
\label{kappadef}
\eea
\end{theo}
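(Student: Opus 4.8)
The plan is to establish the covariance formula (\ref{kappadef}) by combining a spatial decorrelation argument (which yields the $\min(r,s)$ factor) with a temporal analysis of the evolving DLM (which yields the exponential factor $\exp(-\lambda|u-t|)$). First I would reduce to the case $t \le u$ by symmetry, and set $\tau := u - t \ge 0$. The key structural observation is that the forward-evolving point processes $\eta_t$ and $\eta_u$ are driven by the same Poisson process of leaves, so their dependence is governed by which leaves arriving in the time window $(t,u]$ disturb the tessellation boundary. I would exploit stationarity in time: since $\eta_t \eqd \eta$ for every $t$, the variance $\lim_n n^{-1}\Var[\eta_t(n)] = \sigma_1^2$ is $t$-independent, which is why $\sigma_1^2$ from Theorem \ref{th2mom1d} appears as the leading coefficient.

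The heart of the argument is computing the limiting covariance per unit length between $\eta_t$ and $\eta_u$ over a common spatial interval. I would set up the second-order computation via the multivariate Mecke formula, treating $\eta_t$ and $\eta_u$ as functionals of the marked Poisson process. Since the excerpt flags a general DLRM framework (Section \ref{secDLRM}) that reduces first- and second-order quantities to Mecke-type integrals, I would appeal to that machinery: both $\eta_t(nr)$ and $\eta_u(ns)$ are integrals of indicator functions of visibility events, so their covariance becomes a sum of integrals over pairs of leaves against the intensity measure $\H_d \otimes \H_1 \otimes \Q$. The crucial point is that a boundary point visible at the \emph{later} time $u$ at location $x$ survives to also be visible at the \emph{earlier} time $t$ at $x$ precisely when no leaf covering $x$ arrived in $(t,u]$; the probability that a fixed location is undisturbed over a time-interval of length $\tau$ is governed by the Poisson rate at which covering leaves fall, which is $\lambda$ (the expected Lebesgue measure of a leaf, i.e.\ the intensity of leaves covering a given point). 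This is exactly the mechanism producing $\exp(-\lambda\tau)$: the correlation between the two snapshots decays like the probability that the relevant covering region is left alone, an independent exponential clock of rate $\lambda$. The $\min(r,s)$ factor arises because the common overlap of the spatial windows $[0,nr]$ and $[0,ns]$ has length $n\min(r,s)$, and (after the spatial decorrelation established in proving Theorem \ref{th2mom1d}) the covariance density is asymptotically constant across this overlap, with contributions from the non-overlapping portions and from the window endpoints being $o(n)$.

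More concretely, I would write $\Cov(\eta_t(nr),\eta_u(ns)) = \E[\eta_t(nr)\eta_u(ns)] - \E[\eta_t(nr)]\,\E[\eta_u(ns)]$ and aim to show that, after dividing by $n$, only a stationary local contribution survives. I would condition on the configuration of leaves arriving in $(t,u]$: those leaves partition space into regions that are ``refreshed'' (where the time-$t$ and time-$u$ tessellations are conditionally independent, contributing nothing to the limiting covariance density) and regions that are ``undisturbed'' (where $\eta_t$ and $\eta_u$ coincide locally, so their covariance there equals the autocovariance of a single snapshot). The fraction of undisturbed space at a given point is $e^{-\lambda\tau}$, and on the undisturbed part the local second-order structure matches that analysed for Theorem \ref{th2mom1d}, delivering the coefficient $\sigma_1^2$. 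Putting these together over the overlap of length $n\min(r,s)$ and taking $n\to\infty$ gives (\ref{kappadef}).

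The main obstacle I anticipate is rigorously justifying the splitting into ``refreshed'' and ``undisturbed'' contributions, and in particular showing that the refreshed part genuinely decorrelates in the limit rather than merely heuristically. The subtlety is that a leaf arriving in $(t,u]$ affects the time-$u$ tessellation but not the time-$t$ one, yet both tessellations still depend on the shared leaves arriving \emph{after} $u$ (in forward time, the later-covering leaves), so the conditional independence is not exact and must be controlled through a careful range-of-dependence / mixing estimate. The moment assumptions $\E[(\H_0(\partial S))^2] < \infty$ and $\E[R^2] < \infty$ are exactly what should make these error terms summable and the limit exist; the delicate bookkeeping of endpoint and boundary-effect corrections being $o(n)$ is where the real work lies, and I expect it to parallel (and reuse) the variance computation already carried out for Theorem \ref{th2mom1d}.
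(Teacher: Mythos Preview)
Your overall strategy is sound in spirit but differs from the paper's and contains a conceptual slip that makes you anticipate difficulties that are not really there.

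The paper's proof of Theorem \ref{limcovthm} is a one-line application of the general DLRM covariance result (Theorem \ref{thcovgen}), taking $M=\H_0(\partial S\cap\cdot)$ and the test functions $f={\bf 1}_{[0,r]}$, $g={\bf 1}_{[0,s]}$; the factor $\min(r,s)$ then drops out simply as $\langle f,g\rangle$. The real content is in the proof of Theorem \ref{thcovgen}, where the temporal factor is obtained not by your ``refreshed/undisturbed'' spatial decomposition but by conditioning on $\F_t$ (the $\sigma$-field of all leaves arriving by time $t$). Writing $\xi_u(g_n)=X+Y$ with $X$ the part of $\xi_t$ that survives to time $u$ and $Y$ the contribution from leaves arriving in $(t,u]$, one computes directly
\[
\E[\txi_u(g_n)\mid\F_t]=e^{-\lambda(u-t)}\,\txi_t(g_n),
\]
and the tower property then gives the \emph{exact} identity $\Cov(\xi_t(f_n),\xi_u(g_n))=e^{-\lambda(u-t)}\Cov(\xi_t(f_n),\xi_t(g_n))$ for every $n$, reducing immediately to the equal-time case already handled.

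Your route---conditioning instead on the leaves in $(t,u]$---can be made to work and in fact yields the same exact identity, because given those leaves the ``refreshed'' portion of $\eta_u$ is deterministic while $\eta_t$ is fully independent of the conditioning. But you do not see this, because of a genuine error in your dependence accounting: you write that ``both tessellations still depend on the shared leaves arriving after $u$ (in forward time, the later-covering leaves).'' In the forward DLM defined by (\ref{Phindef}), $\eta_t$ depends only on leaves with $t_i\le t$ and $\eta_u$ only on leaves with $t_i\le u$; nothing after $u$ enters. With the correct picture the conditional independence is exact, no mixing estimate is needed, and the ``delicate bookkeeping'' you flag as the main obstacle evaporates. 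So your proposal would succeed once this confusion is removed, but the paper's $\F_t$-conditioning is cleaner: it sidesteps the spatial decomposition entirely and produces the exponential factor from a single conditional-expectation computation.
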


\begin{figure}[!h]
\label{fig5}
\center
\includegraphics[width=1.0\textwidth]{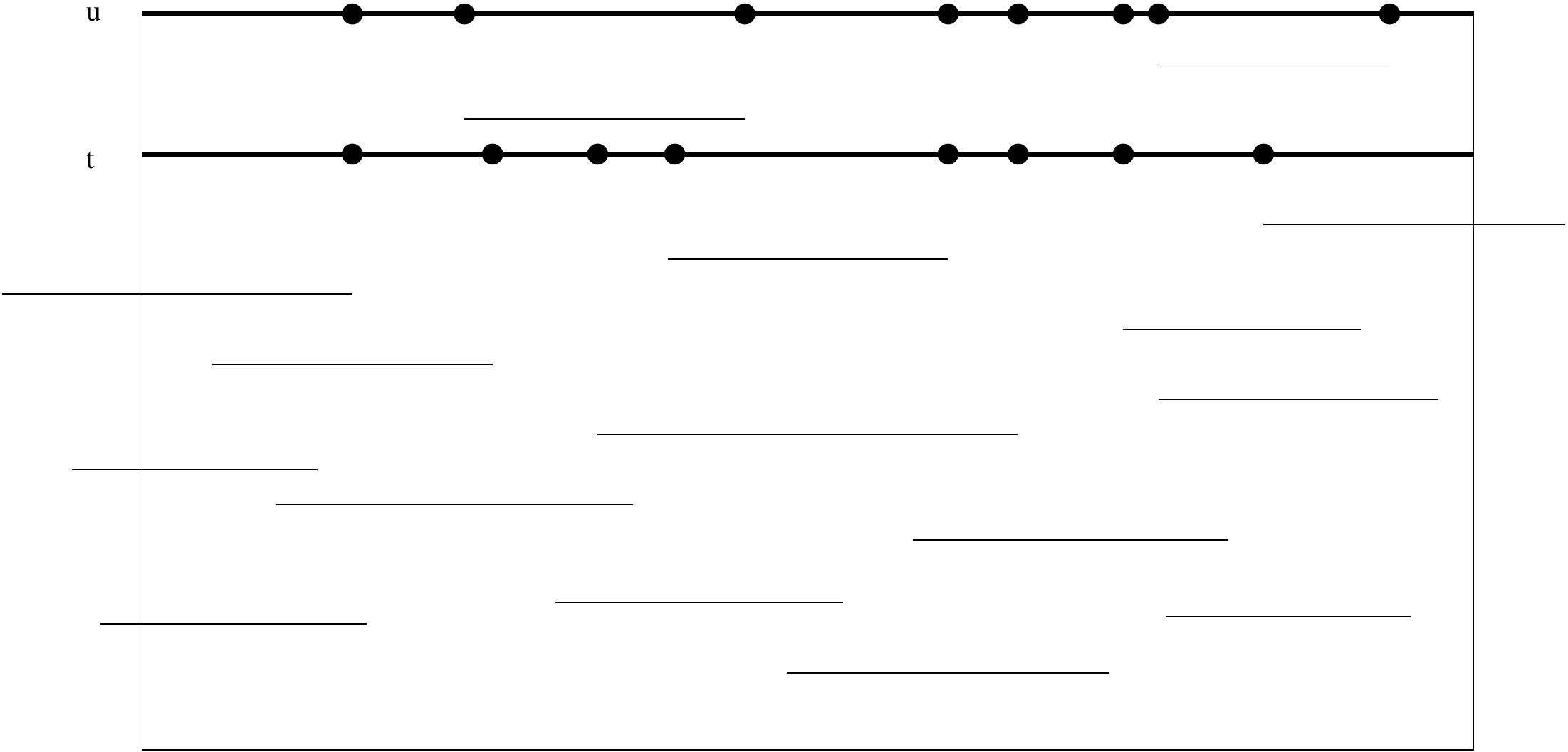}
\caption{Illustration of the evolving DLM tessellation in $d=1$ 
with the point processes $\eta_t$ and $\eta_u$ shown. Here $t < u$.} 
\end{figure}



\begin{theo}[CLT for $\eta_t(n)$]
\label{CLTb}
Suppose that there exist constants $\eps, K \in (0,\infty)$ 
such that
 $\E[(\H_0(\partial S))^{2+\eps}] < \infty$
and $\Pr[R \leq K] = 1$.
Then as $n \to \infty$,
\bea
n^{-1/2} (\eta([0,n] ) - \E \eta([0,n])) \tod {\cal N}(0,\sigma^2_1),
\label{CLTeq3}
\eea
where $\sigma^2_1$ is given in Theorem \ref{th2mom1d}.
More generally, the finite-dimensional distributions
of the random field 
$ (n^{-1/2} (\eta_t(ns) - \E[\eta_t(ns)]), (s,t) \in \R_+ \times \R )$ 
converge to those of a centred Gaussian random field with covariance function
$\kappa_1((r,t),(s,u))$ as defined in (\ref{kappadef}).
\end{theo}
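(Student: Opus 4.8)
The plan is to exploit a finite range of \emph{spatial} dependence that the boundary process inherits from the hypothesis $\Pr[R \le K]=1$, and thereby reduce both assertions to a central limit theorem for $m$-dependent (in space) random fields. Note first that the limiting (co)variances are \emph{already} supplied by Theorems \ref{th2mom1d} and \ref{limcovthm}: we have $n^{-1}\Cov(\eta_t(nr),\eta_u(ns)) \to \kappa_1((r,t),(s,u))$. Hence the only thing genuinely at stake is \emph{Gaussianity} of the limit, and the first assertion (\ref{CLTeq3}) is the one-index, $s=1$ specialisation (via the distributional identity $\Phi_t\eqd\Phi$), so it suffices to treat the multidimensional statement.

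The key structural observation is that, although the temporal dependence between $\eta_t$ and $\eta_u$ decays only exponentially (cf.\ the factor $\exp(-\lambda|u-t|)$ in (\ref{kappadef})) and is therefore \emph{not} of finite range, the spatial dependence remains of finite range even when the whole time-evolution is taken into account. Indeed, whether a site $z$ lies on $\Phi_t$ for a given $t$ is determined by those leaves whose translated support $S_i+x_i$ meets a neighbourhood of $z$, and since $R\le K$ a.s.\ every such leaf has its centre within distance $K$ of $z$. Consequently, for an interval $B$ the entire trajectory $(\eta_t(\,\cdot\,\cap B))_{t\in\R}$ is a measurable function of the restriction of the marked Poisson process to $\{(x,t,S):(S+x)\cap B\neq\emptyset\}$; two intervals separated by more than $2K$ give rise to disjoint such regions, so by the independence property of the Poisson process the associated trajectories are independent.

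Concretely, I would proceed as follows. By the Cram\'er--Wold device it is enough to show that an arbitrary linear combination $\sum_j c_j\,n^{-1/2}(\eta_{t_j}(ns_j)-\E[\eta_{t_j}(ns_j)])$ is asymptotically normal. Partition $\R$ into unit blocks $B_k=[k,k+1)$, $k\in\Z$, and set $Y_k:=\sum_j c_j\,\eta_{t_j}(B_k\cap[0,ns_j])$, so that the combination equals $n^{-1/2}\sum_k(Y_k-\E Y_k)$. By the previous paragraph $(Y_k)_k$ is $m$-dependent with $m=\lceil 2K\rceil+1$. The coefficient attached to $B_k$ is $\sum_{j:\,s_j>k/n}c_j$, a piecewise-constant function of $k/n$ with finitely many jumps, so away from the $O(1)$ blocks abutting each cutoff $ns_j$ the summands are stationary. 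For the moment bound, $\eta_{t}(B_k)\eqd\eta([0,1])$ for every $t$ (as $\Phi_t\eqd\Phi$ and $\eta$ is stationary), whence by Minkowski's inequality $\|Y_k\|_{2+\eps}\le(\sum_j|c_j|)\,\|\eta([0,1])\|_{2+\eps}$; one checks, using the Poisson/Mecke calculus together with $\E[(\H_0(\partial S))^{2+\eps}]<\infty$ and $R\le K$, that $\E[\eta([0,1])^{2+\eps}]<\infty$. With a uniform $(2+\eps)$ moment bound and a bounded-degree dependency graph in hand, a standard central limit theorem for $m$-dependent sums (equivalently, a Stein's-method normal-approximation bound driven by the dependency neighbourhoods) yields asymptotic normality, and the limiting variance is forced to be $\sum_{j,j'}c_jc_{j'}\kappa_1((s_j,t_j),(s_{j'},t_{j'}))$ by Theorem \ref{limcovthm}.

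The main obstacle is precisely the reconciliation of the two dependence scales: the process is genuinely long-range in time, so no purely spatial or purely temporal finite-range argument applies to the $(s,t)$-indexed quantities directly. The device that resolves this is to absorb the entire time trajectory at a given spatial location into the single block variable $Y_k$, after which only the finite spatial range matters. The remaining technical crux is then the uniform $(2+\eps)$-moment control of these bundled block variables: this is exactly what the hypothesis $\E[(\H_0(\partial S))^{2+\eps}]<\infty$ is designed to provide, while $\Pr[R\le K]=1$ is what guarantees the finite spatial range of dependence underpinning the whole scheme. The same mechanism drives the general functional CLT for dead leaves random measures, of which this theorem is a special case.
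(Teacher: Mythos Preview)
Your approach is correct and is essentially the same as the paper's: Cram\'er--Wold, partition into unit spatial blocks, use the bounded-range hypothesis $R\le K$ to obtain a dependency graph of bounded degree, control the $(2+\eps)$-th moment of each block, and apply a Stein-type CLT for sums with a dependency-graph structure (the paper uses the Chen--Shao bound, Lemma~\ref{lem0207b}). The only difference is organisational: the paper packages exactly this argument once and for all as Theorem~\ref{th:CLTgen}(a) for general dead leaves random measures, and then the present theorem is obtained in one line by taking $M=\H_0(\partial S\cap\cdot)$ and $f=\mathbf{1}_{[0,s]}$ (via Proposition~\ref{prop:bdy}). You have effectively rewritten that general proof in the special case $d=1$, which you yourself note in your final sentence.

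One small point: the moment bound $\E[\eta([0,1])^{2+\eps}]<\infty$ is not really a Mecke-formula computation. The paper (Lemma~\ref{lem0207a2}) proves it by dominating $\eta([0,1])$ by $\sum_{i=1}^{N}|M_{j(i)}|$, where $N$ is the number of arrivals until $[0,1]$ is first covered, showing that $N$ has all moments (via a geometric domination), and then decoupling $N$ from the $|M_{j(i)}|$'s with a stopping-time/independence trick. Your overall strategy is fine, but this step deserves an argument of that type rather than a hand-wave.
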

By the case $s=1$ of Theorem \ref{CLTb},
 the finite dimensional distributions of the process 
$ (n^{-1/2} (\eta_t(n) - \E[\eta_t(n)]), t  \in \R)$ 
converge to those of  a Gaussian process $(X_t)_{t \in \R}$ with covariance
 function $\sigma_1^2 \exp( - \lambda|u-t|)$.
This limiting process is a stationary  
Ornstein-Uhlenbeck process; see \cite[page 358]{KS}. That is, it is
the solution to the stochastic differential equation
$$
dX_t = - \lambda X_t dt + (2 \lambda)^{1/2} \sigma_1 dB_t,
$$
where $(B_t)$ is a standard Brownian motion.  
Under a stronger moment condition, we can improve this  finite dimensional
convergence
 to a {\em functional}
CLT;
that is, to convergence in 
$D(-\infty,\infty)$ of right-continuous functions on
$\R$ with left limits. We give this space the
Skorohod topology, as described in \cite{Bill} and
extended to non-compact time intervals in \cite{Whitt}.

\begin{theo}[Functional CLT for $\eta_t(n)$]
\label{FCLT1}
Suppose \textcolor{\blue}{there exist $\eps, K \in (0,\infty)$ such that  
 $\E[(\H_0(\partial S))^{4+\eps}] < \infty$
and $\Pr[R \leq K] = 1$.}
Then as $n \to \infty$, the
 stochastic process $ (n^{-1/2} (\eta_t(n) - \E[\eta_t(n)]), t \in \R)$ 
converges in distribution, in the space
$D(-\infty,\infty)$,
 to the stationary Ornstein-Uhlenbeck Gaussian process with covariance
 function $\sigma_1^2 \exp( - \lambda|u-t|),$ $t,u \in \R$.
\end{theo}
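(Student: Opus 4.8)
The plan is to combine the already-established convergence of finite-dimensional distributions with a tightness argument in Skorohod space. The finite-dimensional convergence is immediate: applying Theorem \ref{CLTb} with $s=1$ shows that the finite-dimensional distributions of the process $(n^{-1/2}(\eta_t(n) - \E[\eta_t(n)]), t \in \R)$ converge to those of a centred Gaussian process with covariance $\kappa_1((1,t),(1,u)) = \sigma_1^2 \exp(-\lambda|u-t|)$, which is precisely the stationary Ornstein--Uhlenbeck process identified in the discussion following that theorem. So the substance of the proof is tightness. Since the limit process has continuous paths, by the extension of the Skorohod topology to $\R$ described in \cite{Whitt} it suffices to prove tightness of the restrictions to each compact interval $[-T,T]$, together with the asymptotic vanishing of the maximal jump.

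For tightness on $[-T,T]$ I would use a Billingsley-type moment criterion on products of increments over adjacent time windows: for $t_1 \le t \le t_2$ in $[-T,T]$, it suffices to bound
$$\E\big[(\eta_t(n) - \eta_{t_1}(n))^2 (\eta_{t_2}(n) - \eta_t(n))^2\big] \le C n^2 (t_2 - t_1)^2$$
uniformly in $n$, since dividing by $n^2$ then yields the bound $C(t_2-t_1)^2$ for the rescaled process. The essential point is that the increment $\eta_t(n) - \eta_{t_1}(n)$ is driven entirely by leaves arriving in the space--time slab $[-K,n+K] \times (t_1,t]$ (the finite spatial range coming from $\Pr[R \le K]=1$): such a leaf destroys visible boundary points in its interior and creates new ones on its boundary. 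I would decompose each increment as a sum $\sum_k \xi_k$ of local contributions indexed by unit spatial blocks, where $\xi_k$ depends only on the leaves landing within distance $K$ of block $k$ during the relevant time window and on the configuration just before that window. Two features drive the estimate: by finite range, contributions from blocks separated by more than $2K$ are independent; and $\xi_k$ vanishes unless a leaf lands near block $k$ during the window, an event of probability at most $C|I|$ for a window $I$ of length $|I|$, so that $\E[\xi_k^2]$ and $\E[\xi_k^4]$ are both of order $|I|$.

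Feeding this into the product $\E[\Delta_1^2 \Delta_2^2]$, where $\Delta_1,\Delta_2$ are the increments over the disjoint windows $(t_1,t]$ and $(t,t_2]$, the finite-range structure collapses the fourfold sum over blocks to $O(n^2)$ surviving index-tuples, while the requirement that each factor be active forces a factor $|t-t_1|$ from $\Delta_1$ and a factor $|t_2-t|$ from $\Delta_2$; since the two windows are disjoint (so the governing Poisson points are independent apart from the shared earlier configuration) one obtains a leading term of order $n^2 |t-t_1||t_2-t| \le n^2(t_2-t_1)^2$. It is exactly the \emph{product} form of the criterion that supplies the quadratic scaling in $(t_2-t_1)$, since a single increment only admits a bound linear in the window length for very short windows. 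The assumption $\E[(\H_0(\partial S))^{4+\eps}] < \infty$ enters to control the fourth moments of the local contributions (the boundary created by a leaf is at most $\H_0(\partial S_i)$, while the boundary it destroys is bounded by a sum over previously-visible points, for which the spare $\eps$ is convenient), and the same assumption guarantees that the largest rescaled jump $n^{-1/2}\max_i \H_0(\partial S_i)$ over a compact space--time window is $o(1)$, so that the $J_1$-tight limit indeed has continuous paths.

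I expect the main obstacle to be the rigorous construction and moment control of the local contributions $\xi_k$: bounding the number of previously-visible boundary points that a newly arriving leaf destroys, attributing these to blocks without double counting, and showing that these destroyed-boundary counts have the requisite fourth moments uniformly in $n$ (this is where the spare $\eps$ in the $(4+\eps)$-th moment assumption is used, via H\"older combined with the finite-range and stationarity structure). A secondary technical point is the regime of very short windows, where the product bound degrades to a term linear in $(t_2-t_1)$; this is absorbed using the uniform smallness of the rescaled jumps established above. By contrast, the reduction to compact intervals, the passage from the increment bound to Skorohod $J_1$-tightness, and the identification of the continuous Gaussian limit are comparatively routine.
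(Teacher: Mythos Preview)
Your approach is correct and is, at the level of the underlying argument, the same as the paper's. The paper's proof of this theorem is a one-line appeal to the general DLRM functional CLT (Theorem~\ref{th:CLTgen}(b)), applied with $M=\H_0(\partial S\cap\cdot)$ and $f={\bf 1}_{[0,1]}$; the substance sits in Lemma~\ref{lem4D}, which establishes exactly the Billingsley product-of-increments bound you outline, via the same block decomposition, finite-range dependency-graph structure, and split of each increment into a ``created'' part (bounded by a sum of $|M|$'s for newly arriving leaves) and a ``destroyed'' part (bounded by $\xi_s(Q)$ times an indicator that some leaf arrives). Your identification of the main obstacle---moment control of the destroyed boundary via the $(4+\eps)$-th moment and H\"older---is precisely where the paper's work lies (Lemma~\ref{lem0207a2} and the estimates~(\ref{0530d})--(\ref{0814c})).

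Two small corrections. First, your claimed exponent $(t_2-t_1)^2$ is too optimistic: the ``destroyed'' contributions $R_i^-,Y_k^-$ involve $\xi_s$ and $\xi_t$, and $\xi_t$ is not independent of arrivals in $(t_1,t]$, so the paper only obtains $C(t_2-t_1)^{1+\eps'}$ with $\eps'=\min(\eps/4,1/2)$ after H\"older. This is still ample for \cite[Theorem~15.6]{Bill}. Second, your ``secondary technical point'' about very short windows is unnecessary: once the bound $C(u-s)^{1+\eps'}$ is in hand it holds uniformly for all $s<t<u$ in a compact interval, with no separate short-window regime, and tightness plus continuity of the limit follows directly without a separate jump-size argument.
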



The limiting random field in Theorem \ref{CLTb} 
\textcolor{\blue}{is an} Ornstein-Uhlenbeck process in Wiener space. See
for example  \cite{quasi} for a definition, or \cite{Meyer} for a more
detailed discussion of this infinite-dimensional Ornstein-Uhlenbeck process. 
It would be interesting, in future work, to try to extend the
finite dimensional  convergence of Theorem \ref{CLTb} to convergence
in an appropriate two-parameter function space.
\textcolor{\blue}{
This is beyond the scope of the methods used here, since our proof is based
ultimately on the classical approach of \cite{Bill}
for showing convergence of a sequence of processes with a single 
time parameter, namely finite-dimensional
convergence plus tightness via moment bounds.}

It would also be of interest
 to extend these CLTs to cases
where the leaves are intervals of unbounded length but satisfy
a moment condition.


We conclude this section with results on
the length of the interval of the DLM tessellation containing the origin,
and the length of a `typical interval'. 
These follow from results in \cite{Matheron}, as we discuss   
in Section  \ref{secpfoned}.
 For an alternative proof,
see the earlier version \cite{v1} of \textcolor{\blue}{the present} paper.

\begin{prop}[Exposed interval length distribution]
\label{thmXdist}
Assume that $\Q$ is concentrated 
\textcolor{\blue}{
on 
 intervals} 
 of strictly positive length, and 
that $\lambda < \infty$.
Let $\nu$ denote the distribution of the length of a leaf under
the measure $\Q$, and
 $X$ the length of the  visible interval covering the origin.
The distribution of $X$ is given
 by 
\bea
\Pr[X \in dx] =  \left(  \int_{(x,\infty)} 
\left(
 \frac{2 x (\la + u) }{ (\la +x)^3} \right) \nu(du) 
\right)
dx + \frac{x \nu(dx)}{x + \la} .
\label{Xdist}
\eea
In the special case where
 the measure $\nu$ is the Dirac measure $\delta_\la$
for some $\la >0$,
\bea
\Pr[X \in dx] =   \left(
 \frac{4 \la x}{ (\la +x)^3} \right) {\bf 1}_{\{x < \la\}} 
dx + (1/2) \delta_{\la}(dx).
\label{0323b}
\eea
\end{prop}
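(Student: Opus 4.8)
The plan is to work with the time-reversed DLM, in which the visible interval $X$ containing the origin is the connected component, containing the origin, of the visible portion of the \emph{first} leaf to cover the origin. I would begin by identifying this leaf $L_0$. The leaves covering the origin form a Poisson process in (arrival time, length) with intensity $h\,dt\,\nu(dh)$, so $L_0$ arrives at a time $\tau_0$ that is exponential with rate $\lambda$, carries a size-biased length $H$ with $\Pr[H\in dh]=h\,\nu(dh)/\lambda$, and, given $H=h$, is positioned so that the origin falls uniformly inside $L_0$; moreover $\tau_0$, $H$ and the position are independent (the standard first-point-of-a-marked-Poisson-process structure). Writing $v$ and $u$ for the distances from the origin to the left and right endpoints of $L_0$ (so $u+v=H$ and $u$ is uniform on $[0,H]$), the problem reduces to measuring how far the visible portion of $L_0$ extends on each side of the origin.

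Next I would describe the obstructing leaves. A point $x\in L_0$ lies in the same visible cell as the origin exactly when no leaf arriving in $(0,\tau_0)$ covers $x$. Given $\tau_0=\tau$, the leaves arriving during $(0,\tau)$ that do not cover the origin form, by the independence/restriction property of the Poisson process, an undisturbed Poisson process with the original intensity on the non-covering region. The right endpoints of those lying entirely to the left of the origin then form a homogeneous Poisson process of rate $\tau$ on $(-\infty,0)$, so the nearest covered point to the left of the origin is at distance $D_L$ that is exponential with rate $\tau$; symmetrically the nearest covered point to the right is at distance $D_R$, with $D_L,D_R$ independent since they are governed by disjoint parts of the process. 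Hence the visible interval extends a distance $\min(v,D_L)$ to the left and $\min(u,D_R)$ to the right, so that
\[
X=\min(v,D_L)+\min(u,D_R).
\]

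I would then compute the law of $X$ by integrating out $D_L,D_R$ (given $\tau$), then $u$, then $\tau$, then $H$. The distribution splits into two parts. The \emph{atom part} is the event that $L_0$ is unobstructed on both sides, i.e.\ $D_L\ge v$ and $D_R\ge u$, in which case $X=H$; this has conditional probability $e^{-\tau H}$, and integrating over $\tau$ gives $\lambda/(\lambda+H)$, so after size-biasing it contributes exactly $\frac{x\,\nu(dx)}{x+\lambda}$, the second term of (\ref{Xdist}). For the \emph{continuous part} ($X<H$) I would convolve the laws of $\min(v,D_L)$ and $\min(u,D_R)$ — each a truncated exponential with an atom at its truncation point — to obtain the conditional density of $X$ at $x\in(0,H)$, and then average over $u\sim\mathrm{Unif}[0,H]$; the two $u$-averages that arise evaluate to $x(H-x)/H$ and $2x/H$. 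Integrating the result against $\tau$ exponential with rate $\lambda$ produces the elementary Gamma integrals $2\lambda/(\lambda+x)^3$ and $\lambda/(\lambda+x)^2$, and finally integrating against the size-biased law $h\,\nu(dh)/\lambda$ cancels the factor $1/H$; combining the two terms over the common denominator $(\lambda+x)^3$ yields $\int_{(x,\infty)}\frac{2x(\lambda+h)}{(\lambda+x)^3}\nu(dh)$, the first term of (\ref{Xdist}). The special case $\nu=\delta_\lambda$ then follows by inspection.

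The main obstacle is the rigorous justification of the conditional independence structure in the second step: that conditioning on the first origin-covering leaf and its arrival time leaves the earlier non-covering leaves as an undisturbed Poisson process, and that the nearest obstruction on each side reduces to an independent exponential distance. Once this is secured, the remainder is bookkeeping — correctly handling the atoms in the laws of $\min(v,D_L)$ and $\min(u,D_R)$ in the convolution, and carrying out the (routine) nested integrations.
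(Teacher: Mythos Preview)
Your approach is correct and genuinely different from the paper's. The paper does not work with the first covering leaf $L_0$ at all; instead it computes the empty-space probability $P(h)=\Pr[\eta([0,h])=0]$ via an exponential race between ``leaves intersecting $[0,h]$'' and ``leaves covering $[0,h]$'', obtaining $P(h)=K(h)/(\lambda+h)$ with $K(h)=\E[(H-h)^+]$. It then relates $P(h)$ to the distribution of $X$ through the typical interval length $Y$: by stationarity $P(h)=(2/\lambda)\int_h^\infty(1-F_Y(t))\,dt$, so two differentiations yield $dF_Y$, and the size-biasing relation $\Pr[X\in dx]=(2x/\lambda)\Pr[Y\in dx]$ is then inverted to recover $X$.

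Your route is more direct and fully probabilistic: conditioning on $(\tau_0,H,u)$, splitting the earlier non-covering leaves into independent left/right Poisson processes, and computing $X$ as the sum of two independent truncated exponentials. This avoids the detour through $P(h)$ and $Y$ entirely, and the conditional-independence step you flag as the main obstacle is indeed the standard restriction property of Poisson processes (the covering, left-of-origin, and right-of-origin leaves live in disjoint regions of $(x,t,h)$-space). The paper's method has the advantage of simultaneously delivering $P(h)$ and the law of $Y$, which are of independent interest, while yours gives a cleaner mechanistic explanation of where the factor $2x(\lambda+u)/(\lambda+x)^3$ comes from.
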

If $\nu$ is a Dirac measure, then $X$ is the length of
the visible part of the leaf visible at the origin.
In general, 
 $X$ counts the length only of the connected component
of the visible part of this
 leaf that includes $0$, ignoring any other components.

A {\em typical visible interval}, loosely speaking, is
obtained by fixing a very large region of $\R$, and
 choosing at random one of the inter-point intervals of $\eta$  
that lie in that region.
  The distribution of the length of a typical visible interval is 
the inverse-size-biased distribution of $X$ (see \cite[Proposition 9.7]{LP});
that is,
 if $Y$ is the length of a typical visible interval
we have $\Pr[Y \in dy] = y^{-1} \Pr[X \in dy]/ \E[X^{-1}]$.
Now $\E[X^{-1}]=2/\lambda$, which can be deduced either from
(\ref{Xdist}) or from Proposition \ref{th:main} using \cite[eqn (9.22)]{LP}. 
Hence  from (\ref{Xdist}) 
we have the following.
\begin{prop}
\label{typintcoro}
 Let $Y$ denote the length of a typical visible interval. 
Under the assumptions of Proposition \ref{thmXdist},
the distribution
of $Y$ is given by
\bea
\Pr[Y \in dy]
 =  
 \left(
 \int_{(y,\infty)} \left(\frac{\la  (\la +u)}{ (\la +y)^3 } 
\right)
\nu(du) \right) dy
+  \frac{\la \nu(dy) }{2 (y+ \la)} 
.
\label{0323c}
\eea
If all the leaves are  intervals of length $\lambda$, i.e.
 $\nu = \delta_\lambda$ for some $\lambda >0$,
then $Y$ has a mixed distribution with
$$
\Pr[Y \in dy] =   \left(
\frac{2 \lambda^2 {\bf 1}_{(0,\lambda)}(y) }{(\lambda+y)^3}
\right)
dy + (1/4) \delta_\lambda(dy).
$$
\end{prop}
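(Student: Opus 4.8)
The plan is to obtain Proposition \ref{typintcoro} directly from Proposition \ref{thmXdist}, via the inverse-size-biasing relation between the length $X$ of the visible interval covering the origin and the length $Y$ of a typical visible interval. Under the hypotheses of Proposition \ref{thmXdist} the leaves are single intervals of positive length, so the visible intervals genuinely partition $\R$ and each point of the breakpoint process $\eta$ separates two consecutive intervals; the length of the interval straddling a fixed point (here the origin) is then the length-biased version of the typical interval length, which is the standard Palm-theoretic statement recorded as \cite[Proposition 9.7]{LP}. Concretely, I would start from
\be
\Pr[Y \in dy] = \frac{y^{-1} \Pr[X \in dy]}{\E[X^{-1}]}.
\label{planisb}
\ee

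First I would pin down the normalising constant $\E[X^{-1}]$. There are two routes: one can integrate $x^{-1}$ against the explicit law (\ref{Xdist}), or, more cheaply, one can invoke Proposition \ref{th:main}, which gives the intensity of $\eta$ as $2/\lambda$ in the all-intervals case, together with the identity \cite[eqn (9.22)]{LP} equating $\E[X^{-1}]$ with that intensity. Either way $\E[X^{-1}] = 2/\lambda$, so (\ref{planisb}) reduces to $\Pr[Y \in dy] = (\lambda/(2y))\,\Pr[X \in dy]$.

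Next I would substitute (\ref{Xdist}) and simplify term by term. In the absolutely continuous part the prefactor $y^{-1}$ cancels the factor $y$ in $2y(\lambda+u)/(\lambda+y)^3$, and the remaining $\lambda/2$ produces the integrand $\lambda(\lambda+u)/(\lambda+y)^3$; in the atomic part the same cancellation of $y$ leaves $\lambda\,\nu(dy)/(2(y+\lambda))$. This yields exactly (\ref{0323c}). For the special case $\nu = \delta_\lambda$ I would note that the integral $\int_{(y,\infty)}(\cdots)\nu(du)$ is supported on $\{y < \lambda\}$ and evaluates there to $2\lambda^2/(\lambda+y)^3$, while the atom of $\nu$ at $\lambda$ contributes $\tfrac14\,\delta_\lambda(dy)$; a quick check that these two pieces integrate to $3/4 + 1/4 = 1$ confirms the normalisation.

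There is no substantial obstacle here, since the content is a rearrangement of the already-proved Proposition \ref{thmXdist}. The only steps requiring genuine justification, rather than algebra, are the inverse-size-biasing identity (\ref{planisb}) and the value $\E[X^{-1}] = 2/\lambda$, both of which rest on the Palm theory of stationary interval partitions of the line; the main thing to be careful about is that the hypotheses of Proposition \ref{thmXdist} (leaves concentrated on intervals of strictly positive length, $\lambda < \infty$) are precisely what make $Y$ well defined and the partition structure available.
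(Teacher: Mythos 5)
Your size-biasing computation is correct as algebra, and it is in fact precisely the remark the paper itself makes immediately before Proposition \ref{typintcoro}: there the author notes that $\Pr[Y \in dy] = y^{-1}\Pr[X \in dy]/\E[X^{-1}]$ by \cite[Proposition 9.7]{LP}, that $\E[X^{-1}] = 2/\lambda$ (by either of the two routes you mention), and writes ``Hence from (\ref{Xdist}) we have the following.'' The cancellations in both the absolutely continuous and atomic parts, and the normalisation check $3/4 + 1/4 = 1$ when $\nu = \delta_\lambda$, all check out.

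The problem is that, as a proof inside this paper, your argument is circular. The paper has no independent proof of Proposition \ref{thmXdist}: the single proof in Section \ref{secpfoned}, headed ``Proof of Propositions \ref{thmXdist} and \ref{typintcoro}'', establishes Proposition \ref{typintcoro} \emph{first} and then deduces Proposition \ref{thmXdist} from it by the very same size-biasing relation you invoke, run in the opposite direction. The substantive input there is Matheron's void-probability formula (\ref{0716a}), namely $P(h) = \Pr[\eta([0,h]) = 0] = K(h)/(\lambda+h)$ with $K(h) = \E[(H-h)^+]$, proved via a race between two independent exponential times (the first leaf to cover all of $[0,h]$ versus the first leaf to hit but not cover $[0,h]$). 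This is combined with the stationarity identity $P(h) = \int_h^\infty ((x-h)/x)\Pr[X \in dx]$ (given $X = x$, the first point of $\eta$ to the right of the origin is uniform on $(0,x)$), which after size biasing becomes $P(h) = (2/\lambda)\int_h^\infty (1 - F_Y(t))\,dt$; differentiating twice then yields (\ref{0323c}). So in the paper the law of $Y$ is what gets derived from first principles, and the law of $X$ is the corollary, not the other way round. Your proposal takes the law of $X$ as ``already proved'', so spliced into the paper it would leave both propositions resting on nothing. To repair it you would need to supply an independent derivation of (\ref{Xdist}) (such a derivation exists in the earlier version \cite{v1} of the paper, or via the void-probability route just described), and that derivation is where the real work lies.
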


\section{Leaves in the plane}
\label{ctshidim}
\allco

In this section we take $d=2$, and state our results for the 
two-dimensional DLM.
  We shall prove them in Section 
\ref{secpftwodim}.

We shall say that our grain distribution $\Q$
 has
the {\em rectifiable Jordan property} if it is concentrated on nonempty regular
compact sets 
having a rectifiable Jordan curve as their boundary. 
Here, we say a compact set in $\R^2$ is {\em regular} if 
it is the closure of its interior.
We say $\Q$ has
the {\em piecewise $C^1$ Jordan property} if it is concentrated on sets 
having a piecewise $C^1$ Jordan curve as their boundary. 

Recalling the definitions (\ref{Phindef}) and (\ref{Phidef}),
define the measures $\phi := \H_1(\Phi \cap  \cdot)$,
 the restriction of the one-dimensional Hausdorff measure 
to the  boundaries of the DLM tessellation, and
 $\phi_t := \H_1(\Phi_t \cap  \cdot)$ for $t \in \R$.
As in (\ref{lambdadef}),
we set $\lambda:= \E [ \H_2(S) ]$, and
 $\lambda_x := \E [\H_2 (S \cup (S+x) ) ]$ for
 $x \in \R^2$, where $S$ is a random element of $\cK$ with
distribution $\Q$ as per usual. Define $R$ by (\ref{Rdef}).

\begin{theo}[Intensity of cell boundaries]
\label{meantheo}
Suppose that Condition  \ref{measassu} holds and 
$\E[\H_2(S \oplus B(1)) ] < \infty$.
 Then
 $\phi$ is a stationary random measure and its intensity is
$
\lambda^{-1}
\E [ \H_1(\partial S) ].
$
\end{theo}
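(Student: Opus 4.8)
The plan is to verify stationarity and then compute the intensity directly from the Mecke equation \cite{LP}. Stationarity is essentially free: the marked Poisson process $\sum_i\delta_{(x_i,t_i,S_i)}$ is invariant in law under translations of the spatial coordinate, so the random set $\Phi$ defined by \eqref{Phidef}, and hence the measure $\phi=\H_1(\Phi\cap\cdot)$, are shift-invariant; Condition \ref{measassu} is exactly what is needed to know that $\phi$ is a bona fide (measurable, locally finite) random measure, while the moment hypothesis $\E[\H_2(S\oplus B(1))]<\infty$ controls the expected number and boundary length of leaves meeting a bounded window and so guarantees local finiteness and a finite intensity. Given stationarity, it then suffices to show $\E[\phi(B)]=\lambda^{-1}\E[\H_1(\partial S)]\,\H_2(B)$ for a fixed bounded Borel set $B$.

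Write $V_i:=(\partial S_i+x_i)\setminus\bigcup_{j:\,0\le t_j<t_i}(S_j^o+x_j)$ for the visible boundary portion of leaf $i$, so that $\Phi=\bigcup_i V_i$. I would first evaluate $\E[\sum_i\H_1(V_i\cap B)]$ by applying the Mecke formula to the leaf process, whose intensity measure is $dx\,dt\,\Q(dS)$ on $\R^2\times[0,\infty)\times\cK$. Inserting a leaf with mark $(x,t,S)$ leaves the index set $\{j:t_j<t\}$ unchanged, and a point $z\in(\partial S+x)\cap B$ lies in the visible portion of this leaf exactly when $z$ is covered by none of the Poisson leaves arriving during $[0,t)$. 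The number of covering leaves is Poisson with mean $\int_0^t\!\int_{\R^2}\Pr[z\in S^o+x']\,dx'\,ds=\lambda t$, so by the void probability the survival probability is $e^{-\lambda t}$, independently of the location $z$. Interchanging expectation and the $\H_1$-integral over the curve (Fubini) gives expected visible length $e^{-\lambda t}\,\H_1((\partial S+x)\cap B)$, and since $\int_{\R^2}\H_1((\partial S+x)\cap B)\,dx=\H_1(\partial S)\,\H_2(B)$ by translation invariance, we obtain
\[
\E\Big[\sum_i\H_1(V_i\cap B)\Big]=\int_0^\infty e^{-\lambda t}\,\E\big[\H_1(\partial S)\big]\,\H_2(B)\,dt=\lambda^{-1}\,\E[\H_1(\partial S)]\,\H_2(B).
\]

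The main obstacle is to upgrade this from the sum over leaves to $\E[\H_1(\Phi\cap B)]$, i.e.\ to show the visible portions do not overlap on a set of positive length in the mean. Since $\H_1(\Phi\cap B)\le\sum_i\H_1(V_i\cap B)$ with the deficit dominated by $\sum_{i\ne i'}\H_1\big((\partial S_i+x_i)\cap(\partial S_{i'}+x_{i'})\cap B\big)$, it is enough to prove this pairwise-overlap sum has zero expectation. Applying the multivariate (second-order) Mecke formula and integrating out the third-party leaves (again by the void probability) reduces the expectation, via Tonelli, to a spatial integral of the form $\int_{\R^2}\H_1\big(A\cap(\partial S'+x')\big)\,dx'$ with $A\subseteq(\partial S+x)\cap B$ of finite length; by Fubini this equals $\int_A\H_2(\partial S')\,\H_1(dz)=0$, because under Condition \ref{measassu} the boundary $\partial S'$ has $\sigma$-finite (indeed locally finite) $\H_1$-measure and is therefore Lebesgue-null in $\R^2$. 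This is the analytic content of the ``touching integrates to zero'' statement discussed in Section \ref{secBuffon}. Hence $\E[\H_1(\Phi\cap B)]=\lambda^{-1}\E[\H_1(\partial S)]\,\H_2(B)$, giving the asserted intensity. The remaining points are routine bookkeeping: justifying the Fubini/Tonelli interchanges and the finiteness of the intensity from $\E[\H_2(S\oplus B(1))]<\infty$, and invoking Condition \ref{measassu} for measurability throughout.
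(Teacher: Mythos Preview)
Your proof is correct and follows essentially the same approach as the paper. The paper packages the argument modularly: it first proves a general intensity formula $\alpha=\lambda^{-1}\E[|M|]$ for an arbitrary dead leaves random measure (Theorem~\ref{th:momgen}, proved exactly via the Mecke formula and the void probability $e^{-\lambda t}$ as you do), then separately shows (Proposition~\ref{prop:bdy}) that $\phi=\H_1(\Phi\cap\cdot)$ coincides with the DLRM obtained from $M=\H_1(\partial S\cap\cdot)$, which is precisely your overlap argument via the bivariate Mecke formula and $\H_2(\partial S')=0$. You have unpacked these two pieces into a single direct computation; the analytic content is identical. One small remark: your reference to Section~\ref{secBuffon} for the ``touching integrates to zero'' step is a slight mislabel --- the vanishing here comes simply from $\H_2(\partial S')=0$ (a consequence of Condition~\ref{measassu} and compactness of $S'$), not from the Buffon-type results, which concern counting measure on intersections of curves rather than $\H_1$-measure.
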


As mentioned earlier, the cells of the (time-reversed) DLM tessellation
are the closures of the connected components of the set $\R^2 \setminus \Phi$.
We now define $\Xi$ to be the set of points in $\R^2$ which lie in
three or more cells of this tessellation.
Later we shall view $\Phi$ as a planar graph with   the points
of $\Xi$ as  the nodes, which we call {\em branch points},
in this graph.
We define the measure $\chi := \H_0(\Xi \cap \cdot) $.



For $A \subset \R^2$ 
and $\theta \in (-\pi, \pi]$, let $\rho_\theta(A)$ denote the image
of $A$ under an anticlockwise rotation
through an angle $\theta$ about the origin (elsewhere we are using $\rho_2$
to denote pair correlation, but this clash of notation should not be confusing).
We say $\Q$ is {\em rotation invariant}
if $\rho_\theta(S) \eqd S$ for all $\theta \in (-\pi,\pi]$.

\begin{theo}[Intensity of branch points]
\label{thbranch}
Assume either that $\Q$ has the piecewise $C^1$ Jordan property, or that 
$\Q$ has the rectifiable Jordan property  and is rotation invariant.
Assume also that $\lambda < \infty$, and $\E[R^2] < \infty$, 
\textcolor{\blue}{and 
 set
\bea
 \beta_3 := \lambda^{-2} \int_{\cK}  \int_{\cK} 
 \int_{\R^2} \H_0(\partial \sigma \cap (\partial \sigma'+x))
dx \Q(d \sigma) \Q(d \sigma') . 
\label{0628c}
\eea
Then:
(a)
If $\beta_3 < \infty$, then 
 $\chi$  is a stationary point process,
with intensity
 $\beta_3$.  
}

(b)  If $\Q$ 
 is rotation invariant, then
\bea
\beta_3 = \frac{2}{\pi \lambda^{2}} \left(
\E[ \H_1 (\partial S) ] 
 \right)^2. 
\label{0629a}
\eea
\end{theo}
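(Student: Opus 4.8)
The plan is to identify the branch points with the transversal crossings of two \emph{exposed} leaf boundaries and then compute the intensity of $\chi$ through the (multivariate) Mecke equation. Working with the time-reversed DLM, I regard the leaves as a unit-intensity Poisson process on $\R^2\times[0,\infty)$ marked independently by $\Q$. The geometric input is the local picture at a crossing: if $\partial S_i+x_i$ and $\partial S_j+x_j$ meet transversally at a point $p$ with, say, $t_i<t_j$, then near $p$ leaf $i$ lies on top on the $S_i$-side of its boundary, so the whole of $\partial S_i$ is locally exposed while exactly one of the two arcs of $\partial S_j$ issuing from $p$ is exposed (the other being hidden by leaf $i$). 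This is a ``T''-junction at which three \emph{distinct} cells meet -- the exposed part of leaf $i$, the exposed part of leaf $j$, and a cell lying outside both leaves -- so $p\in\Xi$; and such a $p$ lies in $\Xi$ precisely when no further leaf $k\ne i,j$ with $t_k<\max(t_i,t_j)$ covers $p$ in its interior. Conversely, off a null family of degenerate configurations every node of the planar graph $\Phi$ arises this way and has degree exactly three: simultaneous passage of three boundaries through one point, and a corner landing on another boundary, are measure-zero events, and a transversal crossing can never yield a degree-four node because one leaf is always strictly on top.

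First I would apply the multivariate Mecke equation to $\chi(B)$, $B$ a bounded Borel set, summing over ordered pairs of leaves and the exposed crossings of their boundaries inside $B$. Since the two inserted atoms do not cover $p$ in their interiors, the exposure event concerns only the remaining Poisson leaves, and the leaves covering a fixed point in their interior with arrival time in $[0,s]$ are Poisson with mean $\lambda s$; hence each crossing contributes a factor $e^{-\lambda\max(t,t')}$. This gives
\[
\E[\chi(B)]=\tfrac12\int_0^\infty\!\!\int_0^\infty\!\int_\cK\!\int_\cK\!\int_{\R^2}\!\int_{\R^2}\Big(\textstyle\sum_{p}\mathbf{1}_B(p)\,e^{-\lambda\max(t,t')}\Big)\,dx\,dx'\,\Q(d\sigma)\Q(d\sigma')\,dt\,dt',
\]
the sum being over $p\in(\partial\sigma+x)\cap(\partial\sigma'+x')$ and $\tfrac12$ accounting for unordered pairs. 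The time integral separates and equals $\int_0^\infty\!\int_0^\infty e^{-\lambda\max(t,t')}\,dt\,dt'=2/\lambda^2$, so with the $\tfrac12$ the temporal factor is $1/\lambda^2$. For the spatial part I set $z=x'-x$ and use translation invariance: the crossings of $\partial\sigma+x$ and $\partial\sigma'+x+z$ are the points $x+q$ with $q\in\partial\sigma\cap(\partial\sigma'+z)$, whence $\int_{\R^2}\H_0\big((\partial\sigma+x)\cap(\partial\sigma'+x+z)\cap B\big)\,dx=\H_2(B)\,\H_0(\partial\sigma\cap(\partial\sigma'+z))$. Collecting terms, $\E[\chi(B)]=\H_2(B)\,\lambda^{-2}\int_\cK\int_\cK\int_{\R^2}N(\partial\sigma,\partial\sigma'+z)\,dz\,\Q(d\sigma)\Q(d\sigma')$, where $N$ counts only transversal crossings. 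As this is finite and proportional to $\H_2(B)$, $\chi$ is a locally finite, stationary point process.

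It remains to replace $N$ by $\H_0$ in the last display, i.e.\ to show $\int_\cK\int_\cK\int_{\R^2}\tN(\partial\sigma,\partial\sigma'+z)\,dz\,\Q(d\sigma)\Q(d\sigma')=0$, where $\tN$ counts tangential touches; this is where the two hypotheses enter. Under the piecewise $C^1$ Jordan property I argue directly that, for fixed $\sigma,\sigma'$, the set of translations $z$ admitting a tangency is $\H_2$-null: a tangency at parameters $(u,v)$ forces the two tangent directions to be parallel, a single scalar condition carving out a one-dimensional set in the parameter plane whose image under $z=\gamma(u)-\gamma'(v)$ is null (corners add a further null set), $\gamma,\gamma'$ being parametrisations of the two boundaries; hence $\tN=0$ for a.e.\ $z$. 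Under the rectifiable Jordan plus rotation invariant hypothesis, where tangents need not exist, I instead use the Buffon-type identities of Section~\ref{secBuffon}: rotation invariance lets me average the integrand over rotations $\rho_\theta$ of $\sigma'$ without changing the $\Q\otimes\Q$ integral, and $\frac{1}{2\pi}\int_0^{2\pi}\!\int_{\R^2}\tN(\partial\sigma,\rho_\theta\partial\sigma'+z)\,dz\,d\theta=0$. This proves part (a). For part (b) the same averaging applied to $N$, together with $\frac{1}{2\pi}\int_0^{2\pi}\!\int_{\R^2}N(\partial\sigma,\rho_\theta\partial\sigma'+z)\,dz\,d\theta=\frac{1}{2\pi}\cdot4\,\H_1(\partial\sigma)\H_1(\partial\sigma')$, decouples the double $\Q$-integral and yields $\beta_3=\frac{2}{\pi\lambda^2}\big(\E[\H_1(\partial S)]\big)^2$.

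The main obstacle I anticipate is the rigorous verification of the local branch-point structure together with the a.s.\ negligibility of every degenerate configuration -- triple points, tangential contacts of exposed arcs, and nodes of degree exceeding three -- because both the clean Mecke computation and the integral-geometric evaluation rest on the equivalence ``exposed transversal crossing $\Leftrightarrow$ degree-three node of $\Phi$'' holding off an $\H_2$-null set. Securing this, and the measurability needed to invoke Mecke (for which Condition~\ref{measassu} and the assumptions $\lambda<\infty$, $\E[R^2]<\infty$ are used), is the technical core; the surrounding computations are routine.
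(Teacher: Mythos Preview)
Your proposal is correct and follows essentially the same route as the paper: the paper formalises your ``exposed transversal crossing'' picture by defining sets $Y_{ij}=(\partial X_i\cap\partial X_j)\cap\bigcap_{k\neq i,j,\,0<t_k<\max(t_i,t_j)}(X_k^o)^c$, proves via separate lemmas that triple boundary intersections are a.s.\ empty and tangential contacts a.s.\ absent (precisely your two degenerate cases, handled by the same piecewise-$C^1$ versus rotation-invariant dichotomy), and then computes the intensity through the bivariate Mecke formula with the time integral $\int_0^\infty t\,e^{-\lambda t}\,dt=\lambda^{-2}$, which is equivalent to your $\tfrac12\!\int\!\!\int e^{-\lambda\max(t,t')}\,dt\,dt'$. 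Part~(b) is done in the paper exactly as you describe, by averaging over rotations and applying the two-noodle formula.
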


The \textcolor{\blue}{next two results require} $\Q$ to have a further property.
We say $\Q$ has the {\em non-containment property} if
 for $(\Q \otimes \Q)$-almost all pairs $(\sigma,\sigma') \in \cK \times \cK$,
 the  set of  $x \in \R^2$ such that  
$\sigma +x \subset \sigma'$ is Lebesgue-null. 
One way to guarantee the non-containment property is
to have $\Q$ be such that under $\Q$, all of the 
sets $S_i$ have the same area.

\begin{theo}[Connectivity of $\Phi$]
\label{thmconnect}
\textcolor{\blue}{Suppose $\Q$ has the rectifiable Jordan and non-containment
properties, and that $\E[R^2] < \infty$. Then $\Phi$ is almost surely  a
connected set.}
\end{theo}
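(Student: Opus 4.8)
The plan is to reduce the connectivity of $\Phi$ to a statement about the \emph{cells}, namely the connected components of $\R^2 \setminus \Phi$, and then to show that under the non-containment property almost every cell is simply connected. First I would record the qualitative structure of $\Phi$ that follows from the hypotheses together with the earlier results of this section. Since every leaf is a regular compact set bounded by a rectifiable Jordan curve and $\E[R^2]<\infty$, the arguments underlying Theorems \ref{meantheo}--\ref{thbranch} give that almost surely $\Phi$ is a locally finite union of rectifiable arcs meeting at locally finitely many branch points, that the plane is tiled by the cells, and that each cell is bounded (being contained in a single leaf $S_i+x_i$ with $R<\infty$). On this event each cell $C$ is a component of $\R^2\setminus\Phi$ on which a single leaf, say $S_i+x_i$, is the visible (first-arriving) leaf, so that $C \subseteq (S_i+x_i)$.

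The topological core is the implication: if every cell is simply connected then $\Phi$ is connected. Passing to the sphere $S^2=\R^2\cup\{\infty\}$ and setting $\widehat\Phi:=\Phi\cup\{\infty\}$, the complement $S^2\setminus\widehat\Phi$ is the disjoint union of the open cells; if each such cell is an open topological disk then this complement has trivial first \v{C}ech cohomology, whence by Alexander duality $\widehat\Phi$ is connected. Boundedness of the cells together with local finiteness then guarantees that deleting the single point $\infty$ does not disconnect $\widehat\Phi$, so that $\Phi$ itself is connected. This step is standard planar topology; the only point requiring care is the behaviour at infinity, which is controlled by the fact that the bounded cells tile the whole plane.

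It therefore remains to prove that, almost surely, no cell has a hole, and here is where the geometry enters; this is the main obstacle. Suppose a cell $C$, with visible leaf $S_i+x_i$, is not simply connected, so that $\R^2\setminus C$ has a bounded component $H$. Encircling $H$ by a Jordan curve inside $C$ and using that the interior of the Jordan domain $S_i+x_i$ is simply connected, one gets $\overline H \subseteq (S_i+x_i)^o$. Let $S_m+x_m$ be the earliest-arriving leaf meeting $H$. A short argument shows $t_m<t_i$: if leaf $i$ were itself earliest, every point of $H$ would have leaf $i$ visible, forcing $H$ to merge with $C$, a contradiction. Finally $S_m+x_m\subseteq (S_i+x_i)$: otherwise, being connected and meeting both the hole $H\subseteq(S_i+x_i)^o$ and the exterior of $S_i+x_i$, leaf $m$ would contain a path crossing the surrounding region $C$, but any point of $C$ lying in leaf $m$ has an earlier visible leaf and so cannot be visible-as-$i$, contradicting its membership in $C$. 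Thus a hole forces an \emph{earlier} leaf to be contained in a later one.

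The proof then concludes probabilistically. By the multivariate Mecke formula, the expected number of ordered pairs of distinct leaves $(S_j+x_j,\,S_i+x_i)$ with $S_j+x_j\subseteq S_i+x_i$ and $x_i$ in a fixed bounded window equals an integral whose inner integral over $x_j$ is the Lebesgue measure of $\{y: S_j+y\subseteq S_i\}$; by the non-containment property this vanishes for $(\Q\otimes\Q)$-almost every pair of shapes, and since the Poisson law of positions has no atoms the whole expectation is zero. Exhausting $\R^2$ by an increasing sequence of windows, almost surely no leaf is contained in another; in particular no cell has a hole, so every cell is simply connected and, by the topological step, $\Phi$ is connected almost surely.
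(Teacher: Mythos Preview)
Your strategy is genuinely different from the paper's, and most of it is sound. The paper proceeds by first showing (via a patch representation of $\Phi$ and the non-containment hypothesis) that $\Phi$ has no bounded component, and then, using Urysohn's lemma and the unicoherence of $\R^2$, that $\Phi$ has at most one unbounded component. Your route through simply-connectedness of the cells and Alexander duality on $S^2$ is an attractive alternative: the argument that a hole in a cell would force one leaf to be contained in another is correct (with a little care in the step ``$t_m<t_i$'', which really uses that a clopen bounded subset of $\R^2$ is impossible), and it parallels the paper's Lemma~\ref{lembounded}.

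However, there is a real gap at the point where you pass from connectivity of $\widehat\Phi=\Phi\cup\{\infty\}$ in $S^2$ to connectivity of $\Phi$ in $\R^2$. That implication is equivalent to the assertion that $\Phi$ has at most one unbounded component, and this is precisely the substantive content of the paper's final step. Your sentence ``boundedness of the cells together with local finiteness then guarantees that deleting the single point $\infty$ does not disconnect $\widehat\Phi$'' is not an argument: local finiteness of $\Phi$ holds only at finite points, not at $\infty$, and boundedness of the cells does not by itself prevent $\Phi$ from having several unbounded components meeting only at $\infty$. The paper handles exactly this by constructing, from two putative unbounded components $Z_0,Z_1$, a level set of a Urysohn function whose boundary $\partial V$ is (by unicoherence) a connected subset of $\R^2\setminus\Phi$ that is forced to be unbounded---contradicting the boundedness of the cells. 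You need an argument of comparable strength here; without it the proof is incomplete.

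A smaller point: the structural claims you attribute to ``Theorems~\ref{meantheo}--\ref{thbranch}'' (that $\Phi$ is closed, that cells are bounded and locally finite) are not consequences of those theorems, which concern intensities and moreover require stronger hypotheses (piecewise $C^1$ or rotation invariance) than Theorem~\ref{thmconnect}. In the paper these facts are established separately as Lemmas~\ref{lempatrep}--\ref{lemclosed} under just the rectifiable Jordan hypothesis and $\E[R^2]<\infty$; you should invoke analogues of those lemmas rather than the intensity theorems.
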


Let $\Psi$ be the set of centroids of cells of the DLM tessellation,
and define the measure $\psi := \H_0(\Psi \cap \cdot)$. 
While we would expect that $\psi $ is 
a point process (i.e., that it is measurable),
we have not proved this in general
(unlike in the case of $\chi$), so we leave
this as an open problem and include
the measurability  as an assumption in the next result.

\begin{theo}[Intensity of cells]
\label{meantheo2}
Suppose $\Q$ has  the rectifiable Jordan and non-containment properties,
and either has the piecewise $C^1$ Jordan property or is rotation
invariant. 
Assume that $\beta_3 < \infty$, and $\E[R^2]< \infty$,
 and that $\psi $ is a point process.
Then $\psi$ is a stationary point process, and 
its intensity, denoted  $\beta_1$, is given by
$\beta_1 = \beta_3/2$.  In particular, if 
$\Q$ is 
 rotation invariant, 
 then
\bea
\beta_1 =
  (\pi \lambda^2)^{-1} 
(\E[\H_1(\partial S) ])^2.
\label{0320a}
\eea
\end{theo}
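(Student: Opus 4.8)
The plan is to realise $\Phi$ as a stationary planar tessellation and then invoke the density form of Euler's relation. Regard $\Phi$ as an embedded planar graph whose nodes are the branch points comprising $\Xi$, whose edges are the open arcs of $\Phi$ joining consecutive branch points, and whose bounded faces are the cells of the tessellation, carrying one centroid each. By Theorem \ref{thmconnect}, $\Phi$ is almost surely connected (this is where the rectifiable Jordan and non-containment hypotheses enter), so this graph is connected and its face set is exactly the cell set whose centroids form $\Psi$. Writing $\gamma_0 = \beta_3$ and $\gamma_2 = \beta_1$ for the intensities of nodes and faces, and $\gamma_1$ for the (as yet unidentified) intensity of edges, the goal reduces to two mean-value identities for stationary planar tessellations: the handshake relation $2\gamma_1 = 3\gamma_0$ and the Euler relation $\gamma_0 - \gamma_1 + \gamma_2 = 0$, from which $\gamma_2 = \gamma_0/2$ is immediate.

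First I would establish that almost every branch point has degree exactly three, equivalently that exactly three cells meet there. The formula (\ref{0628c}) exhibits $\beta_3$ as an intensity built from the pairwise boundary crossings $\H_0(\partial\sigma \cap (\partial\sigma'+x))$, so each contributing branch point is a transversal crossing of the visible boundaries of two distinct leaves; under the DLM visibility rule the leaf nearer to the front covers one side of such a crossing, erasing the two arcs of the other leaf's boundary lying behind it, so the local picture is a T-junction at which three arcs meet and three cells abut. The piecewise $C^1$ (or rotation-invariant rectifiable) hypothesis guarantees transversality, and a Mecke-type computation shows that configurations in which three or more leaf boundaries pass through a common point, or in which two boundaries are tangent, form a set of zero intensity. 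Hence branch points of degree $\geq 4$ are negligible, every branch point counted by $\beta_3$ carries three incident edges, and summing degrees over nodes (each edge being counted at its two endpoints) yields $2\gamma_1 = 3\gamma_0$ in the density sense.

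Next I would make the Euler relation quantitative on the window $W_n$. Counting the nodes, edges and faces of $\Phi$ meeting $W_n$, Euler's formula for a connected planar graph gives $V(W_n) - E(W_n) + F(W_n) = O(1)$ up to corrections arising only from faces and edges clipped by $\partial W_n$; since the number of such boundary-straddling cells and arcs is of order $|\partial W_n| = O(n^{1/2})$, and the connectedness of $\Phi$ forbids order-$n$ many interior components each contributing its own Euler constant, these corrections are $o(n)$. Dividing by $n$ and letting $n \to \infty$ (using $\beta_3 < \infty$, $\E[R^2] < \infty$, and the assumed measurability of $\psi$ to obtain the three intensities as limits of normalised counts) gives $\gamma_0 - \gamma_1 + \gamma_2 = 0$. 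Combined with $2\gamma_1 = 3\gamma_0$ this produces $\beta_1 = \gamma_2 = \gamma_0/2 = \beta_3/2$. Finally, in the rotation-invariant case I would substitute the explicit value (\ref{0629a}) of $\beta_3$ to obtain $\beta_1 = (\pi\lambda^2)^{-1}(\E[\H_1(\partial S)])^2$, which is (\ref{0320a}).

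The main obstacle is the rigorous control of boundary effects in the density-Euler step together with the degree-three genericity: one must verify that clipped cells and arcs contribute only $o(n)$, that $\Phi$ restricted to $W_n$ carries no anomalous interior topology (guaranteed by Theorem \ref{thmconnect}), and that higher-order boundary coincidences genuinely have zero intensity, so that the handshake and Euler counts hold exactly in the limit. Once these are in place the remaining algebra is immediate.
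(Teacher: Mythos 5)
Your proposal is correct and follows essentially the same route as the paper: view $\Phi$ as a connected planar graph (connectivity via Theorem \ref{thmconnect}) whose vertices are the branch points, all of degree exactly 3 (tangencies and triple intersections being ruled out exactly as in Lemmas \ref{intbdylem} and \ref{notouchlem}), then combine the handshake lemma with Euler's relation to conclude $\beta_1 = \beta_3/2$ and substitute (\ref{0629a}) in the rotation-invariant case. The only difference is that you sketch a self-contained window-counting proof of the density form of Euler's relation with $O(n^{1/2})$ boundary corrections, whereas the paper simply cites the standard identity for stationary planar tessellations \cite[eqn. (10.3.1)]{SKM}.
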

{\bf Remarks.}
Our formula for the intensity of $\phi$ in Theorem \ref{meantheo} 
 agrees with that of \cite[p. 57]{Cowan} but
is considerably more general. In \cite{Cowan} it is assumed
that $\Q$ is such that a random
 set $S$ with distribution $\Q$ is a uniform random
rotation of a fixed polygon $S_0$.
In \cite[Sec. 7]{Cowan} there is some discussion on
generalising to the case where $S_0$ is  non-polygonal,
 but it is still taken to
be a fixed set.
Similarly,
Equations (\ref{0629a}) and
(\ref{0320a}) also  generalize  formulae in \cite{Cowan}.

\textcolor{\blue}{
Theorem \ref{thmconnect} is perhaps intuitively obvious, but a careful
proof seems to require some effort. As well as being of interest in itself,
 the connectivity of $\Phi$ is required
for the proof of Theorem \ref{meantheo2}.
}

The reason we require  $\Q$ to have the
Jordan and non-containment properties in Theorem 
\ref{meantheo2}, is because the proof relies on a 
topological argument based on the cell boundaries of the DLM 
tessellation forming a connected planar graph with all
vertices of degree 3.  The Jordan property 
(requiring all leaves to be connected with a
Jordan curve boundary)
could be relaxed to  a requirement that every leaf
has a finite (and uniformly bounded)  number of components, each with
a Jordan curve boundary;
 the key requirement here is to avoid 
having leaves which are one-dimensional sticks or have
boundary shaped like a figure 8 or letter $b$, for example, since
then there would be vertices of degree other than 3.
The non-containment 
condition is needed to ensure that the planar graph of
boundaries is connected.
If it fails  (but the Jordan 
condition holds) then one may still deduce in a more
general version of Theorem \ref{meantheo2} that $\beta_3/2$
is the density of   faces minus the density of `holes', 
where by a `hole' we mean a bounded 
component of the union of cell boundaries.

{\bf Examples.}
If the leaves  are all a fixed  convex set $S_0$
with $0 < \H_2(S_0) < \infty$, then
using \textcolor{\blue}{Theorem \ref{meantheo2} and (\ref{0628c}) we have}
$$
\beta_1 = 
\textcolor{\blue}{ \beta_3/2 }
 = (\H_2(S_0))^{-2}   \H_2( S_0 \oplus \check{S_0}),
$$
where $\check{S_0} := \{-x: x \in S_0\}$.  
Therefore,  if moreover
$S_0$ 
 is symmetric (i.e. $S_0 = \check{S}_0$;  for example if $S_0$ is
 a fixed rectangle or circle centred on the origin), then
$
\beta_1 = 4/\H_2(S_0).
$

On the other hand,
if each leaf is a uniformly distributed random rotation of
a unit square, then (\ref{0320a}) gives us $\beta_1 =  16/\pi$.

Recall the definitions of $\cR_0$, $W_n$, 
 $T_n$ and $R$ from
Section \ref{subsecnotation}, \textcolor{\blue}{and of $\phi$, $\phi_t$
from the start of this section}.

\begin{theo}[Asymptotic covariance for edge length]
\label{thlengthcov}
Suppose $\E[(\H_1(\partial S))^2] < \infty$ and $\E[R^4] < \infty$.
Let $f \in \cR_0$.
Then $n^{-1} \Var[\phi(T_n(f))] \to \sigma_2^2 \|f\|_2^2$
as $n \to \infty$, with $\sigma_2^2 := v_1 + v_2- v_3$,
where we set
\bea
 v_1 :=
  \E \int_{\partial S} \int_{\partial S} \lambda_{y-x}^{-1} \H_1(dx) \H_1(dy),
\label{A5def}
\eea
\bea
v_2 : = \lambda^{-2} (\E \H_1(\partial S) )^2   \int_{\R^2}  
((2\lambda/\lambda_x) -1) dx,
\label{A6def}
\eea
and
\bea
v_3 :=  \lambda^{-1} \left( \E [ \H_1(\partial S) ] \right) 
 \E \int_{\partial S} \left( \int_{S}  (2/\lambda_{y-x}) dy \right) \H_1(dx) , 
\label{A7def}
\eea
and the quantities $v_1,v_2,v_3$ are all finite.
More generally, for $t ,u \in \R$ and $f,g \in \cR_0$,
\bea
\lim_{n \to \infty} n^{-1} \Cov(\phi_t(T_n(f)),\phi_u(T_n(g))) = 
\sigma_2^2 \langle f, g \rangle 
\exp(- \lambda|u-t| ) 
\nonumber \\
=: \kappa_2((f,t),(g,u)).
\label{0509a}
\eea
\end{theo}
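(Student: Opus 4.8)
The plan is to recognize $\phi$ (and the evolving family $\phi_t$) as the dead leaves random measure obtained by attaching to each leaf $i$ the surface-measure mark $\H_1(\partial S_i \cap \cdot)$, so that the statement is an instance of the general covariance result for DLRMs developed in Section \ref{secDLRM}; the real content is then to specialize the general covariance kernel and check that it collapses to $v_1+v_2-v_3$. I would nonetheless run the underlying second-moment computation directly. Writing $g$ for a generic test function and expanding $\phi(g)^2$ as a double sum over leaves, I split into the \emph{diagonal} part (both boundary points on the same leaf) and the \emph{off-diagonal} part (two distinct leaves), and evaluate the two resulting expectations by the univariate and bivariate Mecke equations for the marked Poisson process $\sum_i \delta_{(x_i,t_i,S_i)}$.

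The key step is the bivariate visibility probability. Conditioning on two tagged leaves with shapes $S_1,S_2$, positions $x_1,x_2$ and arrival times $t_1,t_2$, the event that a boundary point $a\in\partial S_1+x_1$ and a boundary point $b\in\partial S_2+x_2$ are simultaneously visible factorizes, by independence of the tagged leaves from the rest of the Poisson field, into (i) the indicator that the earlier of the two tagged leaves does not cover the boundary point lying on the later one (in the ordering $t_1<t_2$ this is $\1[\,b\notin S_1^o+x_1\,]$, with the symmetric term for $t_2<t_1$), and (ii) the survival probability against the background field. A Campbell--Mecke count gives the expected number of background leaves obscuring $a$ or $b$ before their respective times as $t_1\lambda+t_2\lambda-\min(t_1,t_2)(2\lambda-\lambda_{b-a})$, where $2\lambda-\lambda_{b-a}$ is the covariogram, so the survival probability is $\exp(-t_1\lambda-t_2\lambda+\min(t_1,t_2)(2\lambda-\lambda_{b-a}))$. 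Integrating over $t_1,t_2\in(0,\infty)$ and handling the two orderings yields the clean kernel
\[
\int_0^\infty\!\!\int_0^\infty(\text{joint visibility})\,dt_1\,dt_2=(\lambda\lambda_{b-a})^{-1}\big(\1[\,b\notin S_1^o+x_1\,]+\1[\,a\notin S_2^o+x_2\,]\big).
\]

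With this kernel the three terms fall out after rescaling $g=T_nf$ and dividing by $n$. The diagonal term converges to $v_1\|f\|_2^2$, using $\lambda_{b-a}^{-1}$ with $a,b$ on a single leaf. Writing the off-diagonal term as $\mathrm{Off}_A-\mathrm{Off}_B$ via $\1[\,b\notin\cdots\,]=1-\1[\,b\in\cdots\,]$, the part $\mathrm{Off}_A$ minus the squared mean $(\E\phi(g))^2$ (computed from the intensity of Theorem \ref{meantheo}) converges to $v_2\|f\|_2^2$, the range cutoff coming from the covariogram; the indicator part $\mathrm{Off}_B$, after the substitution turning $\1[\,b\in S_1^o+x_1\,]$ into an integral of $y$ over $S_1$ tested against $\partial S_1$, converges to $v_3\|f\|_2^2$, the two symmetric contributions each supplying half. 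All these are dominated-convergence arguments in which $f(\cdot+n^{-1/2}r)\to f(\cdot)$ (Riemann integrability of $f\in\cR_0$) and the offset $r$ is controlled by the leaf diameter. This is exactly where $\E[R^4]<\infty$ enters, since $\int_{\R^2}(2\lambda-\lambda_x)\,dx=\E[(\H_2(S))^2]\le\pi^2\E[R^4]$ is what makes $v_2$ finite (and, via Cauchy--Schwarz against $\E[(\H_1(\partial S))^2]$, makes $v_3$ finite). The cross-time statement $\kappa_2$ is obtained by the identical computation with the time integrals for $\phi_t$ and $\phi_u$: the only new feature is an extra factor $\exp(-\lambda|u-t|)$, the probability that the time interval of length $|u-t|$ carries no obscuring leaf, and $\|f\|_2^2$ is replaced by $\langle f,g\rangle$ in the spatial limit.

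I expect the main obstacle to be (a) the careful bookkeeping of the bivariate visibility event, in particular which tagged leaf can obscure which boundary point depending on the arrival-time ordering, and (b) the uniform integrability needed to pass to the limit under the double boundary integral with only fourth-moment control on $R$ and second-moment control on $\H_1(\partial S)$, including verifying at the outset that $v_1,v_2,v_3$ are finite. The geometric and topological subtleties (Jordan and non-containment properties) that complicate the cell-counting theorems do not arise here, since the surface measure is insensitive to how boundaries meet.
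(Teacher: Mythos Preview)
Your proposal is correct and is essentially the paper's approach: the paper's proof consists of a single sentence invoking Theorem~\ref{thcovgen} with the mark $M=\H_1(\partial S\cap\cdot)$ (together with Proposition~\ref{prop:bdy} to identify $\phi$ with the resulting DLRM $\xi$), which is exactly the reduction you name in your first sentence. Your subsequent ``direct'' second-moment computation is simply the proof of Theorem~\ref{thcovgen} specialized to this $M$---the diagonal/off-diagonal split, the two-point exposure probability (your survival kernel is Lemma~\ref{Elem}), and the resulting terms $v_4,v_5,v_6$ collapsing to $v_1,v_2,v_3$ all match the general argument; for the cross-time factor $e^{-\lambda|u-t|}$ the paper uses a slightly cleaner conditioning on $\F_t$ rather than redoing the time integrals, but the content is the same.
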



We now provide a central limit theorem for  $\phi(W_n)$,
under the assumption that the leaves are {\em uniformly bounded} together
with a moment condition on $\H_1(\partial S)$.

\begin{theo}[CLT for the length of tessellation boundaries]
\label{CLTa}
Suppose for some $\eps, r_0 \in (0,\infty)$ 
that $\E[(\H_1(\partial S))^{2 + \eps}] < \infty$ and
 $\Pr[R \leq r_0]=1$.
Then
\bea
n^{-1/2} (\phi(W_n ) - \E \phi(W_n)) \tod {\cal N}(0,\sigma_2^2),
\label{CLTeq2}
\eea
where $\sigma_2$ is as given in Theorem \ref{thlengthcov}. 
More generally, the finite-dimensional distributions
of the random field
 $ (n^{-1/2}( \phi_t(T_n(f)) - \E[\phi_t(T_n(f))]) , f \in \cR_0, t \in \R)$ 
converge to those of a centred Gaussian random field with covariance function
$\kappa_2((f,t),(g,u)) $ given by (\ref{0509a}).
\end{theo}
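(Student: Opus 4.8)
The plan is to derive the asymptotic normality from the finite \emph{spatial} range of dependence forced by the hypothesis $\Pr[R\le r_0]=1$, while reading off the limiting (co)variances from the already-established Theorem \ref{thlengthcov}. Since that theorem identifies $\sigma_2^2$ and $\kappa_2$, only the Gaussianity of the limit needs proof. By the Cram\'er--Wold device it therefore suffices to establish a one-dimensional CLT for an arbitrary finite linear combination $Y_n:=\sum_{j=1}^{k} a_j\,\phi_{t_j}(T_n f_j)$, with $a_j\in\R$, $f_j\in\cR_0$ and $t_j\in\R$: the limiting variance of $n^{-1/2}Y_n$ is then $\sum_{j,j'}a_j a_{j'}\kappa_2((f_j,t_j),(f_{j'},t_{j'}))$ by Theorem \ref{thlengthcov}, which also guarantees that $n^{-1}\Var[Y_n]$ converges.

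The next step is to localise. Working with the time-reversed DLM, partition $\R^2$ into the unit cubes $(C_z)_{z\in\Z^2}$ and write $Y_n=\sum_z Y_{n,z}$, where $Y_{n,z}$ collects the contributions to $Y_n$ from the portions of cell boundaries lying in $C_z$. A boundary arc meeting $C_z$ belongs to a leaf whose reference point lies in $C_z\oplus B(r_0)$, and whether such an arc is visible at time $0$ depends only on the leaves covering points of that arc, whose reference points lie in $C_z\oplus B(2r_0)$. Hence each $Y_{n,z}$ is a measurable function of the marked Poisson process restricted to the spatial tube $(C_z\oplus B(2r_0))\times\R$; consequently $Y_{n,z}$ and $Y_{n,z'}$ are independent whenever $\|z-z'\|$ exceeds a constant depending only on $r_0$. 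Thus $(Y_{n,z})_z$ is a sum over a dependency graph of bounded degree, and, because each $f_j$ has compact support, only $O(n)$ of the cells contribute, of which the $O(n^{1/2})$ cells meeting the boundary of the support region affect the variance only at order $o(n)$ and are negligible.

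I would then apply a normal-approximation bound for sums over bounded-degree dependency graphs — for instance the Stein-method bounds of Chen--Shao or Baldi--Rinott — to $n^{-1/2}(Y_n-\E Y_n)$. Such a bound controls the distance to the normal law by a quantity of order $n^{-1/2}$ times a uniform bound on the $(2+\eps)$-moments of the cell contributions $Y_{n,z}$. Combined with the convergence of $n^{-1}\Var[Y_n]$ from Theorem \ref{thlengthcov}, this yields the one-dimensional CLT, and hence, via Cram\'er--Wold, the full finite-dimensional statement; the identical localisation applies verbatim across the different times $t_j$, since each $\phi_{t_j}$ shares the same spatial range of dependence.

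The main obstacle is the uniform moment bound $\sup_{n,z}\E\big[|Y_{n,z}|^{2+\eps}\big]<\infty$. Here $|Y_{n,z}|$ is at most a constant (depending on the $a_j$ and $\sup|f_j|$) times the total visible boundary length inside $C_z$, which is in turn bounded by $\sum_i \H_1(\partial S_i)$ over the leaves $i$ that are visible at time $0$ and meet $C_z$. To bound its $(2+\eps)$-moment I would combine two ingredients: the hypothesis $\E[(\H_1(\partial S))^{2+\eps}]<\infty$ controlling a single leaf, and a tail estimate showing that the number of leaves visible at time $0$ within a fixed-size region has exponentially decaying tails. The latter is the delicate point: it is a consequence of the covering structure of the DLM, where boundedness of $R$ ensures that only leaves in a fixed-size neighbourhood compete for visibility at a given site, so that the visible count is dominated by a geometric-type variable. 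A Rosenthal- or Minkowski-type inequality then transfers these two inputs into the required $(2+\eps)$-moment bound, completing the argument.
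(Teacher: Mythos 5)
Your proposal is correct and is essentially the paper's own argument: the paper proves Theorem \ref{CLTa} by applying its general DLRM result (Theorem \ref{th:CLTgen}, with $M(\cdot)=\H_1(\partial S\cap\cdot)$), and the proof of that general result proceeds exactly as you describe --- Cram\'er--Wold, a unit-cube decomposition whose summands form a bounded-degree dependency graph because $R\le r_0$, the Chen--Shao normal approximation bound for dependency graphs (Lemma \ref{lem0207b}), and a uniform $(2+\eps)$-moment bound on the visible mass in a unit cube obtained from a geometric covering-time argument (Lemma \ref{lem0207a2}). The points you leave implicit (centring of the cell contributions, the degenerate case of zero limiting variance, which the paper settles by Chebyshev, and the dependence between the covering count $N$ and the leaf boundary lengths, which the paper handles via the fact that $\{N\ge i\}$ is determined by the first $i-1$ arrivals) are minor and fillable.
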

\begin{theo}[Functional CLT for the length of tessellation boundaries]
\label{FCLT2}
Suppose \textcolor{\blue}{for some $ \eps, r_0 \in (0,\infty)$ 
that $\E[(\H_1(\partial S))^{4+\eps}] < \infty$ and
 $\Pr[R \leq r_0]=1$.}
Let $f \in \cR_0$.
Then
 $ (n^{-1/2}( \phi_t(T_n(f)) - \E[\phi_t(T_n(f))]) , t \in \R)$ 
converges in distribution as $n \to \infty$, in the space $D(-\infty,\infty)$,
to the stationary Ornstein-Uhlenbeck process with
covariance function $\kappa_2((f,t),(f,u))$.
\end{theo}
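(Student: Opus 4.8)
The plan is to follow the classical two-step route to a functional CLT in Skorohod space: establish convergence of the finite-dimensional distributions, then prove tightness. The first step is already in hand. Fixing $f$ and letting $t$ range over $\R$, the random functions $X_n := (n^{-1/2}(\phi_t(T_n f) - \E[\phi_t(T_n f)]))_{t \in \R}$ have, by the general statement of Theorem \ref{CLTa} (whose $(2+\eps)$-moment hypothesis is implied by the stronger $(4+\eps)$-moment assumption here), finite-dimensional distributions converging to those of a centred Gaussian process with covariance $\kappa_2((f,t),(f,u)) = \sigma_2^2 \|f\|_2^2 \exp(-\lambda|u-t|)$. As noted for $d=1$ after Theorem \ref{CLTb}, a stationary centred Gaussian process with such an exponentially decaying covariance is exactly the stationary Ornstein--Uhlenbeck process; so the candidate limit is identified and has (a.s.) continuous paths. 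Since the forward boundary measure $\phi_t(T_n f)$ is, within any bounded space-time region, piecewise constant in $t$ with jumps only at the locally finite leaf arrival times, and is right-continuous by the convention $t_i \le t$ in (\ref{Phindef}), each realization of $X_n$ lies in $D(-\infty,\infty)$. This places $\phi_t$ squarely within the dead leaves random measure framework, with each leaf $S_i$ carrying the marking measure $\H_1(\partial S_i \cap \cdot)$.

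It remains to prove tightness of $(X_n)$ in $D(-\infty,\infty)$. Because the limit has continuous paths, it suffices to prove tightness of the restrictions to each compact interval $[-T,T]$ in the Skorohod $J_1$ topology and then pass to the whole line using the criterion of \cite{Whitt} for non-compact time domains. For tightness on $[-T,T]$ I would apply the standard fourth-moment criterion (Billingsley's Theorem 13.5 in \cite{Bill}): it suffices to produce a constant $C = C(f,T)$, independent of $n$, such that
\[
\E\!\left[(X_n(t_2)-X_n(t_1))^2 (X_n(t_3)-X_n(t_2))^2\right] \le C\,(t_3-t_1)^2
\]
for all $-T \le t_1 \le t_2 \le t_3 \le T$, the exponent $2 > 1$ being what makes the criterion bite. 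Unwinding the $n^{-1/2}$ scaling, this amounts to bounding the fourth moment of the unnormalised increments by $C\,n^2 (t_2-t_1)(t_3-t_2)$.

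To obtain this bound I would exploit the Poisson structure of the leaves together with the hypotheses $\Pr[R \le r_0]=1$ and $\E[(\H_1(\partial S))^{4+\eps}]<\infty$. Running the DLM forward, the increment $\phi_{t_2}-\phi_{t_1}$ is determined entirely by the leaves arriving in $\R^2 \times (t_1,t_2]$: each such leaf both adds its own visible boundary and erases boundary lying beneath it, and because $R \le r_0$ every such contribution is localized within distance $r_0$ of the arriving leaf. Thus the increment is a sum of local contributions driven by a Poisson process of mean $\sim n(t_2-t_1)$ on the rescaled window, with contributions from regions more than $2r_0$ apart being independent. Using the multivariate Mecke formula (as in Section \ref{secproofsDLRM}, and as already used for Theorems \ref{thlengthcov} and \ref{CLTa}), the fourth moment decomposes into integrals over ordered tuples of at most four leaves; the bounded range makes all off-diagonal integrals factor over disjoint spatial regions and scale like $n^2(t_2-t_1)(t_3-t_2)$, while the diagonal terms are controlled by $\E[(\H_1(\partial S))^4]$, finite by hypothesis. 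The extra $\eps$ supplies the uniform integrability needed to absorb boundary effects and to pass per-leaf moments through a Rosenthal-type estimate.

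I expect the moment bound of the previous paragraph to be the main obstacle. The delicate point is not the leading spatial scaling (inherited from the variance asymptotics of Theorem \ref{thlengthcov}) but showing that the expected squared change of boundary length over a short window $(t_1,t_2]$ is genuinely $O(n(t_2-t_1))$, uniformly in $n$, with enough higher-moment control that the product of two adjacent increments factorizes up to a constant. This requires a careful geometric accounting of how an arriving leaf alters the existing cell boundaries --- adding arcs of $\partial S_i$ that become exposed and deleting arcs that become covered --- and of the total length so affected; here the rectifiability of the grain boundaries and the finite range $r_0$ keep each leaf's influence uniformly bounded in expectation after integration against $f$. Once the moment bound is established, tightness on $[-T,T]$ follows from \cite{Bill}, the extension to $D(-\infty,\infty)$ from \cite{Whitt}, and combining with the finite-dimensional convergence from Theorem \ref{CLTa} yields convergence in distribution to the stationary Ornstein--Uhlenbeck process with covariance $\kappa_2((f,t),(f,u))$, as claimed. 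The argument runs in close parallel to the $d=1$ functional CLT of Theorem \ref{FCLT1}, the additional difficulty lying entirely in the planar geometric estimates.
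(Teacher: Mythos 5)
Your overall architecture --- finite-dimensional convergence from Theorem \ref{CLTa}, tightness on compact time intervals via a moment bound on the product of squared adjacent increments, Billingsley's criterion, and then passage to $D(-\infty,\infty)$ via \cite{Whitt} --- is exactly the paper's route: the paper proves Theorem \ref{FCLT2} by specializing the general DLRM result, Theorem \ref{th:CLTgen}(b) with $M = \H_1(\partial S \cap \cdot)$, whose proof consists of precisely these steps, with the moment bound supplied by Lemma \ref{lem4D}. The gap is in that moment bound, which you leave as a sketch, and whose sketched mechanism does not work as stated. You claim the fourth moment ``decomposes into integrals over ordered tuples of at most four leaves'' via the Mecke formula, with diagonal terms controlled by $\E[(\H_1(\partial S))^4]$. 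But an increment $\phi_{t_2}(T_n f) - \phi_{t_1}(T_n f)$ is not a functional of the leaves arriving in $(t_1,t_2]$ alone: its negative part is the pre-existing boundary measure $\phi_{t_1}$ restricted to the union of the newly arrived leaves, and $\phi_{t_1}$ depends on unboundedly many leaves from the past. After applying Mecke to the arrivals in the two time slabs, every term still carries factors of the form $\phi_{t_1}(Q)^k$ for unit boxes $Q$. These are not controlled by grain moments alone; one needs the separate fact that the \emph{equilibrium} measure has finite fourth moments on unit cubes, which is the paper's Lemma \ref{lem0207a2}, proven by a covering-time/geometric-trials argument, and which your proposal never establishes.

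Second, the precise bound you target, $\E[(X_n(t_2)-X_n(t_1))^2 (X_n(t_3)-X_n(t_2))^2] \leq C (t_3-t_1)^2$, is stronger than what the paper's argument delivers, and the loss occurs exactly at the point above. The dangerous terms are products such as $\phi_{t_1}(Q)^2 \, {\bf 1}\{N \geq 1\}$, where $N$ counts arrivals near $Q$ during $(t_1,t_2]$: since $\phi_{t_1}(Q)$ is unbounded (only $(4+\eps)$ moments are available), extracting the factor $\Pr[N \geq 1] = O(t_2-t_1)$ costs an application of H\"older's inequality, and the paper (proof of Lemma \ref{lem4D}, estimate (\ref{0530d})) correspondingly obtains only the exponent $1+\eps'$ with $\eps' = \min(\eps/4,1/2)$. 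That weaker exponent is all Billingsley's criterion requires, since any exponent exceeding $1$ suffices. So you should either lower your target to $C(t_3-t_1)^{1+\eps'}$ and carry out the positive/negative-part decomposition of the increments as in Lemma \ref{lem4D} (cube decomposition, dependency graph, the bound $R_i^- \leq \xi_s(Q_{n,i}){\bf 1}\{N_{ij}\geq 1\}$, Cauchy--Schwarz and H\"older), or supply a genuinely finer conditioning argument exploiting the independence of $\phi_{t_1}$ from later arrivals in order to rescue the exponent $2$; the latter appears nowhere in your sketch and is in any case unnecessary.
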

{\bf Remarks.}
The limiting Gaussian process in the preceding theorem is a stationary
  Ornstein-Uhlenbeck process.
Similar remarks to those made after Theorem \ref{CLTb}, regarding
possible extensions to the result above, apply here.

It should be possible to adapt 
the conditional variance argument  of Avram and Bertsimas
\cite{AvB}
to show the proportionate variance of $\phi(W_n)$ is bounded
away from zero, so that  $\sigma_2^2 $
 is strictly positive.

\section{Dead leaves random measures}
\label{secDLRM}
\allco

In this section we present some results for the DLM in
arbitrary dimension $d \in \N$ (which we shall prove in
 Section \ref{secproofsDLRM}), which enable us to consider some
of the results already stated in a unified framework and also
to indicate further results on dead-leaves type models that can
be derived similarly.

 It is convenient here to consider a slightly more general
setting than before.
  We augment our mark space (previously taken to be $\cK$)
to now be the space $\cK \times \bM$.
  Let $\Q'$ denote a probability measure on $\cK \times \bM$ 
with first  marginal $\Q$. Assume that our
Poisson process $\Po = \sum_{i=1}^\infty \delta_{(x_i,t_i)}$  
in $\R^d \times \R $
is now independently marked using a sequence 
$(S_i,M_i)_{i \geq 1}$
of independent random elements of $\cK \times \bM$
with common distribution $\Q'$.
With each point $(x_i,t_i)$ of $\Po$
 we
associate a `leaf' $S_i+x_i$ and also a {\em measure} $M_i+x_i$, where
$(\mu +x)(A) := \mu(A+ (-x))$ for any $(\mu,x) \in \bM \times
\R^d$ and  $A \in \cB^d$.
For each $i$ the measure $M_i +x_i$ is added at time $t_i$
but is then restricted to the complement of
 regions covered  by later arriving leaves
$S_j + x_j$, as they arrive.
Thus, at time $t \in \R$ we end up with a measure
\bea
\xi_t := \sum_{\{i: t_i \leq t \}} (M_i +x_i) (\cdot \cap 
\R^d \setminus \cup_{\{j: t_i < t_j \leq t 
 \}} (S_j + x_j)), 
\label{xitdef}
\eea 
which we call the {\em dead leaves random measure} (DLRM) at time $t$.
We also define the {\em  time-reversed DLRM} (at time zero) by
\bea
\xi := \sum_{\{i: t_i \geq 0  \}} (M_i +x_i) (\cdot \cap 
\R^d \setminus \cup_{\{j: 0 \leq t_j < t_i 
 \}} (S_j + x_j)), 
\label{xidef}
\eea

Here are some examples of how to specify 
\textcolor{\blue}{
a type of distribution  $\Q'$ that yields a}
  resulting DLRM of interest.
In these examples,
\textcolor{\blue}{
to describe $\Q'$ 
we let $(S,M)$ 
denote a random element
of $\cK \times \bM$ having
the distribution
 $\Q'$, and we describe the interpretation of the resulting DLRM.
}
 Often we take   $M$ to be supported by $S$ but this is not essential. 
\begin{itemize}
\item
Let $M(\cdot) := \H_{d-1}(\partial S \cap \cdot)$.
Then (see Proposition \ref{prop:bdy} below),
 the resulting DLRM is $\xi_t = \H_{d-1}(\Phi_t \cap \cdot) $,
where $\Phi_t$ is the set of points in the union of
all cell boundaries of the DLM tessellation
at time $t$. Similarly, $\xi = \H_{d-1}(\Phi \cap \cdot) $.
For $d=1$,  this $\xi$ is the same as the  measure $\eta$ considered
earlier.  For $d=2$, this $\xi$ is the same as the measure $\phi$
considered earlier.
\item
Take $d=2$ and let $M$ be
\textcolor{\blue}{the counting measure supported by} 
 the set of corners of $S$ 
\textcolor{\blue}{(counting measures are defined in
e.g. \cite{LP})}. Here we could be assuming
that the shape $S$ is almost surely polygonal, or more
generally,  that its boundary  is almost surely a piecewise $C^1$
Jordan curve.
We defined a `corner' of such a curve in Section \ref{subsecnotation}. 
  The resulting measure $\xi$ is
\textcolor{\blue}{the counting measure supported by} 
the set of corners of the boundaries of the DLM tessellation,
which has been considered in \cite{Cowan}.
\item
{\em Colour Dead Leaves Model (CDLM)}. Let each leaf have a `colour'  (either 1 or 0)
and let $M$ be Lebesgue measure restricted to $S$ (if the colour is 1)
or the zero measure (if the colour is 0).  Then $\xi$ is Lebesgue
measure restricted to those visible leaves which are coloured 1.  
The CDLM was introduced by Jeulin in \cite{JeulinMik} (see
also \cite{Jeulin}), and is the basis of
the percolation problems considered in \cite{Schramm,Muller}.
\item
{\em Dead Leaves Random function (DLRF).} Let \textcolor{\blue}{$M$
 have a density} 
given by a random function $f:\R^d \to \R_+$ with support 
$S$ (representing for example the level of `greyscale' on 
the leaf $S$). Then $\xi$ is a measure with density at each site
$x \in \R^d$ given by the
level of greyscale on the leaf visible at $x$.
The DLRF  has been proposed by Jeulin \cite{Jeulin} 
for modelling microscopic images.

\item  {\em Seeds and leaves model.}
 Imagine that at each `event' of
our Poisson process the arriving object is
 either a random finite set of points
(seeds)  or a leaf. Thus for the random 
$\Q'$-distributed pair $(S,M)$,
   either $M$ is  a finite sum of
Dirac measures and $S$ is the empty set, or $M$ is the zero measure
and $S$ is a non-empty set in $\cK$ (a  leaf).
 The point process $\xi_t$ will then represent
the set of locations of seeds on the ground that are
 visible (i.e., not covered by leaves) at time $t$. It might
be that these are the seeds which have potential to grow into new trees,
or that they are the seeds which get eaten.
\end{itemize}

We now give some general results on the DLRM. In applying these results
elsewhere in this paper, we concentrate on the first of the examples
 just listed.  However, the general results could similarly be applied
to the other examples.
In the following results, $(S,M)$ denotes a random element
of $\cK \times \bM$ with distribution $\Q'$, and
we write $|M|$ for $M(\R^d)$.  
 We define $\lambda$, $\lambda_x$, $R$  and $W_n$
 as in Section \ref{subsecnotation}.
\begin{theo}[Intensity of $\xi$]
\label{th:momgen}
Assume $\Q'$ is such that $\E[|M|] < \infty$, and also 
$\E[\H_d(S \oplus B(1))]< \infty$.
Then $\xi$ defined at (\ref{xidef}) is a stationary random measure and its
 intensity, denoted  $\alpha $, is given by
\bea
 \alpha = \lambda^{-1} \E[|M|].
\label{genmean}
\eea
\end{theo}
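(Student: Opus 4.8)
The plan is to reduce the statement to a computation of the mean measure $A \mapsto \E[\xi(A)]$ for bounded Borel $A \subset \R^d$ via the Mecke equation for the marked Poisson process, and to read off both stationarity and the value of the intensity from it. Stationarity of $\xi$ is essentially automatic: the construction (\ref{xidef}) commutes with spatial shifts $x \mapsto x + a$, and the law of $\Po$ together with its independent marks is invariant under such shifts, so $\xi(\cdot + a) \eqd \xi(\cdot)$. Before computing the mean I would check that $\xi$ is a genuine random measure; measurability follows from its representation as a countable sum over the points of $\Po$ of the measures $(M_i + x_i)$ restricted to Borel sets, while local finiteness comes a posteriori, since once $\E[\xi(A)] < \infty$ is established it forces $\xi(A) < \infty$ almost surely. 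The hypothesis $\E[\H_d(S \oplus B(1))] < \infty$ enters here to guarantee $\lambda = \E[\H_d(S)] \le \E[\H_d(S \oplus B(1))] < \infty$, so that (recalling $\lambda > 0$) the constant $\lambda^{-1}\E[|M|]$ is finite.

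For the mean, write $\xi(A) = \sum_i g((x_i,t_i,S_i,M_i),\Po)$ with
\[
g((x,t,s,m),\Po) := \mathbf{1}\{t \ge 0\}\,(m+x)\Big(A \setminus \bigcup_{j:\,0 \le t_j < t}(S_j+x_j)\Big).
\]
Since $g \ge 0$ and the union runs only over arrival times strictly below $t$, adding a point at time $t$ does not change that union, so $g((x,t,s,m),\Po + \delta_{(x,t,s,m)}) = g((x,t,s,m),\Po)$, and the Mecke equation (see \cite{LP}) yields
\[
\E[\xi(A)] = \int_0^\infty \int_{\R^d} \int_{\cK \times \bM} \E\big[(m+x)(A \setminus U_t)\big]\, \Q'(d(s,m))\,dx\,dt,
\]
where $U_t := \bigcup_{j:\,0 \le t_j < t}(S_j + x_j)$ depends only on $\Po$.

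For fixed $(x,t,s,m)$ the measure $m+x$ is deterministic, so by Fubini the inner expectation equals $\int_{\R^d} \mathbf{1}\{y \in A\}\,\Pr[y \notin U_t]\,(m+x)(dy)$, and the crux is the void probability $\Pr[y \notin U_t]$. The leaves covering a fixed $y$ with arrival time in $[0,t)$ form a thinned Poisson process whose mean, by the marking theorem, is $\int_0^t \int_{\R^d} \int_{\cK} \mathbf{1}\{y - x' \in s'\}\,\Q(ds')\,dx'\,dt' = t\,\E[\H_d(S)] = t\lambda$, independently of $y$; hence $\Pr[y \notin U_t] = e^{-\lambda t}$. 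Substituting, the $t$-integral contributes $\int_0^\infty e^{-\lambda t}\,dt = \lambda^{-1}$, the $x$-integral contributes $\int_{\R^d}(m+x)(A)\,dx = \H_d(A)\,|m|$ by translation invariance of Lebesgue measure, and integrating $|m|$ against $\Q'$ gives $\E[|M|]$. Collecting factors yields $\E[\xi(A)] = \lambda^{-1}\E[|M|]\,\H_d(A)$, which identifies the intensity as $\alpha = \lambda^{-1}\E[|M|]$ and confirms local finiteness.

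I expect the main obstacle to be not the displayed calculation, which is routine once set up, but the measure-theoretic bookkeeping that makes it legitimate: verifying joint measurability of $g$ in its two arguments (using that $U_t$ is a random closed set, so $A \setminus U_t$ is Borel and $(m+x)(A \setminus U_t)$ is measurable in both the mark and in $\Po$), confirming that the self-interaction term vanishes by the strict inequality $t_j < t$ so that Mecke closes with the clean factor $e^{-\lambda t}$, and establishing at the outset that $\xi$ takes values in $\bM$ so that the Mecke identity for nonnegative integrands may be invoked.
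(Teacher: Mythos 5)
Your proposal is correct and is essentially the paper's own argument: stationarity from shift-invariance of the independently marked Poisson process, measurability of $\xi$ as a technical preliminary (the paper's Lemma \ref{lemmeas}), and then the Mecke formula combined with the void probability $\Pr[y \notin U_t] = e^{-\lambda t}$ (the paper's Lemma \ref{Elem}) and Fubini to get $\E[\xi(A)] = \lambda^{-1}\E[|M|]\,\H_d(A)$. One small correction: the hypothesis $\E[\H_d(S \oplus B(1))] < \infty$ is not there merely to make $\lambda$ finite; it is the input to the paper's Lemma \ref{Boolem}, which guarantees that almost surely only finitely many leaves meet a bounded window by any bounded time --- exactly what makes your set $U_t$ locally a finite union of closed sets (hence closed) and lets the measurability bookkeeping you flag at the end go through.
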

\begin{theo}[Asymptotic covariance for the DLRM]
\label{thcovgen}
Suppose $\E[|M|^2] < \infty$ and $\E[R^{2d}] < \infty$.
Let $f \in \cR_0$. 
Then $n^{-1} \Var[\xi(T_n(f))] \to \sigma_0^2 \|f\|_2^2$
as $n \to \infty$,
where we set
$\sigma_0^2 := v_4 + v_5- v_6$
with
\bea
 v_4 :=
  \E \int_{\R^d} \int_{\R^d} \lambda_{y-x}^{-1} M(dy) M(dx) ,
\label{B5def}
\eea
\bea
v_5 : = \lambda^{-2} (\E [|M|] )^2   \int_{\R^d}
((2\lambda/\lambda_x) -1 ) dx ,
\label{B6def}
\eea
and
\bea
v_6 :=  \lambda^{-1} \E [ |M| ]
 \E \left[ \int_{\R^d}  \left( \int_{S} (2/\lambda_{y-x}) dy 
\right)
M(dx)
\right], 
\label{B7def}
\eea
and the quantities $v_4,v_5,v_6$ are all finite.
More generally, for $t ,u \in \R$, $f,g \in \cR_0$,
\bea
\lim_{n \to \infty} n^{-1} \Cov(\xi_t(T_n(f)),\xi_u(T_n(g))) = 
\sigma_0^2 \langle f,g \rangle 
\exp(- \lambda|u-t| ) 
\label{0509a2}
 \\
=: \kappa_0((f,t),(g,u)).
\label{kappa0def}
\eea
\end{theo}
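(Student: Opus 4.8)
The plan is to write $\xi(T_nf)$ as a sum over the atoms of the marked Poisson process $\Po$ and to evaluate its first two moments using the Mecke equation and its bivariate form \cite{LP}. Writing $Z_i$ for the contribution of the $i$-th leaf, namely
\[
Z_i:=\int_{\R^d}T_nf(w)\,\1[w\notin\cup_{j:\,0\le t_j<t_i}(S_j+x_j)]\,(M_i+x_i)(dw),
\]
we have $\xi(T_nf)=\sum_{i:\,t_i\ge0}Z_i$ and $\E[\xi(T_nf)^2]=\E\sum_iZ_i^2+\E\sum_{i\ne j}Z_iZ_j$. The single-point Mecke formula applied to the diagonal sum, and the bivariate Mecke formula applied to the off-diagonal sum, reduce both to integrals over one or two distinguished space--time--mark points, with the remaining (\emph{background}) leaves averaged out. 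Subtracting $(\E\xi(T_nf))^2=\big(\lambda^{-1}\E[|M|]\int_{\R^d}T_nf(w)\,dw\big)^2$, supplied by Theorem~\ref{th:momgen}, then yields $\Var[\xi(T_nf)]$, and the three pieces $v_4,v_5,v_6$ will emerge respectively from the diagonal sum, the off-diagonal sum minus the squared mean, and a blocking correction.

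The heart of the argument is a void-probability computation for the background process. For fixed sites $w,w'$, a background leaf $(x_0,t_0,S_0)$ covers $w$ or $w'$ iff $x_0\in(w+\check S_0)\cup(w'+\check S_0)$, so the background leaves covering $w$ or $w'$ form, in time, a Poisson process of rate $\E[\H_d(\check S\cup(\check S+(w'-w)))]=\lambda_{w'-w}$, using translation invariance and the reflection identity $\E\H_d(\check S\cup(\check S+z))=\lambda_z$ (symmetry of the covariogram). Hence for one distinguished leaf arriving at $t_i\ge0$ the probability that neither $w$ nor $w'$ is covered earlier equals $\exp(-t_i\lambda_{w'-w})$, and $\int_0^\infty e^{-t_i\lambda_{w'-w}}\,dt_i=\lambda_{w'-w}^{-1}$, exactly the kernel of $v_4$. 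For two distinguished leaves at times $t_1<t_2$ the later leaf is visible at $w_2$ only if, additionally, $w_2\notin S_1+x_1$; the background void probability is $\exp(-t_1\lambda_{w_2-w_1}-(t_2-t_1)\lambda)$, and $\int_0^\infty\!\int_0^{t_2}$ of this equals $(\lambda\lambda_{w_2-w_1})^{-1}$. Doubling by symmetry under interchange of the two distinguished leaves and writing $\1[w_2\notin S_1+x_1]=1-\1[w_2\in S_1+x_1]$ splits the off-diagonal sum as $T_1-T_2$.

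After the change of variables $w=a+x$ (translating each distinguished leaf to the origin) and integration over the leaf location, $T_1$ collapses to $\tfrac{2}{\lambda}(\E|M|)^2\iint T_nf(u)T_nf(v)\lambda_{v-u}^{-1}\,du\,dv$, whose leading $O(n^2)$ behaviour cancels exactly against $(\E\xi(T_nf))^2$, leaving $n^{-1}(T_1-(\E\xi)^2)\to v_5\|f\|_2^2$. The decisive point is that the resulting integrand $\tfrac{2}{\lambda\lambda_x}-\tfrac1{\lambda^2}=\tfrac{2\lambda-\lambda_x}{\lambda^2\lambda_x}$ is integrable over $\R^d$: it is dominated by $\lambda^{-3}(2\lambda-\lambda_x)$, and $\int_{\R^d}(2\lambda-\lambda_x)\,dx=\E[\H_d(S)^2]\le\pi_d^2\,\E[R^{2d}]<\infty$. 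The term $T_2$, after the substitution $y=a_2+x$ turning its indicator into an integral of $\lambda_{y-a_1}^{-1}$ over $S_1$, converges to $v_6\|f\|_2^2$, while the diagonal sum converges to $v_4\|f\|_2^2$. In each case the scaling limit rests on $T_nf(x)=f(n^{-1/d}x)$ and the $L^2$-continuity of translation, $n^{-1}\int T_nf(u)T_nf(u+x)\,du\to\|f\|_2^2$ for fixed $x$, justified by dominated convergence using the integrable bounds above together with $\E[|M|^2]<\infty$; the same bounds give finiteness of $v_4,v_5,v_6$ (e.g. $v_4\le\lambda^{-1}\E[|M|^2]$). Assembling, $n^{-1}\Var[\xi(T_nf)]\to(v_4+v_5-v_6)\|f\|_2^2=\sigma_0^2\|f\|_2^2$.

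For the general covariance one repeats the computation for the forward measures $\xi_t,\xi_u$, the only change being the time bookkeeping: when a leaf contributes at $w$ to $\xi_t$ and at $w'$ to $\xi_u$ with $t<u$, the background must avoid $\{w,w'\}$ on $(t_i,t]$ (rate $\lambda_{w'-w}$) and avoid $w'$ on the extra window $(t,u]$ (rate $\lambda$), with $t_i$ now ranging over $(-\infty,t]$. Integrating out $t_i$ reproduces the same spatial kernels, multiplied by the survival factor $e^{-\lambda(u-t)}=e^{-\lambda|u-t|}$ from the window $(t,u]$; since this factor multiplies every diagonal and off-diagonal contribution, the limit is $\sigma_0^2\langle f,g\rangle e^{-\lambda|u-t|}$. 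I expect the main obstacle to be the $O(n^2)$ cancellation of the third paragraph: one must show that a difference of two quantities each of order $n^2$ is genuinely of order $n$ with the stated constant, which relies on integrability of the covariogram-type integrand over all of $\R^d$ (hence on $\E[R^{2d}]<\infty$ rather than on boundedness of the leaves) and on a careful dominated-convergence argument for the rescaled correlation integrals; the remaining delicate points are the bivariate-Mecke bookkeeping of which leaf blocks which, and the measurability of the associated set integrals.
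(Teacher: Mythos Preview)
Your variance computation (the case $t=u$, same test function) follows essentially the same route as the paper: split $\E[\xi(T_nf)^2]$ into diagonal and off-diagonal sums, apply the univariate and bivariate Mecke formulas, integrate out the arrival times using the void probability $\exp(-\lambda_{y-x}t)$, subtract the squared mean, and pass to the limit by a change of scale plus dominated convergence. Your finiteness argument for $v_5$ via $\int_{\R^d}(2\lambda-\lambda_x)\,dx=\E[\H_d(S)^2]\le\pi_d^2\E[R^{2d}]$ is the same as the paper's (indeed slightly sharper). For $f\neq g$ at equal times the paper invokes polarisation, whereas your direct computation handles two test functions simultaneously; this is a cosmetic difference.

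Where the two arguments genuinely diverge is the passage to $t\neq u$. You propose to redo the Mecke calculation with the extra survival factor, asserting that $e^{-\lambda|u-t|}$ ``multiplies every diagonal and off-diagonal contribution''. This is correct at the level of the \emph{covariance}, but not term-by-term in the off-diagonal second moment: when the leaf contributing to $\xi_u$ arrives in the window $(t,u]$, its visibility is independent of the earlier leaf's, there is no joint-void kernel $\lambda_{w_2-w_1}^{-1}$, and the contribution is $\lambda^{-2}(1-e^{-\lambda(u-t)})$ times a product of first moments. Only after subtracting $\E[\xi_t(f_n)]\E[\xi_u(g_n)]$ does this piece cancel and the stated factorisation $\Cov(\xi_t,\xi_u)=e^{-\lambda(u-t)}\Cov(\xi_t,\xi_t)$ emerge. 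So your approach works once this extra case is accounted for. The paper instead avoids the time-case analysis entirely: it conditions on the $\sigma$-algebra of arrivals up to time $t$ and shows that the centred process satisfies $\E[\,\txi_u(g_n)\mid \text{past up to }t\,]=e^{\lambda(t-u)}\txi_t(g_n)$, which immediately reduces the general covariance to the equal-time case. This conditioning argument is shorter and makes the Ornstein--Uhlenbeck structure transparent, while your direct route is more computational but self-contained.
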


\textcolor{\blue}{
 Theorem \ref{thcovgen} does not rule out the possibility
that $\sigma_0$ could be zero. Our formula for $\sigma_0^2$ has some 
resemblance for the formula for the asymptotic variances of
certain measures associated with the Boolean model in \cite[eqn (7.3)]{HM}.  
There is a certain similarity between the manner in which these measures
are defined in \cite{HM}, and the DLRMs considered here. However there
is no time-parameter in the definition of the Boolean model.
}

\begin{theo}[CLT for the DLRM]
\label{th:CLTgen}
(a)
Suppose for some $\eps, r_0 \in (0,\infty)$ that
 $\E[|M|^{2+\eps} ] < \infty$,
$M$ is supported by
the ball $B(r_0)$ almost surely, \textcolor{\blue}{and that} $R \leq r_0$ almost surely.
Then with $\sigma_0$ given in Theorem \ref{thcovgen},
 the finite-dimensional distributions of the 
random field 
$n^{-1/2}(\xi_t(T_n(f)) - \E [\xi_t(T_n(f))] )_{f \in \cR_0,t \in \R} $
 converge to those of a centred Gaussian random field with covariance
function $\kappa_0((f,t),(g,u))$ given by (\ref{kappa0def}).
In particular,
\bea
n^{-1/2} (\xi(W_n) - \E [\xi(W_n)] ) \tod \cN(0,\sigma_0^2).
\label{0529a}
\eea
(b) \textcolor{\blue}{Suppose in addition that $\E[|M|^{4+ \eps}]< \infty$ for
some $\eps >0$.} Let 
 $f \in \cR_0$.
Then as $n \to \infty$ the  process 
$n^{-1/2}(\xi_t(T_n(f)) - \E [\xi_t(T_n(f))] )_{t \in \R} $
 converges in distribution (in $D(-\infty,\infty)$)  to 
the stationary Ornstein-Uhlenbeck process 
with covariance function $\kappa_0((f,t),(f,u))$.
\end{theo}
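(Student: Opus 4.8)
The plan is to establish part (a) by combining the Cramér--Wold device with a central limit theorem for sums of random variables having bounded-range spatial dependence, and then to deduce the functional statement (b) from part (a) (which supplies the finite-dimensional convergence) together with tightness in $D(-\infty,\infty)$ obtained through fourth-moment bounds on increments. The crucial structural input throughout is that the hypotheses $R \le r_0$ and $\supp(M) \subset B(r_0)$ almost surely make $\xi_t$ a \emph{finite-range} functional of the marked Poisson process: for any bounded Borel set $A$, the restriction $\xi_t(\cdot \cap A)$ is determined by the points of $\Po$ whose centres lie in $A \oplus B(3r_0)$, since a leaf can deposit mass at distance at most $r_0$ from its centre and the survival of that mass is decided by covering leaves whose centres lie within a further $2r_0$. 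The limiting covariance $\kappa_0$ is already identified in Theorem~\ref{thcovgen}, so the only thing to prove is asymptotic Gaussianity with that covariance.

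For part (a), fix $m \in \N$, times $t_1,\dots,t_m \in \R$, functions $f_1,\dots,f_m \in \cR_0$ and reals $a_1,\dots,a_m$, and set $G_n := \sum_{k=1}^m a_k(\xi_{t_k}(T_n f_k) - \E[\xi_{t_k}(T_n f_k)])$. By Cramér--Wold it suffices to show $n^{-1/2}G_n \tod \cN(0,\sigma^2)$ with $\sigma^2 = \sum_{j,k} a_j a_k \kappa_0((f_j,t_j),(f_k,t_k))$, the value forced by Theorem~\ref{thcovgen}. I would partition $\R^d$ into cubes of a fixed side $L > 6 r_0$ and write $G_n = \sum_Q \zeta_Q$, where $\zeta_Q$ is the centred contribution of the $\xi$-mass lying in the cube $Q$. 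By the finite-range property, $\zeta_Q$ and $\zeta_{Q'}$ are independent whenever $Q$ and $Q'$ are separated by more than $L + 6r_0$, so the family $(\zeta_Q)$ admits a dependency graph of bounded degree, with order $n$ non-negligible summands (those $Q$ meeting $\cup_k n^{1/d}\,\supp f_k$). Using $\E[|M|^{2+\eps}] < \infty$ together with the bounded-leaf hypothesis, one checks that the total $\xi$-mass deposited in a fixed bounded region has a finite $(2+\eps)$-moment, giving a uniform bound $\sup_Q \E[|\zeta_Q|^{2+\eps}] < \infty$. A standard central limit theorem for sums over dependency graphs of bounded degree under such a Lyapunov condition then yields $n^{-1/2}G_n \tod \cN(0,\sigma^2)$. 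The displayed assertion \eqref{0529a} is the special case $m=1$, $f_1 = \mathbf{1}_{[0,1]^d}$, for which $W_n = n^{1/d}\supp f_1$ and $\kappa_0((f_1,0),(f_1,0)) = \sigma_0^2$, using $\xi \eqd \xi_0$.

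For part (b), fix $f \in \cR_0$ and write $Y_n(t) := n^{-1/2}(\xi_t(T_n f) - \E[\xi_t(T_n f)])$. Part (a), specialised to a single $f$ and varying $t$, already gives convergence of the finite-dimensional distributions of $(Y_n(t))_{t \in \R}$ to those of the centred Gaussian process with covariance $\kappa_0((f,t),(f,u)) = \sigma_0^2\|f\|_2^2 \exp(-\lambda|u-t|)$, which is the stationary Ornstein--Uhlenbeck process (cf.\ the discussion after Theorem~\ref{CLTb}). It therefore remains to prove tightness in $D(-\infty,\infty)$ under the Skorohod topology. Because $\xi_t \eqd \xi_{t'}$ for all $t,t'$, the increments are time-stationary, so tightness on each compact interval $[-T,T]$, established uniformly in the time-origin, suffices. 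For this I would verify Billingsley's moment criterion \cite{Bill} by proving a bound of the form
\bea
\E\!\left[(Y_n(t) - Y_n(s))^2 (Y_n(u) - Y_n(t))^2\right] &\le& C\,(u-s)^2, \qquad s \le t \le u,
\nonumber
\eea
with $C$ independent of $n$. The key point is that the increment $\xi_{t}(T_n f) - \xi_{s}(T_n f)$ is supported within a bounded neighbourhood of the leaves arriving in the time-window $(s,t]$, a Poisson process whose expected number of points in $W_n$ is of order $(t-s)\,n$; combined with the finite-range dependence this gives $\Var[\xi_t(T_n f) - \xi_s(T_n f)] = O((t-s)\,n)$, and the fourth-order product bound follows by the same blocking argument under the stronger hypothesis $\E[|M|^{4+\eps}] < \infty$.

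The step I expect to be the main obstacle is precisely this fourth-moment increment bound. One must show that the contributions of leaves arriving in a short time-window interact only locally in space, and that their aggregate fourth moment scales like $((t-s)n)^2$ rather than being dominated by rare high-mass leaves; this is where the $(4+\eps)$-moment control on $|M|$ and the uniform bound $R \le r_0$ are essential. Identifying the correct space--time ``affected region'' of an increment, and controlling the spatial overlap of such regions for the two disjoint sub-intervals $(s,t]$ and $(t,u]$, is the delicate part of the argument; by contrast, the finite-dimensional CLT of part (a) is a routine application of the dependency-graph machinery once the finite-range property and the $(2+\eps)$-moment bound are in hand.
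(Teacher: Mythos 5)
Your strategy is the same as the paper's: part (a) by Cram\'er--Wold plus a normal approximation for sums indexed by a bounded-degree dependency graph (the paper uses the Chen--Shao bound, Lemma \ref{lem0207b}, fed by the moment bound on $\xi(W_1)$ of Lemma \ref{lem0207a2}), and part (b) by finite-dimensional convergence plus Billingsley's moment criterion on the product of squared increments, extended from $D[a,b]$ to $D(-\infty,\infty)$ (the paper cites \cite{Whitt} for this last step). So the architecture is sound. However, the quantitative target you set for the tightness step --- the step you yourself flag as the main obstacle --- is not attainable under the stated hypotheses.

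The bound $\E[(Y_n(t)-Y_n(s))^2(Y_n(u)-Y_n(t))^2]\le C(u-s)^2$ cannot be proved assuming only $\E[|M|^{4+\eps}]<\infty$. The reason is the negative part of an increment: restricted to a unit cube $Q$, the increment $\xi_t-\xi_s$ can be as negative as the entire pre-existing mass $\xi_s(Q)$, and this happens as soon as one leaf lands near $Q$ during $(s,t]$. After blocking, a typical term to control is
\[
\E\bigl[\xi_s(Q_i)\,\xi_s(Q_j)\,\xi_t(Q_k)\,\xi_t(Q_\ell)\,\1\{N_{ij}\ge 1\}\,\1\{N'_{k\ell}\ge 1\}\bigr],
\]
where $N_{ij}$ counts arrivals near $Q_i\cup Q_j$ in $(s,t]$ and $N'_{k\ell}$ counts arrivals near $Q_k\cup Q_\ell$ in $(t,u]$. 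The factor $\1\{N'_{k\ell}\ge 1\}$ is independent of everything else and contributes $O(u-t)$, but $\1\{N_{ij}\ge 1\}$ is \emph{not} independent of $\xi_t(Q_k)\xi_t(Q_\ell)$, since the time-$t$ masses depend on what arrived during $(s,t]$; one must therefore decouple by H\"older. With only a $(4+\eps)$-th moment available for $|M|$ (hence, via Lemma \ref{lem0207a2}, for the four-fold product of cube masses), the admissible H\"older exponent is $p=1+\eps/4$, which yields $\Pr[N_{ij}\ge 1]^{1-1/p}=O\bigl((u-s)^{1-1/p}\bigr)$ and an overall bound of order $(u-s)^{2-1/p}$, i.e.\ exponent $1+O(\eps)$, strictly less than $2$; exponent $2$ is out of reach unless $|M|$ is essentially bounded. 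This is exactly why the paper's Lemma \ref{lem4D} asserts only $C(u-s)^{1+\eps'}$ with $\eps'=\min(\eps/4,1/2)$. The repair is cost-free: Billingsley's criterion (his (15.21) with $F(t)=t$) requires only an exponent strictly greater than $1$, so once you lower your target to $(u-s)^{1+\eps'}$ your blocking argument goes through as in the paper. Two smaller omissions: in part (a) you should treat separately the degenerate case where the limiting variance of the Cram\'er--Wold combination vanishes (the dependency-graph CLT normalizes by the standard deviation, so it does not apply there; the paper falls back on Chebyshev), and the passage from tightness on compacts to convergence in $D(-\infty,\infty)$ needs a specific justification such as \cite[Theorem 2.8]{Whitt}.
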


Our proof of (\ref{0529a})
 provides a rate of convergence (using the Kolmogorov distance)
to the normal in  (\ref{0529a}), and hence also in
 Theorems \ref{CLTb} and \ref{CLTa}.
Under a stronger moment condition, namely
 $\E [ |M|^{3+\eps} ] < \infty $, 
one can adapt the proof (which is based on the Chen-Stein method)
 to make the rate of convergence presumably optimal. 

It would be of interest to derive a 
 functional CLT for the DLRM starting from the
zero measure at time 0 (rather than starting from
equilibrium as we have taken here). It may be possible
to do this using  \cite[Theorem 3.3]{Surveys};
the evolving DLRM fits into the general framework of the spatial
birth, death, migration and displacement process
in \cite[Section 4.1]{Surveys}. It is not so clear
whether results from \cite{Surveys} can be used
directly in the present setting where the DLM starts in
equilibrium, although the argument used here
is related to that in 
 \cite{Surveys}.

It would also be of interest
 to extend these CLTs to cases
where there is no \textcolor{\blue}{uniform bound} $r_0$ on
the range of the support of $M$ and  the value of $R$.
We would expect that the uniform boundedness  condition could
be replaced by appropriate  moment conditions, but we leave this
for future work. \textcolor{\blue}{There
 are several approaches to proving central
 limit theorems for Boolean models
(see for example in \cite{HM,PEJP,HLS}),
 which allow for unbounded grains 
and might be adaptable to the dead leaves setting}.

Our last result in this section confirms that the surface measure
$\H_{d-1}(\Phi \cap \cdot)$ of the DLM can be obtained 
as a special case of the DLRM.

\begin{prop}
\label{prop:bdy}
Let $\Q'$ be such that
 $M(\cdot) := \H_{d-1}(\partial S \cap \cdot)$.
Assume that Condition  \ref{measassu} holds, and that
 $\H_{d-1}(\partial S) <\infty$ almost surely. 
Then the resulting DLRM is $\xi_t = \H_{d-1}(\Phi_t \cap \cdot) $,
where $\Phi_t$ is the set of points in the union of
all cell boundaries of the DLM tessellation
at time $t$.  Similarly, $\xi = \H_{d-1}(\Phi \cap \cdot)$.
\end{prop}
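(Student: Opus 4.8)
The plan is to unwind the definition (\ref{xitdef}) for the specific choice $M_i(\cdot) = \H_{d-1}(\partial S_i \cap \cdot)$ and match it term by term against the set $\Phi_t$ of (\ref{Phindef}). Writing $M_i + x_i = \H_{d-1}((\partial S_i + x_i) \cap \cdot)$, the $i$-th summand of $\xi_t$ becomes $\H_{d-1}(C_i \cap \cdot)$, where $C_i := (\partial S_i + x_i) \setminus \bigcup_{j: t_i < t_j \leq t} (S_j + x_j)$; here Condition \ref{measassu} guarantees that each $M_i$ is a genuine random measure, and the hypothesis $\H_{d-1}(\partial S) < \infty$ makes it finite, so $\xi_t$ is well defined. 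Comparing $C_i$ with the set $B_i := (\partial S_i + x_i) \setminus \bigcup_{j: t_i < t_j \leq t} (S_j^o + x_j)$ appearing in $\Phi_t = \bigcup_i B_i$, and using $S_j^o + x_j \subseteq S_j + x_j$, I get $C_i \subseteq B_i$ and hence $\bigcup_i C_i \subseteq \Phi_t$. It will therefore suffice to prove that (i) the $C_i$ are pairwise disjoint almost surely, so that $\xi_t = \sum_i \H_{d-1}(C_i \cap \cdot) = \H_{d-1}((\bigcup_i C_i) \cap \cdot)$, and (ii) $\H_{d-1}(\Phi_t \setminus \bigcup_i C_i) = 0$ almost surely, which upgrades the union $\bigcup_i C_i$ to $\Phi_t$.

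For (i), the arrival times of a Poisson process are almost surely distinct, so for $i \neq j$ I may assume $t_i < t_j \leq t$; then $j$ is among the indices removed in forming $C_i$, giving $C_i \cap (S_j + x_j) = \emptyset$, while $C_j \subseteq \partial S_j + x_j \subseteq S_j + x_j$, so $C_i \cap C_j = \emptyset$. Thus disjointness is built into the use of the closed sets $S_j + x_j$, rather than their interiors, in (\ref{xitdef}). For (ii), a point of $\Phi_t \setminus \bigcup_i C_i$ lies in some $B_i \setminus C_i$, i.e. on $\partial S_i + x_i$ and inside some removed $S_j + x_j$ but in no corresponding $S_j^o + x_j$; such a point lies on $\partial S_j + x_j$. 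Hence $\Phi_t \setminus \bigcup_i C_i \subseteq \bigcup_{i \neq j} (\partial S_i + x_i) \cap (\partial S_j + x_j)$, a countable union, and it remains to show each pairwise boundary intersection is $\H_{d-1}$-null almost surely.

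This last point is the crux and the main obstacle. For fixed compact $\sigma, \sigma'$ with $\H_{d-1}(\partial \sigma), \H_{d-1}(\partial \sigma') < \infty$, Fubini's theorem gives $\int_{\R^d} \H_{d-1}((\partial \sigma) \cap (\partial \sigma' + x)) \, dx = \H_{d-1}(\partial \sigma)\,\H_d(\partial \sigma')$, and $\H_d(\partial \sigma') = 0$ because a set of finite $(d-1)$-dimensional Hausdorff measure has zero $d$-dimensional measure; hence $\H_{d-1}((\partial \sigma) \cap (\partial \sigma' + x)) = 0$ for Lebesgue-a.e. $x$. I would then apply the multivariate Mecke equation \cite{LP} to the marked Poisson process: the expected number of ordered pairs $(i,j)$ with $x_i, x_j$ in a fixed bounded window, arrival times in a bounded range, and $\H_{d-1}((\partial S_i + x_i) \cap (\partial S_j + x_j)) > 0$ reduces, after translating out $x_i$, to a spatial integral of the indicator $\1[\H_{d-1}((\partial \sigma) \cap (\partial \sigma' + x)) > 0]$, which vanishes by the display above. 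Exhausting $\R^d \times \R$ by countably many such windows then shows that almost surely no pair of distinct leaves has translated boundaries meeting in positive $\H_{d-1}$-measure, giving (ii).

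Combining (i) and (ii) yields $\xi_t = \H_{d-1}(\Phi_t \cap \cdot)$, and the identity for the time-reversed $\xi$ and $\Phi$ of (\ref{xidef}) and (\ref{Phidef}) follows verbatim with the time-ordering reversed (the earlier-arriving leaf is then the one removed). I expect the genuine work to lie entirely in the measure-zero step: the Fubini computation is short, but some care is needed to verify the joint measurability in $(\sigma,\sigma',x)$ of the event $\{\H_{d-1}((\partial \sigma) \cap (\partial \sigma' + x)) > 0\}$ — which is exactly where Condition \ref{measassu} is invoked — so that the Mecke equation may legitimately be applied.
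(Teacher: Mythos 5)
Your proposal is correct and follows essentially the same route as the paper: both reduce the identity to the almost-sure statement that $\H_{d-1}\bigl((\partial S_i + x_i) \cap (\partial S_j + x_j)\bigr) = 0$ for all pairs $i \neq j$, proved via the bivariate Mecke formula together with the Fubini computation and the fact that $\H_{d-1}(\partial S) < \infty$ forces $\H_d(\partial S) = 0$. The only (cosmetic) difference is that you observe the sets $C_i$ are genuinely pairwise disjoint, so the null-intersection claim is needed only to pass from $\bigcup_i C_i$ to $\Phi_t$, whereas the paper invokes the claim for both the additivity over the union and the replacement of $S_j^o$ by $S_j$.
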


\section{Buffon's noodle and Poincar\'e's formula}
\label{secBuffon}
\allco

The classical Buffon's needle problem
may be phrased as follows.
If one throws a stick (straight, and of zero thickness)
at random onto a wooden floor (so both its 
location and its orientation are random and uniform), then
how often does one expect to see it cross the cracks
between floorboards?  The generalization 
to a possibly {\em curved} stick has been wittily christened
{\em Buffon's noodle}.
What we require here is a further variant, concerned
with the expected number of crossings for
 {\em two} curved sticks thrown at random onto
a carpeted floor. This  has been referred to as
{\em Poincar\'e's formula} \cite{Maak}, although in an earlier
version of the present paper \cite{v1} we called
it the {\em two noodle} formula.

\begin{lemm}[Poincare's `two noodle' formula]
\label{lemBuffon}
Let  $\gamma $ and $\gamma'$ be
 rectifiable
 curves in $\R^2$.  Then
\bea
\int_{-\pi}^\pi \int_{\R^2} \H_0 \left( \gamma \cap 
\left( \rho_\theta(\gamma') +x \right) \right) dx d \theta
= 4 \H_1(\gamma) \H_1(\gamma').
\label{eqBuffon}
\eea
\end{lemm}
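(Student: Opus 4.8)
The plan is to prove this integral-geometric identity by reducing the case of general rectifiable curves to the classical Buffon's noodle result for a single curve against a fixed straight line, via an approximation and averaging argument. The key observation is that the left-hand side of (\ref{eqBuffon}) is symmetric in $\gamma$ and $\gamma'$ and additive under concatenation of curves, so it suffices to establish the formula for short, nearly-straight pieces and then sum.

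First I would reduce to the case where both $\gamma$ and $\gamma'$ are line segments. Since any rectifiable curve can be approximated by inscribed polygonal paths whose total length converges to $\H_1(\gamma)$, and since the crossing-number functional $\H_0(\gamma \cap (\rho_\theta(\gamma')+x))$ behaves well under such approximation (up to sets of $(x,\theta)$ of measure zero, e.g. tangencies and multiple crossings), both sides of (\ref{eqBuffon}) pass to the limit. Thus I would first verify the identity when $\gamma$ and $\gamma'$ are each single straight segments of lengths $a$ and $b$; the general case follows by writing each curve as a concatenation of such segments, using additivity of both sides.

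For two straight segments, the computation is a direct calculation but is cleanest via the classical Buffon/Poincar\'e kinematic measure. I would fix $\gamma'$ to be a segment of length $b$, and compute $\int_{-\pi}^\pi \int_{\R^2} \H_0(\gamma \cap (\rho_\theta(\gamma')+x))\,dx\,d\theta$ by first performing the spatial integral over $x$ for a fixed angle $\theta$. For a fixed relative orientation differing by angle $\theta$, translating one segment of length $b$ across another of length $a$ produces exactly one crossing for a set of translations $x$ whose Lebesgue measure equals $ab\,|\sin\theta|$ (the area of the parallelogram swept out). Integrating $\int_{-\pi}^\pi ab\,|\sin\theta|\,d\theta = 4ab$ then gives the desired $4\H_1(\gamma)\H_1(\gamma') = 4ab$. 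This single-segment computation is really the heart of Poincar\'e's formula.

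The main obstacle is the technical control of the approximation step: one must ensure that the set of $(\theta, x)$ for which the crossing number of the polygonal approximants fails to converge to that of the true curves has measure tending to zero, and that the approximants' crossing numbers are dominated by an integrable function (to justify passing the limit through the integral, e.g. via dominated or monotone convergence). Here I would use that for rectifiable curves, the expected number of crossings is controlled by the product of lengths, giving a uniform integrable bound; tangencies and endpoint coincidences form a null set in $(\theta,x)$ and may be discarded. Controlling this degeneracy carefully — particularly when the curves have corners or when segments become nearly parallel — will require the bulk of the rigorous work, though the underlying geometric content is elementary.
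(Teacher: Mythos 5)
Your two-segment computation is correct (for fixed relative angle $\theta$ the set of translations producing an intersection is a parallelogram of area $ab\,|\sin\theta|$, and $\int_{-\pi}^{\pi} ab\,|\sin\theta|\,d\theta = 4ab$), and the a.e.-additivity under concatenation does reduce the polygonal case to this. Note, however, that the paper itself does not prove Lemma \ref{lemBuffon} this way, or indeed at all: for general rectifiable curves it simply cites Z\"ahle \cite[Theorem 1.5.1]{Zahle}, and the elementary argument in the earlier version \cite{v1} is only claimed for piecewise $C^1$ curves. Your plan is therefore attempting strictly more than the paper does, and the extra generality is exactly where it breaks.

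The genuine gap is the passage from polygons to rectifiable curves. First, your appeal to dominated convergence is unjustified: the ``uniform integrable bound'' you invoke is just the statement that the $L^1$ norms $\int\int \H_0(\gamma_n \cap (\rho_\theta(\gamma_n')+x))\,dx\,d\theta = 4\H_1(\gamma_n)\H_1(\gamma_n')$ stay bounded, and bounded $L^1$ norms provide neither a dominating function nor uniform integrability; the approximants' crossing numbers can concentrate near degenerate placements and cancel in the limit. What inscribed-polygon approximation genuinely yields is one-sided: a transversal crossing of $\gamma$ with a placed $\gamma'$ is eventually detected by the approximants (an interleaving argument on a small circle around the crossing point), so $\liminf_n N_n \geq N$ for a.e.\ placement, and Fatou then gives only the inequality $\int\int \H_0(\gamma \cap (\rho_\theta(\gamma')+x))\,dx\,d\theta \leq 4\H_1(\gamma)\H_1(\gamma')$. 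The reverse inequality --- that crossings of the polygons do not coalesce and evaporate in the limit --- is precisely the hard content of the rectifiable case; it is equivalent to the coincidence of the integral-geometric measure with $\H_1$ on $1$-rectifiable sets (Federer/Z\"ahle), and no Fatou or dominated-convergence argument supplies it. Second, even the one-sided step presupposes that for a.e.\ placement all intersections of the limit curves are genuine crossings (no touches); for merely rectifiable curves the paper obtains this null-set statement (Lemma \ref{touchlemrect}) by combining Maak's crossing formula \cite{Maak} with the identity (\ref{eqBuffon}) itself, so assuming it in a proof of (\ref{eqBuffon}) is circular. Your outline can be made rigorous in the piecewise $C^1$ setting, where tangency sets can be handled elementarily (as in Lemma \ref{touchlemC1} and \cite{v1}); for the full rectifiable statement one needs the structure-theoretic input that the paper imports by citation.
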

In other words, if $\gamma'$ is rotated uniformly at random,
and translated by a random amount uniformly distributed over a large
region $A$, then the expected number of times it  intersects
$\gamma $ is  equal to $(2/\pi)$ times the product of the lengths
of $\gamma$ and $\gamma'$, divided by the area of $A$.
Lemma \ref{lemBuffon} follows from
 \cite[Theorem 1.5.1]{Zahle}.
In the special case where $\gamma$ and $\gamma'$ are piecewise $C^1$, 
an elementary proof of (\ref{eqBuffon})
 may be found in \cite{v1}.

Given rectifiable curves $\gamma$ and $\gamma'$ in $\R^2$,
we say that $\gamma$ and $\gamma'$ {\em cross}
at a point $x \in \gamma \cap \gamma'$ if $x$ is not
an endpoint of $\gamma$ or $\gamma'$, and 
$\gamma$ passes from one side of $\gamma'$ to the other
at $x$,
where the `sides' of $\gamma'$
in a neighbourhood of $x$ 
can be defined by extending $\gamma$ to a Jordan curve and
taking the two components of its complement. 
We say that $\gamma$ and $\gamma'$ {\em touch} at $x$
if $x \in \gamma \cap \gamma'$ but $\gamma$ and $\gamma'$
do not cross at $x$.

We say that $\gamma$ and $\gamma'$ {\em touch} if there
exists $z \in \R^2$ such that they touch at $z$.
As well as Lemma \ref{lemBuffon}, 
in the proof of (\ref{0629a}) and (\ref{0320a})
we require the following.

\begin{lemm}
\label{touchlemrect}
Suppose $\gamma$ and $\gamma'$ are rectifiable curves in $\R^2$.
For Lebesgue-almost every $(z,\theta) \in \R^2 \times (-\pi,\pi]$,
the set $\rho_\theta(\gamma') +z$ does not touch $\gamma$.
\end{lemm}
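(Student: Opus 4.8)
My plan is to prove that, for almost all motions $(z,\theta)$, every point of $\gamma \cap (\rho_\theta(\gamma')+z)$ is a \emph{transversal} meeting, and hence a crossing rather than a touch. The starting point is the following dichotomy. If $\gamma$ and $\rho_\theta(\gamma')+z$ meet at a point $p$ at which both curves possess a tangent line and these two tangent lines are distinct, then near $p$ the curve $\gamma$ passes from one side of $\rho_\theta(\gamma')+z$ to the other, so they cross at $p$ and therefore do not touch there. Consequently a touching can occur only (i) at a point where $\gamma$ or $\gamma'$ fails to possess a tangent, or (ii) at a point of \emph{tangential} contact, where both tangent lines exist and are parallel. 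It thus suffices to show that each of (i) and (ii) happens only for a Lebesgue-null set of $(z,\theta) \in \R^2 \times (-\pi,\pi]$.

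For (i), let $N_\gamma \subseteq \gamma$ and $N_{\gamma'} \subseteq \gamma'$ be the sets of points at which the (approximate) tangent fails to exist; since $\gamma$ and $\gamma'$ are rectifiable we have $\H_1(N_\gamma) = \H_1(N_{\gamma'}) = 0$. Applying the rectifiable-set version of the kinematic formula underlying Lemma \ref{lemBuffon} (which is the actual content of \cite[Theorem 1.5.1]{Zahle}) to the null set $N_\gamma$ in place of one curve gives $\int_{-\pi}^\pi \int_{\R^2} \H_0(N_\gamma \cap (\rho_\theta(\gamma')+z))\, dz\, d\theta = 4\, \H_1(N_\gamma)\, \H_1(\gamma') = 0$, and symmetrically $\int_{-\pi}^\pi \int_{\R^2} \H_0(\gamma \cap (\rho_\theta(N_{\gamma'})+z))\, dz\, d\theta = 0$. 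Hence for a.e. $(z,\theta)$ the set $\rho_\theta(\gamma')+z$ misses $N_\gamma$ and $\gamma$ misses $\rho_\theta(N_{\gamma'})+z$, so no contact of type (i) occurs.

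For (ii), I would first decompose $\gamma$, up to the $\H_1$-null set $N_\gamma$, into countably many $C^1$ arcs $\Gamma_k$ (possible by the structure theorem for $1$-rectifiable sets, or equivalently by the $C^1$ Lusin property of the arc-length parametrisation), and likewise decompose $\gamma'$ into arcs $\Gamma'_l$. Parametrising $\Gamma_k$ by arc length as $s \mapsto p(s)$ and $\Gamma'_l$ as $s' \mapsto q(s')$, with $p,q$ of unit speed, I fix $\theta$ and consider the $C^1$ contact map $H_\theta(s,s') := p(s) - \rho_\theta(q(s'))$, whose image is exactly the set of translations $z$ for which $\Gamma_k$ meets $\rho_\theta(\Gamma'_l)+z$. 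The key observation is that the Jacobian of $H_\theta$ has columns $p'(s)$ and $-\rho_\theta(q'(s'))$, which are unit vectors, so the Jacobian is singular precisely when these tangent vectors are parallel, i.e. precisely at a tangential contact. Thus every tangential-contact value $z$ is a critical value of $H_\theta$, and by Sard's theorem (applicable since $H_\theta$ is a $C^1$ map between $2$-manifolds) the set of such $z$ has $\H_2$-measure zero for each $\theta$. Taking the countable union over $(k,l)$ and integrating over $\theta$ by Fubini shows the set of $(z,\theta)$ admitting a type-(ii) contact is Lebesgue-null; combining with (i) finishes the proof.

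I expect the main obstacle to be the reduction in (ii) from a general rectifiable curve to $C^1$ arcs: Sard's theorem needs $C^1$ regularity, whereas an arc-length parametrisation is only Lipschitz with a merely measurable derivative, so continuity of the contact map alone would not bound the dimension of its image. I would resolve this through the structure/Lusin decomposition into countably many $C^1$ pieces. A secondary technical point is measurability for the Fubini step: to be rigorous I would exhibit a Borel superset of the tangential-contact locus, for instance the $\sigma$-compact set of critical values of $H_\theta$ depending measurably on $\theta$, each $\theta$-slice of which is null, so that the locus is contained in a genuinely null subset of $\R^2 \times (-\pi,\pi]$.
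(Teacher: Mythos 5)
Your proposal takes a genuinely different route from the paper, and its core strategy (kill contacts at non-differentiability points by a null-set avoidance argument, kill tangential contacts by a $C^1$ Lusin decomposition plus Sard's theorem) is viable. The paper's own proof is instead a two-line cancellation: Maak \cite{Maak} proved that the kinematic integral of the number of \emph{crossings}, $\int_{-\pi}^\pi\int_{\R^2}\H_0(\gamma\,\square\,(\rho_\theta(\gamma')+z))\,dz\,d\theta$, equals $4\H_1(\gamma)\H_1(\gamma')$, which is exactly the value Lemma \ref{lemBuffon} assigns to the integral of the number of \emph{intersection points}; since every crossing is an intersection and both integrals are finite and equal, the number of touching points integrates to zero. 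So the paper outsources all the geometric work to \cite{Maak}, while you essentially re-prove the qualitative content of Maak's theorem from scratch; your version is self-contained modulo standard geometric measure theory, at the cost of considerably more machinery.

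There are, however, concrete gaps to close. First, your dichotomy omits endpoints: under the paper's definitions a crossing cannot occur at an endpoint of either curve, so a transversal contact at one of the four endpoints is a touch of neither type (i) nor type (ii). The fix is trivial (adjoin the endpoints to $N_\gamma$ and $N_{\gamma'}$, which stay null; compare the endpoint step closing the paper's proof of Lemma \ref{touchlemC1}), but as stated the classification is incomplete. Second, invoking \cite[Theorem 1.5.1]{Zahle} for $N_\gamma$ is not licensed as written: that result concerns rectifiable closed sets, and $N_\gamma$ is in general neither closed nor a curve. What you need is true by an elementary argument instead: for fixed $\theta$, the set of $z$ for which $\rho_\theta(\gamma')+z$ meets $N_\gamma$ is the Minkowski sum $N_\gamma\oplus(-\rho_\theta(\gamma'))$, and covering $N_\gamma$ by balls of total radius $<\eps$ together with the tube estimate (the $r$-neighbourhood of a curve of length $L$ has area $O(rL+r^2)$) shows this sum is Lebesgue-null. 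Third, your opening claim that a transversal intersection at a point of differentiability is a crossing is asserted rather than proved; it is correct, but a rigorous proof needs genuine differentiability of the arc-length parametrisations (with merely \emph{approximate} tangents, which your parenthesis allows, the claim becomes doubtful), the fact that a simple arc is a homeomorphism onto its image (so the other curve cannot re-enter arbitrarily small neighbourhoods of the contact point), and a Jordan-arc separation argument. Finally, in step (ii) the Sard argument only catches a tangential contact if the tangent of $\gamma$ at the contact point agrees with the tangent of the covering arc $\Gamma_k$; this is exactly what first-order Lusin agreement gives on the good set, and the exceptional points must be shunted into the null set of step (i) — your write-up parametrises $\Gamma_k$ by arc length without addressing this point.
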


\begin{lemm}
\label{touchlemC1}
Suppose $\gamma$ and $\gamma'$ 
are \textcolor{\blue}{piecewise} 
$C^1$ curves in $\R^2$.
For Lebesgue-almost every $z \in \R^2$,
the set $\gamma' +z$ does not touch $\gamma$.
\end{lemm}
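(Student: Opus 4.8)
The plan is to study the difference map and combine Sard's theorem with the fact that a rectifiable curve is Lebesgue-null in the plane. Parametrise $\gamma$ by a piecewise $C^1$ injection $\Gamma_1 \colon [0,1] \to \R^2$ and $\gamma'$ by $\Gamma_2 \colon [0,1] \to \R^2$, with associated partitions $0 = a_0 < \cdots < a_{m+1} = 1$ and $0 = b_0 < \cdots < b_{k+1} = 1$, so that $\Gamma_1$ is $C^1$ with nonvanishing derivative on each $(a_{i-1},a_i)$ and similarly for $\Gamma_2$. Define $F \colon [0,1]^2 \to \R^2$ by $F(s,t) := \Gamma_1(s) - \Gamma_2(t)$. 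A point $w$ lies in $\gamma \cap (\gamma'+z)$ precisely when $w = \Gamma_1(s) = \Gamma_2(t)+z$ for some $(s,t)$, i.e. when $z = F(s,t)$, so intersection points are indexed by the level set $F^{-1}(z)$. Writing $B := \{z \in \R^2 : \gamma'+z \text{ touches } \gamma\}$, I would show $\H_2(B)=0$ by splitting the defining condition according to where the offending parameters $(s,t)$ sit.

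First, the \emph{endpoint and corner contribution}. If $w = \Gamma_1(s) = \Gamma_2(t)+z$ is a touching point with $s \in \{a_0,\ldots,a_{m+1}\}$, then $w$ is a corner or endpoint of $\gamma$ lying on $\gamma'+z$, which forces $z = \Gamma_1(s) - \Gamma_2(t)$ to lie in the translate $\Gamma_1(s) - \gamma'$ of the reflected curve. Since $\gamma'$ is rectifiable, $\H_1(\gamma') < \infty$, hence $\H_2(\gamma') = 0$, and likewise for each translate; the finite union over $s \in \{a_0,\ldots,a_{m+1}\}$ is still null. The symmetric argument shows that the set of $z$ for which some corner or endpoint of $\gamma'+z$ meets $\gamma$ (confining $z$ to a finite union of translates of $\gamma$) is also null. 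This disposes of every touching point that is an endpoint of either curve, and of every touching point whose parameter $s$ or $t$ is a partition point.

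It remains to treat the \emph{smooth interior contribution}: touching points $w = \Gamma_1(s) = \Gamma_2(t)+z$ with $s \in (a_{i-1},a_i)$ and $t \in (b_{j-1},b_j)$. Near such a point both curves are $C^1$ arcs with nonzero tangents $\Gamma_1'(s)$ and $\Gamma_2'(t)$. The key local claim is that if these tangents are \emph{not} parallel then $\gamma$ and $\gamma'+z$ meet transversally, and a transversal intersection of two $C^1$ arcs is a crossing in the sense of the stated definition (locally one straightens $\gamma'+z$ by a $C^1$ change of coordinates, after which $\gamma$, having a tangent line distinct from that of $\gamma'+z$, visibly passes from one side to the other). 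Contrapositively, a touching at a smooth interior point forces $\Gamma_1'(s) \parallel \Gamma_2'(t)$, which is exactly the condition that the Jacobian $DF(s,t) = [\,\Gamma_1'(s)\; {-}\Gamma_2'(t)\,]$ is singular, i.e. that $(s,t)$ is a critical point of $F$ and $z=F(s,t)$ a critical value. On each open rectangle $(a_{i-1},a_i)\times(b_{j-1},b_j)$ the map $F$ is $C^1$, so by Sard's theorem its critical values form an $\H_2$-null set; as there are finitely many such rectangles, the union of these sets is null. (Overlaps of $\gamma$ and $\gamma'+z$ along an arc are automatically included, since differentiating $\Gamma_1(s)=\Gamma_2(t)+z$ along the overlap again yields parallel tangents.) Combining the two contributions gives $\H_2(B)=0$.

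The step I expect to require the most care is the local claim linking the analytic transversality condition $\Gamma_1'(s) \nparallel \Gamma_2'(t)$ to the topological notion of crossing, which is phrased in terms of $\gamma$ passing between the two complementary sides of $\gamma'+z$. This is intuitively clear and follows from a standard local normal-form argument, but it is the only point where the precise definition of ``crossing'' must be invoked; everything else reduces to Sard's theorem and the elementary fact that a rectifiable curve has planar Lebesgue measure zero. By contrast, the companion Lemma~\ref{touchlemrect}, for merely rectifiable curves and with an averaging over rotations, cannot use Sard and must instead exploit the rotation integral, which is why the piecewise $C^1$ hypothesis here permits the cleaner Jacobian argument.
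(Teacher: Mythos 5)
Your proposal is correct, and it reaches the conclusion by a genuinely different route from the paper. Both arguments share the same skeleton: split touchings into (i) intersections at endpoints or corners, disposed of because translates of a rectifiable curve are $\H_2$-null, and (ii) ``grazings'' at smooth interior points, where the tangents must be parallel, disposed of by a Sard-type argument. The difference lies in how step (ii) is executed. The paper first reduces, by subdivision and rotation, to the case where both curves are $C^1$, almost straight, and $\gamma$ is the graph of a $C^1$ function; it then proves by hand (via uniform continuity of the derivative and a covering estimate, in its Lemma \ref{lemtouch}) that the critical values of the one-variable function $h=f-g$ form a null set of vertical shifts, and finally integrates over horizontal shifts by Fubini. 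You instead work with the difference map $F(s,t)=\Gamma_1(s)-\Gamma_2(t)$ on the product of the smooth parameter rectangles and invoke the two-dimensional Sard theorem: a grazing forces $DF(s,t)$ singular, so the bad translates $z=F(s,t)$ are critical values of a $C^1$ map from $\R^2$ to $\R^2$, hence null. This is legitimate --- in the equidimensional case Sard's theorem does hold for $C^1$ maps (the smoothness threshold $k\geq\max(n-m+1,1)$ equals $1$ here), and it is worth saying so explicitly since the general Sard theorem needs more regularity when the domain dimension exceeds the codomain dimension. What each approach buys: yours is shorter and avoids the paper's somewhat fiddly reductions (almost-straightness, ruling out vertical tangents, the graph representation, Fubini), at the price of citing Sard as a black box; the paper's proof is deliberately elementary and self-contained, which matches its stated aim of giving an elementary proof in the piecewise $C^1$ case, in contrast with Lemma \ref{touchlemrect} where rectifiability alone forces the rotation-averaged argument of \cite{Maak,Zahle}. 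Your one remaining obligation, which you correctly flag, is the local claim that transversal intersection of two $C^1$ arcs is a crossing in the paper's sense; the straightening argument you sketch (first-order expansion along the normal direction of the straightened arc, giving a sign change) does settle it.
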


We  shall prove Lemmas \ref{touchlemrect} and \ref{touchlemC1} 
\textcolor{\blue}{later in} this section. The proof of Lemma \ref{touchlemrect}
is very short, but heavily reliant  on results in
 \cite{Maak,Zahle}.
In the case where $\gamma$ and $\gamma'$
are piecewise $C^1$,  the conclusion of Lemma \ref{touchlemrect}
can alternatively be derived from Lemma \ref{touchlemC1}; we shall provide
an elementary proof of the latter result. 

As a slight digression, we also state the Buffon's noodle
result mentioned above.

  \begin{theo}[Buffon's noodle]
\label{th:noodle}
Let  $\gamma $  be a 
\textcolor{\blue}{rectifiable
 curve} in $\R^2$.
\textcolor{\blue}{Let $a >0$.}
 For $i \in \Z$, let $L_i$ denote the horizontal line
$\{(x,y):y = i\}$. 
 If $Y$ and $\Theta$ denote independent random variables, uniformly
 distributed over $[0,a)$ and $(-\pi,\pi)$ respectively, then
\bea
\E \left[ \sum_{i \in \Z}  \H_0(L_i \cap (\rho_\Theta(\gamma) +(0,Y) ) ) 
\right] = \textcolor{\blue}{ (2/(\pi a)) } \H_1(\gamma).
\label{0501a}
\eea
\end{theo}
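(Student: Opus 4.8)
The plan is to reduce Buffon's noodle (Theorem~\ref{th:noodle}) to the two-noodle formula of Lemma~\ref{lemBuffon}, which I am free to assume. The key observation is that the periodic family of horizontal lines $\{L_i\}_{i \in \Z}$ is invariant under vertical translation by integer amounts, so that averaging over $Y \sim \text{Unif}[0,a)$ is essentially averaging over a single translation direction against an infinite comb of lines. The strategy is to relate the left-hand side of (\ref{0501a}) to the full planar average $\int_{\R^2} \int_{-\pi}^\pi \H_0(\gamma \cap (\rho_\theta(\gamma')+x))\,d\theta\,dx$ appearing in (\ref{eqBuffon}) by taking $\gamma'$ to be a long segment of one horizontal line and exploiting periodicity.

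First I would fix notation and rewrite the expectation. Since $\Theta$ is uniform on $(-\pi,\pi)$ and $Y$ is uniform on $[0,a)$, the left side of (\ref{0501a}) equals
\[
\frac{1}{2\pi a} \int_{-\pi}^\pi \int_0^a \sum_{i \in \Z} \H_0\left(L_i \cap (\rho_\theta(\gamma)+(0,y))\right) dy\, d\theta.
\]
The inner double integral over $(y,i)$ has a clean interpretation: as $y$ ranges over $[0,a)$ and $i$ over $\Z$, the translated line $L_i - (0,y) = L_0 + (0, i-y)$ sweeps each horizontal line exactly once as $(0,i-y)$ ranges (up to a sign and reindexing) over all vertical shifts. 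More precisely, I would swap the roles by translating $\gamma$ rather than the lines: $\H_0(L_i \cap (\rho_\theta(\gamma)+(0,y))) = \H_0((L_i - (0,y)) \cap \rho_\theta(\gamma))$, and $\{L_i - (0,y) : i \in \Z, y \in [0,a)\}$ is exactly the collection of all horizontal lines $L_c = \{(x,y'): y' = c\}$, with $c = i - y$ ranging over $\R$, each hit once with ``density'' $1$ in the $c$-variable (after the $1/a$ normalisation from $dy$ on an interval of length $a$, combined with the unit spacing of the $L_i$). Thus $\frac{1}{a}\int_0^a \sum_{i} (\cdots)\,dy = \int_{\R} \H_0(L_c \cap \rho_\theta(\gamma))\,dc$ where $L_c$ is the horizontal line at height $c$.

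Next I would recognise $\int_{\R} \H_0(L_c \cap \rho_\theta(\gamma))\,dc$ as a horizontal-line Buffon integral and relate it to arclength. For a single horizontal line translated over all heights $c$, the integral $\int_\R \H_0(L_c \cap \rho_\theta(\gamma))\,dc$ counts, for each point of $\rho_\theta(\gamma)$, the measure of heights where the line meets it; by the coarea/Cauchy formula this equals the total variation of the vertical coordinate along $\rho_\theta(\gamma)$, i.e. the projected length of $\rho_\theta(\gamma)$ onto the vertical axis (counted with multiplicity). Averaging this projected length over the rotation $\theta \in (-\pi,\pi)$ gives, by the standard computation behind Cauchy's formula (the mean width / mean projection of a rectifiable curve), a value proportional to $\H_1(\gamma)$; explicitly $\frac{1}{2\pi}\int_{-\pi}^\pi (\text{vertical projected length of }\rho_\theta(\gamma))\,d\theta = \frac{2}{\pi}\H_1(\gamma)$, since the mean absolute value of $\sin$ over a full period is $2/\pi$. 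Combining with the $1/a$ from the $Y$-average yields the factor $2/(\pi a)$ in (\ref{0501a}).

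The main obstacle, and the step I would handle most carefully, is justifying the interchange of the countable sum, the $y$-integral, and the $\theta$-integral, and making the ``sweeping'' identity $\frac{1}{a}\int_0^a \sum_i f(i-y)\,dy = \int_\R f(c)\,dc$ rigorous for the integrand $f(c) = \H_0(L_c \cap \rho_\theta(\gamma))$. This requires knowing that $f$ is integrable and measurable, which follows because $\gamma$ is rectifiable (so $\H_1(\gamma) < \infty$) and the horizontal-line crossing count is finite for Lebesgue-almost every height $c$ by the coarea inequality; the periodisation identity is then just Fubini together with the unit periodicity of $\Z$. Given these measurability facts, everything else is the classical Cauchy/Poincar\'e arclength computation, so the essential content is really contained in Lemma~\ref{lemBuffon} and the mean-projection formula, and Theorem~\ref{th:noodle} drops out as the one-noodle specialisation. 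I would therefore present the periodisation step in full and cite (\ref{eqBuffon}) or the underlying Cauchy formula for the arclength identity.
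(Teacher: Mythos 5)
Your overall architecture --- condition on $\Theta$, convert the sum over the line family plus the $Y$-average into a single integral over all horizontal lines, then apply the Cauchy--Crofton / projection-averaging identity --- is a legitimate route, and it is genuinely different from what the paper does: the paper gives no proof of Theorem \ref{th:noodle} at all, saying only that it ``can be deduced from Lemma \ref{lemBuffon}'' (details omitted, with a piecewise $C^1$ proof in \cite{v1}). You bypass Lemma \ref{lemBuffon} almost entirely, resting instead on the Banach-indicatrix/coarea identity $\int_\R \H_0(L_c\cap\rho_\theta(\gamma))\,dc = (\mbox{vertical variation of } \rho_\theta(\gamma))$ together with the average $\frac{1}{2\pi}\int_{-\pi}^{\pi}|\sin(\phi+\theta)|\,d\theta = 2/\pi$; that part of your argument is sound for rectifiable curves.

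The genuine gap is the periodisation step, and it interacts with an inconsistency in the statement that you should have flagged rather than silently absorbed. With the lines as literally defined ($L_i$ at \emph{integer} heights) and $Y$ uniform on $[0,a)$, the heights $c=i-y$ do not sweep $\R$ once each: each $c$ is covered $N(c)=\H_0(\Z\cap[c,c+a])$ times, so
\[
\frac1a\int_0^a\sum_{i\in\Z} f(i-y)\,dy=\frac1a\int_\R f(c)\,N(c)\,dc,
\]
which equals $\int_\R f(c)\,dc$ only when $a$ is a positive integer (then $N\equiv a$ a.e.); for non-integer $a$ the left side genuinely depends on the position of $\gamma$, not just on $\H_1(\gamma)$, so no formula of the stated type can hold. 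Moreover your bookkeeping spends the factor $1/a$ twice: once inside the claimed identity $\frac1a\int_0^a\sum_i(\cdots)\,dy=\int_\R(\cdots)\,dc$, and then again when you say ``combining with the $1/a$ from the $Y$-average yields $2/(\pi a)$''; with unit-spaced lines and integer $a$, your computation actually produces $(2/\pi)\H_1(\gamma)$, not $(2/(\pi a))\H_1(\gamma)$. The constant $2/(\pi a)$ in (\ref{0501a}) is the one that corresponds to lines at \emph{spacing} $a$, i.e.\ $L_i=\{(x,y):y=ia\}$ (consistent with $Y$ being uniform on $[0,a)$); under that reading the intervals $(ia-a,\,ia]$ tile $\R$, the sweeping is exact, $\frac1a\int_0^a\sum_i f(ia-y)\,dy=\frac1a\int_\R f(c)\,dc$, and the rotation average then gives precisely $(2/(\pi a))\H_1(\gamma)$. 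So the repair is clear: state explicitly which version you are proving, replace the ``each height hit once with density $1$'' claim by the exact tiling statement for spacing-$a$ lines (or restrict to $a=1$ in the unit-spacing version), and use the $1/a$ exactly once.
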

This result is well known, though not all of the proofs
 in the literature are complete.   It can be deduced
from
 \textcolor{\blue}{Lemma \ref{lemBuffon},}
 but we do not give the details here.
For further discussion and a proof of Theorem \ref{th:noodle} in
the piecewise $C^1$ case, see
\cite{v1}.

\begin{proof}[Proof of Lemma \ref{touchlemrect}]
Given rectifiable curves $\gamma$ and $\gamma'$ in $\R^2$, let
$\gamma \; \square \; \gamma'$ be the set of points at which
$\gamma$ crosses $\gamma'$. It is proved in 
\cite{Maak} that 
$$
\int_{-\pi}^\pi \int_{\R^2}
 \H_0 (\gamma \; \square \; (\rho_\theta(\gamma') +x)) 
dx d \theta = 4 \H_1(\gamma) \H_1(\gamma'),
$$
and combined with (\ref{eqBuffon}), this gives us  the result.
\end{proof}

In the rest of this section we prove
 Lemma
\ref{touchlemC1}.

Given $C^1$ curves $\gamma$ and $\gamma'$ in $\R^2$,
we shall say that they {\em graze}
 at a point $z \in \R^2$, if
 $z \in \gamma \cap \gamma'$ 
but $z$  is not one of the endpoints of
$\gamma$ or $\gamma'$, 
 and
 $\gamma$, $\gamma'$
have a common tangent line at $z$ (in \cite{v1} we used the
term `touch' for this notion). We say that $\gamma$ and $\gamma'$
{\em graze} if they graze at $z$ for some $z \in \R^2$.

We say that a $C^1$ curve
 $\gamma$ in $\R^2$ is
 {\em almost straight}, if all
 lines tangent to $\gamma$ are  an
angle of at most $\pi/99$ to each other.
Observe that if $\gamma$ is almost straight, then there exists
$\theta \in [-\pi,\pi)$ such that $\rho_\theta(\gamma)$
is the graph of a \textcolor{\blue}{ $C^1$  function } defined on an interval. 



\begin{lemm}
\label{lemtouch}
Suppose $\gamma$ and $\gamma'$ are $C^1$ curves in $\R^2$,
and $\gamma$ is almost straight. Assume
there exist an interval $[a,b] $ and a function $f \in C^1([a,b])$ such that 
\bea
\gamma = \{(x,y): a \leq x \leq b, y = f(x) \}
\label{polar1}
\eea
with $f'(x) =  0$ for some $x \in (a,b)$.
Then 
\bea
\textcolor{\blue}{
\int_{-\infty}^{\infty} {\bf 1} \{ \gamma' +(0,v) 
\mbox{\rm ~ grazes  } \gamma \} dv} =0.
\label{0418a}
\eea 
\end{lemm}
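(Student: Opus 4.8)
The plan is to reduce the grazing condition to the vanishing of the derivative of a single $C^1$ function of one variable, and then to invoke the one-dimensional (and most elementary) case of Sard's theorem. First I would fix a $C^1$ parametrization $\Gamma' = (g_1,g_2) : [0,1] \to \R^2$ of $\gamma'$ with nowhere-vanishing derivative, which exists by the definition of a $C^1$ curve. A vertical translate $\gamma' + (0,v)$ meets $\gamma$ at the image under $\Gamma'$ of a parameter $s$ precisely when $a \le g_1(s) \le b$ and $g_2(s) + v = f(g_1(s))$; writing $h(s) := f(g_1(s)) - g_2(s)$, the latter condition reads $v = h(s)$. Since grazing excludes the endpoints of $\gamma$, I may restrict to the open set $U := \{ s \in (0,1) : a < g_1(s) < b \}$, on which $h$ is $C^1$ with $h'(s) = f'(g_1(s)) g_1'(s) - g_2'(s)$.

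The key computation will be that the tangency requirement in the definition of grazing is equivalent to $h'(s) = 0$. Indeed, the tangent direction of $\gamma$ at the point $(g_1(s), f(g_1(s)))$ is $(1, f'(g_1(s)))$, while that of the translate $\gamma' + (0,v)$ at the corresponding point is $(g_1'(s), g_2'(s))$; these span the same line if and only if the cross product $g_2'(s) - f'(g_1(s)) g_1'(s)$ vanishes, i.e. if and only if $h'(s) = 0$. (Note this also automatically excludes points where $\gamma'$ has a vertical tangent, since there $g_1'(s)=0$ forces $h'(s) = -g_2'(s) \neq 0$.) Consequently, if $\gamma' + (0,v)$ grazes $\gamma$ then $v = h(s)$ for some $s \in U$ with $h'(s) = 0$; that is, the set of \emph{bad} values of $v$ is contained in $h(\{ s \in U : h'(s) = 0 \})$, the set of critical values of $h$.

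It then remains to observe that the image of the critical set of a $C^1$ function on an open subset of $\R$ has Lebesgue measure zero: covering the critical set by intervals on which $|h'|$ is as small as we please shows that $h$ compresses their total length by an arbitrarily small factor, so the image is null. Writing $U$ as a countable union of open intervals and summing, the set of bad $v$ is Lebesgue-null, which is exactly the assertion (\ref{0418a}). The main thing to get right will be the equivalence between grazing and $h'(s) = 0$, together with the bookkeeping of endpoints, which only removes finitely many parameter values and hence cannot affect the measure of the bad set. I note that the almost-straightness of $\gamma$ and the existence of an interior critical point of $f$ serve only to place us in the graph setting in which this lemma is to be applied within the proof of Lemma \ref{touchlemC1}, and do not otherwise enter the argument above.
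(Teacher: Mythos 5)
Your proof is correct, and it takes a genuinely cleaner route than the paper's, though both rest on the same two pillars: (i) the set of bad shifts $v$ is contained in the set of critical values of a scalar $C^1$ function $h$, and (ii) the critical values of a $C^1$ function of one real variable form a Lebesgue-null set (the one-dimensional Sard theorem, proved by exactly the uniform-continuity/mean-value-theorem compression argument you sketch; the paper spells this out in detail). The difference lies in how $h$ is produced. The paper first reduces $\gamma'$ to a graph $y=g(x)$ over the $x$-axis: it splits $\gamma'$ into finitely many almost straight pieces and disposes of the vertical-tangent case by a separate argument that uses the almost-straightness of $\gamma$ together with the hypothesis that $f'$ vanishes somewhere; only then does it set $h=f-g$ and identify grazing with $h(x)=v$, $h'(x)=0$. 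You instead keep an arbitrary $C^1$ parametrization $\Gamma'=(g_1,g_2)$ of $\gamma'$ and work with $h(s)=f(g_1(s))-g_2(s)$ as a function of the curve parameter; the cross-product computation shows that tangency is precisely $h'(s)=0$, and vertical tangents of $\gamma'$ are handled automatically, since there $h'(s)=-g_2'(s)\neq 0$ because the parametrizing derivative never vanishes. This buys you two things: no decomposition of $\gamma'$ or case analysis is needed, and the hypotheses that $\gamma$ is almost straight and that $f'$ has an interior zero are never used, so your argument proves the stronger statement valid for any $C^1$ curve $\gamma'$ and any $C^1$ graph $\gamma$ (as you correctly observe, those hypotheses only reflect how the lemma is invoked in the proof of Lemma \ref{touchlemC1}). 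What the paper's graph-over-graph formulation buys in exchange is purely notational: with both curves written as graphs over the same axis, the tangency condition $f'(x)=g'(x)$ requires no computation. Your endpoint bookkeeping (restricting to the open set $U$, which removes exactly the parameter values excluded by the definition of grazing) and the conclusion that the integral in (\ref{0418a}) vanishes because the bad set lies inside a Lebesgue-null set are both in order.
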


\begin{proof}
Without loss of generality we may assume $\gamma'$ is also almost straight,
since if not, we may break $\gamma'$ into finitely many almost straight pieces.
We claim that we may also assume that the locus  of $\gamma'$ 
takes a similar form to that of $\gamma$, namely
\bea
\gamma' = \{(x,y): a' \leq x \leq b', y = g(x) \}
\label{polar2}
\eea
for some $a' < b'$  and
and for some  $C^1$ function $g: [a',b'] \to \R$.
Indeed, if $\gamma'$ cannot be expressed in this
form then it must 
 have a vertical tangent line somewhere, 
but in this case, since both $\gamma$ and $\gamma'$
are assumed almost straight and $\gamma$ has at least one horizontal
tangent line,
it is 
impossible for  any translate of $\gamma'$ to be tangent to $\gamma$.

By (\ref{polar2}) we have for  all $v \in \R$ that 
$$
   \gamma' + (0,v) = \{(x,y): a' \leq x \leq b', y = 
g(x) + v \}
$$
and therefore comparing with (\ref{polar1}) we see that
if $\gamma' +(0,v)$ grazes $\gamma$, then  for some $x \in
[a,b] \cap [a',b']$, we have both
$g(x)+ v = f(x)$, and
$g'(x) = f'(x)$.
In other words, setting $h = f-g$, we have that 
\bea
\{v: \gamma' + (0,v)
\mbox{ grazes } 
\gamma \} \subset 
 \{h(x) : x \in [a,b] \cap [a',b']   \mbox{ and } h'(x) =0 \}
\nonumber
\\
= h(A),
\label{0418b}
\eea
where we set 
$A := \{x\in [a,b ] \cap [a',b']: h'(x) =0\}$.
Given $n \in \N$, divide $[a,b] \cap [a',b']$ into
$n$ intervals of equal length, denoted $I_{n,1},\ldots,I_{n,n}$.
Let $A_n$ be the union of those intervals $I_{n,i}$
 having non-empty intersection with $A$. 
That is, set  $A_n  := \cup_{i \in {\cal I}_n} I_{n,i}$ with 
$$
{\cal I}_n := \{i \in \{1,\ldots,n\}: I_{n,i} \cap A \neq \emptyset\}.
$$ 
Since $h \in C^1$, the derivative $h'$ is uniformly
continuous on $[a,b] \cap [a',b']$. 
Therefore, given $\eps >0$, we can choose $n$ large enough so that
for all $i \in {\cal I}_n$ we have $|h'(x)|  \leq \eps$ for all $x \in
I_{n,i}$. Hence, for all such $i$,
by the mean value theorem, 
with $\H_1$ denoting Lebesgue measure we have
$\H_1(h(I_{n,i})) \leq  \eps \H_1(I_{n,i})$. Thus
$$
\H_1(h(A) ) \leq \H_1(h(A_n)) 
\leq \sum_{i \in {\cal I}_n} \H_1(h(I_{n,i}))
\leq \eps \sum_{i \in {\cal I}_n} \H_1(I_{n,i}) \leq \eps 
(b - a),
$$
and hence, since $\eps$ is arbitrarily small,
$\H_1(h(A)) =0$. Therefore by (\ref{0418b}) we have
(\ref{0418a}), as required.
\end{proof}

\begin{proof}[Proof of Lemma \ref{touchlemC1}]
\textcolor{\blue}{
We can and do
assume without loss of generality that both $\gamma$ and $\gamma'$ are 
$C^1$ (not just piecewise $C^1$), and  moreover that they are 
 almost straight.}
It is enough to prove the result for
 $\rho_\theta (\gamma)$ and
 $\rho_\theta (\gamma')$ for some $\theta \in (-\pi,\pi]$ (rather than
for the original $\gamma$, $\gamma'$) and
therefore we can (and do) also assume
there exists an interval $[a,b] $ and function $f \in C^1([a,b])$ such that 
(\ref{polar1}) holds.

Under these assumptions, applying  (\ref{0418a}) to the
curve $\gamma'_x:= \gamma' + (x,0)$ instead of $\gamma'$ shows that
\bean
\int_{-\infty}^\infty \int_{-\infty}^\infty 
{\bf 1} \{ \gamma' +(x,y) 
\mbox{ grazes } \gamma \} 
dy dx
= 
\int_{-\infty}^\infty \int_{-\infty}^\infty 
 {\bf 1} \{ \gamma'_x +(0,y) \mbox{ grazes } \gamma  \} dy dx
=0,
\eean
and hence the set of $z = (x,y)$ such that $\gamma'+z $ grazes $\gamma$
is Lebesgue null.

Let $\gamma_0,\gamma_1$ denote the endpoints of $\gamma$ and
$\gamma'_0,\gamma'_1$ the endpoints of $\gamma'$. If $\gamma$ and
$\gamma' +z$ touch but do not graze for some 
$z \in \R^2$, then either $\gamma_i \in \gamma'+z$ 
or $\gamma'_i + z \in \gamma$ for some $i \in \{0,1\}$. Since
$\gamma'$ is rectifiable we have  for $i =0,1$ that
$$
\int_{\R^2} {\bf 1}_{\gamma'+z} (\gamma_i )  dz = 
\int_{\R^2} {\bf 1}_{\gamma'} (\gamma_i + ( -z )  )  dz = 
\H_2(\gamma') =0, 
$$
and similarly
$\int_{\R^2} {\bf 1}_{\gamma} (\gamma'_i + z )  dz = 0$.
Hence the set of $z \in \R^2$ such that $\gamma$ and $\gamma'+z$ touch
 but do not graze is also Lebesgue null.
\end{proof}

\section{Proof of results for the DLRM}
\label{secproofsDLRM}
\allco
Throughout this section
 $(S,M)$ and  $(S',M')$ denote independent random elements of
$\cK \times \bM$ \textcolor{\blue}{with common distribution $\Q'$.}

\begin{lemm}
\label{Boolem}
Suppose $\Q$ is such that $\E[\H_d(S \oplus B(1)) ]< \infty$.
Then for any $K \in (0,\infty)$, with probability 1 only
finitely many of the sets $S_j+x_j$ with $-K \leq t_j \leq K $
have non-empty intersection with $B(K)$.
\end{lemm}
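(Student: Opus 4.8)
The plan is to apply the Mecke formula (or equivalently, a direct moment computation) to the marked Poisson process to show that the expected number of relevant leaves is finite, and then conclude almost-sure finiteness. Fix $K \in (0,\infty)$. A leaf $S_j + x_j$ with arrival time $t_j \in [-K,K]$ intersects $B(K)$ precisely when $x_j \in B(K) \oplus \check{S}_j$, where $\check{S}_j := \{-y : y \in S_j\}$. Equivalently, the event $(S_j + x_j) \cap B(K) \neq \emptyset$ holds iff there exist $y \in S_j$ and $z \in B(K)$ with $x_j + y = z$, i.e. $x_j \in z - y$ for some such pair.

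First I would count the expected number $N$ of such leaves. Since the marked process $\sum_i \delta_{(x_i,t_i,S_i)}$ is Poisson on $\R^d \times \R \times \cK$ with intensity $\H_d \otimes \H_1 \otimes \Q$, the Mecke formula gives
\bea
\E[N] = \int_{-K}^{K} \int_{\R^d} \int_{\cK} {\bf 1}\{(\sigma + x) \cap B(K) \neq \emptyset\} \, \Q(d\sigma) \, dx \, dt. \nonumber
\eea
The $t$-integral contributes a factor $2K$. For the spatial integral, note that $(\sigma + x) \cap B(K) \neq \emptyset$ forces $x \in B(K) \oplus \check{\sigma}$, a set of Lebesgue measure at most $\H_d(\check{\sigma} \oplus B(K))$. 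Since $\H_d$ is translation- and reflection-invariant, this equals $\H_d(\sigma \oplus B(K))$.

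Hence
\bea
\E[N] \leq 2K \, \E[\H_d(S \oplus B(K))]. \nonumber
\eea
The remaining step is to show that the hypothesis $\E[\H_d(S \oplus B(1))] < \infty$ controls $\E[\H_d(S \oplus B(K))]$ for every $K$. This follows from the monotonicity and scaling behaviour of the Minkowski sum: either one covers $B(K)$ by finitely many (roughly $\lceil K \rceil^d$) unit balls and uses subadditivity of $\H_d$ under unions of translates of $S \oplus B(1)$, or one observes directly that $S \oplus B(K) \subseteq (S \oplus B(1)) \oplus B(K)$ and bounds the volume of the dilation of a set by its own volume plus surface and lower-order terms. In either case $\E[\H_d(S \oplus B(K))] < \infty$, so $\E[N] < \infty$ and therefore $N < \infty$ almost surely, which is exactly the claim.

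The only genuine subtlety is the passage from the $K=1$ hypothesis to general $K$. If one prefers to avoid Steiner-formula-type bounds, the cleanest route is the covering argument: choose points $z_1,\dots,z_m \in \R^d$ with $B(K) \subseteq \bigcup_{\ell=1}^m B(z_\ell,1)$ and $m$ depending only on $K$ and $d$; then $S \oplus B(K) \subseteq \bigcup_\ell \big((S \oplus B(1)) + z_\ell\big)$, whence $\H_d(S \oplus B(K)) \leq m \, \H_d(S \oplus B(1))$ by translation invariance, giving $\E[\H_d(S \oplus B(K))] \leq m \, \E[\H_d(S \oplus B(1))] < \infty$. This is elementary and sidesteps any measurability worries, which are in any case covered by the standing assumption that $S$ is a random element of $\cK$.
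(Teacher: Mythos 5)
Your proof is correct and follows essentially the same route as the paper: a first-moment (Mecke/marking) computation showing the expected number of relevant leaves is $2K$ times an expected Minkowski-sum volume, combined with a covering of $B(K)$ by finitely many unit balls. The only cosmetic difference is that the paper applies the covering to the random configuration (finitely many leaves hit each translate of $B(1)$, a.s.), whereas you apply it deterministically to bound $\H_d(S \oplus B(K)) \leq m\,\H_d(S \oplus B(1))$ before taking expectations; both are equally valid.
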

\begin{proof}
The number of  sets $S_j + x_j$ that intersect $B(1)$
and have $|t_j| \leq K$ is Poisson with mean
\bean
2K \int_{\R^d} dx \int_{\cK} \Q(d \sigma) {\bf 1}\{ 
(\sigma +x) \cap B(1) \neq \emptyset\} 
= 2K  \int_{\cK} \Q(d\sigma) \int_{ \R^d} 
{\bf 1}_{\sigma  \oplus B(1) } (-x) dx,
\eean
 which is finite by the assumption 
 $\E[\H_d(S \oplus B(1))] < \infty$.
Hence, almost surely, $(S_j+x_j) \cap B(1) \neq \emptyset$
 for only finitely many $j$
 with $-K \leq t_j \leq K $.
 Since
we can cover $B(K) $ with finitely many translates of $B(1)$, the
result follows.
\end{proof}

\begin{lemm}
Suppose $\E[\H_d(S\oplus B(1)) ] < \infty$. Then 
the time-reversed DLRM  $\xi$ is indeed a random measure, and so is
the DLRM  $\xi_t$ 
for all $t \in \R$.
\label{lemmeas}
\end{lemm}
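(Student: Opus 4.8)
The plan is to check the two conditions that characterise a random measure on $(\R^d,\cB^d)$: that $\xi$ (and likewise each $\xi_t$) is almost surely locally finite, so that it takes values in $\bM$, and that $\omega\mapsto\xi(A)$ is measurable for every $A\in\cB^d$. The latter suffices by definition of the $\sigma$-algebra on $\bM$, which is generated by the evaluation maps $\mu\mapsto\mu(A)$. Writing $V_i:=\R^d\setminus\cup_{\{j:\,0\le t_j<t_i\}}(S_j+x_j)$ for the region left uncovered by leaves arriving strictly before $t_i$, definition (\ref{xidef}) exhibits $\xi$ as the countable sum $\sum_{\{i:\,t_i\ge0\}}\nu_i$ of the nonnegative measures $\nu_i:=(M_i+x_i)(\cdot\cap V_i)$; thus nonnegativity and countable additivity are automatic, and the content of the lemma is local finiteness together with measurability. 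I will treat $\xi$ in detail; the argument for $\xi_t$ is the mirror image, with the backward window $(t_i,t]$ replacing $[0,t_i)$, and I recall that $\xi_t\eqd\xi$.

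For local finiteness I fix $K>0$ and show $\xi(B(K))<\infty$ almost surely. The first step is to eliminate late-arriving leaves. Since $\Q$ charges the sets with nonempty interior, $\lambda>0$, and with positive probability a leaf contains a fixed small ball; the associated Boolean model of balls covers the bounded set $B(K)$ at some almost surely finite random time, so there is an almost surely finite $\tau_K$ with $B(K)\subset\cup_{\{j:\,0\le t_j\le\tau_K\}}(S_j+x_j)$. For every $i$ with $t_i>\tau_K$ we then have $B(K)\cap V_i=\emptyset$, whence $\nu_i(B(K))=0$. The second step controls the remaining leaves: by Lemma \ref{Boolem} (with the obvious analogue for the deterministic space--time window $\R^d\times[0,T]$, any $T\ge\tau_K$) only finitely many leaves with $t_i\le T$ meet $B(K)$, and each contributes the finite mass $(M_i+x_i)(B(K)\cap V_i)\le(M_i+x_i)(B(K))<\infty$ because $M_i\in\bM$ is locally finite. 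Hence $\xi(B(K))$ is a finite sum of finite terms.

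Measurability of $\xi(A)$ then reduces to finitely many marks. It is enough to treat bounded $A$, since $\xi(A)=\lim_{K\to\infty}\xi(A\cap B(K))$ by monotone convergence; so suppose $A\subset B(K)$. For a point $y\in A$, membership $y\in V_i$ is decided only by the leaves that meet $B(K)$, so $\nu_i(A)=(M_i+x_i)\big(A\setminus\cup_{\{j:\,0\le t_j<t_i,\ (S_j+x_j)\cap B(K)\neq\emptyset\}}(S_j+x_j)\big)$, and by Lemma \ref{Boolem} the union here is over finitely many leaves. Consequently $\nu_i(A)$ is a measurable functional of the mark $(x_i,t_i,S_i,M_i)$ and those finitely many other marks of $\Po$, and $\xi(A)=\sum_{\{i:\,t_i\ge0\}}\nu_i(A)$ is measurable as a countable sum of nonnegative measurable functions. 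The same reasoning, excising leaves with $t_j\in(t_i,t]$ and using the finite ``cover time from above'', gives measurability and local finiteness of $\xi_t$.

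The main obstacle is the local-finiteness step. The delicate point is that infinitely many leaves accumulate in every space--time slab $\R^d\times[0,T]$, so one must argue that only finitely many of them actually deposit mass in the bounded window $B(K)$. This is exactly where the visibility structure of the time-reversed DLM is used: the finite cover time $\tau_K$ discards all sufficiently late leaves, while the geometric local finiteness provided by Lemma \ref{Boolem} discards all spatially distant ones, the two together reducing the sum to finitely many terms. (One should record here that the mass a leaf contributes to $B(K)$ is carried by the vicinity of that leaf --- automatic in all the examples, where $M_i$ is borne by $S_i$ --- so that geometric finiteness transfers to the measure.) Once local finiteness is secured, measurability is the routine countable combination of finite-leaf functionals described above.
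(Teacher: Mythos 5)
Your proposal and the paper's proof disagree about where the difficulty of this lemma lies, and the step you dismiss as routine is exactly the one the paper spends its whole proof on. The paper fixes a bounded Borel set $A$, writes $\xi(A)$ as the countable sum of the per-leaf terms $(M_i+x_i)\bigl(A\setminus \cup_{\{j:0\le t_j<t_i\}}(S_j+x_j)\bigr){\bf 1}\{t_i\ge 0\}$, and then works to show that \emph{each summand is a random variable}. This is not automatic: it requires the joint measurability of evaluating the random measure $M_i+x_i$ on a set carved out by the random compact sets $S_j+x_j$. The paper proves it by choosing $r_1$ with $A\subset B^o(r_1)$, using Lemma \ref{Boolem} to see that a.s.\ only finitely many leaves with $0\le t_j<t_i$ meet $B^o(r_1)$, so that the uncovered region $U$ inside $B^o(r_1)$ is open; it then approximates $U$ from inside by unions $U_n$ of dyadic cubes and uses monotone convergence to write the summand as $\lim_n\sum_k (M_i+x_i)(A\cap Q_{n,k})\,{\bf 1}\{Q_{n,k}\cap\cup_j(S_j+x_j)=\emptyset\}$, where the first factor is measurable by the definition of the $\sigma$-algebra on $\bM$ and the second by the definition of the Fell topology. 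Your sentence ``Consequently $\nu_i(A)$ is a measurable functional of the mark $(x_i,t_i,S_i,M_i)$ and those finitely many other marks'' asserts precisely this joint measurability with no argument, and your closing claim that ``measurability is the routine countable combination of finite-leaf functionals'' is where the proof is missing. That is a genuine gap, not a formality: without some approximation scheme of this kind, measurability of $\mu(A\setminus F)$ in the pair (measure, compact set) does not follow formally from the definitions.

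The other half of your proposal, almost sure local finiteness, is something the paper neither claims nor proves, and it cannot be proved under the stated hypotheses. The lemma assumes nothing about $M$ except $M\in\bM$; the paper explicitly notes that $M$ need not be supported by $S$, and your own parenthetical concedes that your argument needs the mass of $M_i$ to sit near $S_i$. Without that extra hypothesis the conclusion is false: take $M_i$ to be Lebesgue measure on all of $\R^d$ for every leaf. Then each of the infinitely many leaves arriving in $\R^d\times[0,\eps]$, however distant its location $x_i$, contributes at least $\H_d\bigl(B(K)\setminus\cup_{\{j:0\le t_j\le\eps\}}(S_j+x_j)\bigr)$ to $\xi(B(K))$, and this quantity is positive with positive probability, so $\xi(B(K))=\infty$ with positive probability. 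The paper's proof is consistent with the kernel reading of ``random measure'' that it also offers: $\xi(\omega)$ is automatically a measure (a countable sum of measures), and the content of the lemma is that $\omega\mapsto\xi(A)$ is a random variable for every bounded Borel $A$. So your write-up proves a stronger statement that is false in this generality, while leaving unproven the statement the lemma actually requires.
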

\begin{proof}
We prove just the first assertion (the proof of the second assertion
is similar).
It suffices to show,
for arbitrary bounded Borel $A \subset \R^d$,
 that $\xi(A)$ is a random variable.
By the definition (\ref{xidef}),
$$
\xi(A) = \sum_{i=1}^\infty (M_i+ x_i)(A
 \setminus \cup_{\{j: 0 \leq t_j < t_i\}} 
(S_j+x_j) ) {\bf 1}\{ t_i \geq 0\},
$$
and it suffices to prove that each summand is a random variable.
Choose $r_1$ such that $A \subset B^o(r_1)$, where $B^o(r_1)$ is the
interior of the ball $B(r_1)$.
Fix $i \in \N$.  By Lemma \ref{Boolem},
with probability 1 
only finitely many of the sets $S_j + x_j $ with $0 \leq t_j < t_i$
 have non-empty intersection with $B^o(r_1)$.
 Therefore 
the set
$$
U := B^o(r_1) 
\setminus \cup_{\{j: 0 \leq t_j < t_i\}} 
(S_j+x_j)
$$
is open. 

Given $n \in \N $, partition $\R^d$ into cubes of the form
$[0,2^{-n})^d  + 2^{-n} z$ with $z \in \Z^d$.
Let
 the cubes in the partition that are contained in $B^o(r_1)$ be denoted 
$Q_{n,1},\ldots,Q_{n,m_n}$.
Let $U_n $ be the union of cubes of the form $Q_{n,k}$
with $1 \leq k \leq m_n$ and $Q_{n,k} \subset U$. Since $U$ is
open we have $U_n \uparrow U$, and so  by monotone convergence,
\bean
(M_i +x_i) (A 
 \setminus \cup_{\{j: 0 \leq t_j < t_i\}} 
(S_j+x_j) )
 {\bf 1}\{ t_i \geq 0\}
= \lim_{n \to \infty} (M_i + x_i) 
( A \cap U_n)
 {\bf 1}\{ t_i \geq 0\}
 \\
 = \lim_{n \to \infty} \sum_{k=1}^{m_n} (M_i +x_i) (A \cap Q_{n,k}) 
{\bf 1}\{ Q_{n,k} \cap \cup_{\{j:0 \leq t_j < t_i\}} (S_j + x_j) = \emptyset \}
 {\bf 1}\{ t_i \geq 0\},
\eean
and we claim that this is
 a random variable. For example, if
$Q_{n,k}= [0,1)^d$, then for each $j$ we have
$$
\{(S_j + x_j) \cap Q_{n,k} =\emptyset \} =
\cap_{n=2}^\infty \{ 
(S_j + x_j) \cap [0,1-1/n]^d \}  =\emptyset \} 
$$
which is an event by the definition of the Fell topology 
and the fact that $S_j+x_j$ is a random element
of $\K$ by \cite[Theorem 2.4.3]{SW}, for example.
\end{proof}
\begin{lemm}
\label{Elem}
For $(x,t) \in \R^d \times \R_+$ let
$E_{x,t}$ be the event that the site $x$ is exposed
(i.e., not already covered)
just before time $t$, in the time-reversed DLM. 
 That is, let
\bea
E_{x,t} := \{ x \notin \cup_{\{i: 0 \leq t_i <t \} } (S_i + x_i) \}.
\label{Edef}
\eea
Then
with $\lambda$ and $\lambda_{x}$ defined at (\ref{lambdadef}),
for all $x,y \in \R^d$, $t \geq 0$ and  $u \in [t,\infty)$ we have 
\bea
\Pr[E_{x,t} \cap E_{y,u} ] = \exp(-\lambda_{y-x} t- \lambda(u-t)).
\label{0711a}
\eea
 In particular $\Pr[E_{x,t} ] = \exp(-\lambda t)$.
\end{lemm}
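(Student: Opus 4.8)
The plan is to recognise $E_{x,t} \cap E_{y,u}$ as a \emph{void event} for the marked Poisson process and to evaluate its probability through the void probability formula. Recall from the excerpt that $\sum_{i=1}^\infty \delta_{(x_i,t_i,S_i)}$ is a Poisson process on $\R^d \times \R \times \cK$ with intensity measure $\H_d \otimes \H_1 \otimes \Q$. I would first observe that the site $x$ fails to be exposed just before time $t$ precisely when some point $(\xi,s,\sigma)$ of this process has $0 \le s < t$ and $x \in \sigma + \xi$, equivalently $\xi \in x - \sigma$, where $x - \sigma := \{x - a: a \in \sigma\}$; and similarly for $y$ with time window $[0,u)$. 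Hence $E_{x,t} \cap E_{y,u}$ is exactly the event that the process places no point in the ``bad'' set
\[
B := \{(\xi,s,\sigma):\ (0 \le s < t,\ \xi \in x-\sigma)\ \text{or}\ (0 \le s < u,\ \xi \in y - \sigma) \},
\]
so that, writing $\mu := \H_d \otimes \H_1 \otimes \Q$, the void probability formula gives $\Pr[E_{x,t} \cap E_{y,u}] = \exp(-\mu(B))$.

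Next I would compute $\mu(B)$ by splitting the time coordinate into the strips $[0,t)$ and $[t,u)$ (using $u \ge t$). On $[0,t)$ a point is bad iff $\xi \in (x - \sigma) \cup (y - \sigma)$, while on $[t,u)$ it is bad iff $\xi \in y - \sigma$; integrating over $s$ contributes factors $t$ and $u-t$ respectively. By Tonelli,
\[
\mu(B) = t \int_{\cK} \H_d\big( (x - \sigma) \cup (y - \sigma) \big)\, \Q(d\sigma) + (u-t) \int_{\cK} \H_d(y - \sigma)\, \Q(d\sigma).
\]

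The remaining step is to identify the two spatial integrals with $\lambda_{y-x}$ and $\lambda$. Since $\H_d$ is invariant under reflection and translation, $\H_d(y - \sigma) = \H_d(\sigma)$, so the second integral equals $\E[\H_d(S)] = \lambda$. For the first, translating by $-x$ turns the integrand into $\H_d(\check\sigma \cup (\check\sigma + (y-x)))$ with $\check\sigma := \{-a : a \in \sigma\}$; the key identity is that, after inclusion–exclusion and using translation and reflection invariance of $\H_d$, one has $\H_d(\check\sigma \cup (\check\sigma + z)) = \H_d(\sigma \cup (\sigma + z))$ pointwise, whence the first integral equals $\E[\H_d(S \cup (S + (y-x)))] = \lambda_{y-x}$. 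Substituting yields $\mu(B) = \lambda_{y-x}\, t + \lambda(u-t)$, which is exactly (\ref{0711a}); the displayed special case then follows either by setting $y=x$, $u=t$ (so that $\lambda_0 = \lambda$) or by repeating the one-site void computation directly.

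I expect the only genuine subtlety to lie in the reflection/translation bookkeeping of the last paragraph: one must check that the union of the two \emph{reflected} translates $x-\sigma$ and $y-\sigma$ has the same measure as $\sigma \cup (\sigma + (y-x))$, so that the answer emerges as $\lambda_{y-x}$ rather than as some reflected variant. Everything else — the void-probability reduction and the Tonelli evaluation — is routine, and finiteness of $\mu(B)$, required for the formula to be meaningful, is guaranteed by $\lambda_{y-x} \le 2\lambda < \infty$.
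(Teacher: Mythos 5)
Your proof is correct and takes essentially the same approach as the paper's: both are void-probability computations for the marked Poisson process $\sum_i \delta_{(x_i,t_i,S_i)}$, with the intensity mass of the ``bad'' points evaluating to $t\lambda_{y-x} + (u-t)\lambda$ via the translation/reflection invariance of $\H_d$ (your identity $\H_d(\check\sigma \cup (\check\sigma+z)) = \H_d(\sigma \cup (\sigma+z))$ matches the paper's identification of the parameter as $\lambda_{x-y}=\lambda_{y-x}$). The only organizational difference is that the paper first treats $u=t$ as a Poisson count of covering leaves being zero and then extends to $u>t$ by independence and time-homogeneity over the disjoint time strip, whereas you compute a single void probability over the whole space-time-mark bad set $B$ and split the time integral --- the same computation in different bookkeeping.
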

\begin{proof}
We first prove (\ref{0711a})
in the case with $u=t$.
The number of $i$ with $\{x,y\} \cap  (S_i +x_i) \neq \emptyset$ and
 $t_i \in [0,t)$ is Poisson distributed with parameter
\bean
t \int_{\cK} \int_{\R^d} {\bf 1} \{ \{x,y\} \cap (\sigma + z) \neq \emptyset 
\} dz \Q(d \sigma) 
= t \int_{\cK} \int_{\R^d}  {\bf 1}_{(\sigma +(-x)) \cup (\sigma + (-y)) }(-z) 
dz \Q(d\sigma)
\\
= 
 t \int_{\cK}  \H_d((\sigma +(-x)) \cup (\sigma + (-y)) ) \Q(d\sigma) 
 = t \lambda_{x-y}, 
\eean
and since $\lambda_{x-y}= \lambda_{y-x}$ this gives us (\ref{0711a})
for $u=t$.
 Taking $y=x$ gives us also
that $\Pr[E_{x,t}] = e^{-\lambda t}$. Finally, for $u >t$, by 
the independence property and time-homogeneity 
of the Poisson process $\sum_i \delta_{(x_i,t_i,S_i)}$ we have that 
$$
\Pr[E_{x,t} \cap E_{y,u}] = \Pr[E_{x,t} \cap E_{y,t} ] \Pr[E_{y,u-t}]
= \exp(-t\lambda_{y-x} ) \times \exp(-(u-t) \lambda),
$$
which gives us (\ref{0711a}) in  general.
\end{proof}
\begin{proof}[Proof of Theorem \ref{th:momgen}]
By Lemma \ref{lemmeas}, $\xi$ is a random measure.
It is easy to see that this random measure
  is  stationary.
 Recalling (\ref{xidef}),
and using the Mecke formula (see \cite{LP}), and the notation
from (\ref{Edef}),
 we have
that 
\bean
\alpha = \E[ \xi([0,1]^d)] \\ 
= \int_{\cK \times \bM}
\Q'(d (\sigma,\mu))
\int_0^\infty  dt
\int_{\R^d}  dx
\int_{\R^d}  
\mu(dy) {\bf 1}_{[0,1]^d}(x+y) \Pr[ E_{x+y,t} ]. 
\eean
Hence by Lemma \ref{Elem}
and Fubini's theorem,
\bean
\alpha = \int_{\cK \times \bM} \int_{\R^d}
\int_0^\infty  
\exp(- \lambda t) 
dt \mu(dy) 
\Q'(d (\sigma,\mu))
= \lambda^{-1} \E    [|M|].
\eean
That is, we have (\ref{genmean}). 
\end{proof}

\begin{proof}[Proof of Theorem \ref{thcovgen}]
Let $f \in \cR_0$ and $t \in \R$.
We first prove (\ref{0509a2}) in the special case
where  $g=f$ and $u=t$.  For each $n$ set $f_n := T_n(f)$
and  $Z_n:=\xi(f_n)$.  
Then
$\E [ Z_n^2 ] = a_n + b_n$, where we set
\bean
a_n :=
\E \sum_{\{i:t_i \geq 0\}} 
\left(
\int_{\R^d} f_n(x) ( 1- {\bf 1}_{\cup_{\{k: 0 \leq t_k < t_i\}} (S_k + x_k) }   
(x) )(M_i + x_i)(dx)
\right)^2,
\eean
and
\bean
b_n := \E \sum_{\{i: t_i \geq 0\}} \sum_{\{j: t_j \geq 0, j \neq i\}}
\int_{\R^d} f_n(x) ( 1 - {\bf 1}_{ \cup_{\{k: 0 \leq t_k < t_i\}}(S_k+x_k) }  
(x)) (M_i +x_i)( dx)   
 \\
\times
\int_{\R^d}  f_n(y) (1 - {\bf 1}_{\cup_{\{k: 0 \leq t_k < t_j\}} ( S_k + x_k) } 
(y) ) 
 (M_j + x_j) (dy). 
\eean
\textcolor{\blue}{
In what follows, we adopt the convention that any unspecified
domain of integration is taken to be $\R^d$. 
By the Mecke formula 
\bean
a_n = 
\int_{\cK \times \bM} \Q'(d(\sigma,\mu)) \int dx \int_0^\infty dt 
\\
\times
\E \left[ \left(
\int f_n(y) ( 1- {\bf 1}_{\cup_{\{k: 0 \leq t_k < t \}} } (S_k+x_k) (y) ) 
(\mu + x)(dy) \right)^2 \right], 
\eean
so using notation $E_{x,t}$  from Lemma \ref{Elem},
\bean
a_n = 
\int_{\cK \times \bM} \Q'(d(\sigma,\mu)) \int dx \int_0^\infty dt 
\E \left[ 
\int
  {\bf 1}_{E_{y,t}  }  
 \times  f_n(y)
(\mu + x)(dy)
\right.
\\
\left.
 \times \int 
 {\bf 1}_{E_{z,t}} 
\times f_n(z)
(\mu + x)(dz)
  \right].
\eean
Changing variables to $\tily = y-x$ and $\tz= z-x$
 we obtain that
\bean
a_n = 
\int_{\cK \times \bM} \Q'(d(\sigma,\mu)) \int dx \int_0^\infty dt 
\E \left[ 
\int 
  {\bf 1}_{E_{x+\ty,t}  }  
f_n(x + \ty )
\mu (d \tily)
\right.
\\
\left.
 \times \int
 {\bf 1}_{E_{x+\tz,t}} 
 f_n(x+ \tz)
\mu (d\tz)
  \right].
\eean
Now writing $y $ for $\ty$ and $z$ for $\tz$, using Fubini's theorem 
 and Lemma \ref{Elem},
 followed by a change of variable
$\tx = x + y$,  we have
}

\bean 
a_n =
\int_{\cK \times \bM} \Q'(d(\sigma,\mu)) 
 \int dx \int_0^\infty dt 
\int_{\R^d} \mu(dy)
\int_{\R^d} \mu(dz) e^{-\lambda_{z-y} t } f_n(x+y) f_{n}(x+z)
\\
=
\int_{\cK \times \bM} \Q' (d(\sigma,\mu)) 
\int_{\R^d} \mu(dy)
\int_{\R^d} \mu(dz) \lambda_{z-y}^{-1 }
\int  d\tx f_n(\tx) f_{n}(\tx+z-y).
\eean
\textcolor{\blue}{Using the} further change of variables $x' := n^{-1/d} \tx$,
we have for almost all $x' \in \R^d$ and all $(z,y) $
that $f_n(n^{1/d}x' + z -y ) \to f(x')$ as $ n \to \infty$
(because we assume $f \in \cR_0$), 
so by the dominated convergence theorem
$ n^{-1} a_n \to v_4 \|f\|_2^2, $
with $v_4$ given by (\ref{B5def}),
and $v_4$ is finite because we assume $\E[ |M|^2] < \infty$,
\textcolor{\blue}{ and
because $\lambda_z \geq \lambda >0$ for all $z \in \R^d$.}

By the multivariate Mecke formula
 (see 
\textcolor{\blue}{
e.g. \cite[page 30]{LP}), 
\bean
b_n = 2 \int_{\cK \times \bM} \Q' (d(\sigma,\mu))
 \int_{\cK \times \bM} \Q' (d(\sigma',\mu'))
\int_0^\infty ds \int_s^\infty dt \int du \int dv 
\\
\times \E \int  {\bf 1}_{E_{x,s}} f_n(x)(\mu+u)(dx)
 \int  {\bf 1}_{E_{y,t}} f_n(y)
{\bf 1}\{y \notin \sigma +u\}
 (\mu'+v)(dy).
\eean
Then by Fubini's theorem and the changes of variables $\tx = x-u$ and $\ty = y- v$,
\bean
b_n = 2 \int_{\cK \times \bM} \Q' (d(\sigma,\mu))
 \int_{\cK \times \bM} \Q' (d(\sigma',\mu'))
\int_0^\infty ds \int_s^\infty dt \int du \int dv 
\\
\times \int \int f_n(u+ \tx) f_n(v + \ty) \Pr[E_{u+\tx,s} \cap E_{v + \ty,t} ] 
{\bf 1}\{ v+ \ty \notin \sigma + u\} \mu(d\tx) \mu'(d\ty).
\eean
By the law of total probability,
the two outer integrals  may
be written as expectations with respect to $(S,M)$ and $(S',M')$.
 Using also 
 Lemma \ref{Elem}, and writing just $x$ for $\tx$ and $y$ for $\ty$,
 we obtain that
}
\bean
b_n = 2  \E \int du \int dv \int_{0}^\infty ds \int_s^\infty dt 
 \int_{\R^d} M  (dx) 
\int_{\R^d} M' (dy) 
f_n(u+x) 
f_n (v+y)
\\
\times {\bf 1} \{v+y \notin S  +u\} \exp( -\lambda_{v+y-u-x}s )
e^{-\lambda (t-s)} 
\\
= 2 \E \int du \int dv \int
M(dx)
 \int
M'(dy)
{\bf 1} \{v+y \notin S+u\} \lambda_{v+y-u-x}^{-1} \lambda^{-1} 
\\
\times
f_n(u+x) f_n(v+y).
\eean
On the other hand,
 $(\E Z_n )^2 =  \lambda^{-2}  (\E|M|)^2 (\int f_n(x)dx)^2$
by Theorem \ref{th:momgen} and Campbell's formula (see e.g.
\textcolor{\blue}{ \cite[page 128]{LP}}).
Setting $w=v-u$ and $\tilde{u}= u+x$, we may deduce that 
\bean
b_n - (\E Z_n)^2  =
\lambda^{-1}
\E \int d\tu \int dw \int M(dx) \int M' (dy)
f_n(\tu) 
f_n(\tu + w + y  -x)
\\ \times
 (2\lambda_{w+ y-x}^{-1} {\bf 1}\{w+ y \notin S\} - \lambda^{-1} ). 
\eean
Now take $u'= n^{-1/d}\tu$. For almost every 
$u' \in \R^d$, and all $(w,x,y)$, we have that
 $f_n(n^{1/d}u' + w + y -x) \to f(u')$ as $n \to \infty$.
Hence using dominated convergence, we find that 
\bean
n^{-1} ( b_n - (\E Z_n)^2) \to
\lambda^{-1} \|f \|_2^2 \E \int dw \int M(dx)
\int M'(dy) \left(
 \frac{2 {\bf 1}\{w+ y \notin S\} }{\lambda_{w+y-x}} - 
\frac{1}{\lambda} \right) 
\\
= (v'_5 - v'_6) \|f\|_2^2,
\eean
where we set
\bean
v'_5:= \lambda^{-1} \E \int dw \int M(dx)
\int M'(dy) \left(
 \frac{2  }{\lambda_{w+y-x}} - \frac{1}{\lambda} \right),
\eean
and 
$$
v'_6 := \lambda^{-1} \E \int dw \int M(dx)
\int M'(dy) \left(
 \frac{2 {\bf 1}\{w+ y \in S\} }{\lambda_{w+y-x}}  
 \right). 
$$
Then we obtain that $n^{-1}\Var [Z_n] \to ( v_4 + v'_5 - v'_6)
\|f\|_2^2$.

Now setting $w' := w+y$, we obtain that
\bean
v'_6 = \lambda^{-1} \E \int M(dx)
 \int M'(dy) \int_{S} d w' (2/\lambda_{w'-x}) 
= v_6,
\eean
with $v_6$ given by (\ref{B7def}).
Also, setting $v= w+y -x$ yields that
\bean
v'_5 = \lambda^{-1} \E  \int M(dx) \int
M'(dy) \int dv
 \left( \frac{2 \lambda - \lambda_{v} }{ \lambda \lambda_v} \right)
= v_5,
\eean
where $ v_5$ is given by (\ref{B6def}).
The integral in (\ref{B6def})  is finite because
$(2\lambda - \lambda_x)/\lambda_x$ is bounded
above by a constant times $\E[\H_d(S \cap (S +x))]$, and
with $R$ given by (\ref{Rdef}),
\bean
\E \int \H_d( S \cap (S+v) ) dv \leq \E\left[  \H_d(S) 
 \int_{B(2R)}  dv 
\right]
\leq 2^d \pi_d^2 \E[ R^{2d} ].
\eean
Thus we have the case $t=u$ and $f=g$
  of (\ref{0509a2}).
We can then deduce the case 
  of (\ref{0509a2}) with  $t=u$ but
with general $f,g  \in \cR_0$,  by 
polarisation (\textcolor{\blue}{see e.g. \cite[page 192]{LP}}).

Finally we need to prove (\ref{0509a2}) in general.  Without loss of
generality, we assume $u >t$. 
Write $g_n $ for $T_n(g)$. Then
we may write $\xi_u (g_n) := X +Y$, where 
$$
X := 
\int g_n(x)
( 1 - {\bf 1}_{\cup_{i: t < t_i \leq u} (S_i+ x_i)} (x) ) \xi_t(dx ),   
$$
 and $Y$ denotes the sum, over those  $i$ for which $t < t_i \leq u$, 
 of the integral of $g_n$  with respect to
the measure
$(M_i + x_i)$
 restricted to regions which do not subsequently get covered
between times $t_i$ and $u$, i.e.
$$
Y := \sum_{i: t < t_i \leq u } 
\int g_n(x) (1 - 
{\bf 1}_{ \cup_{\{j: t_i < t_j \leq u\} } (S_j + x_j) } (x) )  
(M_i + x_i)(dx) .
$$
 Let $\F_t$ denote the
$\sigma$-algebra generated by all Poisson arrivals
 \textcolor{\blue}{ and associated marks}
 up to time $t$.   Then
by Lemma \ref{Elem},
\bean
\E[X| \F_t] = \int_{\R^d} g_n(x) 
e^{-\lambda(u-t)} 
\xi_t(dx) 
= e^{\lambda(t-u)} \xi_t( g_n) .
\eean
Also $Y$ is independent of $\F_t$ and by the Mecke formula and
Fubini's theorem,
\bean
\E[Y|\F_t] = \E[Y] = \E \int dx \int_{t}^u ds  \int M(dy)
g_n( x + y) \exp(- \lambda(u-s))
\\
=n \lambda^{-1}(1- e^{\lambda(t-u)} ) \E [|M|] \int g(x) dx.
\eean
Hence, 
\bea
\E[ \xi_u(g_n) |\F_t]  =   \E[X+Y|\F_t] 
~~~~~~~~~~~~~~~~
~~~~~~~~~~~~~~~~
~~~~~~~~~~~~~~~~
~~~~~~~~~~~~~~~~
\nonumber
\\
  =  \left( n\lambda^{-1} \E [|M| ] \int g(x)dx \right)  + 
e^{\lambda(t-u)} \left( \xi_t(g_n) - n \lambda^{-1} \E [ |M| ] \int g(x)dx 
\right).  
~~~~~
\label{0824a}
\eea
\textcolor{\blue}{
For all $h \in \cR_0$, 
 set $\txi_t(h) = \xi_t(h) - \E \xi_t(h) $.
By Theorem \ref{th:momgen} and Campbell's formula,
$\E[\xi_t(h)] =   \lambda^{-1} \E [|M|] \int h(x) dx$. 
Therefore by
 (\ref{0824a}),   $\E[\txi_u(g_n)|\F_t]= e^{\lambda(t-u) } \txi_t(g_n) $. Thus,
since $\xi_t(f_n)$ is $\F_t$-measurable,
}
\bean
n^{-1} \Cov(\xi_t(f_n),\xi_u(g_n) ) = n^{-1} \E[\txi_t(f_n) \txi_u(g_n) ] 
= n^{-1} \E[\txi_t(f_n) \E[\txi_u(g_n)|\F_t] ] 
\\
= n^{-1} \E[\txi_t(f_n)  e^{\lambda(t-u)} \txi_t(g_n) ],
\eean
and by  the case of (\ref{0509a2}) already proved, this
tends to $ e^{\lambda(t-u)} \sigma_0^2 \langle f , g \rangle$ as
 $n \to \infty$.
Thus we obtain
  the general case of (\ref{0509a2}).
\end{proof}

We now work towards proving Theorem \ref{th:CLTgen}.
Recall the notation $W_1:= [0,1]^d$.

\begin{lemm}
\label{lem0207a2}
Suppose for some $q \geq 1, r_0 \in (0,\infty)$ that
 $\E[|M|^{q} ] < \infty$, and almost surely,
 $R \leq r_0$ and $M$ is supported by
the ball $B(r_0)$.
Then 
$
\E[\xi(W_1)^{q} ] < \infty. 
$
\end{lemm}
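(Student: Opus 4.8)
The plan is to reduce the statement to a moment computation via the multivariate Mecke formula, in the same spirit as the proof of Theorem~\ref{thcovgen}. Writing the contribution of leaf $i$ to $\xi(W_1)$ as an integral against an exposure indicator, I would use (\ref{xidef}) and the definition (\ref{Edef}) of $E_{y,t}$ to record the identity
\[
\xi(W_1) = \sum_{\{i:\, t_i \ge 0\}} \int_{W_1} {\bf 1}_{E_{y,t_i}} \, (M_i + x_i)(dy),
\]
valid because $\{y \in \R^d \setminus \cup_{\{j:\,0 \le t_j < t_i\}}(S_j + x_j)\} = E_{y,t_i}$. Setting $Z_i := \int_{W_1}{\bf 1}_{E_{y,t_i}}(M_i+x_i)(dy)$ and expanding the $q$-th power, I would group the index $q$-tuples according to the partition $\pi$ of $\{1,\dots,q\}$ recording which indices coincide, so that $\xi(W_1)^q$ becomes a finite sum (over the finitely many partitions) of sums of products $\prod_{B \in \pi} Z_{i_B}^{|B|}$ over tuples of \emph{distinct} leaves indexed by the blocks of $\pi$. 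It then suffices to bound, for each partition with $p := |\pi|$ blocks of sizes $m_1,\dots,m_p$, the expectation of the corresponding distinct-tuple sum.

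For each such term I would apply the $p$-variate Mecke formula (\cite[page 30]{LP}), introducing $p$ extra leaves with marks $(\sigma_l,\mu_l)$, positions $u_l \in \R^d$ and arrival times $s_l \ge 0$. In the resulting Palm expectation, each exposure event is computed in the configuration augmented by the added leaves; since adding leaves only enlarges the covered region, every such indicator is dominated by the corresponding one, ${\bf 1}_{E_{y,s_l}}$, for the \emph{original} Poisson process. Writing $Z_{i_l}^{m_l}$ as an $m_l$-fold integral over $W_1^{m_l}$ introduces points $y_{l,j}\in W_1$, and by Lemma~\ref{Elem} together with the elementary bound $\Pr[\cap_k A_k]\le \Pr[A_{k^*}]$,
\[
\Pr\Big[\bigcap_{l,\,j} E_{y_{l,j},\, s_l}\Big] \le e^{-\lambda \max_l s_l},
\]
uniformly over the $y_{l,j}$ (taking $k^*$ to attain $\max_l s_l$). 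Integrating this over $(s_1,\dots,s_p)\in[0,\infty)^p$ gives $\int_{[0,\infty)^p} e^{-\lambda \max_l s_l}\, d\vec{s} = p!/\lambda^p$, which is finite since $\lambda\in(0,\infty)$: positivity of $\lambda$ is the standing non-degeneracy assumption on $\Q$, and $\lambda \le \pi_d r_0^d < \infty$ by $R \le r_0$.

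What remains factorizes over the blocks. After the exposure bound, the surviving integrand over positions and marks is $\prod_{l=1}^p (\mu_l + u_l)(W_1)^{m_l}$, and
\[
\int_{\R^d}(\mu_l+u_l)(W_1)^{m_l}\,du_l \le |\mu_l|^{m_l-1}\int_{\R^d}(\mu_l+u_l)(W_1)\,du_l = |\mu_l|^{m_l},
\]
the last equality by Tonelli, since $\int_{\R^d}(\mu+u)(W_1)\,du = |\mu|\,\H_d(W_1) = |\mu|$. Thus block $l$ contributes a factor $\E[|M|^{m_l}]$, which is finite because $m_l \le q$ and $\E[|M|^q] < \infty$. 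Collecting factors, each partition term is at most $(\mathrm{const})\,\lambda^{-p}\prod_l \E[|M|^{m_l}] < \infty$, and summing over the finitely many partitions gives $\E[\xi(W_1)^q] < \infty$; the case $q=1$ recovers the value $\lambda^{-1}\E[|M|]$ of Theorem~\ref{th:momgen}, a reassuring consistency check. The main obstacle is that $Z_i$ depends on the whole configuration through the uncovered region, so the Mecke expansion does not factorize directly; this is resolved by the monotonicity step replacing augmented-process exposures by original-process ones, which decouples the factors up to the single exponential $e^{-\lambda\max_l s_l}$ supplying integrability in the arrival times.
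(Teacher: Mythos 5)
Your Mecke-formula route is genuinely different from the paper's argument and, for \emph{integer} $q$, it is essentially correct: the representation $\xi(W_1)=\sum_{\{i:t_i\ge 0\}}\int_{W_1}{\bf 1}_{E_{y,t_i}}(M_i+x_i)(dy)$ is valid, the monotonicity step (dominating exposure indicators in the Mecke-augmented configuration by those of the original process) is sound, and the time, position and mark integrals come out as you compute them, with the $q=1$ case reproducing Theorem \ref{th:momgen}. However, there is a genuine gap: the lemma is stated for arbitrary real $q\ge 1$, and your expansion over partitions of $\{1,\dots,q\}$ is meaningless unless $q\in\N$. This is not a cosmetic issue, because the paper invokes Lemma \ref{lem0207a2} precisely at non-integer exponents: with $q=2+\eps$ (to bound $\E[|X_{n,m}|^{2+\eps/2}]$) in the proof of Theorem \ref{th:CLTgen}(a), and with $q=4+\eps$ in the proof of Lemma \ref{lem4D}. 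Rounding does not rescue the argument: $\E[|M|^q]<\infty$ does not imply $\E[|M|^{\lceil q\rceil}]<\infty$, so you cannot run your proof at $\lceil q\rceil$, and finiteness of $\E[\xi(W_1)^{\lfloor q\rfloor}]$ does not imply finiteness of the $q$-th moment.

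The gap is repairable within your framework, but it needs an extra idea and, unlike your integer-$q$ computation, it genuinely uses the hypothesis that $M$ is supported in $B(r_0)$ (which your proof never touches). Write $q=k+s$ with $k=\lfloor q\rfloor$ and $s\in(0,1)$; since $x\mapsto x^s$ is subadditive on sums of nonnegative terms, $\xi(W_1)^q\le\sum_{i_1,\dots,i_{k+1}}Z_{i_1}\cdots Z_{i_k}Z_{i_{k+1}}^s$, and one can group by coincidence patterns as before, all block exponents being $\le q$. But now the block containing the fractional index may carry an exponent $m<1$, and there your key position bound $(\mu+u)(W_1)^{m}\le|\mu|^{m-1}(\mu+u)(W_1)$ \emph{reverses}, since $m-1<0$ and $(\mu+u)(W_1)\le|\mu|$. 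One must instead use that $(\mu+u)(W_1)=0$ unless $u\in W_1\oplus B(r_0)$, a set of volume at most $(1+2r_0)^d$, giving $\int(\mu+u)(W_1)^m\,du\le(1+2r_0)^d|\mu|^m$; without the support hypothesis this integral can be infinite even when $|\mu|<\infty$ (e.g.\ $\mu=\sum_k k^{-2}\delta_{z_k}$ with widely spaced atoms and $m=1/2$). For contrast, the paper's proof avoids the integer/non-integer dichotomy entirely: it bounds $\xi(W_1)\le\sum_{i=1}^{N}Z_i$, where $N$ is a covering time with geometric tails built from the assumption that leaves contain a fixed ball with positive probability, and then applies Jensen's inequality together with an independence decomposition; that argument treats every real $q\ge1$ at once, which is exactly what the applications require.
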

\begin{proof}
Let ${\cal I} :=  \{i \in \N: (x_i,t_i) \in B(r_0 +d) \times \R_+\}$,
\textcolor{\blue}{
and enumerate this set as ${\cal I} = \{j(1),j(2),j(3),\dots\}$
with $0 \leq t_{j(1)} < t_{j(2) } < t_{j(3)} < \cdots$.
Let}
\bea
N: = \min \{k: W_1 \subset 
\cup_{i=1}^k
(S_{j(i)} + x_{j(i)}) \}. 
\label{Ndef}
\eea
\textcolor{\blue}{
We prove first that all moments of $N$ are finite.
For $x \in \R^d,$ $ r>0$ set $B(x,r):= B(r) +x$.
Recall from Section \ref{secDLRM} 
 that we assume $\Q'$ has first marginal
$\Q$,  and from Section \ref{subsecoverview} 
that we assume there exist $x^* \in \R^d$ and $r_2 >0$
such that $\Q(\{\sigma: B(x^*,2r_2 ) \subset \sigma \} ) >0$.
We may assume without loss of generality that $x^*$ is the origin.
}

\textcolor{\blue}{
Choose $m \in \N$ and $y_1,\ldots,y_m \in \R^d$ such that
$
W_1 \subset \cup_{i=1}^m  B(y_i,r_1).
$
For  $k=1,2,\ldots, m $ set 
$$
N_k := \min\{n \in \N: x_{j(n)} \in B(y_k,r_2) {\rm ~ and~ }
B(2r_2) \subset S_{j(n)}  \}.
$$
Then $B(y_k,r_2) \subset 
S_{j(N_k)} + x_{j(N_k )}$ 
for each $k$, and hence $N \leq \max(N_1,\ldots,N_m)$. 
Also each of $N_1,\ldots,N_m$ has
a geometric distribution with strictly positive parameter.
It follows that $N$ has finite moments of all orders as claimed,
since for each $r \in \N$ we have $N^r \leq \sum_{k=1}^m  N_k^r$ 
and $\E[N_k^r ] < \infty$  for each $k$.
}

\textcolor{\blue}{
For $k \in \N$ set $Z_k := |M_{j(k)}|$.
Then} $\xi(W_1) \leq \sum_{i=1}^N Z_i$, and
by Jensen's inequality,
\bean 
\E \left[ \left( \sum_{i=1}^N Z_i \right)^q \right]
\leq \E \left[ N^q \left( N^{-1}  \sum_{i=1}^N Z_i^q \right) \right]
= \E \left[ N^{q-1} \sum_{i=1}^N Z_i^q \right].
\eean
For each $i \in \N$, let 
$$
N_i: = \min \{k: W_1 \subset 
\cup_{\ell=i+1}^{i+k}
(S_{j(\ell)} + x_{j(\ell)}) \}, 
$$
which has the same distribution as $N$.
Then $N \leq i+ N_{i}$ for each $i$, so that
\bean
\E[\xi(W_1)^q] \leq \E \left[ \sum_{i=1}^\infty Z_i^q
 {\bf 1}_{\{N \geq i\}} N^{q-1}
\right]
 \leq \sum_{i=1}^\infty \E [ Z_i^q {\bf 1}_{\{N \geq i\}} (i+ N_{i})^{q-1}].
\eean
For each $i$ the three random variables $Z_i$, ${\bf 1}_{\{N \geq i\}}$ and
$N_i$ are mutually independent, so
\bean
\E[\xi(W_1)^q] \leq
\sum_{i=1}^\infty \E[Z_1^q] \Pr[N \geq i]\E [2^{q-1}(i^{q-1} + N^{q-1}) ],
\eean
which is finite because $\E[Z_1^q]< \infty$ by assumption,
while $\E[N^q] < \infty$ as explained earlier.
\end{proof}

Given a finite graph $G$ with vertex set $V$, we say
$G$ is a {\em dependency graph} 
\textcolor{\blue}{
for a collection of random variables
(or random vectors)}
$\{X_i, i \in  V \}$ if for all pairs of disjoint subsets $V_1,V_2$
of $V$ such that there are no edges connecting $V_1$ to $V_2$,
the random vectors $(X_i,i \in V_1)$ and $(X_i,i \in V_2)$
are independent of each other.
Let $|V|$ denote the number of elements of $V$.
To derive a central limit theorem we are going to use the following result
from \cite[Theorem 2.7]{CS}:
\begin{lemm}
\label{lem0207b}
Let $2 < q \leq 3$. Let $X_i$, $i \in  V$, 
be random variables indexed by the vertices of a dependency
graph with maximum degree $D$. Let $W = \sum_{i \in V} X_i$. Assume
that $\E[W^2] =1, $ $\E[X_i]=0$, and $\E[|X_i|^q] \leq \theta^q$
for all $i \in  V$ and some $\theta >0$. Then
$$
\sup_{t \in \R} | \Pr[W\leq t] - \Pr[\cN(0,1) \leq t]| \leq
75 D^{5(q-1)} | V | \theta^q.
$$
\end{lemm}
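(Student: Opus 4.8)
The plan is to prove this via Stein's method for normal approximation, since an explicit Berry--Esseen-type rate in the Kolmogorov distance under a local-dependence hypothesis is precisely the setting in which Stein's method is effective. First I would introduce, for each fixed $t \in \R$, the solution $f_t$ of the Stein equation $f_t'(w) - w f_t(w) = \mathbf{1}_{\{w \leq t\}} - \Pr[\cN(0,1) \leq t]$, and recall the standard estimates $\|f_t\|_\infty \leq \sqrt{2\pi}/4$ and $\|f_t'\|_\infty \leq 1$, together with the Lipschitz-type control on the map $w \mapsto w f_t(w)$. Taking expectations and using $\E[W^2]=1$ gives $\Pr[W \leq t] - \Pr[\cN(0,1) \leq t] = \E[f_t'(W) - W f_t(W)]$, so the entire task reduces to bounding $\E[W f_t(W) - f_t'(W)]$ uniformly in $t$.

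Next I would exploit the dependency graph. For each vertex $i \in V$ let $A_i$ be $\{i\}$ together with its graph-neighbours, and let $B_i$ be the set of vertices within graph-distance two of $i$; these have cardinality at most $1+D$ and $1+D^2$ respectively. Writing $W = \sum_i X_i$ and $W_i := \sum_{j \in A_i} X_j$, the defining property of the dependency graph yields that $X_i$ is independent of $W - W_i$ and that $W_i$ is independent of $W - \sum_{j \in B_i} X_j$. Using $\E[X_i]=0$, I would decompose
\bea
\E[X_i f_t(W)] = \E\big[X_i\big(f_t(W) - f_t(W - W_i)\big)\big], \nonumber
\eea
Taylor-expand $f_t(W) - f_t(W - W_i) = W_i f_t'(W-W_i) + \text{remainder}$, and match the leading term against $\sum_i \E[X_i W_i] = \E[W^2] = 1$ (valid because $\E[X_iX_j]=0$ for non-adjacent $i,j$). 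This reduces $\E[Wf_t(W) - f_t'(W)]$ to a sum of second-order error terms, each a product of at most three of the $X_j$ over indices confined to $A_i$ or $B_i$, hence controllable by H\"older's inequality and the hypothesis $\E[|X_j|^q] \leq \theta^q$.

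The one genuinely delicate point, which I expect to be the main obstacle, is that $f_t$ is not smooth: its derivative jumps at $t$, so the Taylor remainder cannot be bounded by $\|f_t''\|_\infty$. The standard remedy is to replace the crude bound by a concentration inequality controlling $\Pr[t-a \leq W - W_i \leq t+a]$ at the relevant local scales; proving such an inequality (again by a Stein-type argument exploiting the local independence) is where most of the effort lies, and it is also the step that forces the final constant to depend on $D$ through repeated summation over the neighbourhoods $A_i$ and $B_i$. Carefully tallying the powers of $D$ contributed by the local sum, its Taylor remainder, and the concentration bound yields the stated exponent $D^{5(q-1)}$ and the overall factor $|V|\theta^q$. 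Since this is exactly \cite[Theorem 2.7]{CS}, in practice I would simply invoke that reference rather than reproduce the full argument.
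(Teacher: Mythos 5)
Your proposal is correct and, in its essential move, identical to the paper's: the paper gives no proof of this lemma at all, but simply quotes it as Theorem 2.7 of Chen and Shao \cite{CS}, which is precisely the citation you fall back on at the end. Your Stein's-method sketch is a reasonable outline of how Chen and Shao actually establish the bound (including the concentration-inequality step needed because the Stein solution is not smooth), but none of that is required here since the result is invoked, not reproved.
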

\begin{proof}[Proof of Theorem \ref{th:CLTgen} (a)]
For $t \in \R$ and
 $f \in \cR_0$, write $\txi_t(f) $ for $ \xi_t(f) - \E [\xi_t(f)] $.
Let $k \in \N$ and $f_1,\ldots, f_k \in \cR_0$, and
$ t_1,\ldots,t_k  \in \R$. Write $f_{n,i}$ for $T_n(f_i)$, $1 \leq i \leq k$.
By the Cram\'er-Wold theorem 
\textcolor{\blue}{\cite[page 49]{Bill}}, it suffices to prove that
\bea
n^{-1/2} \sum_{i=1}^k \txi_{t_i}(f_{n,i})
 \tod \cN \left( 0, \sum_{i=1}^k
\sum_{j=1}^k 
\kappa_0((f_i,t_i),(f_j,t_j))
\right).
\label{0508a}
\eea
By Theorem \ref{thcovgen},
\bea
\lim_{n \to \infty} 
\Var \left[ n^{-1/2} \sum_{i=1}^k  \txi_{t_i}(f_{n,i})\right] = 
\sum_{i=1}^k \sum_{j=1}^k 
\kappa_0((f_i,t_i),(f_j,t_j)).
\label{limvar}
\eea
Partition $\R^d$ into half-open rectilinear unit cubes,
and denote those cubes in this partition which intersect
the support of at least one of $f_{n,1}, \ldots, f_{n,k} $ by
 $Q_{n,1},\ldots,Q_{n,m_n}$. Since  $f_1, \ldots, f_k$ are all in
$\cR_0$ and therefore have bounded support, $m_n =
O(n)$ as $n \to \infty$.  

Given $n $, 
for $ 1\leq m \leq m_n$  set 
\bea
X_{n,m} := \frac{ 
\sum_{i=1}^k  \txi_{t_i}(f_{n,i} {\bf 1}_{Q_{n,m}})  }{ 
\sqrt{\Var \left[\sum_{j=1}^k  \txi_{t_j}(f_{n,j})\right] } }.
\label{Xndef}
\eea 
Suppose the right hand side of (\ref{limvar}) is strictly positive.
Then the denominator in (\ref{Xndef})  is $\Theta(n^{1/2}) $, and therefore
by Lemma \ref{lem0207a2} and the assumption that 
$\E[|M|^{2+\eps}] < \infty$, there is a constant $C$ such that
$$
\E[ |X_{n,m}|^{2+ \eps/2}] \leq C n^{-1-\eps/4}, ~~~ 
1 \leq m \leq m_n, ~ n \in \N.
$$
By our assumption that the random set $S$ and the
support of the  random measure  $M$ are uniformly bounded, the
indices $1 \leq m \leq m_n$
of the random variables $X_{n,m}$, 
 have a dependency graph structure with all vertex degrees bounded by
a constant independent of $n$. Moreover
 $$
\frac{\sum_{i=1}^k  \txi_{t_i}(f_{n,i}) }{\sqrt{\Var(
\sum_{j=1}^k  \txi_{t_j}(f_{n,j}))}}   = \sum_{m=1}^{m_n}
X_{n,m}.
$$ 
Therefore we obtain from  Lemma  \ref{lem0207b} that
$$
\sup_{t \in \R} \left( \Pr \left[ \frac{ \sum_{i=1}^k  \txi_{t_i}(f_{n,i}) 
 }{ \sqrt{\Var(\sum_{j=1}^k  \txi_{t_j}(f_{n,j}))} }
\leq t  \right] - \Pr[\cN(0,1) \leq t ] \right) = O(n^{-\eps/4}).
$$
Using (\ref{limvar}) again, 
we thus have  (\ref{0508a}), if the right hand side
of (\ref{limvar}) is strictly positive. If in fact   this limit is zero,
we still have (\ref{0508a}) by Chebyshev's inequality.
\end{proof}
The proof of Theorem \ref{th:CLTgen}(b)
is based on the following lemma.

\begin{lemm}
\label{lem4D}
Let $-\infty < a < b < \infty$.
Suppose the assumptions of Theorem \ref{th:CLTgen} (b) hold.
Let $f \in \cR_0$, and  
\textcolor{\blue}{for $n \in \N \cup \{0\}$ set $f_n:= T_n(f)$.
Then there are constants $C >0,$ $ \eps' >0$ such that
for all $s,t,u$ with $ a \leq s < t < u \leq b$,}
\bea
\textcolor{\blue}{
\E[ n^{-2} (\xi_t(f_n) - \xi_s(f_n))^2 (\xi_u(f_n)- \xi_t(f_n))^2 ]
\leq C  (u-s)^{1 + \eps'} .  }
\label{0605a}
\eea
\end{lemm}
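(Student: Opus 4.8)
The target bound is precisely the moment criterion for tightness in $D(-\infty,\infty)$ in the classical sense (cf.\ \cite{Bill}), so the plan is to establish it by a direct fourth-order moment computation using the machinery already developed for Theorem \ref{thcovgen}. First I would reduce the problem: since the space--time Poisson process is invariant under shifts of the time coordinate, the joint law of the pair $(\xi_t(f_n)-\xi_s(f_n),\,\xi_u(f_n)-\xi_t(f_n))$ depends only on $\delta_1:=t-s$ and $\delta_2:=u-t$, so I may take $s=0$. Writing $\Delta_{s,t}:=\xi_t(f_n)-\xi_s(f_n)$, I would decompose each increment into an \emph{addition} part and a \emph{deletion} part, $\Delta_{s,t}=A_{s,t}-D_{s,t}$, where $A_{s,t}$ collects the $f_n$-integrals of the marks $M_i+x_i$ of leaves arriving in the window $(s,t]$ (restricted to the region not yet covered by time $t$), and $D_{s,t}=\int f_n(x)\,\mathbf 1\{x\in\cup_{j:\,s<t_j\le t}(S_j+x_j)\}\,\xi_s(dx)$ records the part of the earlier measure $\xi_s$ that becomes covered during $(s,t]$. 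This decomposition makes transparent that $\Delta_{s,t}$ vanishes unless there is a relevant Poisson arrival in $(s,t]$.

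The core step is to expand $\E[\Delta_{s,t}^2\Delta_{t,u}^2]$ by the multivariate Mecke formula, exactly as in the proof of Theorem \ref{thcovgen}, obtaining a finite sum of space--time integrals over up to four Poisson points together with an expectation over the remaining (older) configuration. Three features then drive the estimate. (i) Each factor $\Delta_{s,t}$, $\Delta_{t,u}$ is nonzero only in the presence of a relevant arrival in its own window $(s,t]$, respectively $(t,u]$; since these windows are disjoint, every nonvanishing term carries at least one time-integration over $(s,t]$ and one over $(t,u]$, and these produce the crucial factor $\delta_1\delta_2$. (ii) The hypotheses $R\le r_0$ and $\supp M\subseteq B(r_0)$ a.s.\ force all interacting points into clusters of diameter $O(r_0)$, while the visibility probability of an old leaf decays like $e^{-\lambda\,(\text{elapsed time})}$ by the computation behind Lemma \ref{Elem}; hence the integrations over spatial differences and over the older arrival times converge and contribute only bounded constants. (iii) After this localisation two spatial integrations against $f_n$ remain free, each contributing a factor $n$ (as in the $\|f\|_2^2$ bookkeeping of Theorem \ref{thcovgen}), so the spatial part scales like $n^2$. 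The moment assumption $\E[|M|^{4+\eps}]<\infty$, fed in through the method of Lemma \ref{lem0207a2}, bounds the mark contribution attached to each point.

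Combining these would give $\E[\Delta_{s,t}^2\Delta_{t,u}^2]\le C\,n^2\,\delta_1\delta_2$, whence $n^{-2}\E[\Delta_{s,t}^2\Delta_{t,u}^2]\le C\,\delta_1\delta_2\le C\,(u-s)^2\le C\,(b-a)^{1-\eps'}(u-s)^{1+\eps'}$ for any $\eps'\in(0,1]$, which is the claim. \emph{The main obstacle} is that the deletion terms $D_{s,t},D_{t,u}$ tie each increment to the common earlier configuration, so the two squared increments are genuinely dependent; the naive bound $\E[\Delta_{s,t}^2\Delta_{t,u}^2]\le(\E[\Delta_{s,t}^4]\,\E[\Delta_{t,u}^4])^{1/2}$ is too lossy precisely when both windows are shorter than $1/n$, since it degrades the product $\delta_1\delta_2$ to a $\sqrt{\delta_1\delta_2}$-type quantity and costs a spurious factor $n^{-1}$. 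The resolution is to retain, inside the Mecke expansion, the requirement of \emph{independent activity in each window}, so that the product structure $\delta_1\delta_2$ is preserved; equivalently, one may condition on $\F_t$ and use the Markov identity behind (\ref{0824a}) to show that $\E[\Delta_{t,u}^2\mid\F_t]$ concentrates around its mean of order $n\delta_2$, with fluctuations too weakly correlated with $\Delta_{s,t}^2$ to spoil the bound. The remaining work --- tracking additions versus deletions and coincidences among the four points --- is routine but lengthy bookkeeping.
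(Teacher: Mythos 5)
Your overall plan --- a fourth-moment bound exploiting the disjointness of the two time windows, the uniform range bound $r_0$ for locality, and the mark moment assumption --- is the same strategy the paper uses, though implemented differently: the paper partitions space into unit cubes, uses a bounded-degree dependency graph for the cube-local increments $(R_i,Y_i)$, and splits the quadruple sum into connected quadruples (order $n$ of them) and two-pair factorisations (order $n^2$), rather than running a direct Mecke expansion. Either skeleton could in principle work, and your diagnosis of why naive Cauchy--Schwarz across the two windows fails (it destroys the product structure exactly when $u-s \lesssim 1/n$) is correct and matches the paper's implicit reason for not using it.

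However, there is a genuine gap at precisely the point you flag as ``the main obstacle.'' Your proposed resolution --- condition on $\F_t$ and claim that $\E[(\xi_u(f_n)-\xi_t(f_n))^2\mid\F_t]$ concentrates around $Cn(u-t)$ with fluctuations ``too weakly correlated'' with the first squared increment --- is an assertion, not an argument, and it cannot be extracted from the identity (\ref{0824a}): that identity controls conditional \emph{first} moments only. In fact $\E[(\xi_u(f_n)-\xi_t(f_n))^2\mid\F_t]$ is of the form $\asymp (u-t)\,V_t + O((u-t)^2)\,\xi_t(f_n)^2$, where $V_t$ is a local pair-mass functional of $\xi_t$ (an integral of $f_n\otimes f_n$ over pairs within distance $2r_0$ against $\xi_t\otimes\xi_t$). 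Since $|M|$ is only assumed to have a finite $(4+\eps)$-th moment, $V_t$ has mean $O(n)$ but is not bounded by $Cn$ almost surely, and it is positively correlated with $(\xi_t(f_n)-\xi_s(f_n))^2$ (arrivals in $(s,t]$ both create the first increment and add mass to $\xi_t$). Any rigorous control of $\E[(\xi_t(f_n)-\xi_s(f_n))^2\,V_t]$ has to pay for this dependence, e.g.\ by H\"older's inequality, and that degrades the factor $t-s$ to $(t-s)^{1/p}$ with $p>1$. This is exactly how the paper handles the corresponding term (the all-deletion quadruple $\E[R_i^-R_j^-Y_k^-Y_\ell^-]$, bounded via H\"older with $p=1+\eps/4$ in (\ref{0530d})), and it is why the hypothesis is $\E[|M|^{4+\eps}]<\infty$ and why the achievable exponent is $1+\eps'$ with $\eps'=\min(\eps/4,1/2)$ rather than the exponent $2$ implicit in your claimed intermediate bound $Cn^2(t-s)(u-t)$. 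As written, your sketch asserts this clean product bound without supplying the device that would prove it, so the argument does not close; the good news is that the weaker H\"older-degraded exponent still suffices for (\ref{0605a}), so inserting such a step (together with the bookkeeping of Lemma \ref{lem0207a2} and the Poisson tail bounds for the window arrival counts) would repair your proof along essentially the paper's lines.
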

\begin{proof}
Assume initially that  $0 \leq f(x) \leq 1$
for all $x \in \R^d$.  Also let $g \in  \cR_0$ with $0 \leq g(x) \leq 1$
for all $x \in \R^d$, and set $g_n:= T_n(g)$.

Partition $\R^d$ into half-open rectilinear unit cubes,
and for $n >0$,
 denote those cubes in this partition which intersect
the support of $f_{n} $ or the support of $g_n$ by
 $Q_{n,1},\ldots,Q_{n,m_n}$,
with the centres of these cubes denoted
 $q_{n,1},\ldots,q_{n,m_n}$ respectively.
Then $m_n = O(n)$ as $n \to \infty$.  
%

Let $a \leq s < t <u \leq b$. For $1 \leq i \leq m_n$, set
\bean
R_i :=
R_i(s,t,u) :=
 \xi_t(f_n {\bf 1}_{Q_{n,i}} ) 
- \xi_s(f_n {\bf 1}_{Q_{n,i}});
\\
Y_i := 
Y_i(s,t,u) : =
 \xi_u(g_n {\bf 1}_{Q_{n,i}} ) 
- \xi_t(g_n {\bf 1}_{Q_{n,i}}),
\eean
and observe that $\E[R_i] = \E[Y_i] = 0$ by time-stationarity.
Let us introduce an  adjacency relationship $\sim$ on $\{1,2,\ldots,m_n\}$
whereby $i \sim j$ if and only if $\|q_{n,i} - q_{n,j}\| \leq 3(r_0+ d)$.
%
Then
the degrees of the resulting graph
$(\{1,2,\ldots,m_n\},\sim)$ are bounded by a constant, independent of
$n$.  Also, 
 this graph is a dependency graph for the 
random 2-vectors $(R_1,Y_1),\ldots,(R_{m_n},Y_{m_n})$.
Thus, for $i,j,k,\ell \in \{1,2,\ldots,m_n\}$, 
we have  $\E[R_iR_j Y_k Y_\ell] =0$ unless
the subgraph of $(\{1,\ldots,m_n\}, \sim)$
 induced by $\{i,j,k,\ell\}$
is either connected or has two connected components of order 2;
in the latter case the expectation factorises as the product of two
expectations of pairwise products.
%
Since  $\xi_t(f_n) -\xi_s(f_n) = \sum_{i=1}^{m_n} R_i$ and
  $\xi_u(g_n) -\xi_t(g_n) = \sum_{i=1}^{m_n} Y_i$, we thus have
\bea
\E[
(\xi_t(f_n) - \xi_s(f_n))^2
(\xi_u(g_n) - \xi_t(g_n))^2
] = 
\E \sum_{i=1}^{m_n} \sum_{j=1}^{m_n} \sum_{k=1}^{m_n}
\sum_{\ell=1}^{m_n} R_i R_j Y_k Y_\ell  
\nonumber
\\
\leq C 
 m_n \sup_{i,j,k,\ell \in \{1,\ldots,m_n\}} \E[R_i R_j Y_k Y_\ell]
\nonumber
\\
+ C
 m_n^2  \left( \sup_{i,j \in \{1,\ldots,m_n\}}
(|\E[R_i R_j] | +  |\E[Y_i Y_j]| + |\E[R_i Y_j]| ) \right)^2,
\label{0530h}
\eea
where, throughout this proof, $C$ denotes a positive constant independent
of $s,$ $t,$ $u,$ $n, $ $i,$ $j,$ $k, $ and $\ell$ which may change from line to line (or even
within a line).  

Given $i $ and $j$, let
 $N_{ij}$ (respectively, $N'_{ij}$) be the number of arrivals 
of $\Po$ between times
$s$ and $t$ (respectively, between times $t$ and $u$) 
 within Euclidean distance $r_0$ of 
$Q_{n,i}  \cup Q_{n,j}$.
 Then $N_{ij}$ and $N'_{ij}$  are independent Poisson variables, each
 with parameter  bounded by $c(u-s)$, where we may take the
constant $c$ to be $2 (2r_0+1)^d$. 
Since we assume $a \leq s < u \leq b$, 
 by the law of the unconscious statistician,
 for any constant $\beta >0$, and any $i,j$, 
 we have
\bea
\E[N_{ij}^\beta ] \leq \sum_{k=1}^\infty \frac{k^\beta (c(u-s))^k}{k!}
\leq (u-s) \sum_{k=1}^\infty \frac{k^\beta (b-a)^{k-1} c^k}{k!}.   
\label{0814a}
\eea
The last sum
 converges so
 $\E[N_{ij}^\beta ]= O(u-s)$,
uniformly in $n$, $i,$ $j$,  $s$, $t$ and $u$.
 Similarly
 $\E[(N'_{ij})^\beta ]= O(u-s)$,
uniformly in $n$, $i,$ $j$,  $s$, $t$ and $u$.

Given $i,j \in \{1,2,\ldots,m_n\}$, let 
\bean
{\cal I} :=  \{k \in \N: (x_k,t_k) \in ( 
(B(r_0 +d) + q_{n,i}) \cup (B(r_0 +d) + q_{n,j}) )
 \times (s,\infty)\};
\\
{\cal I'} :=  \{k \in \N: (x_k,t_k) \in ( 
(B(r_0 +d) + q_{n,i}) \cup (B(r_0 +d) + q_{n,j}) )
 \times (t,\infty)\}.
\eean
Enumerate ${\cal I} = \{k(1),k(2),k(3),\dots\}$
with $s < t_{k(1)} < t_{k(2) } < t_{k(3)} < \cdots$, and
enumerate ${\cal I'} = \{k'(1),k'(2),k'(3),\dots\}$
with $t < t_{k'(1)} < t_{k'(2) } < t_{k'(3)} < \cdots$.
For $h \in \N$ set $Z_{i,j,h} := |M_{k(h)}|$
 and $Z'_{i,j,h} := |M_{k'(h)}|$.

By Jensen's inequality, followed by  (\ref{0814a}),
and our assumption that $\E[|M|^{4+\eps}] < \infty$,
for $ 1\leq \beta \leq 4$ we have
\bea
\E \left[ \left(\sum_{h=1}^{N_{ij}} Z_{i,j,h} \right)^\beta \right]
\leq \E \left[ \E \left[  {N_{ij}}^\beta \left( \frac{1}{{N_{ij}}} 
\sum_{h=1}^{N_{ij}} Z_{i,j,h}^\beta\right) 
| {N_{ij}} \right] \right]
= \E\left[ N_{ij}^\beta \right] \E[|M|^\beta] 
\nonumber \\
\leq C(u-s).
~~~~~~~~~~~~~~~~~
\label{0814d}
\eea

Given any random variable $X$, let $X^+:= \max(X,0)$,
 and $X^- := \max(-X,0)$ be its
positive and negative parts. 
Since we assume $0\leq f \leq 1$ and
$0 \leq g \leq 1$ pointwise we have 
the following estimates,
which we shall use repeatedly:
\bea
R_i^+ \leq \sum_{h=1}^{N_{ij}} Z_{i,j,h}, ~~~~~~~ 
Y_i^+ \leq \sum_{h'=1}^{N'_{ij}} Z'_{i,j,h'},
\label{0815b}
\eea
\bea
 R_i^- \leq \xi_s(Q_{n,i}) {\bf 1}\{N_{ij} \geq 1\}, ~~~~~~~ 
{\rm and} ~~~~~~~
Y_i^- \leq \xi_{t}(Q_{n,i}) {\bf 1}\{ N'_{ij} \geq 1\}.
\label{0815c}
\eea

Let $i,j \in \{1,\ldots,m_n\}$.
By (\ref{0815b}) and (\ref{0815c}),
\bean
|R_i | \leq  
\left( \sum_{h=1}^{N_{ij}} Z_{i,j,h} \right) + \xi_s(Q_{n,i})
 {\bf 1}\{N_{ij} \geq 1\}
\leq
2 \max \left(  \sum_{h=1}^{N_{ij}} Z_{i,j,h} ,
 \xi_s(Q_{n,i}) {\bf 1}\{N_{ij} \geq 1 \} \right).
\eean  
Hence,
\bean
\E[|R_i|^2 ] \leq 2 \E \left[ \left( \sum_{h=1}^{N_{ij}} Z_{i,j,h} \right)^2
+ \xi_s(Q_{n,i})^2{\bf 1}\{N_{ij} \geq 1\} \right],
\eean
Thus using  (\ref{0814d}), and Lemma \ref{lem0207a2},
 and the fact that
 $N_{ij}$
is independent of $\xi_s(Q_{n,i})$, we have that
$
\E[R_i^2] \leq C (u-s).
$
By the same argument we may also deduce that $\E[Y_i^2] \leq C(u-s)$, 
and then by the Cauchy-Schwarz inequality we obtain that
\bea
\sup_{m \in \N, i,j \in\{1,\ldots,m_n\}}
 (|\E[R_iR_j]| + |\E[Y_iY_j]| + |\E[R_i Y_j]| )
\leq C (u-s).
\label{0815a}
\eea

Given now $i,j,k, \ell \in \{1,\ldots,m_n\}$,
by (\ref{0815b}) 
we have
\bean
R_i^+ R_j^+ Y_k^+ Y_{\ell}^+ 
 \leq
 \left( \sum_{h=1}^{N_{ij}}  Z_{i,j,h} \right)^2
 \left( \sum_{h'=1}^{N'_{k \ell}} Z'_{k,\ell,h'} \right)^2,
\eean
and the two factors on the right are independent of each other. Hence
 by (\ref{0814d}) and a similar estimate for
$\sum_{h'=1}^{N'_{k\ell}} Z'_{k,\ell,h'}$,
\bea
\E [ R_i^+ R_j^+ Y_k^+ Y_{\ell}^+ ]
\leq
\E \left[ \left( \sum_{h=1}^{N_{ij}} Z_{i,j,h} \right)^2 \right] 
\E \left[ \left( \sum_{h'=1}^{N'_{k\ell}} Z'_{k,\ell,h'} \right)^2
 \right]
\leq C(u-s)^2.
\label{0530c}
\eea

Also by  (\ref{0815c}),
$$
R_i^- R_j^- Y_k^- Y_\ell^-  \leq 
\xi_s(Q_{n,i}) \xi_s(Q_{n,j}) \xi_t(Q_{n,k}) \xi_t(Q_{n,\ell})
{\bf 1}\{N_{ij} \geq 1 \} 
{\bf 1}\{N'_{k\ell} \geq 1 \} ,
$$
and since  $N'_{k\ell}$ is independent of
$\xi_s(Q_{n,i})
\xi_s(Q_{n,j})
\xi_t(Q_{n,k})
\xi_t(Q_{n,\ell}) {\bf 1}\{N_{ij} \geq 1\}$,
we obtain that
\bean
\E[ R_i^- R_j^- Y_k^- Y_\ell^-  ] 
\leq \E[
\xi_s(Q_{n,i})
 \xi_s(Q_{n,j}) \xi_t(Q_{n,k}) \xi_t(Q_{n,\ell})
{\bf 1}\{N_{ij} \geq 1\} ] \Pr[ N'_{k\ell} \geq 1]. 
\eean
Since we assume  for some $\eps >0$ 
that $\E[|M|^{4+\eps}] < \infty $, 
we have by Lemma \ref{lem0207a2}
that $\E[\xi_s(Q_{n,i})^{4+\eps} ] $ and
 $\E[\xi_t(Q_{n,j})^{4+\eps} ] $ are bounded by a constant,
independent of $n$, $i$, $j$, $s$ and $t$. 
Hence by H\"older's inequality, taking $p=1+(\eps/4)$,
we have
\bea
\E[ R_i^- R_j^- Y_k^- Y_\ell^-  ] 
\leq \E[ 
|\xi_s(Q_{n,i}) \xi_s(Q_{n,j}) \xi_t(Q_{n,k}) \xi_t(Q_{n,\ell})|^p ]^{1/p} 
\nonumber \\
\times
\Pr[ N_{ij} \geq 1]^{1- 1/p} \Pr[N'_{k\ell} \geq 1]
\nonumber \\
\leq C (u-s)^{2-1/p} 
\leq C' (u-s)^{1+ \eps/4}. 
\label{0530d}
\eea

Also by (\ref{0815b}) and (\ref{0815c}), 
\bean
R_i^+ R_j^+ Y_k^- Y_{\ell}^-
 \leq 
 \left( \sum_{h=1}^{N_{ij}} Z_{i,j,h} \right)^2 
 \xi_t(Q_{n,k}) \xi_t(Q_{n,\ell})
{\bf 1}\{N'_{k\ell} \geq 1\} 
,
\eean
so by independence of $N'_{k\ell}$, and the Cauchy-Schwarz inequality,
\bean
\E[R_i^+ R_j^+ Y_k^- Y_{\ell}^-] \leq 
\Pr[N'_{k\ell}\geq 1]
 \E \left[ \left( \sum_{h=1}^{N_{ij}} Z_h \right)^4\right]^{1/2}
\E[
 \xi_t(Q_{n,k})^2 \xi_t(Q_{n,\ell})^2]^{1/2}.
\eean
Hence by (\ref{0814d}),  
 Lemma \ref{lem0207a2}, our $(4+ \eps)$-th moment assumption on $|M|$,
and the Cauchy-Schwarz inequality,
\bea
\E[R_i^+ R_j^+ Y_k^- Y_{\ell}^-] \leq 
C (u-s)^{3/2}. 
\label{0530e}
\eea

Next, observe that by (\ref{0815b}) and (\ref{0815c}), 
\bean
R_i^- R_j^- Y_k^+ Y_{\ell}^+
 \leq 
 \xi_s(Q_{n,i}) \xi_s(Q_{n,j}) 
{\bf 1}\{N_{ij} \geq 1\}
\left(\sum_{h=1}^{N'_{k\ell}} Z'_{k,\ell,h} \right)^2, 
\eean
and since the last  factor on the right
is independent of the other factors,
 using the Cauchy-Schwarz inequality and Lemma \ref{lem0207a2} we obtain
\bea
\E[R_i^- R_j^- Y_k^+ Y_{\ell}^+] \leq 
\E\left[
 \xi_s(Q_{n,i})^2 \xi_s(Q_{n,j})^2 \right]^{1/2} 
\Pr[N_{ij} \geq 1 ]^{1/2}
\E \left[
\left(\sum_{h=1}^{N'_{k\ell}} Z'_{k,\ell,h} \right)^2 
\right]
\nonumber \\
\leq C (u-s)^{3/2}.
~~~~~~~~~~~~~~
\label{0814b}
\eea

Next, note from (\ref{0815b}) and (\ref{0815c}) that
\bean
R_i^+ R_j^- Y_k^+ Y_{\ell}^- \leq 
\xi_s(Q_{n,j} ) \left(\sum_{h=1}^{N_{ij}} Z_{i,j,h} \right)
\xi_t(Q_{n,\ell})
\left( \sum_{h'=1}^{N'_{k\ell}} Z'_{k,\ell,h'} \right),  
\eean
and since the last factor on the right is independent of the other factors, 
using the Cauchy-Schwarz inequality, (\ref{0814d})
and Lemma \ref{lem0207a2} 
 again yields
\bea
\E[ R_i^+ R_j^- Y_k^+ Y_{\ell}^- ] \leq 
\E[ \xi_s(Q_{n,j} )^2
\xi_t(Q_{n,\ell})^2]^{1/2} 
\E\left[
 \left(\sum_{h=1}^{N_{ij}} Z_{i,j,h} \right)^2 \right]^{1/2}
\E\left[
 \sum_{h'=1}^{N'_{k\ell}} Z'_{k,\ell,h'} 
\right]
\nonumber \\
\leq C (u-s)^{3/2}.
~~~~~~~~~
\label{0814c}
\eea

Combining (\ref{0530c}), (\ref{0530d}), (\ref{0530e}),
(\ref{0814b}) and (\ref{0814c})  gives us 
\bea
\E[R_i R_j Y_k Y_\ell] \leq C (u-s)^{1+\eps'},
\label{0530g}
\eea
where we take $\eps' := \min(\eps/4,1/2) $.
Using (\ref{0530g}), (\ref{0815a}),
and the fact that $m_n = O(n)$, gives us from
(\ref{0530h}) that 
\bea
\E[ n^{-2}
(\xi_t(f_n)- \xi_s(f_n))^2 
 (\xi_u(g_n) - \xi_t(g_n))^2 
]
\leq C  (u-s)^{1 + \eps'} ,  
\label{0605a2}
\eea
for $0 \leq f \leq 1$ and $0 \leq g \leq 1$ pointwise (the case $g=f$
gives (\ref{0605a}) for the special case with $0\leq f \leq  1 $ pointwise).

Now we drop the assumption that $f \geq 0$ but still assume $|f| \leq 1$ 
pointwise.  Write $\xi_{s,t}(f)$ for $\xi_t(f) - \xi_s(f)$.  
Using the fact that for any real $A,B$ we have $(A+B)^2 \leq (2 \max( |A| , |B|) )^2 \leq
4 (A^2 +B^2)$, we obtain that
\bean
 (\xi_{s,t}(f_n) )^2
 (\xi_{t,u}(f_n) )^2
= ( \xi_{s,t}(f_n^+ ) - \xi_{s,t}(f_n^- ))^2 
 ( \xi_{t,u}(f_n^+ ) - \xi_{t,u}(f_n^- ))^2 
\\
\leq 
4 (\xi_{s,t}(f_n^+)^2 + \xi_{s,t}(f_n^-)^2 )
\times
4 (\xi_{t,u}(f_n^+)^2 + \xi_{t,u}(f_n^-)^2 ) 
\\
= 16[ 
\xi_{s,t}(f_n^+)^2 \xi_{t,u}(f_n^+)^2 
+
\xi_{s,t}(f_n^+)^2 \xi_{t,u}(f_n^-)^2 
\\
+
\xi_{s,t}(f_n^-)^2 \xi_{t,u}(f_n^+)^2 
+
\xi_{s,t}(f_n^-)^2 \xi_{t,u}(f_n^-)^2 ].
\eean
By applying (\ref{0605a2}) four times, we see that (\ref{0605a}) holds
for arbitrary $f \in \cR_0$ with $|f| \leq 1 $ pointwise. Then
the general case follows by linearity.
%
\end{proof}

\begin{proof}[Proof of Theorem \ref{th:CLTgen}(b)] 
Assume the conditions of Theorem \ref{th:CLTgen}(b)
hold, and let $f \in \cR_0$.
 Suppose $-\infty < a < b < \infty$.
By Theorem \ref{th:CLTgen}(a), the finite-dimensional 
distributions of the processes 
 $n^{-1/2} (\xi_t(T_n(f)) - \E[ \xi_t(T_n(f))])_{t \in [a,b]}$  
converge to those of 
a centred Gaussian
process $(X_t)_{t \in [a,b]}$  with covariance function
 $\|f\|_2^2\kappa_0((f,t),(f,u))$ given by
(\ref{kappa0def})
(a stationary Ornstein-Uhlenbeck process).

\textcolor{\blue}{
Given real numbers $a< b$, we
 can now apply \cite[Theorem 15.6]{Bill} to obtain convergence
in distribution in $D[a,b]$. Our
 Lemma \ref{lem4D} implies the condition \cite[eqn (15.21)]{Bill},
with $F(t)=t$.
The continuity of the limiting Ornstein-Uhlenbeck can be seen
for example from the Kolmogorov-\v{C}entsov theorem
\cite[Theorem 2.8]{KS}.
}

Once we have the convergence in $D[a,b]$ for all $a<b$
 we can obtain convergence
in $D(-\infty,\infty)$ using \cite[Theorem 2.8]{Whitt}.
\end{proof}

\begin{proof}[Proof of Proposition \ref{prop:bdy}]
Let $\Q'$ be such that
 $M(\cdot) := \H_{d-1}(\partial S \cap \cdot)$, and
 Condition  \ref{measassu} holds, and 
 $\H_{d-1}(\partial S) <\infty$ almost surely. 
We claim that  almost surely, 
for all $i,j \in \N$ with $i \neq j$, we have
$\H_{d-1}((S_i + x_i)  \cap ( S_j + x_j))=0$. Indeed,
%
%
by the Marking
theorem \textcolor{\blue}{(see \cite[Theorem 5.6]{LP})} the point process
 $\sum_{i =1}^\infty \delta_{(x_i,t_i,S_i,M_i)}$
is a Poisson process in $\R^d \times \R \times \cK \times \cM$,
with intensity $\H_d \otimes \H_1 \otimes \Q'$, and hence
by the bivariate Mecke formula,
\bean
\E \left[  \sum_{i \in \N } \sum_{j\in \N \setminus\{  i \}}
\H_{d-1} ( (\partial S_i + x_i) \cap ( \partial S_j + x_j) ) \right]
\\
=
\int_\cK \Q(d \sigma) \int_{\cK} \Q(d \sigma') 
\int_{0}^\infty dt \int_{0}^\infty ds
\int_{\R^d} dx \int_{\R^d} dy 
\H_{d-1}((\partial \sigma + x) \cap (\partial \sigma'+y))
\\
= \int_\cK \Q(d\sigma) \int_{\cK} \Q(d\sigma') 
\int_{0}^\infty dt \int_{0}^\infty ds
\int_{\R^d} dx 
\int_{\partial \sigma} \H_{d-1}(dz)
\int_{\R^d} dy 
 {\bf 1}_{\partial \sigma' + y}(x+z),
\eean
which comes to zero because, almost surely, $\H_{d-1}(\partial S) < \infty$
and $\H_{d}(\partial S)=0$. The claim follows.

By (\ref{Phidef}), and the preceding claim, almost surely
\bean
\H_{d-1}(\Phi \cap \cdot) = 
 \sum_{i: t_i \geq 0 } \H_{d-1}(\cdot \cap (\partial S_i + x_i) \setminus
\cup_{j: 0 \leq t_j  < t_i} (S_j^o + x_j) )
\\
= \sum_{i: t_i \geq 0 } \H_{d-1}( \cdot \cap (\partial S_i + x_i) \setminus
\cup_{j: 0 \leq t_j  < t_i} (S_j + x_j) ), 
\eean
which is equal to $\xi$ by (\ref{xidef}), since
\textcolor{\blue}{
 $(M_i+x_i) = \H_{d-1}(( \partial S_i + x_i) \cap \cdot)$} 
 by our choice of $\Q'$.
\end{proof}

\section{Proof of results for the DLM in $d=1$}
\label{secpfoned}
\allco

We start this section with a measurability result that we shall use 
more than once. 
\begin{lemm}
\label{rcspplem}
Let $d \in \N$.
Suppose $X$ is a random closed set in $\R^d$ that is almost surely finite.
Then $\H_0(X \cap \cdot)$ is a point process in $\R^d$.
\end{lemm}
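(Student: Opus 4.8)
The plan is to verify directly that the map $\omega \mapsto \H_0(X(\omega)\cap\cdot)$ is a measurable map into $\bM$ taking nonnegative integer values. By the definition of the $\sigma$-algebra on $\bM$ (generated by the evaluation maps $\mu \mapsto \mu(A)$, $A \in \cB^d$), it suffices to show that $\omega \mapsto \H_0(X(\omega)\cap A)$ is a random variable (taking values in $\{0,1,2,\dots\}\cup\{\infty\}$) for every $A \in \cB^d$; the values are almost surely finite since $X$ is almost surely finite, so local finiteness, hence membership in $\bM$, is automatic. To avoid the null set on which $X$ may be infinite, I would first note that $\{X \text{ is finite}\}$ is itself measurable (this will follow once we have measurability of $\H_0(X\cap G)$ for open $G$, by taking $G=\R^d$), and replace $X$ by $\tilde X := X$ on $\{X\text{ finite}\}$ and $\tilde X:=\emptyset$ otherwise. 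One checks that $\tilde X$ is again a random closed set and agrees with $X$ almost surely, so there is no loss in assuming $X$ is finite for every $\omega$.

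The key step, and the only one that uses the structure of $X$ as a random closed set, is the measurability of $\H_0(X\cap G)$ for open $G$. Here the clean device is to count points by separating them with a countable base of open balls. Fix the countable family $\mathcal B$ of all open balls in $\R^d$ with rational centre and rational radius. For open $G$ and $k \in \N$ I would establish the identity
\[
\{\H_0(X\cap G)\ge k\} \;=\; \bigcup \Big\{ \textstyle\bigcap_{j=1}^k \{X\cap B_j \neq \emptyset\} : B_1,\dots,B_k\in\mathcal B \text{ pairwise disjoint}, \ B_j\subset G \Big\}.
\]
The inclusion $\supseteq$ is immediate, since $k$ disjoint balls each meeting $X$ yield $k$ distinct points of $X\cap G$; for $\subseteq$, given $k$ distinct points of $X$ in the open set $G$, one encloses them in pairwise disjoint rational balls contained in $G$. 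Since each event $\{X\cap B_j\neq\emptyset\}$ lies in the Fell $\sigma$-algebra (because $B_j$ is open), and the union is over a countable index set, the right-hand side is measurable. Hence $\H_0(X\cap G)$ is measurable for every open $G$, and in particular $\H_0(X)=\H_0(X\cap\R^d)$ is measurable, which justifies the null-set reduction above.

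It then remains to pass from open sets to arbitrary Borel sets, a routine Dynkin argument. With $X$ finite for every $\omega$, the set function $A\mapsto \H_0(X(\omega)\cap A)$ is a finite measure, so the class $\mathcal D:=\{A\in\cB^d : \omega\mapsto \H_0(X\cap A)\text{ is measurable}\}$ is a $\lambda$-system: it contains $\R^d$; it is closed under proper differences because $\H_0(X\cap(B\setminus A))=\H_0(X\cap B)-\H_0(X\cap A)$ is a difference of finite measurable functions when $A\subset B$; and it is closed under increasing limits by continuity from below. Since $\mathcal D$ contains the $\pi$-system of open sets, Dynkin's theorem gives $\mathcal D=\cB^d$. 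Therefore $\omega\mapsto\H_0(X\cap\cdot)$ is an integer-valued random measure, i.e.\ a point process.

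I expect the main obstacle to be precisely the measurability of the counting functional $\H_0(X\cap G)$ for open $G$: one must convert the coarse ``hitting'' information carried by the Fell $\sigma$-algebra (which records only whether $X$ meets a given open or compact set) into the finer ``counting'' information. The separation-by-disjoint-basis-balls identity is exactly what bridges this gap, and getting that identity together with its measurability is the crux; the remaining reductions (the null-set modification and the Dynkin extension to Borel sets) are standard.
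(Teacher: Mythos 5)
Your proof is correct, but it takes a genuinely different route from the paper. The paper's proof is essentially an appeal to known machinery: it views $X$ as a random element of the space $\cF_{\ell f}$ of locally finite closed sets and invokes \cite[Lemma 3.1.4]{SW}, which asserts that the correspondence $i$ between simple counting measures and locally finite sets is bi-measurable, so that each set $F_{A,k} = \{\sigma : \H_0(\sigma \cap A) = k\}$ lies in the trace of the Fell $\sigma$-algebra and $\{\H_0(X \cap A) = k\}$ is an event. You instead prove the needed measurability from scratch: the identity
$$
\{\H_0(X\cap G)\ge k\} \;=\; \bigcup \Bigl\{ \textstyle\bigcap_{j=1}^k \{X\cap B_j \neq \emptyset\} : B_1,\dots,B_k \text{ disjoint rational balls contained in } G \Bigr\}
$$
converts the hitting events that generate the Fell $\sigma$-algebra into counting information for open $G$, and a $\pi$-$\lambda$ argument (using finiteness of the measure after your null-set modification) extends this to all Borel sets. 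Your separation-by-disjoint-balls step is in effect a direct proof of the special case of the Schneider--Weil correspondence that the paper cites. What your approach buys is self-containedness and transparency --- no reliance on the set--measure identification theory --- and it also handles explicitly the null set where $X$ may be infinite, a point the paper glosses over when it declares $X$ a random element of $\cF_{\ell f}$; what the paper's approach buys is brevity, and the cited lemma covers locally finite (not just finite) sets without the modification step. Both arguments are sound.
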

\begin{proof}
In the notation of \cite[page 51]{SW}, the set $X$
is a random element of $\cF_{\ell f}$. For bounded Borel $A \subset \R^d$,
and $k \in \N \cup \{0\}$, let $ F_{A,k}$  be the set of
 locally finite sets $\sigma \subset \R^d$
such that  $\H_0(\sigma \cap A)= k$. Then using notation
from \cite[Lemma 3.1.4]{SW}, we have 
$$ 
 F_{A,k}= i(\{\mu \in {\mathrm N}_s : \mu(A) =k\}) \in i_s({\cal N}_s),
$$
and therefore by
 \cite[Lemma 3.1.4]{SW}, $F_{A,k} \in {\cal B}({\cal F})_{\ell f}$
so that the event $\{\H_0(X \cap A) =k\} = \{X \in F_{A,k}\}$
is measurable (that is, it is indeed an event).
Hence $\H_0(X \cap \cdot)$ 
is a point process.
\end{proof}

Throughout  the rest of 
this section we take $d=1$. We prove the results stated in
Section \ref{seconedim}.

\begin{lemm}
\label{1dcondlem}
Suppose $\Q$ is such that $\partial S$ is almost surely finite.
Then Condition \ref{measassu} holds, that is, $\H_0(\partial S \cap \cdot)$
is a point process in $\R$.
\end{lemm}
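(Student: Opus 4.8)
The plan is to deduce the statement from Lemma \ref{rcspplem}. Since $d=1$ we have $\H_{d-1}=\H_0$, so Condition \ref{measassu} merely asserts that $\H_0(\partial S\cap\cdot)$ is a point process. Because $S$ is compact, its boundary $\partial S\subseteq S$ is compact, hence closed, and by hypothesis it is almost surely finite. Thus Lemma \ref{rcspplem} yields the conclusion at once \emph{provided} I can show that $\partial S$ is a random closed set, i.e.\ that the map $\sigma\mapsto\partial\sigma$ is measurable on $\cK$. Establishing this measurability is really the whole content of the lemma, so the rest of the plan is devoted to it.

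To verify measurability I would use that the $\sigma$-algebra generated by the Fell topology is already generated by the hitting events $\{F:F\cap G\neq\emptyset\}$ with $G$ open (see \cite{SW,Molchanov}), together with the fact that every open subset of $\R$ is a countable union of open intervals with rational endpoints. It therefore suffices to check that $\{\partial S\cap I\neq\emptyset\}$ is an event for each such interval $I=(c,d)$. The crucial step is the elementary topological equivalence, valid for every compact $S\subseteq\R$, that $\partial S\cap I\neq\emptyset$ holds if and only if both $S\cap I\neq\emptyset$ and $(\R\setminus S)\cap I\neq\emptyset$. The forward direction is immediate from $\partial S=S\cap\overline{\R\setminus S}$. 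For the converse I would argue by contradiction: if $\partial S\cap I=\emptyset$ then $S\cap I$ and $(\R\setminus S)\cap I$ are both relatively open in $I$ and partition it, so connectedness of the interval forces one of them to be empty.

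Given this equivalence, the measurability is routine. The event $\{S\cap I\neq\emptyset\}$ is itself a Fell generator for the open set $I$. For the second event I would write $\{(\R\setminus S)\cap I\neq\emptyset\}=\{I\not\subseteq S\}$ and note that, since $S$ is closed and $\Q\cap I$ is dense in $I$, one has $I\subseteq S$ iff $q\in S$ for every $q\in\Q\cap I$; hence $\{I\subseteq S\}=\bigcap_{q\in\Q\cap I}\{q\in S\}$ is a countable intersection of events, each $\{q\in S\}$ being the complement of the Fell-generating miss event $\{S\cap\{q\}=\emptyset\}$ (as $\{q\}$ is compact). Combining these shows $\{\partial S\cap I\neq\emptyset\}$ is an event for every rational interval $I$, so $\partial S$ is a random closed set and Lemma \ref{rcspplem} applies.

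The main obstacle, and the only genuinely non-routine point, is the topological equivalence above, whose converse rests squarely on the connectedness of intervals in $\R$. This is exactly the feature that is unavailable in higher dimensions, and it explains why the statement is claimed only for $d=1$ (indeed the text notes that it is unknown whether Condition \ref{measassu} follows automatically when $d\geq2$). Everything else is bookkeeping with the generators of the Effros $\sigma$-algebra.
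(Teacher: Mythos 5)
Your proof is correct, and it follows the same skeleton as the paper's own proof: show that $\partial S$ is a random closed set, note that it is almost surely finite by hypothesis, and apply Lemma \ref{rcspplem}. The one place you diverge is the first step. The paper disposes of it with a citation to \cite[Theorem 2.1.1]{SW}, which asserts (in any dimension) that the boundary of a random closed set is again a random closed set; you instead prove the measurability of $\sigma \mapsto \partial\sigma$ by hand, reducing to hitting events of rational open intervals $I$ and using the equivalence that $\partial S \cap I \neq \emptyset$ if and only if both $S \cap I \neq \emptyset$ and $I \not\subseteq S$, together with the density trick for writing $\{I \subseteq S\}$ as a countable intersection of hitting events of singletons. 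That argument is sound, and what it buys is self-containedness: no appeal to the Schneider--Weil measurability machinery, at the cost of some bookkeeping. (Minor notational point: in this paper $\Q$ denotes the grain distribution, so using it for the rationals invites confusion.)

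One genuine error in your commentary, though it does not affect the validity of the proof itself: your closing diagnosis of why the statement is confined to $d=1$ is wrong. Your connectedness argument works verbatim in $\R^d$ with open balls in place of intervals (balls are connected), and indeed \cite[Theorem 2.1.1]{SW} gives measurability of $\sigma \mapsto \partial\sigma$ in every dimension. What is actually special to $d=1$ is the \emph{second} step, Lemma \ref{rcspplem}: for an almost surely finite random closed set, the counting measure $\H_0(\partial S \cap \cdot)$ is automatically a point process, whereas for $d \geq 2$ there is no known analogue showing that $\H_{d-1}(\partial S \cap \cdot)$ is a random measure. That measurability of the Hausdorff-measure functional, not the measurability of $\partial S$, is precisely the open issue the paper flags concerning Condition \ref{measassu} in higher dimensions.
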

\begin{proof}
We are now assuming $d=1$.
By \cite[Theorem 2.1.1]{SW},  the 
set $\partial S$ is a random closed set that
is almost surely finite by assumption. Therefore
$\H_0(\partial S \cap \cdot)$ is a point process in $\R$
by Lemma \ref{rcspplem}.
\end{proof}

\begin{proof}[Proof of Proposition \ref{th:main}]
By definition $\eta = \H_0(\Phi \cap \cdot)$, where $\Phi$ is
the set of boundary points of our time-reversed DLM tessellation.
We aim to apply Theorem \ref{th:momgen}. 
We are given the measure $\Q$, and let $\Q'$ be the
measure on $\cK \times \bM$ whereby a random
pair $(S,M)$  under $\Q'$ is such that $S$ has
distribution $\Q$ and $M = \H_0 ((\partial S) \cap \cdot)$.
Note that $M$ is a random element of $\bM$ by  Lemma \ref{1dcondlem}.
Then by Proposition \ref{prop:bdy},
 $\eta $ is the dead leaves random measure $ \xi$,  defined
 at  (\ref{xitdef}).
Hence by Theorem \ref{th:momgen}, $\eta$ is a point process and its
 intensity  is equal
to $\E[\H_0(\partial S)]/\lambda$.
\end{proof}

\begin{proof}[Proof of Theorem \ref{th:fmm}]
Assume without loss of generality that $\Q$ is concentrated on
intervals of the form \textcolor{\blue}{$[0,x]$ with $x \geq 0$}.
 Let $A,B \in \cB^1$ 
with $0 < \H_1(A) < \infty$
and $0 < \H_1(B) < \infty$, and with $x <y$ for all $x \in A,y \in B$.
  The product $\eta(A) \eta(B)$
equals the number of pairs of exposed endpoints of intervals
(i.e., leaves) in the time-reversed DLM,
one arriving in $A$ and the other in $B$. We can split this
into several contributions according to whether the endpoints
in question are left or right endpoints, whether they belong
to the same or different intervals, and (in the latter case)
which of the two endpoints arrives first.

Consider first the contribution from pairs consisting of  an exposed
right endpoint arriving
in $A$ before an exposed left endpoint arriving in $B$. Let $N_1$
denote the number of such pairs.
By the multivariate Mecke formula, 
\bean
\E[N_1] = \int dx \int \nu(du) \int dy \int \nu(dv) \int_{0}^\infty ds
\int_s^\infty dt {\bf 1}_A(x+u) {\bf 1}_B(y) \Pr[E_{x+u,s} \cap E_{y,t}],
\eean
where $E_{x,t}$ is defined in Lemma \ref{Elem}, and the range of integration
when unspecified is $(-\infty,\infty)$.
By Lemma \ref{Elem}, for $0< s <t$  and $x,y \in \R$ 
we have
$\Pr[E_{x,s} \cap E_{y,t}]= \exp(-\lambda_{y-x}s - \lambda (t-s))$. 
Hence, using the change of variables $z= x+u$, we have
\bea
\E[N_1] =  \int \nu(du) \int dy \int \nu(dv) \int dz 
\int_{0}^\infty ds
\int_s^\infty dt {\bf 1}_A(z) {\bf 1}_B(y) e^{-s \lambda_{y-z}}
e^{-(t-s) \lambda}
\nonumber \\
= \int_A dz \int_B dy \lambda_{y-z}^{-1} \lambda^{-1}.
~~~~~~~~~~~~
\label{0705a}
\eea
We get the same contribution as $\E[N_1]$ for pairs consisting
of a right endpoint in $A$ arriving before a right endpoint
in $B$, 
and also from a left endpoint in $B$ arriving before a left  endpoint in $A$,
and also from a left endpoint in $B$ arriving before a right  endpoint in $A$.

Let $N_2$ denote the number of pairs that consist of  an 
exposed left endpoint arriving in $A$ before an 
exposed left endpoint arriving in $B$. 
In this case the first of these arrivals has to avoid covering the
second endpoint, for the pair to contribute.
 By the multivariate Mecke formula and Lemma \ref{Elem}, 
\bea
\E[N_2]= \int_A dx \int_B dy \int \nu(du) {\bf 1} \{x+ u < y\}
\int_0^\infty ds \int_s^\infty dt
\Pr[E_{x,s} \cap E_{y,t}]
\nonumber \\
= \int_A dx \int_B dy \nu([0,y-x)) \lambda_{y-x}^{-1} \lambda^{-1} 
\nonumber \\
= \int_A dx \int_B dy F(y-x) \lambda_{y-x}^{-1} \lambda^{-1} 
\label{0705b}
\eea
where we have used the fact that $\nu(\{z\}) = 0$ for all but
countably many $z \in \R$ so  $\int_B \nu(\{y-x\}) dy = 0$.
We get the same contribution as $\E[N_2]$ from
pairs consisting of a left endpoint arriving in $A $ before
 a right endpoint in $B$, and from pairs consisting of
a right endpoint in $B$ arriving before a left endpoint arriving in $A$, and
a right endpoint in $B$ arriving before a right endpoint arriving in $A$.

Let $N_3$ be the number of pairs consisting of an exposed
 left endpoint in $A$ and an exposed right endpoint in $B$, both being
endpoints of the same leaf.
Then
\bean
\E[N_3] = \int_A dx \int \nu(dy)  {\bf 1}_B (x+y) \int_0^\infty dt
\Pr[ E_{x,t} \cap  E_{x+y,t} ],
\eean
and using the change of variable $z=y+x$ along with  Lemma
\ref{Elem}, we obtain that
\bea
\E[N_3] = 
\int_A dx  \int_B \textcolor{\blue}{
\Pr[ H +x \in dz] } \int_0^\infty \exp(-\lambda_{z-x} t) dt   
\nonumber \\
= \int_A dx \int_B \lambda_{z-x}^{-1} \Pr[H+x \in dz].
\label{0705c}
\eea
Then $ \alpha_2(A \times B) = \E[\eta(A) \eta(B)] = 
4 \E[N_1] + 4 \E[N_2] + \E[N_3]$, 
and by (\ref{0705a}), (\ref{0705b}) and (\ref{0705c})
we obtain (\ref{0615a}).
%
%
%
%
We then obtain (\ref{pcf}) from (\ref{0615a}), the
definition of pair correlation function, and Proposition \ref{th:main}.
Likewise, it is straightforward to deduce (\ref{delta2mom}) 
from (\ref{0615a}) when $\nu= \delta_\lambda$.
\end{proof}

\begin{proof}[Proof of Theorem \ref{th2mom1d}]
Suppose that $\E[(\H_0(\partial S))^2] < \infty$ and
$\E[(\H_1(S))^2] < \infty$.
The existence of the limit
 $\sigma_1^2 := \lim_{n \to \infty} (n^{-1}\Var [\eta([0,n])])$ 
follows from Theorem \ref{thcovgen}, taking $\Q'$ to be 
 as in the proof of Proposition \ref{th:main}.

Now suppose  also that $\Q$ is concentrated
on connected intervals and $F(0)=0$. Then
we can  derive  (\ref{0628a}) 
using either 
(\ref{0615a}) or
the formula for $\sigma_0^2$ given in
 Theorem \ref{thcovgen}. We take the first of these options, 
and leave it to the reader to check that the latter option gives the
same value for $\sigma_1^2$.
Since $\eta$ is a simple point process, we have
by (\ref{0615a}) and Proposition \ref{th:main} that
\bean
\E [(\eta([0,n]))^2] = \E[\eta([0,n])] + 2 \int_{0\leq x < y \leq n}
 \alpha_2 (d(x,y))
\\
= (2n/\lambda)  + 2 \int_0^n \int_{(x,n]} \lambda_{y-x}^{-1}
\Pr[ H+x \in dy ] dx + 8 \int_0^n \int_x^n \frac{1+ F(y-x)}{\la \la_{y-x} }
 dy dx,
\eean
while $\E[\eta([0,n])]^2 = 8 \int_0^n \int_x^n \la^{-2} dy dx$.
Taking $v= x/n$ and $u= y-x$, we thus have
\bean
n^{-1} \Var [\eta([0,n])] = \frac{2}{\lambda}  
+ 2 \int_0^1 \int_{(0,n-nv]} \lambda_u^{-1} \Pr[H \in du] dv
\\
+ 8 \int_0^1   \int_0^{n(1-v)} \left( \frac{1+F(u) }{\lambda \lambda_u }
- \frac{1}{\lambda^2} \right) du dv,
\eean
so that  (\ref{0628a}) holds by dominated convergence, provided
the right hand side of (\ref{0628a}) is finite.

\textcolor{\blue}{
Recall from just before Theorem  \ref{th:fmm}, 
and from (\ref{lambdadef}), that
  $\lambda_u =\lambda+ \E [\min(u,H)]$, 
for all $u \geq 0$.
Since 
$\E[\min(u, H)] 
= \int_0^u \oF(t)dt = \la - \int_u^\infty \oF(t)dt$,}
in the last integral of the right hand side of (\ref{0628a}) 
 the integrand can be re-written as
\bean
\frac{2 - \oF(u)}{\la(2\la - \int_u^\infty \oF(t)dt)} - \frac{1}{\la^2}
= \frac{ 2 \la - \la \oF(u) - (2 \la - \int_u^\infty \oF(t)dt ) }{\la^2
(2 \la - \int_u^\infty \oF(t)dt)} 
\eean
which equals the expression in  (\ref{0628b}).

Since we assume $\E[H^2]< \infty$, we have
$\int_{0}^\infty \int_u^\infty \oF(t) dt du < \infty$, 
and therefore the expression in (\ref{0628b})
 is integrable and the right hand side of (\ref{0628a}) is indeed finite.

In the special case with $\nu= \delta_1$ (so that $\lambda=1$), the expression
in (\ref{0628b}) comes to $-u/(1+u)$ for
\textcolor{\blue}{ $u < 1$ (and zero for $u \geq 1$).} 
Therefore in this case the right hand side of (\ref{0628a}) comes to
\bean
2 +1 + 8 \left( \int_{0}^1 \left( \frac{1}{1+u} \right) du -1 \right) 
= 8 \log 2 -5,
\eean
as asserted.
\end{proof}

\begin{proof}[Proof of Theorem \ref{limcovthm}]
We use the last part of Theorem \ref{thcovgen}, 
 taking $\Q'$ to be as in the proof of Proposition \ref{th:main},
and taking $f = {\bf 1}_{[0,r]}$
and $g = {\bf 1}_{[0,s]}$.
\end{proof}

\begin{proof}[Proof of Theorems \ref{CLTb} and \ref{FCLT1}]
We use Theorem \ref{th:CLTgen},
 taking $\Q'$ to be as in the proof of Proposition \ref{th:main},
and applying it to functions of the form $f={\bf 1}_{[0,s]}$.
\end{proof}

\begin{proof}[Proof of Propositions \ref{thmXdist} and
  \ref{typintcoro}]
We use formulae from \cite{Matheron}, and also provide
some extra details compared to \cite{Matheron}.

\textcolor{\blue}{Given $h \geq 0$, 
let $K(h)$ and $P(h)$ be as defined in \cite[page 3]{Matheron};
that is, using our notation from Section \ref{seconedim},
let}
 $P(h) = \Pr[\eta([0,h]) =0]$ and 
$K(h) = \E[(H-h)^+]$. \textcolor{\blue}{Then }
 $ K(h) = \int_h^\infty (1-F(t))dt$, and
$K'(h)= -(1-F(h))$.
In particular $K'(0)= -1$ under our present assumptions.
Also $K(0) =   \lambda$. 
\textcolor{\blue}{
We assert 
that
\bea
P(h) = \frac{K(h)}{ \lambda + h}.
\label{0716a}
\eea 
Indeed, this is the last formula on \cite[page 3]{Matheron}, but
for completeness we sketch a proof here. In the time-reversed
DLM, the time $T$ to the first  arrival  (after time 0) of a leaf
 that intersects $[0,h]$ is
exponentially distributed  with mean $1/\mu_1$, where
$$
\mu_1 := \int_{-\infty}^\infty \Pr[[y,y+H] ] \cap [0,h] \neq \emptyset] dy
= h + \int_{-\infty}^0 \Pr[ H \geq -y] dy = h + \lambda.
$$ 
The time $T'$ to the first arrival (after time 0) 
of a leaf that covers the whole of $[0,h] $ is exponential with mean
 $1/\mu_2$, where 
$$
\mu_2 := \int _{-\infty}^0 \Pr[ [0,h] \subset [y,y+ H]] dy
= \int_{-\infty}^0 \Pr[H \geq -y +h] dy = K(h).
$$  
Moreover the time $T''$ to the first arrival of a leaf that intersects but
does not cover $[0,h]$ is also exponential with mean $1/(\mu_1-\mu_2)$,
and independent of $T'$. Then $P(h) = \Pr[ T' < T'']$, which
gives us  (\ref{0716a}) by a well-known result on the minimum
of independent exponentials.}

Let $X$ and $Y$ be as in the statement of 
Propositions \ref{thmXdist} and  
\ref{typintcoro}.  By stationarity, given $X =x$ the first point of
 $\eta$ to the right of $0$ is Unif$(0,x)$.
Hence
$
P(h) = \int_h^\infty 
((x-h)/x)
\Pr[X \in dx], 
$
so by the discussion just before Proposition \ref{typintcoro},
\bean
P(h) = \int_h^\infty 
 ((x-h)/x)  
 (2x/\lambda)
\Pr[ Y \in dx]
= (2/\lambda) \int_h^\infty (1-F_Y(t)) dt,
\eean
where $F_Y := \Pr[Y \leq \cdot]$
 is the cumulative distribution function of $Y$,
and the  last equality comes from  Fubini's theorem.
Hence by 
 (\ref{0716a}),
\bean
1- F_Y(y) = (-\lambda/2) P'(y) = \frac{\lambda}{2}
\left( \frac{1-F(y)}{\lambda+y}
+ \frac{K(y)}{(\lambda+y)^2} \right). 
\eean
This formula  appears on \cite[page 10]{Matheron}
(Matheron's $F_0$ is our $F$, and Matheron's $\nu$ is 
the intensity of $\eta$, which is $2/\lambda$ by Proposition \ref{th:main}).
By the product rule,
\bean
-dF_Y(y) = \frac{\lambda}{2} \left(  \frac{- dF(y)}{y+\lambda} -  
\frac{2(1-F(y))}{(y+\lambda)^2} dy - \frac{2 K(y)}{(\lambda+y)^3}
\right).
\eean
Since $K(y) = \E[(H-y)^+ ] = \int_y^\infty (u-y) dF(u)$,
 hence
\bean
dF_Y(y) = \frac{\lambda dF(y)}{2(y+\lambda)} + 
\frac{\lambda}{(y+\lambda)^3}
\left[
(\lambda +y)(1-F(y))  + \int_y^\infty (u-y) dF(u) 
\right] dy.
\eean
The expression inside the square brackets in the above right hand side
is equal to $\int_y^\infty(\lambda +u) dF(u)$,
and so
we have Proposition \ref{typintcoro}.
The argument just before Proposition \ref{typintcoro}
shows that we can then deduce Proposition \ref{thmXdist}.
\end{proof}

\section{Proofs for the DLM in $d=2$}
\label{secpftwodim}
\allco
Throughout this section we take $d=2$. Also
 $S,S', S''$  denote independent random elements of
$\cK $ with common distribution $\Q$, and
$\Theta$ denotes a  random variable uniformly distributed 
over $(-\pi,\pi]$, independent of $(S,S')$.

\begin{proof}[Proof of Theorem \ref{meantheo}]
We obtain the result 
by application of Theorem \ref{th:momgen}.
Here we are given $\Q$, and we  take $\Q'$ to be the
probability measure on $\cK \times \bM$ with first marginal $\Q$
such that if $(S,M)$ is $\Q'$-distributed then $M = \H_1(\partial S \cap
 \cdot)$.
\end{proof}
Our proof of \textcolor{\blue}{Theorem \ref{thbranch}   requires}
a series of lemmas.
The first is concerned with  random closed sets in $\R^2$ (or more generally,
in $\R^d$). 
\begin{lemm}
\label{RACSlem}
Any countable intersection of random closed sets in $\R^2$ is a random closed
set in $\R^2$.
\end{lemm}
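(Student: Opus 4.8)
The set $Y := \bigcap_{n} X_n$ is closed, being an intersection of closed sets, so the only real issue is measurability. Recall (see \cite[Section 2.1]{SW} or \cite{Molchanov}) that the $\sigma$-algebra on the space $\cF$ of closed sets in $\R^2$ is generated by the compact-hit sets $\{F : F \cap K \neq \emptyset\}$, $K$ compact: each such set is the complement of a Fell-open set, and conversely $\{F : F\cap G\neq\emptyset\}=\bigcup_m\{F:F\cap K_m\neq\emptyset\}$ whenever $K_m\uparrow G$ is a compact exhaustion of an open set $G$. Since measurability of a map may be checked on a generating collection, it suffices to prove that for every compact $K\subset\R^2$ the event $\{Y\cap K\neq\emptyset\}$ is measurable. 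Note also that, as each $X_n$ is a random closed set and each closed ball is compact, the events $\{X_n\cap\overline{B}\neq\emptyset\}$ are measurable for every closed ball $\overline{B}$; these will be the building blocks.

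The plan is to express $\{Y\cap K\neq\emptyset\}$ through countably many of these building blocks. Let $\mathcal{B}$ be the countable family of open balls with rational centre and radius, and for $k\in\N$ let $\mathcal{B}_k(K)$ denote the subfamily of those $B\in\mathcal{B}$ having radius at most $1/k$ and satisfying $\overline{B}\cap K\neq\emptyset$. First I would establish the identity
\[
\{Y\cap K\neq\emptyset\}=\bigcap_{k=1}^{\infty}\ \bigcup_{B\in\mathcal{B}_k(K)}\ \bigcap_{n=1}^{\infty}\{X_n\cap\overline{B}\neq\emptyset\}.
\]
The forward inclusion is straightforward: if $x\in K$ lies in every $X_n$, then for each $k$ one can pick $B\in\mathcal{B}$ with $x\in B$ of radius at most $1/k$, so that $\overline{B}\cap K\ni x$ and $X_n\cap\overline{B}\ni x$ for all $n$, placing $\omega$ in the right-hand side.

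The reverse inclusion is where the real work lies, and it is the step I expect to be the main obstacle: from membership in the right-hand side one only extracts, for each $k$, a ball $B_k\in\mathcal{B}_k(K)$ together with points $a_{n,k}\in X_n\cap\overline{B_k}$ (one for every $n$) and a point $c_k\in\overline{B_k}\cap K$, and these scattered points must be assembled into a single location common to $K$ and to all the $X_n$ simultaneously. Here I would exploit the compactness of $K$: the points $c_k\in K$ have a subsequence converging to some $z\in K$, and since $\overline{B_k}$ has diameter at most $2/k$, along that subsequence $a_{n,k}\to z$ as well for each fixed $n$; because each $X_n$ is closed this forces $z\in X_n$ for all $n$, whence $z\in Y\cap K$. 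Routing the argument through compact test sets $K$ rather than open sets $G$ is precisely what makes this limiting point land inside $K$ (not merely in its closure), which is why the compact-hit generators are the right ones to use. Granting the displayed identity, its right-hand side is a countable intersection of countable unions of countable intersections of measurable events, hence measurable, and the proof is complete. The argument uses nothing particular to the plane and carries over verbatim to $\R^d$.
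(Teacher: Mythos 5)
Your proof is correct, but it follows a genuinely different route from the paper's. The paper first forms the partial intersections $Y_n := \cap_{i=1}^n X_i$, which are random closed sets by the known closure of random closed sets under \emph{finite} intersection (citing \cite[Theorem 2.1.1]{SW}), and then exploits that the $Y_n$ decrease: for compact $K$, the sets $Y_n^c$ form an increasing open cover of $K$ whenever $K$ misses the full intersection, so compactness gives the event identity $\{\cap_n X_n \cap K = \emptyset\} = \cup_n \{Y_n \cap K = \emptyset\}$, and measurability of the miss-events follows in one line. You instead work directly with the hit-events and rebuild everything from scratch: the three-layer identity over rational balls of radius at most $1/k$, with the compactness-plus-closedness limiting argument supplying the reverse inclusion. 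Your limiting argument is in essence a hands-on proof of the finite intersection property that the paper's decreasing-sequence trick invokes implicitly. What your approach buys is self-containedness -- you never need the finite-intersection result from \cite{SW}, only the measurability of hit-events for closed balls -- at the cost of a longer argument and a mild gloss (checking measurability on the compact-hit generators of the Fell--Borel $\sigma$-algebra requires second countability of the Fell topology, which holds and is exactly the equivalence of Definitions 1.1.1 and 1.1.$1''$ in \cite{Molchanov} that the paper cites). Both arguments, as you note, work verbatim in $\R^d$.
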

\begin{proof}
Let $X_1,X_2,\ldots$ be random closed sets in $\R^2$. For $n \in \N$ set
$Y_n = \cap_{i=1}^n X_i$. Then $Y_n$ is a random closed set by 
\cite[Theorem 2.1.1]{SW}. Set $X= \cap_{n=1}^\infty X_n= \cap_{n=1}^\infty Y_n$.
 Then for any compact $K \subset \R^2$,
we have the event equalities
\bean
\{X \cap K = \emptyset\} = \{ K \subset \cup_{n=1}^\infty Y_n^c\}
= \cup_{n=1}^\infty  \{K \subset Y_n^c\} =
 \cup_{n=1}^\infty \{ Y_n \cap K = \emptyset\}, 
\eean
which is an event because each $Y_n$ is a random closed set
\textcolor{\blue}{
(see \cite[Definition 1.1.1]{Molchanov}).
}
 Therefore
$X$ is also a random closed set.
\end{proof}
\begin{lemm}
\label{intbdylem}
Suppose $\lambda < \infty$ and
 $\beta_3 <\infty$, where $\beta_3$ is given by (\ref{0628c}).
Then, almost surely:

 (a) for all distinct $i,j \in \N$ the set
$(\partial S_i + x_i) \cap (\partial S_j + x_j)$ is finite, and 

(b)  
 for all distinct $i,j,k \in \N$ the
set 
$(\partial S_i + x_i) \cap (\partial S_j + x_j)
\cap (\partial S_k + x_k)$ is empty.
\end{lemm}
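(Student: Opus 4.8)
The plan is to exploit the Poisson structure through the multivariate Mecke formula, reducing both assertions to deterministic integral-geometric facts that follow from $\beta_3<\infty$. Recall from the Marking theorem that $\sum_{i=1}^\infty\delta_{(x_i,t_i,S_i)}$ is a Poisson process on $\R^2\times\R\times\cK$ with intensity $\H_2\otimes\H_1\otimes\Q$. Since the time and position of any fixed pair or triple of points are almost surely finite, it suffices to bound, for each $K\in\N$, the expected number of \emph{bad} pairs (resp.\ triples) whose arrival times lie in $[-K,K]$ and whose centres $x_i$ lie in $B(K)$; letting $K\to\infty$ then accounts for all pairs (resp.\ triples).

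For part (a), the key point is that $\beta_3<\infty$ forces the overlap set to be Lebesgue-null. Writing $I(\sigma,\sigma'):=\int_{\R^2}\H_0(\partial\sigma\cap(\partial\sigma'+z))\,dz$, definition (\ref{0628c}) gives
\[
\int_\cK\int_\cK I(\sigma,\sigma')\,\Q(d\sigma)\,\Q(d\sigma')=\lambda^2\beta_3<\infty,
\]
so $I(\sigma,\sigma')<\infty$ for $(\Q\otimes\Q)$-almost every $(\sigma,\sigma')$. For any such pair, the nonnegative integrand $z\mapsto\H_0(\partial\sigma\cap(\partial\sigma'+z))$ has finite integral and is therefore finite for Lebesgue-almost every $z$, i.e.\ $\int_{\R^2}\mathbf{1}\{\H_0(\partial\sigma\cap(\partial\sigma'+z))=\infty\}\,dz=0$. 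I would then apply the bivariate Mecke formula to
\[
\E\Big[\sum_{i\neq j}\mathbf{1}\{\H_0((\partial S_i+x_i)\cap(\partial S_j+x_j))=\infty\}\,\mathbf{1}\{t_i,t_j\in[-K,K],\ x_i,x_j\in B(K)\}\Big],
\]
which expands into an integral over $(\sigma,\sigma',s,t,x,y)$. Performing the $y$-integration first and substituting $z=y-x$ (using translation invariance of $\H_0$) makes the inner indicator integrate to zero by the previous step, so this expectation vanishes for every $K$; hence almost surely no pair has infinite intersection.

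For part (b), I would additionally use that in the setting of Theorem \ref{thbranch} the boundary $\partial S$ is almost surely a rectifiable curve, so $\H_1(\partial S)<\infty$ and consequently $\H_2(\partial S)=0$ almost surely. By the trivariate Mecke formula it suffices to show the vanishing of
\[
\int_{\cK^3}\int_{[-K,K]^3}\int_{B(K)^3}\mathbf{1}\big\{(\partial\sigma_1+x_1)\cap(\partial\sigma_2+x_2)\cap(\partial\sigma_3+x_3)\neq\emptyset\big\}\,\prod_i dx_i\,\prod_i dt_i\,\prod_i\Q(d\sigma_i).
\]
Fixing $\sigma_1,\sigma_2,\sigma_3,x_1,x_2$, set $F:=(\partial\sigma_1+x_1)\cap(\partial\sigma_2+x_2)$; by the deterministic fact established in part (a), $F$ is finite for $(\Q\otimes\Q)$-almost every $(\sigma_1,\sigma_2)$ and Lebesgue-almost every value of $x_2-x_1$. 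For such a finite $F=\{p_1,\dots,p_m\}$, the triple intersection is nonempty exactly when $p_\ell\in\partial\sigma_3+x_3$ for some $\ell$, i.e.\ $x_3\in p_\ell-\partial\sigma_3$, whence
\[
\int_{\R^2}\mathbf{1}\{F\cap(\partial\sigma_3+x_3)\neq\emptyset\}\,dx_3\le\sum_{\ell=1}^m\H_2(p_\ell-\partial\sigma_3)=m\,\H_2(\partial\sigma_3)=0.
\]
Integrating this vanishing $x_3$-integral against the remaining finite measures yields zero, so the expected number of bad triples in each window is zero, and part (b) follows.

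The main obstacle I anticipate is measure-theoretic bookkeeping rather than hard analysis: one must verify that the events $\{\H_0((\partial S_i+x_i)\cap(\partial S_j+x_j))=\infty\}$ and the triple-intersection events are genuinely measurable (so that Mecke applies) and that the various Fubini interchanges are legitimate. The analytic heart is the clean implication from $\beta_3<\infty$ to ``the overlap set is Lebesgue-null,'' which drives part (a) and then feeds into part (b) via the finiteness of $F$ combined with $\H_2(\partial S)=0$.
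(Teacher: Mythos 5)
Your proposal is correct and follows essentially the same route as the paper: both proofs apply the multivariate Mecke formula to reduce the claims to deterministic integral-geometric facts, deduce part (a) from the finiteness of $\beta_3$ (the paper bounds the expected \emph{sum} of intersection cardinalities over pairs in a window, while you bound the expected number of pairs with infinite intersection --- two sides of the same coin), and deduce part (b) by combining the a.e.\ finiteness of pairwise intersections (to justify the Fubini step) with the fact that $\H_2(\partial S)=0$ for a rectifiable boundary, exactly as the paper does by integrating the position of the third leaf last. The reliance on rectifiability of $\partial S$, which you flag as coming from the setting of Theorem \ref{thbranch} rather than from the lemma's stated hypotheses, is shared by the paper's own proof.
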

\begin{proof}
Let $K >0$.
By the Mecke formula,
\bean
\E \sum_{i\in \N} \sum_{j\in \N \setminus \{i\}}
\H_0 ( (\partial S_i + x_i) \cap (\partial S_j + x_j))
{\bf 1}_{B(K) \times [-K,K]} ((x_i,t_i))
{\bf 1}_{B(K) \times [-K,K]} ((x_j,t_j))
\\
=  (2K)^2  \int_{B(K)} dx \int_{B(K)} dy 
\E [ \H_0( (\partial S +x) \cap (\partial S' +y)) ]
\\
\leq
4K^2 \int_{B(K)} dx \int_{\R^2}  dz \E [\H_0( (\partial S + x ) 
\cap (\partial S' +x + z)) ]
= 
 \textcolor{\blue}{
4K^2 (\pi K^2)
 \lambda^2 \beta_3},
\eean
which is finite by assumption. Therefore, almost surely 
\bea
\H_0 ((\partial S_i + x_i )
\cap (\partial S_j + x_j )) < \infty
\label{0627a}
\eea
for all $(i,j)$ with $i \neq j$
with  $(x_i,t_i) \in B(K) \times [-K,K]$ 
and $(x_j,t_j) \in B(K) \times [-K,K]$. Therefore, letting $K \to \infty$
shows that  (\ref{0627a}) holds for all $(i,j)$ with $i \neq j$,
 almost surely, which gives
us (a).

For (b), note that for $K >0$, by the multivariate Mecke formula,
writing $\sum_{i,j,k \in \N}^{\neq}$
for the sum over ordered triples $(i,j,k)$  of distinct
elements of $\N$, we have
\bean
\E \sum_{i,j,k \in \N}^{\neq} 
\H_0 ((\partial S_i + x_i )
\cap (\partial S_j + x_j )
\cap (\partial S_k + x_k )
) 
{\bf 1}_{[-K,K]}(t_i)
{\bf 1}_{[-K,K]}(t_j)
{\bf 1}_{[-K,K]}(t_k)
\\
= (2K)^3 \E \int dx \int dy \int dz
\H_0(( \partial S +x) \cap (\partial S' +y) \cap (\partial S'' + z)
),
\eean
where the range of integration is taken to be $\R^2$ whenever
it is not specified explicitly.
Taking $y'=y-x$ and $z'= z-x$, we find that
the last expression equals
\bean
 (2K)^3 \E \int dx \int dy' \int dz'
\H_0( \partial S  \cap (\partial S' +y') \cap (\partial S'' + z'))
\\
= (2K)^3 \E \int dx \int dy' \int dz' 
\int_{ \partial S  \cap (\partial S' +y') } \H_0(dw) 
{\bf 1}_{ \partial S'' + z'} (w).
\eean
In the last line we may interchange the innermost two integrals because
almost surely and for almost all $y'$ the innermost integral
$\int_{ \partial S  \cap (\partial S' +y') } \H_0(dw) $ is finite
because of the assumption that $\beta_3 < \infty$. 
Therefore the last expression is equal to
\bean
 (2K)^3 \E \int dx \int dy' 
\int_{ \partial S  \cap (\partial S' +y') } \H_0(dw) 
\int dz' 
{\bf 1}_{ \partial S'' } (w- z'), 
\eean
which is zero because, almost surely, $\partial S''$ is 
a rectifiable curve so that $\E [ \H_2(\partial S'') ] = 0$. Since
$K$ is arbitrary, this gives us part (b).
\end{proof}
\begin{lemm}
\label{notouchlem}
Assume either that $\Q$ has the piecewise $C^1$ Jordan property, or that 
$\Q$ has the rectifiable Jordan property  and is rotation invariant.
Then, almost surely,  there is no pair $\{i,j\}$ of distinct
elements of $\N$ such that $ \partial S_i + x_i$ touches 
$ \partial S_j + x_j$. 
\end{lemm}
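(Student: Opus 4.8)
The plan is to bound the expected number of touching pairs of leaf boundaries inside a bounded space-time window by means of the bivariate Mecke formula, and thereby reduce the statement to the assertion that for $(\Q\otimes\Q)$-almost every pair $(\sigma,\sigma')$ the set of $z\in\R^2$ for which $\partial\sigma$ touches $\partial\sigma'+z$ is Lebesgue-null. This last assertion is exactly what Lemma \ref{touchlemC1} gives in the piecewise $C^1$ case and what Lemma \ref{touchlemrect} gives, after averaging over rotations, in the rectifiable case. A pleasant feature is that no moment hypotheses should be needed, since the relevant expectation will turn out to be exactly zero.

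Concretely, I would fix $K>0$ and apply the bivariate Mecke formula to the Poisson process $\sum_i \delta_{(x_i,t_i,S_i)}$ to write the expected number of ordered pairs $(i,j)$ of distinct indices with $x_i\in B(K)$, $t_i,t_j\in[-K,K]$ and $\partial S_i+x_i$ touching $\partial S_j+x_j$ as
\[
(2K)^2 \int_{\cK}\Q(d\sigma)\int_{\cK}\Q(d\sigma')\int_{B(K)}dx\int_{\R^2}dy\;{\bf 1}\{\partial\sigma+x \text{ touches } \partial\sigma'+y\}.
\]
Because touching is invariant under a common translation, the substitution $z=y-x$ and integration of $x$ over $B(K)$ turn this into
\[
(2K)^2(\pi K^2)\int_{\cK}\int_{\cK} G(\sigma,\sigma')\,\Q(d\sigma)\Q(d\sigma'),\qquad
G(\sigma,\sigma'):=\int_{\R^2}{\bf 1}\{\partial\sigma \text{ touches } \partial\sigma'+z\}\,dz.
\]
Thus it suffices to show $G=0$ $(\Q\otimes\Q)$-a.e.: the displayed expectation then vanishes, so almost surely no touching pair lies in the window, and letting $K\to\infty$ yields the lemma. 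In the piecewise $C^1$ case this is immediate from Lemma \ref{touchlemC1}. In the rectifiable case I would use rotation invariance, $\rho_\theta(S')\eqd S'$ together with $\partial(\rho_\theta\sigma')=\rho_\theta(\partial\sigma')$, to rewrite $\int\int G\,\Q\Q$ as
\[
\frac1{2\pi}\int_{\cK}\int_{\cK}\Big(\int_{-\pi}^{\pi}\int_{\R^2}{\bf 1}\{\partial\sigma \text{ touches } \rho_\theta(\partial\sigma')+z\}\,dz\,d\theta\Big)\Q(d\sigma)\Q(d\sigma'),
\]
whose inner $(z,\theta)$-integral vanishes for every fixed pair by Lemma \ref{touchlemrect}.

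The one genuinely delicate point—and the step I expect to be the main obstacle—is that Lemmas \ref{touchlemrect} and \ref{touchlemC1} are phrased for curves with endpoints, whereas the leaf boundaries are Jordan curves. I would deal with this by splitting each Jordan boundary into finitely many (two will do) rectifiable, respectively piecewise $C^1$, arcs. The point to verify is that whenever $\partial\sigma$ and $\rho_\theta(\partial\sigma')+z$ touch at a point $w$, the corresponding pair of arcs (the one carrying $w$ on each curve) also touches at $w$: if $w$ is interior to both arcs then near $w$ the arcs coincide with the full curves, so the local notion of sidedness, and hence the crossing-versus-touching dichotomy, is unchanged; while if $w$ happens to be an endpoint of an arc the intersection is by definition a touch of the arcs. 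Consequently the set of $(z,\theta)$ (respectively $z$) producing a touch of the Jordan curves is contained in the finite union, over pairs of arcs, of the null sets supplied by Lemma \ref{touchlemrect} (respectively Lemma \ref{touchlemC1}), and is therefore null. Everything else is a routine Mecke computation.
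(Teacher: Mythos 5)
Your proof is correct and follows essentially the same route as the paper's: a bivariate Mecke computation over a space-time window, reducing the statement to Lemma \ref{touchlemC1} in the piecewise $C^1$ case and to Lemma \ref{touchlemrect} (after averaging over rotations, using rotation invariance of $\Q$) in the rectifiable case, with the expectation vanishing identically so that no moment assumptions are needed. If anything you are more explicit than the paper on the one delicate point: the paper cites those lemmas, which are stated for curves with endpoints, directly for the Jordan boundaries, whereas you spell out the decomposition into finitely many arcs and verify that a touch of the Jordan curves forces a touch of the corresponding arcs.
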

\begin{proof}
Let $K \in (0,\infty)$. 
Let $N_K$  denote the number of ordered pairs $(i,j)$ of distinct
elements of $\N$ such that $ \partial S_i + x_i$ touches 
$ \partial S_j + x_j$, and $\{t_i,t_j\} \subset [-K,K]$. 

\textcolor{\blue}{Let us say,}
 for any two piecewise $C^1$ Jordan curves $\gamma$
and $\gamma'$, that $\gamma$ {\em grazes} $\gamma'$ if there
exists $z \in \gamma \cap \gamma'$ that is not
a corner of either $\gamma$ or $\gamma'$,
such that $\gamma$  grazes $\gamma$' at $z$.

Suppose $\Q$ has the piecewise $C^1$ Jordan
property. Then
   by
the bivariate Mecke formula,
\bean 
\E[N_K] = (2K)^2
\int_{\cK} \Q( d \sigma) \int_{\cK} \Q(d \sigma')
\int_{\R^2} dx \int_{\R^2} dy {\bf 1} \{ \partial \sigma+x \mbox{ touches } 
\partial \sigma' + y \}
\eean
which 
is zero by  Lemma \ref{touchlemC1}.

Suppose instead that $\Q$ has the rectifiable Jordan property
and is rotation invariant. Then
\bean
\E[N_K] = (2K)^2 \int dx \int dy \Pr[ (\partial S +x) \mbox{ touches }
(\partial S' +y) ]
\\
= (2K)^2 \int dx \int dy \Pr[ (\partial S + x) \mbox{ touches }
\rho_{\Theta} ( \partial S' +y) ]
\\ 
= (2K)^2 \int dx \int_{\cK} \Q(d\sigma) \int_{\cK} \Q(d \sigma')  
 \int dy \int_{-\pi}^{\pi} d \theta
{\bf 1} \{ (\partial \sigma +x) \mbox{ touches }
\rho_\theta ( \partial \sigma' + y ) \}
\eean
which equals zero by Lemma \ref{touchlemrect}. Thus in  both cases,
$N_K =0$ almost surely, for all $K$, and the result follows.
\end{proof}

Before proving
 Theorem \ref{thbranch}, we introduce some further notation.
%
  For any random closed set $X$ in $\R^2$
and event $A$, let $X_A$ be the random closed set that is $X$ if
$A$ occurs and is $\R^2$ if not. Let $X^A$ 
 be the random closed set that is $X$ if
$A$ occurs and is $\emptyset$ if not. 

Given $i \in \N$, write $X_i$ for the set $S_i+x_i$, and
$X_i^o$ for the interior of $X_i$.
  Then $X_i $ is a random element 
of $\cK$, by \cite[Theorem 2.4.3]{SW}, for example.
Hence, 
given also $j \in \N \setminus \{i\}$, the set
 $\partial X_i  \cap \partial X_j $
 is also a random element
of $\cK$ by \cite[Theorem 2.1.1]{SW}, and it is almost surely finite
by Lemma \ref{intbdylem}. Set
$$
Y_{ij} := (\partial X_i \cap \partial X_j)^{\{\min(t_i,t_j) >0\}} \cap 
  \cap_{k \in \N \setminus \{i,j\}} (X_k^o)^c_{\{0 < t_k < \max(t_i , t_j)\}}. 
$$
Recall from Section \ref{ctshidim} that we
 define $\Xi$ to be the set of points in $\R^2$ which lie in
three or more cells of the time-reversed DLM tessellation,
and $\chi$ to be the measure $\H_0(\Xi \cap \cdot)$.
\begin{lemm}
\label{Ylem}
Assume that $\Q$ either has the piecewise $C^1$ Jordan property, 
or is rotation invariant and has the rectifiable Jordan property.  
Assume also that $\beta_3 <\infty$,
where $\beta_3$ is given by (\ref{0628c}), and that 
$\E[\H_2(S \oplus B(1)) ] <\infty$.
Then,
almost surely, $\Xi = \cup_{i=1}^\infty (\cup_{j=i+1}^\infty Y_{ij})$
and $\chi = \sum_{i=1}^\infty (\sum_{j=i+1}^\infty \H_0( Y_{ij} \cap \cdot))$.
\end{lemm}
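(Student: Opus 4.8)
The plan is to prove the set identity $\Xi = \cup_{i=1}^\infty \cup_{j=i+1}^\infty Y_{ij}$ almost surely, and then to deduce the measure identity from it together with the pairwise disjointness of the $Y_{ij}$. Throughout I would work on the almost-sure event on which the conclusions of Lemmas \ref{Boolem}, \ref{intbdylem} and \ref{notouchlem} all hold, so that any two distinct leaf boundaries meet in finitely many points, all of which are transversal crossings, and no three leaf boundaries share a point; I would also use that, almost surely, all arrival times are distinct and none occurs exactly at time $0$. A preliminary step needed repeatedly is a local finiteness statement: for each $w \in \R^2$ there are a radius $\rho > 0$ and a finite time $T^*$ such that $B(w,\rho)$ is entirely covered by the leaves arriving in $[0,T^*]$. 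This holds because the growing Boolean model $\cup_{0 \le t_k \le t}(S_k + x_k)$ covers any fixed compact set once $t$ is large enough (grains contain a fixed ball with positive probability and the intensity grows linearly in $t$), and Lemma \ref{Boolem} then shows only finitely many leaves with $t_k \le T^*$ meet $B(w,\rho)$. Consequently the visible leaf at every point of $B(w,\rho)$ has arrival time at most $T^*$, the tessellation inside $B(w,\rho)$ is determined by finitely many leaves, and $\Phi \cap B(w,\rho)$ is a finite union of subarcs of leaf boundaries.

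For the inclusion $\cup_{i<j} Y_{ij} \subseteq \Xi$, I would take $w \in Y_{ij}$ and assume without loss of generality $t_i < t_j$, so $\min(t_i,t_j) = t_i > 0$. Writing $X_i = S_i + x_i$, the defining conditions give $w \in \partial X_i \cap \partial X_j$ and $w \notin X_k^o$ for every $k \neq i,j$ with $0 < t_k < t_j$; combined with Lemma \ref{intbdylem}(b), which gives $w \notin \partial X_k$ for any third $k$, this yields $w \notin X_k$ for all such $k$. Shrinking a neighbourhood $U$ of $w$ (using local finiteness) so that it is disjoint from every leaf with $0 \le t_k < t_j$ other than $X_i, X_j$ and so that the only leaf boundaries meeting $U$ are arcs of $\partial X_i$ and $\partial X_j$ through $w$, the transversal crossing divides $U$ into four sectors. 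Bookkeeping of which leaf is visible in each sector then shows that $\Phi \cap U$ consists of the whole arc $\partial X_i \cap U$ together with the half of $\partial X_j$ lying outside $X_i$ (the other half, in $X_i^o$, being covered by the earlier leaf $X_i$), so that $w$ is a degree-three vertex at which three distinct cells meet: those of the visible leaves $X_i$, $X_j$, and the earliest leaf covering the fourth sector (which necessarily has arrival time exceeding $t_j$). Hence $w \in \Xi$.

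For the reverse inclusion $\Xi \subseteq \cup_{i<j} Y_{ij}$, I would take a branch point $w \in \Xi \subseteq \Phi$ and set $\tau := \min\{t_k : w \in X_k\}$, attained by a unique leaf $X_{i_0}$. Since $w \in \Phi$ it lies on $\partial X_{i_0}$ (were $w$ interior to $X_{i_0}$, it would lie in the interior of a single cell), and by Lemma \ref{intbdylem}(b) at most one further leaf has $w$ on its boundary. The crux, which I expect to be the main obstacle, is to show that exactly one further such leaf exists and that the exposure condition defining $Y_{i_0 j}$ holds. Working in a neighbourhood $U$ cleared of all leaves with $t_k < \tau$, I would rule out the remaining possibilities by the same sector analysis: if no second boundary passed through $w$, then $\Phi \cap U$ would be the single arc $\partial X_{i_0} \cap U$ and only two cells would meet $w$; and if some $X_k$ with $\tau < t_k < t_j$ had $w \in X_k^o$, then the arc of $\partial X_j$ near $w$ would be covered by $X_k$ and hence invisible, again leaving only $\partial X_{i_0}$ and only two cells. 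Both contradict $w \in \Xi$. Therefore there is a unique second leaf $X_j$ with $w \in \partial X_j$, and no $k \neq i_0,j$ with $0 < t_k < t_j = \max(t_{i_0},t_j)$ has $w \in X_k^o$; since $\tau = t_{i_0} > 0$, this is exactly the assertion $w \in Y_{i_0 j}$. Finally, Lemma \ref{intbdylem}(b) makes the sets $Y_{ij}$ pairwise disjoint (a common point would lie on at least three boundaries) and Lemma \ref{intbdylem}(a) makes each finite, so the counting-measure identity $\chi = \sum_{i=1}^\infty \sum_{j=i+1}^\infty \H_0(Y_{ij} \cap \cdot)$ follows from the set identity by countable additivity.
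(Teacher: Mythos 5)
Your proposal is correct and follows essentially the same route as the paper: you work on the same almost-sure event supplied by Lemmas \ref{Boolem}, \ref{intbdylem} and \ref{notouchlem} (plus distinct arrival times), prove the inclusion $\cup_{i<j}Y_{ij}\subset\Xi$ by the same local analysis at a transversal crossing of two leaf boundaries (your four-sector bookkeeping is the paper's arc-side argument phrased via the first leaf whose interior covers the point), and deduce the measure identity from pairwise disjointness of the $Y_{ij}$ exactly as the paper does. The only difference is one of emphasis: the paper dispatches the inclusion $\Xi\subset\cup_{i<j}Y_{ij}$ in a single sentence and spells out the reverse inclusion in detail, whereas you elaborate both directions and isolate the local coverage/finiteness step as a preliminary claim.
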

\begin{proof}
Assume the times $t_1,t_2,\ldots$ are distinct (this occurs a.s.).
 Given $x \in \Xi$, $x$ must lie on
the boundary of
the first two shapes $X_i$ to arrive after time zero and contain  $x$, and this gives us the inclusion
 $\Xi \subset \cup_{i=1}^\infty (\cup_{j=i+1}^\infty Y_{ij})$. 

For the reverse inclusion, let $E$ be the event that
there is no triple $(i,j,k)$ of distinct elements of $\N$ with
\textcolor{\blue}{
$\partial X_i \cap \partial X_j \cap \partial X_k \neq \emptyset$,
}
and let $E'$ be the event that there is no pair $(i,j)$ of
distinct elements of $\N$ such that $\partial X_i$ touches $\partial
X_j$. Then $E$ and $E'$ occur almost surely, by Lemmas
\ref{intbdylem} and \ref{notouchlem}. 
 Let $E''$ be the event that for all $K \geq 0$ the number of shapes
$X_j$ with $X_j \cap B_K \neq \emptyset$ and $-K \leq t_j \leq K$  
is finite. This event also occurs almost surely,
by Lemma \ref{Boolem}.

Also the times $t_i$ are all distinct,
almost surely.
Assume from now on that events $E$, $E'$ and $E''$ all occur and all of the
times $t_i$ are distinct.

Suppose $x \in Y_{ij}$ for some
$i,j$ with $ 0 < t_i <t_j$.
Let $T_k:= \inf \{t_{\ell}: t_{\ell} >0, x \in X_{\ell}^o\}$.

Then since we assume $E$ occurs, $x \notin \partial X_{\ell}$ for all
$\ell \in \N \setminus \{i,j\}$. Since we assume $E''$ occurs, 
almost surely only finitely many of the shapes $X_\ell$ with 
$0 \leq t_\ell \leq t_k$
have non-empty intersection with $B(1)+x$. Hence,
 $x \in \partial X_i \cap \partial X_j \cap X_k^o$, and there exists
a constant $\eps >0$ such that 
$$
 B(\eps ) + x \subset \cap_{\ell: 0 \leq t_\ell \leq t_k, \ell \notin \{i,j,k\} }
X_\ell^c.  
$$
Now, $x \in X_i$ and since $X_i$ is a regular set, $x$ is
an accumulation point of the interior of $X_i$, which is connected
by the Jordan curve theorem. Thus $x$ is on the boundary of
a component of $\Xi^c$ which is contained in the interior of $X_i$. 

Since we assume that $E'$ occurs, $\partial X_j$ crosses $\partial X_i$ at $x$
rather than touching it. Hence there is an arc within $\partial X_j$,
 with an endpoint at $x$, that lies outside $X_i$ except for this endpoint.
On one side of this arc  is a part of the interior of $X_j$, and hence there
is a component of $X_j^o \setminus X_i$ with an accumulation point at $x$,
and hence a component of $\Phi^c$ that is contained in $X_j^o \setminus X_i$
with an accumulation point at $x$.

Moreover, on the other side of the arc just mentioned is a region of 
$X_j^c \cap X_i^c$ with an accumulation point at $x$. Hence there is a
component of $\Phi^c$ that is contained in $X_k^o \setminus (X_i \cup X_j)$
and has an accumulation point at $x$. Therefore $x \in \Xi$, so that
$\cup_{i=1}^\infty (\cup_{j=i+1}^\infty Y_{ij} ) \subset  \Xi$.
Therefore $\Xi = \cup_{i=1}^\infty ( \cup_{j=i+1}^\infty Y_{ij})$,
as claimed.
Then since $E$ is assumed to occur, we have $Y_{i'j'} \neq Y_{ij}$
for all $(i',j') \neq (i,j)$, so that
$\sum_{i=1}^\infty \sum_{j=i+1}^\infty \H_0(Y_{ij} \cap \cdot) =  \chi$.
\end{proof}

\begin{proof}[Proof of Theorem \ref{thbranch} (a)]
For each $k \in \N \setminus \{i,j\}$, the set $(X_k^o)^c$ is a random
closed set by \cite[Theorem 2.1.1]{SW}. Therefore 
  $(X_k^o)^c_{\{0 < t_k < \max(t_i , t_j)\}}$ is also a random closed set,
and hence by Lemma \ref{RACSlem} the set $Y_{ij}$ is also a random closed
set.  By Lemma \ref{intbdylem}, $Y_{ij}$ is almost surely finite.
By Lemma \ref{rcspplem},
$\H_0( Y_{ij} \cap \cdot) $ is a point process in $\R^2$. 
%
Since  $ \chi = \sum_{1 \leq i < j < \infty}
\H_0( Y_{ij} \cap  \cdot)$, also $\chi$  is a point process.
The stationarity of $\chi$ is clear.
%
%

\textcolor{\blue}{ Denote the intensity}
 of the stationary point process $\chi$ by 
 $\tbeta_3$.  By Lemma \ref{Ylem} and the multivariate Mecke formula,
 using notation  $E_{x,t} $ from Lemma \ref{Elem}, we have
\bean
\tbeta_3 = \E[\chi (W_1 )]  = \E \sum_{i <j} \H_0(Y_{ij} \cap W_1)
\\
= \int_{\cK}  \Q(d \sigma) 
 \int_{\cK}  \Q(d \sigma') \int_{\R^2} dy \int_{\R^2} dx \int_0^\infty dt \int_{0}^t ds 
\sum_{z \in (\partial \sigma +y) \cap (\partial \sigma' +x) }
{\bf 1}_{W_1} (z) \Pr[ E_{z,t} ]. 
\eean
Taking $x'=x-y$ and $z'= z-y$, 
using Lemma \ref{Elem} 
we have
\bean
\tbeta_3 
= \int_{\cK}  \Q(d \sigma) 
 \int_{\cK}  \Q(d \sigma') \int_{\R^2} dy \int_{\R^2} dx'
\sum_{z' \in \partial \sigma \cap (\partial \sigma' +x') }
{\bf 1}_{W_1} (y+ z') 
\times  \int_0^\infty t e^{-\lambda t}  dt,  
\eean
and taking the $y$-integral inside the $x'$-integral and the sum,
we obtain that
\bean
\tbeta_3 
=
\lambda^{-2}
 \int_{\cK}  \int_{\cK}
 \int_{\R^2}
\H_0(\partial \sigma \cap (\partial \sigma'+x')) 
dx' 
 \Q(d\sigma) \Q(d\sigma'),  
\eean
and hence $\tbeta_3 = \beta_3$, as asserted. 
\end{proof}

\begin{proof}[Proof of \textcolor{\blue}{Theorem \ref{thbranch} (b)}]
Assume now that $\Q$ is rotation-invariant.
 Then $\rho_\Theta (S')   \eqd S'$,
so by (\ref{0628c}) we have 
\bean
 \lambda^{2}
\beta_3 
=
 \E  \int_{\R^2} \H_0(\partial S \cap  (\partial S'  + x) )  dx 
=
 \E  \int_{\R^2} \H_0(\partial S \cap  (\rho_\Theta(\partial S')  + x) )  dx 
\\
= 
 (2 \pi)^{-1} \E  
\int_{-\pi}^\pi 
\int_{\R^2}
\H_0( \partial S \cap (\rho_\theta(\partial S' ) +x )) 
 dx d \theta,
\eean
 and hence by the `two noodle' formula  (Lemma \ref{lemBuffon}),
\bean
\lambda^2
 \beta_3
 =
(2/\pi) \E[ \H_1(\partial S) \H_1(\partial S') ],  
\eean
which yields (\ref{0629a}).
\end{proof}

\textcolor{\blue}{
We now work towards proving Theorem \ref{thmconnect}.
Recall from (\ref{Phidef}) that $\Phi$ denotes  the boundary
of our time-reversed DLM tessellation. It is helpful to represent
$\Phi$ in terms of the following sets.
For each $i$ with $t_i \geq 0$,  define 
 the set
\bea
P_i := (S_i+ x_i) \setminus \cup_{\{j:0 \leq t_j < t_i\}} (S_j+ x_j).  
\label{patcheq}
\eea
(Here the $P$ stands for `patch'.})  Set $P_i=\emptyset$ if $t_i <0$. 
\begin{lemm}
\label{lempatrep}
\textcolor{\blue}{
Under the assumptions of Theorem \ref{thmconnect},
 almost surely
$\Phi = \cup_{i=1}^\infty \partial P_i$.
}
\end{lemm}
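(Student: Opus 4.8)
The plan is to reduce both sides to a common description via the \emph{earliest covering leaf} of a point, thereby avoiding any direct term-by-term comparison of the two unions. Write $X_i:=S_i+x_i$, and work on the almost sure event $\Omega_0$ on which the arrival times $(t_i)$ are pairwise distinct and the local finiteness conclusion of Lemma~\ref{Boolem} holds; the latter applies because $\E[R^2]<\infty$ forces $\E[\H_2(S\oplus B(1))]<\infty$, since $S\subseteq B(R)$. For $z\in\R^2$ put $\mathcal L(z):=\{i:t_i\ge 0,\ z\in X_i\}$. On $\Omega_0$, for each fixed $z$ the arrival times of the leaves in $\mathcal L(z)$ meet every bounded interval only finitely often, so whenever $\mathcal L(z)\ne\emptyset$ there is a well-defined earliest such leaf, indexed $k_0=k_0(z)$. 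Since $\Phi\subseteq\cup_i\partial X_i$ and $\cup_i\partial P_i\subseteq\cup_{i:t_i\ge0}X_i$, both sets lie in the covered region, so it suffices to treat points with $\mathcal L(z)\ne\emptyset$.

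I would then prove that, on $\Omega_0$, both $\Phi$ and $\cup_i\partial P_i$ equal the set $G$ of covered points $z$ lying on the boundary of their earliest covering leaf, i.e. $z\in\partial X_{k_0(z)}$. Fix such a $z$ and set $k_0=k_0(z)$. The key structural facts are that each leaf is \emph{regular} (from the rectifiable Jordan property), so $\partial X_{k_0}=X_{k_0}\setminus X_{k_0}^o$ and every neighbourhood of a point of $\partial X_{k_0}$ meets both $X_{k_0}^o$ and the complement of $X_{k_0}$; and that $U_{k_0}:=\cup_{\{j:0\le t_j<t_{k_0}\}}X_j$ is closed, being a locally finite union of closed sets. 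Because $k_0$ is earliest, $z\notin U_{k_0}$, so some neighbourhood $N$ of $z$ is disjoint from $U_{k_0}$, whence $P_{k_0}\cap N=X_{k_0}\cap N$. If $z\in\partial X_{k_0}$, this identity together with regularity gives $z\in\partial P_{k_0}$, while $z\notin U_{k_0}^{\mathrm{int}}$ (writing $U_i^{\mathrm{int}}:=\cup_{\{j:0\le t_j<t_i\}}X_j^o$) gives $z\in\partial X_{k_0}\setminus U_{k_0}^{\mathrm{int}}\subseteq\Phi$. Hence $G\subseteq\Phi\cap(\cup_i\partial P_i)$.

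For the reverse inclusions I would show that any point of either set lies in $G$; the only possibility to exclude is $z\in X_{k_0}^o$. If $z\in\Phi$, say $z\in\partial X_i\setminus U_i^{\mathrm{int}}$, then $i\in\mathcal L(z)$ and $t_{k_0}\le t_i$; were $z\in X_{k_0}^o$ then either $t_{k_0}<t_i$, so $z\in X_{k_0}^o\subseteq U_i^{\mathrm{int}}$, contradicting $z\notin U_i^{\mathrm{int}}$, or $k_0=i$, so $z\in X_i^o$, contradicting $z\in\partial X_i$; thus $z\in\partial X_{k_0}$, i.e. $z\in G$. If instead $z\in\partial P_i$ then $z\in\overline{P_i}\subseteq X_i$; choosing a neighbourhood $N\subseteq X_{k_0}^o$ disjoint from every leaf arriving before $t_{k_0}$, one finds $P_i\cap N$ equals $\emptyset$ (if $t_i>t_{k_0}$, since then $N\subseteq X_{k_0}\subseteq U_i$) or all of $N$ (if $i=k_0$), either way contradicting $z\in\partial P_i$; so again $z\in\partial X_{k_0}$, i.e. $z\in G$. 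These verifications are routine once the localisation is set up, and together with the previous paragraph they give $\Phi=G=\cup_i\partial P_i$.

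The genuine difficulty is purely combinatorial-topological rather than analytic: a direct comparison would pit the arcs of $\partial X_i$ surviving deletion of the \emph{open} earlier leaves (which build $\Phi$) against the boundaries $\partial P_i$ obtained by deleting the \emph{closed} earlier leaves, forcing one to reconcile the shared arcs where an earlier leaf cuts into a later one and, worst of all, to establish the equality \emph{exactly} at the crossing points $\partial X_i\cap\partial X_j$ rather than merely up to a null set. The earliest-covering-leaf reformulation is precisely the device that dissolves this difficulty, since at every point it refers to a single leaf only. I would emphasise that this argument uses only regularity and local finiteness; the non-containment property and the absence of tangential contacts, which are needed elsewhere for Theorem~\ref{thmconnect}, play no role here.
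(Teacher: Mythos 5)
Your proposal is correct and takes essentially the same approach as the paper's proof: both arguments pivot on the earliest-arriving leaf at a point, use local finiteness (Lemma \ref{Boolem}, available since $\E[R^2]<\infty$) to make that leaf well defined and to make unions of earlier leaves closed, and finish with elementary point-set topology. The differences are only organisational --- the paper proves the two inclusions directly (via the earliest leaf whose boundary contains the point for one direction, and a case split $z\in\partial S_j+x_j$ versus $z\in S_j^o+x_j$ for the other), while you route both inclusions through the common set $G$ --- and your appeal to regularity is in fact superfluous, since $z\in P_{k_0}$ together with the fact that every neighbourhood of $z$ meets $(S_{k_0}+x_{k_0})^c\subseteq P_{k_0}^c$ already yields $z\in\partial P_{k_0}$, which is consistent with the paper's proof using neither regularity nor the Jordan or non-containment properties.
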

\begin{proof}
Let $y \in \Phi$. Then by (\ref{Phidef}), 
$y$ lies on the boundary of some leaf that arrives at a non-negative
time; let $i$ be the index of
the earliest-arriving  leaf  (at or after time 0)
 that contains $y$ in its boundary.
Then  by (\ref{Phidef}) again, $y$ does not lie in (either the
interior or the boundary of)
any leaf arriving between times 0 and $t_i$, so $y \in P_i$,
and since $y \in \partial S_i + x_i$, moreover $y \in \partial P_i$.
Thus $\Phi \subset \cup_{i=1}^\infty \partial P_i$.

Conversely, let $j \in \N$ be such that $P_j \neq \emptyset$, and let
 $z \in \partial P_j$. Then
$z \in S_j + x_j$ (since $S_j$ is closed). Also
$z \notin S_k^o + x_k$ for all $k \in \N$ with $0 \leq t_k < t_j$ (else
some neighbourhood of $z$ is disjoint from $P_j$).
 If $z \in \partial S_j + x_j$, then $z \in \Phi$ by (\ref{Phidef}).
If $z \in S_j^o + x_j$, then (since $z \in \partial P_j$) there exists
some $k$ with $0 \leq t_k < t_j$ and $z \in \partial S_k+ x_k$. Hence, again
$z \in \Phi$ by (\ref{Phidef}).
Thus $\cup_{i=1}^\infty \partial P_i \subset \Phi$.
\end{proof}

\begin{lemm}
\label{lempatch}
\textcolor{\blue}{
Let $A \subset \R^2$ be bounded.
Under the assumptions of Theorem \ref{thmconnect},
it is almost surely the case that
 (a) each component of $\R^2 \setminus \Phi$ is  contained in
the interior of
 one of the
 patches $P_i$, and (b)
the union of all components
of $\R^2 \setminus \Phi$ that intersect $A$ is a bounded set.
}
\end{lemm}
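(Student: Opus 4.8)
The plan is to deduce both parts from the patch representation of Lemma \ref{lempatrep} ($\Phi = \cup_i \partial P_i$) together with two structural facts: that the patches $P_i$ are pairwise disjoint with interiors that exhaust $\R^2 \setminus \Phi$, and that almost surely only finitely many patches meet the bounded set $A$. Throughout I work on the almost sure event that the arrival times $t_i$ are distinct.

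First I would record that the patches are pairwise disjoint. If $t_i < t_j$ and $x \in P_j$, then by (\ref{patcheq}) $x \notin S_i + x_i$, whereas $P_i \subset S_i + x_i$; hence $P_i \cap P_j = \emptyset$ whenever $i \neq j$. Consequently the open sets $P_i^o$ are pairwise disjoint, and moreover $P_i^o \cap \partial P_j = \emptyset$ for every $j$: any $x \in P_i^o$ has a neighbourhood contained in $P_i$, hence disjoint from $P_j$, so $x \notin \overline{P_j} \supseteq \partial P_j$. With Lemma \ref{lempatrep} this gives $P_i^o \cap \Phi = \emptyset$, so $\cup_i P_i^o \subseteq \R^2 \setminus \Phi$. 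For the reverse inclusion I would use that the patches cover the plane: almost surely every point of $\R^2$ lies in some $P_i$ (see the coverage claim below), and since $P_i = P_i^o \cup (P_i \cap \partial P_i)$ with the latter part contained in $\Phi$, every point of $\R^2 \setminus \Phi$ lies in some $P_i^o$. Thus $\R^2 \setminus \Phi = \cup_i P_i^o$ is a disjoint union of open sets, and any connected component of $\R^2 \setminus \Phi$, being connected and covered by this disjoint family of open sets, is contained in exactly one $P_i^o$. This is part (a).

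The coverage claim used above is that, almost surely, every bounded subset of $\R^2$ is covered by finitely many leaves arriving at times in a bounded interval $[0,T]$. I would prove this exactly as in the proof of Lemma \ref{lem0207a2}: fixing $K$, cover $B(K)$ by finitely many balls $B(y_k,r_2)$, and use the assumption (recalled there) that $\Q(\{\sigma : B(x^*,2r_2) \subset \sigma\}) > 0$, so that for each $k$ there almost surely arrives, after time $0$, a leaf whose translate contains $B(y_k,r_2)$. The largest of the finitely many relevant arrival times furnishes a finite $T$ and a finite covering family; taking a union over $K \in \N$ gives coverage of all of $\R^2$. For part (b), let $A \subset B(K_0)$ and let $T_A$ be the arrival time of the last leaf in such a finite family whose union contains $A$, so that $T_A < \infty$ almost surely. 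Any component of $\R^2 \setminus \Phi$ meeting $A$ lies in some $P_i^o$ by part (a), and then $P_i \cap A \neq \emptyset$. But if $t_i > T_A$ then $A$ is already covered by leaves arriving strictly before $t_i$, so by (\ref{patcheq}) $P_i \cap A = \emptyset$; hence only leaves with $t_i \leq T_A$ contribute. Stratifying over the events $\{T_A \leq m\}$, $m \in \N$ (whose union has probability $1$), Lemma \ref{Boolem} (applicable since $\E[R^2] < \infty$ forces $\E[\H_2(S \oplus B(1))] < \infty$) shows that only finitely many leaves with $t_i \leq m$ meet $B(K_0) \supseteq A$. Thus almost surely only finitely many patches meet $A$; since each such patch is contained in the compact set $S_i + x_i$, the union of all components meeting $A$ lies in a finite union of bounded sets and is therefore bounded.

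I expect the main obstacle to be the coverage-in-finite-time argument and, relatedly, the need to randomise the time horizon: Lemma \ref{Boolem} is stated for a deterministic window $[-K,K]$, whereas the completion time $T_A$ of the cover is random, so the boundedness in (b) must be obtained by conditioning on $\{T_A \leq m\}$ rather than by a direct application. The remaining steps are purely topological and elementary once the disjoint-interior decomposition of $\R^2 \setminus \Phi$ is in hand.
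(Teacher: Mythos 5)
Your proof is correct and follows essentially the same route as the paper's: part (a) rests on Lemma \ref{lempatrep} plus the observation that components of $\R^2 \setminus \Phi$ cannot meet patch boundaries, and part (b) combines the finite-time coverage argument from the proof of Lemma \ref{lem0207a2} with local finiteness of arrivals and boundedness of the patches. The only differences are presentational: you organise (a) as the disjoint open decomposition $\R^2 \setminus \Phi = \cup_i P_i^o$ rather than the paper's pointwise first-arriving-leaf argument, and in (b) you handle the random coverage time by stratifying over $\{T_A \leq m\}$ and invoking Lemma \ref{Boolem}, where the paper instead notes that the arrivals meeting the window form a one-dimensional Poisson process of finite intensity $\lambda_0$.
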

\begin{proof}
Suppose $y \in \R^2 \setminus \Phi$. Then $y$ must lie in
the interior of the first-arriving leaf (after time 0) to 
contain $y$. Suppose this leaf has index $i$. Then $y \in P_i^o$,
and since $\partial P_i \subset \Phi$ by Lemma 
\ref{lempatrep}, the component of 
 $\R^2 \setminus \Phi$ containing $y$ is contained in
$P_i^o$. This gives us part (a).
Moreover the patches $P_i$ are almost surely bounded
sets.
Therefore, for part (b)
  it suffices to prove
that the number of patches $P_i$  which intersect 
$[0,1]^2$ is almost surely
finite. 

The number  of  leaves $S_i+x_i$  
having non-empty intersection with $[0,1]^2$ and arrival time $t_i \in [0,1]$
is Poisson with intensity $\lambda_0$ given by
\bean
\lambda_0 := \int_{\R^2} \Pr[ (S+ x) \cap [0,1]^2 \neq \emptyset ] 
\leq \int_{\R^2} \Pr[ R \geq  \|x\| - 2 ] dx,
\eean
with $R$ is given by (\ref{Rdef}). Thus $\lambda_0 < \infty$
 since we assume $\E[R^2] < \infty$.  Set
$$
\cI := \{i \in \N: (S_i + x_i) \cap  [0,1]^2 \neq \emptyset, t_i \geq 0\};
$$
then $\sum_{i \in \cI} \delta_{t_i} $ is a
 1-dimensional Poisson process of  intensity $\lambda_0$. 

Define $N$ as at (\ref{Ndef}); by a similar argument to the one given
in the proof of Lemma \ref{lem0207a2}, $N$ is almost surely finite.
That is, almost surely the square $[0,1]^2$ is completely covered within
a finite (random) time, denoted $T$ say. The number of $i$ for which 
$(S_i + x_i) \cap [0,1]^2 \neq \emptyset$ and
$0 \leq t_i \leq T$ is almost surely finite, and provides
an upper bound for the number of patches $P_i$ which intersect $[0,1]^2$,
so this number is also almost surely finite, as required.
\end{proof}
\begin{lemm}
\label{lemclosed}
\textcolor{\blue}{
Under the assumptions of Theorem \ref{thmconnect},
 it is almost surely the case that
for every connected
component $Z$ of $\Phi$, the sets $\Phi$,  $Z$ and
 $\Phi \setminus Z$ are all closed.}
\end{lemm}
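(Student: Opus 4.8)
The plan is to treat the three assertions in turn, with the closedness of $\Phi\setminus Z$ being the substantial one. First I would show $\Phi$ is closed. By Lemma \ref{lempatrep} we have $\Phi=\cup_{i=1}^\infty\partial P_i$ almost surely, so it suffices that the family $\{\partial P_i\}$ is locally finite. Fix $K>0$. As in the proof of Lemma \ref{lempatch}, $B(K)$ is almost surely covered by some finite (random) time $T_K$; once $B(K)$ is covered, any leaf $i$ with $t_i>T_K$ has $P_i\cap B(K)=\emptyset$, so such patches contribute nothing to $\Phi$ inside $B^o(K)$. By Lemma \ref{Boolem} (whose hypothesis $\E[\H_2(S\oplus B(1))]<\infty$ follows from $\E[R^2]<\infty$, since $S\oplus B(1)\subseteq B(R+1)$) only finitely many leaves arriving by time $T_K$ meet $B(K)$, so only finitely many $\partial P_i$ meet $B^o(K)$. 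A locally finite union of closed sets is closed, so $\Phi$ is closed. The second assertion is then immediate: connected components of any topological space are closed in that space, so $Z$ is closed in $\Phi$, and since $\Phi$ is closed in $\R^2$, $Z$ is closed in $\R^2$.

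The core is that $\Phi\setminus Z$ is closed. It suffices to show that for each $x\in\Phi$ there is a closed ball $B(x,r)$ in which $\Phi$ has only finitely many connected components, each of them compact: for then, given any sequence $y_n\to x$ with $y_n\in\Phi$, infinitely many $y_n$ lie in one such component $L$, whence $x\in L$ and $x$ lies in the same component of $\Phi$ as those $y_n$; applied to $x\in Z$ this shows no point of $\Phi\setminus Z$ accumulates at $x$, so $x\notin\overline{\Phi\setminus Z}$. To produce such a ball, first note that by local finiteness of leaves (Lemma \ref{Boolem}) there is a finite set $J$ with $\Phi\cap B(x,\epsilon)\subseteq G:=\cup_{j\in J}(\partial S_j+x_j)$ for small $\epsilon$, a finite union of rectifiable Jordan curves $\gamma_j$. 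Applying Eilenberg's (coarea) inequality to the $1$-Lipschitz function $y\mapsto\|y-x\|$ on each $\gamma_j$ gives $\int_0^\infty\H_0(\gamma_j\cap\{\|y-x\|=r\})\,dr\le\H_1(\gamma_j)<\infty$, so for almost every $r\in(0,\epsilon)$ the circle $\{\|y-x\|=r\}$ meets $G$ in finitely many points. Fixing such a good radius $r$, the finitely many crossing points cut each $\gamma_j$ into finitely many arcs, so $G\cap B(x,r)$ is a finite union of closed arcs and in particular has finitely many components, each compact.

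The hard part will be deducing the same finiteness for $\Phi\cap B(x,r)$ rather than for the ambient $G\cap B(x,r)$. Since $\Phi\cap B(x,r)$ is only a closed subset of the finite union of arcs $G\cap B(x,r)$, it could a priori have infinitely many components (a closed subset of an arc may be a Cantor set), and such accumulating ``island'' components would defeat the pigeonhole step. To control this I would use the dual face structure: by Lemma \ref{lempatch} the components of $\R^2\setminus\Phi$ are locally finite and each is contained in the interior of a single patch, so only finitely many faces have $x$ in their closure; shrinking $r$ so that $B(x,r)$ meets only these faces $F_1,\dots,F_k$, one has $\Phi\cap B(x,r)=B(x,r)\setminus\cup_{\ell=1}^k F_\ell$. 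Combining this with the arc decomposition of $G\cap B(x,r)$ and the Jordan curve theorem (each $F_\ell$ being an open region bounded by arcs of the $\gamma_j$) should show that $\Phi\cap B(x,r)$ is a finite union of arcs and points, hence has finitely many components. Making this gluing argument rigorous using only the rectifiable (not piecewise $C^1$) Jordan property is the principal technical obstacle, since under the hypotheses of Theorem \ref{thmconnect} we cannot invoke the no-touching conclusion of Lemma \ref{notouchlem}.
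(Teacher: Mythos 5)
Your handling of the first two assertions is correct and essentially identical to the paper's: local finiteness of the patches (via the covering-time argument and Lemma \ref{Boolem}) makes $\Phi$ a locally finite union of the closed sets $\partial P_i$, hence closed, and then $Z$, being a connected component of a closed set, is closed. Your reduction of the third assertion to a local statement --- that each $x \in \Phi$ has a ball $B(x,r)$ meeting $\Phi$ in finitely many compact components --- is also logically sound. The gap is exactly where you flag it, and the input you propose for closing it is not available: Lemma \ref{lempatch} does \emph{not} say that the components of $\R^2 \setminus \Phi$ are locally finite in number. Part (a) says each such component lies in the interior of some patch, and part (b) says that the union of the components meeting a bounded set is bounded; neither bounds the \emph{number} of faces meeting a small ball. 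A single patch interior can a priori contain infinitely many faces accumulating at $x$, because under the bare hypotheses of Theorem \ref{thmconnect} (rectifiable Jordan property, non-containment, $\E[R^2] < \infty$) one has no control on the cardinality of the pairwise intersections $(\partial S_i + x_i) \cap (\partial S_j + x_j)$: the paper obtains such control only under the extra hypothesis $\beta_3 < \infty$ (Lemma \ref{intbdylem}) or under rotation invariance / piecewise $C^1$ (Lemma \ref{notouchlem}). So the decomposition $\Phi \cap B(x,r) = B(x,r) \setminus \cup_{\ell=1}^k F_\ell$ with finitely many faces $F_\ell$ is unjustified, and your Eilenberg/arc-cutting construction, which correctly handles the ambient union of curves $G$, cannot be transferred to $\Phi$.

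The paper's proof avoids counting faces or components altogether. It argues by contradiction: take $z_n \in \Phi \setminus Z$ with $z_n \to z \in Z$, let $i$ be the earliest-arriving leaf (after time $0$) with $z \in \partial S_i + x_i$, and let $k$ be the earliest-arriving leaf with $z$ in its interior. Every $z_n$ close to $z$ lies in $S_k^o + x_k$, hence on the boundary of some leaf arriving before $t_k$; since only finitely many such leaves meet a neighbourhood of $z$, a subsequence of the $z_n$ lies on a single boundary $\partial S_j + x_j$ with $t_j < t_k$, and then $z \in \partial S_j + x_j$ because that boundary is closed. Invoking Lemma \ref{intbdylem}(b) to clear a neighbourhood of $z$ of all leaves arriving before $t_k$ other than leaves $i$ and $j$, the paper then constructs, for large $n$, an explicit path inside $\Phi$ from $z_n$ to $z$: follow $\partial S_j + x_j$ until it first hits leaf $i$, then follow $\partial S_i + x_i$ to $z$. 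This exhibits $z_n \in Z$, a contradiction. The point is that the visible pieces of later-arriving boundaries are \emph{attached} to earlier boundaries at crossing points, so path-connectedness of Jordan curves substitutes for any finiteness of faces, and touching points cause no trouble because the path is rerouted at the first hit. If you wish to salvage your approach, you would need to first prove that pairwise boundary intersections are locally finite (after which an Euler-formula argument would indeed give finitely many local faces); but that is precisely the delicate integral-geometric ingredient your argument currently assumes without proof.
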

\begin{proof}
Let $Q \subset \R^2$ be compact. Then by Lemma \ref{lempatrep},
$$
\Phi \cap Q = \cup_{i=1}^\infty (\partial P_i \cap Q),
$$
and by the proof of Lemma \ref{lempatch}, the number
of patches $P_i$ which intersect $Q$ is almost surely finite. Therefore
$\Phi \cap Q$ is a finite union of closed sets, so is closed.
This holds for any compact $Q$, and hence $\Phi$ is also closed, almost surely. 

Now suppose $Z$ is a connected component of $\Phi$. It is easy
to see from the definition of a connected component
(see e.g. \cite{Rudin}) that any limit
point of $Z$ must be in $Z$, and therefore $Z$ is closed.

It remains to prove that $\Phi \setminus Z$ is closed.
Suppose this were not the case.
Then there would exist
 $z \in Z$, 
 and a sequence
 $(z_n)_{n \geq 1}$ 
of points in
$\Phi \setminus Z$, such that  
$z_n \to z$ as $n \to \infty$. Then $z$ lies on the boundary
of some leaf arriving after time 0; let $i$ be the index
of the earliest-arriving such leaf. Also let $k$ be the index
of the earliest-arriving leaf with $z$ in its interior.
Then without loss of generality we may assume that for all $n$ 
we have $z_n \in (S_k^o + x_k)$.

For each $n \in \N$, choose $j(n) $ such that $t_{j(n)} < t_{k}$ and
$z_n \in \partial S_{j(n)} + x_{j(n)}$. Since there are almost surely
only finitely many leaves arriving that intersect any given
 compact region between times 0 and
 $t_{k}$, the numbers $j(n)$ run through a finite set of
indices, and hence by taking a subsequence if necessary
we can assume there is a single index $j$ such that $j(n)=j$ for
all $n$. Since $\partial S_{j} + x_j$ is closed, we also have
$z \in \partial S_j + x_j$. By our choice of $i$ we then have
$t_i \leq t_j$. 

Suppose $t_i = t_j$;
 then $i=j$ and
$z_n \in \partial S_i + x_i$ for all $n$. 
Hence for all large enough $n$,
there is a path from $z_n$ to $z$ along $\partial S_i + x_i$ that does 
not meet any leaf
arriving before time $t_i$, so this path lies in $\Phi$, and hence
 $z_n \in Z$, which is a contradiction.

Therefore we may assume that $t_i < t_j$, and
 $z_n \in \partial S_j + x_j$ for all $n$.
Then $z \in (\partial S_i + x_i ) \cap (\partial S_j + x_j)$.
 By Lemma \ref{intbdylem} (b),
 $z \notin S_{\ell} + x_\ell$ for all $\ell \in \N \setminus \{i,j\}$ with
$t_\ell < t_k$. Hence $z$ has a neighbourhood that is disjoint from
$\cup_{\{\ell \in \N \setminus \{i,j\}:t_\ell < t_k\}}
(S_\ell + x_\ell)$.
 Hence for all large enough $n$, there is a path
in $\partial S_j +x_j$ from $z_n$ to $z$ that does not intersect any
leaf arriving before $t_k$ other than possibly leaf $i$; then
by taking this path from $z_n$ as far as the first intersection with leaf $i$,
and then (if this intersection is not at $z$) concatenating it with
 a path from there
along $\partial S_i + x_i$ to $z$ we have a path in $\Phi$ from 
$z_n$ to $ z$, and therefore also  $z_n \in Z$, which is a contradiction.
Thus $\Phi \setminus Z$ must be closed, as claimed.
\end{proof}
\textcolor{\blue}{
In the next two proofs, we shall
use the fact that
$\R^2$ is {\em unicoherent.} The unicoherence property says that
 for any 
two closed connected sets in $\R^2$ having union $\R^2$,
the intersection of these two sets is connected. See e.g.
\cite[page 177]{RGG}, or
\cite{Dugundji}.}

\begin{lemm}
\label{lembounded}
\textcolor{\blue}{
Under the assumptions of Theorem \ref{thmconnect},
all connected components of the set $\Phi$ are unbounded, almost surely.
}
\end{lemm}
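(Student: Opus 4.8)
The plan is to argue by contradiction: suppose some connected component $Z$ of $\Phi$ is bounded, and produce a configuration in which one leaf is contained in another, which the non-containment property forbids almost surely. By Lemma \ref{lemclosed} the set $Z$ is closed, hence compact, and $\Phi\setminus Z$ is closed too; write $2\eps:=\dist(Z,\Phi\setminus Z)>0$. First I would record two preliminary facts. By Lemma \ref{lempatch}(a) every component of $\R^2\setminus\Phi$ lies in the interior of a single patch $P_k\subseteq S_k+x_k$, so every cell is bounded; consequently $\Phi$ itself is unbounded (otherwise some cell would be unbounded), and in particular $\Phi\setminus Z\neq\emptyset$. Second, the non-containment property, fed through the Mecke formula, shows that almost surely there is no pair $i\neq j$ with $S_i+x_i\subseteq S_j+x_j$: the relevant expected count, localized to any bounded space--time region, is a constant times $\int_\cK\int_\cK \H_2(\{x:\sigma+x\subseteq\sigma'\})\,\Q(d\sigma)\Q(d\sigma')=0$, the inner quantity vanishing by non-containment.

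Next I would analyse the topology around $Z$. Let $O$ be the unbounded component of $\R^2\setminus Z$ and set $D:=\R^2\setminus O$, a compact connected set with $Z\subseteq D$ and $\partial O\subseteq Z$. Since $\overline O$ and $D$ are closed, connected, and have union $\R^2$, the unicoherence of $\R^2$ gives that $\partial O=\overline O\cap D$ is connected. The key observation is that no arc of $\Phi$ can run from $\partial O$ into $O$: if $p\in\partial O$ were approached by points of $\Phi$ lying in $O$, then (as $O\cap Z=\emptyset$) those points would lie in $\Phi\setminus Z$, forcing $p\in\overline{\Phi\setminus Z}=\Phi\setminus Z$ by Lemma \ref{lemclosed}, contradicting $p\in Z$. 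Because $\Phi$ is locally a single arc except at degree-$3$ branch points (using that distinct leaf boundaries only cross and never touch, and no three meet, by Lemmas \ref{intbdylem} and \ref{notouchlem}), this observation shows that the cell abutting $\partial O$ from the $O$-side does not change as one moves along the connected set $\partial O$. Hence there is a single cell $C\subseteq O$ with $\partial O\subseteq\partial C$, and $C$ encircles $Z$.

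I would then convert this into a containment. The cell $C$ lies in a leaf $S_k+x_k$ by Lemma \ref{lempatch}(a), and since $C$ encircles the hole $D$ while staying inside the Jordan domain $S_k+x_k$, the enclosed region satisfies $D\subseteq S_k+x_k$, so $Z\subseteq S_k+x_k$. Crossing $\partial O$ from $C$ into $D$ passes from a point where leaf $k$ is visible to a point where an earlier-arriving leaf $S_j+x_j$ (with $t_j<t_k$) is visible, and the separating arc, lying in $\partial O\subseteq(S_k+x_k)^o$, must belong to $\partial S_j+x_j$ rather than $\partial S_k+x_k$ (an arc of $\partial S_k$ would place the $D$-side outside leaf $k$). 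Tracing the single closed curve $\partial O$, which bounds the region $D\subseteq S_k+x_k$, one identifies the enclosed leaf $j$ as being contained in leaf $k$, i.e.\ $S_j+x_j\subseteq S_k+x_k$, contradicting the almost-sure non-containment established above.

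The main obstacle is the third paragraph: promoting the topological conclusion ``$C$ encircles $Z$ and $C\subseteq S_k+x_k$'' to an honest leaf-to-leaf containment. The two delicate points are verifying that $D\subseteq S_k+x_k$ (that a cell encircling $D$ and lying in the Jordan domain $S_k+x_k$ really traps $D$ inside that leaf), and that the inner boundary $\partial O$ is the boundary of one enclosed leaf rather than several leaves patched together. The degree-$3$ structure of $\Phi$ together with the fact, proved in the second paragraph, that $\partial O$ carries no branch point with an edge running into $O$, are exactly what should make this bookkeeping go through, but these are the steps that require care.
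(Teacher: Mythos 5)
Your overall strategy is the same as the paper's: assume a bounded component $Z$ exists, use Lemma \ref{lemclosed} and unicoherence to build a connected loop separating $Z$ from infinity, locate that loop inside a single cell and hence a single leaf, and then trap an earlier-arriving leaf inside that leaf, contradicting the non-containment property via a Mecke-formula computation. But the two steps you yourself flag as ``requiring care'' are genuine gaps, and they are precisely where the work lies. The first gap: you take the separating loop to be $\partial O$, where $O$ is the unbounded component of $\R^2\setminus Z$, so your loop lies \emph{inside} $\Phi$ itself, and the claim that ``the cell abutting $\partial O$ from the $O$-side does not change,'' so that a single cell $C$ has $\partial O\subseteq\partial C$, is unsupported. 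Justifying it would require knowing that $\Phi$ is locally a finite union of arcs meeting at degree-$3$ points and that one can coherently ``move along'' $\partial O$ --- local structure that is never established (and is not available in the rectifiable-Jordan generality of Theorem \ref{thmconnect}). The paper sidesteps this entirely: since $Z$ is compact and $\Phi\setminus Z$ is closed (Lemma \ref{lemclosed}), one chooses $\eps>0$ with $Z_\eps\cap(\Phi\setminus Z)=\emptyset$, where $Z_\eps$ is the closed $\eps$-neighbourhood of $Z$, and lets $V_\eps$ be the unbounded component of $\R^2\setminus Z_\eps$. Unicoherence makes $\partial V_\eps$ connected, and --- this is the key point --- $\partial V_\eps$ is disjoint from \emph{all} of $\Phi$, since its points lie at distance exactly $\eps$ from $Z$. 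Hence it lies in a single component of $\R^2\setminus\Phi$ and so, by Lemma \ref{lempatch}(a), in a single patch interior $P_{j_0}^o$, with no local analysis of $\Phi$ needed.

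The second gap is in your final identification of ``the enclosed leaf $j$'' by ``tracing the single closed curve $\partial O$'': the set $\partial O$ may be assembled from boundary arcs of several different earlier-arriving leaves, so there is no single leaf to read off from it; and even for one leaf contributing an arc, you have not shown the whole leaf stays inside $D$ --- a leaf meeting $\partial O$ could a priori extend out into $O$. The paper never reads leaves off the separating curve. Instead: since bounded components of $\R^2\setminus\partial V_\eps$ lie inside the Jordan domain $S_{j_0}+x_{j_0}$, we have $Z\subset S_{j_0}+x_{j_0}$; pick any $x\in Z$ and note $x$ must lie on the boundary of some leaf arriving before $j_0$ (otherwise $x$ would be interior to $P_{j_0}$ and not in $\Phi$ at all); take the \emph{earliest-arriving} such leaf $i_0$. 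That leaf is connected (Jordan property) and is disjoint from $\partial V_\eps$, because $\partial V_\eps\subset P_{j_0}^o$ and the patch $P_{j_0}$ is by definition disjoint from every leaf arriving before $j_0$. Hence $S_{i_0}+x_{i_0}$ lies in a bounded component of $\R^2\setminus\partial V_\eps$, so $S_{i_0}+x_{i_0}\subset S_{j_0}+x_{j_0}$, the desired containment. This disjointness of the trapped leaf from the separating curve is exactly what your construction lacks, since your curve $\partial O$ meets the earlier leaves by construction; the missing idea that closes both gaps is the paper's $\eps$-neighbourhood construction of the loop.
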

\begin{proof}
  Suppose that $\Phi$ 
has at least one bounded component, pick one of these
bounded components, and denote this component by
$Z$. Given $\eps >0$, let $Z_\eps$ denote the
closed $\eps$-neighbourhood of $\Phi$, that is, the set of $x \in \R^d$
such that $\|x-y\| \leq \eps$ for some $y \in Z$. 
By Lemma \ref{lemclosed}, $Z$ is compact and $\Phi \setminus Z$
is closed. Hence we can and do choose 
 $\eps >0$ such that 
\bea
Z_{\eps} \cap ( \Phi \setminus Z) = \emptyset.
\label{phi2epseq}
\eea 
%
%
%
Denote by $V_{\eps}$ 
the unique unbounded connected component
of $\R^2 \setminus Z_{\eps}$.
 The set $\R^2 \setminus V_\eps$ is connected; we can think of it
as `$Z_\eps$ with the holes filled in'.
 Then $\R^2 \setminus V_{ \eps}$, 
and $\overline{V_{\eps}}$, 
are connected closed sets
with union $\R^2$, so by unicoherence their intersection, which is simply
$\partial V_\eps$,
is a connected  set (a kind of loop surrounding $Z$).
%
Moreover $\partial V_\eps \cap \Phi = \emptyset$ by (\ref{phi2epseq}),
since every element of $\partial V_\eps$ is distant $ \eps$ 
from $Z$.  Hence $\partial V_\eps$ is contained
in a single component of $\R^2 \setminus \Phi$, so
by Lemma \ref{lempatch} (a),
there exists $j_0 \in \N$ such that $\partial V_\eps \subset P^o_{j_0}$.
 Then by the assumed Jordan property, and the Jordan curve theorem,
$\partial V_\eps$ is surrounded by the boundary of $S_{j_0} + x_{j_0}$.  
Also $Z$ is contained in a bounded component of
 $\R^2 \setminus \partial V_\eps$.


Pick $x \in Z$. Then $x$ must lie on the boundary of
some leaf $i$ arriving before leaf $j_0$ (in the
time-reversed DLM), since otherwise $x$ would be in the interior
of $P_{j_0}$ and not in $\Phi$ at all. Therefore there is some $i$
such that $0 \leq t_i < t_{j_0}$ and the leaf boundary
 $ \partial S_i + x_i$ includes a point
in $Z$. Let $i_0$ be the index of the first-arriving such leaf.

By the Jordan property
 the leaf $S_{i_0} + x_{i_0}$ is connected, 
and it does not intersect $\partial V_\eps$,
since $\partial V_\eps \subset P_{j_0}$ and $t_{i_0} < t_{j_0}$.
Therefore it is contained in a bounded component
of $\R^2 \setminus \partial V_\eps$.  
 Hence, this leaf is entirely  surrounded by
the boundary $ \partial S_{j_0} + x_{j_0}$.  
%
 Thus in this case there would exist distinct  $i_0,j_0 \in \N$
such that $S_{i_0} +x_{i_0} \subset S_{j_0} + x_{j_0}$, and the expected
number of such pairs is zero by 
our non-containment assumption, and a similar argument
using the Mecke formula to the proof of Lemma
\ref{notouchlem}. Thus  $\Phi$ almost surely has no bounded component.
\end{proof}

\begin{proof}[Proof of Theorem \ref{thmconnect}]
%
Suppose that $\Phi$ has at least two  unbounded components.
Pick two unbounded components of $\Phi$, and denote them by
$Z_0$ and $Z_1$. 
By Lemma \ref{lemclosed}, both $Z_0$ and $\Phi \setminus Z_0$ are
closed. By Urysohn's lemma, we can (and do) pick a continuous function
$g: \R^2 \to [0,1]$ 
 taking the value $0$ for all $x \in Z_0$ and $1$ for
all $x \in \Phi \setminus Z_0$. For example, we could take
%
$$
g(x) := \frac{\dist(x,Z_0)}{\dist(x,Z_0) +
\dist(x,\Phi \setminus Z_0) }, 
~~~~~ x \in \R^2.
$$

Define the set $F := \{x \in \R^2 : g(x) \leq 1/2\}$.
Then $F \cap ( \Phi \setminus Z_0) = \emptyset$, and $F$ 
is a closed subset of $\R^2$. 
Let $V$ denote the 
component of $\R^2 \setminus F$ containing $Z_1$.
Then $ \R^2 \setminus V$, and $\overline{V}$, are closed
connected sets with union $\R^2$, so 
by the unicoherence of $\R^2$, their intersection, which is
$\partial V$, 
is connected;
moreover, 
$\partial V \subset \partial F$,
so $g(x) = 1/2$ for all $x \in \partial V$,
and hence
$ (\partial V) \cap \Phi = \emptyset$.

By Lemma \ref{lempatch} (b),  all components of
$\R^2 \setminus \Phi$ are bounded, almost surely.  Since $\partial V  $
 is connected, and disjoint from $\Phi$, it is contained
in a single component of $\R^2 \setminus \Phi$, and
therefore $\partial V$ is bounded.

We now show, however that the set $\partial V$ is 
 {\em unbounded}, which is a contradiction. 
Let $r>0 $ and recall that $B(r): = \{y \in \R^2:\|y\| \leq r\}$.
Since $Z_0$ and $Z_1$ are unbounded, we can  pick points
$z_0 \in Z_0 \setminus B(r)$ and
$z_1 \in Z_1 \setminus B(r)$.
We may then take a polygonal path in $\R^2 \setminus B(r)$ 
 from $z_0$ to $z_1$.
The last point $z$ in this path 
for which $g(z) = 1/2$, lies in $\partial V$.
Hence, $\partial V  \setminus B(r)$
is non-empty. Since $r$ is arbitrary, $\partial V$ is
unbounded.

%

We have proved by contradiction that
 $\Phi$ almost surely has at most one unbounded component. Combined
with Lemma \ref{lembounded},
this shows that  it is
almost surely connected.
\end{proof}

\begin{proof}[Proof of Theorem \ref{meantheo2}]
We now view the random set $\Phi$ (the boundaries of the
DLM tessellation) as a planar graph. By Lemmas 
\ref{intbdylem} and  \ref{notouchlem},
 there are 
no vertices of degree 4 or more in this graph.

The planar graph $\Phi$ has no vertices
of degree 1, by the Jordan assumption.
Thus, 
we may view
$\Phi$
as a planar graph with
 all of its vertices having degree 3,  
 and
$\chi$ is the point process of these  vertices.

\textcolor{\blue}{
Moreover, by Theorem \ref{thmconnect}
this planar graph is almost surely connected.}
Let $\tau$  denote the intensity 
of the point process of midpoints of edges in this planar graph.
By the handshaking lemma,
 $\tau = 3 \beta_3/2$.
Also, by an argument based on Euler's formula
(see \cite[eqn. (10.3.1)]{SKM}),
$\beta_1= \tau - \beta_3 = \beta_3/2$, as asserted. 
In the rotation-invariant
case, by (\ref{0629a}) we have (\ref{0320a}).
\end{proof}

\begin{proof}[Proof of Theorem \ref{thlengthcov}]
Apply Theorem \ref{thcovgen}, 
using the same 
choice of $\Q'$ as in the first part of the proof
of Theorem \ref{meantheo}.
\end{proof}

\begin{proof}[Proof of Theorem \ref{CLTa}]
We apply Theorem \ref{th:CLTgen},
using the same 
choice of $\Q'$ as in the first part of the proof
of Theorem \ref{meantheo}.
\end{proof}

{\bf Acknowledgements.}
We thank Pieter Trapman, G\"unter Last and Michael Klatt for
conversations in 2016 which stimulated our interest in this
model.
\textcolor{\blue}{
  We also thank Daniel Hug, Dominique Jeulin and
(especially) an anonymous referee for numerous helpful
comments on  earlier versions of this paper.}

\end{document}